
\documentclass[12pt]{amsart}
\usepackage[margin=1.2in]{geometry}
\usepackage[final]{microtype}
\setlength{\emergencystretch}{2em}
\usepackage{parskip} 
\usepackage{amsmath,amssymb,amsthm,amsfonts,array,mathtools}
\usepackage{mathrsfs,dsfont,stmaryrd,slashed,cancel,relsize}
\usepackage{mathbbol}

\usepackage[T1]{fontenc}
\usepackage{lmodern}

\usepackage{libertine}

\usepackage{bbm}
\DeclareSymbolFontAlphabet{\mathbb}{AMSb}
\DeclareSymbolFontAlphabet{\mathbbl}{bbold}
\usepackage{graphicx,psfrag}
\usepackage{tikz,tikz-cd,pgfplots}
\pgfplotsset{compat=1.18}
\usetikzlibrary{arrows,calc,decorations.markings,fadings,patterns,positioning,shapes}
\tikzset{>=latex}
\tikzstyle{mypoint}=[inner sep=0pt,outer sep=0pt,minimum size=5pt,fill,circle]
\usepackage{xcolor}
\usepackage{listings}
\definecolor{codegreen}{rgb}{0,0.6,0}
\definecolor{codegray}{rgb}{0.5,0.5,0.5}
\definecolor{codepurple}{rgb}{0.58,0,0.82}
\definecolor{backcolour}{rgb}{0.96,0.96,0.96}

\lstdefinestyle{mystyle}{
  backgroundcolor=\color{backcolour},
  basicstyle=\ttfamily\footnotesize,
  commentstyle=\color{codegreen},
  keywordstyle=\color{blue},
  numberstyle=\tiny\color{codegray},
  stringstyle=\color{codepurple},
  breaklines=true,
  numbers=left,
  numbersep=5pt,
  showspaces=false,
  showstringspaces=false,
  showtabs=false,
  tabsize=2,
  captionpos=b
}
\lstset{style=mystyle}

\usepackage{fancyhdr}
\usepackage[framemethod=tikz]{mdframed}
\usepackage{pagecolor}

\usepackage{thmtools}

\declaretheorem[name=Definition,within=section]{definition}
\declaretheorem[name=Theorem,sibling=definition]{theorem}
\declaretheorem[name=Corollary,sibling=definition]{corollary}
\declaretheorem[name=Proposition,sibling=definition]{proposition}

\declaretheorem[name=Lemma,sibling=definition]{lemma}
\declaretheorem[name=Example,sibling=definition]{example}


\newenvironment{customthm}[1]
  {\innercustomthm}
  {\endinnercustomthm}

\theoremstyle{definition}
\declaretheorem[name=Remark,sibling=definition]{remark}

\usepackage{etoolbox}
\newcommand{\define}[4]{\expandafter#1\csname#3#4\endcsname{#2{#4}}}

\forcsvlist{\define{\DeclareMathOperator}{}{}}{im,coker,rad,nil,Ann,Ass,codim,Spec,mSpec,diam,ord,Supp,supp,disc,Ob,vol,rank,Sym,Alt,Cl,tr,spl,Proj,Pic,CaCl,Div,THH,TC,cone,Spf,Syn,colim,Spa,an,Nyg,HT,Fil}

\forcsvlist{\define{\newcommand}{\mathsf}{}}%
  {Set,Grp,Ab,CRing,Mod,Vect,Cat,Top,PreSh,Sh,Sch,Nat,%
   Fun,Diff,Alg,Rep,Coh,QCoh,Perf,Loc,Ind}

\forcsvlist{\define{\newcommand}{\mathrm}{}}
{R,D,M,I,E,B,X,Y,V,W,U}

\forcsvlist{\define{\newcommand}{\mathrm}{}}{Hom,Mor,id,GL,SL,PSL,PGL,SO,SU,Mat,Ext,Tor,Res,Cor,Inf,End,Irr,Aut,Gal,lcm,sign,triv,diag,Map,op,ev,act,alg,sep,unr,nr,ab,T,MU,MO,KU,un,dR,cris,qsyn,pris,qrsp,perf,gr,conj}

\forcsvlist{\define{\newcommand}{\mathbf}{}}{N,Z,Q,C,F,A}

\newcommand{\rA}{\mathrm{A}}
\newcommand{\rC}{\mathrm{C}}
\newcommand{\rS}{\mathrm{S}}

\newcommand{\rG}{\mathrm{G}}

\newcommand{\q}{\mathfrak{q}}

\newcommand{\m}{\mathfrak{m}}


\newcommand{\G}{\mathbf{G}}

\newcommand{\et}{\mathrm{\acute{e}t}}

\newcommand{\proet}{\mathrm{pro\acute{e}t}}
\newcommand{\heart}{\ensuremath\heartsuit}

\newcommand{\Prism}{{\mathbbl{\Delta}}}
\newcommand{\laur}[1]{(\mkern-2mu(#1)\mkern-2mu)}

\newenvironment{display}{\begin{center}\begin{tikzcd}}%
  {\end{tikzcd}\end{center}}

\usepackage[
  hypertexnames=false,
  colorlinks=true,
  linkcolor={red!50!black},
  citecolor={blue!50!black},
  urlcolor={blue!80!black}
]{hyperref}
\urlstyle{same}

\renewcommand{\O}{\ensuremath{\mathcal{O}}}
\renewcommand{\H}{\mathrm{H}}
\renewcommand{\L}{\mathrm{L}}
\renewcommand{\P}{\mathbf{P}}

\title{Syntomification and crystalline local systems}

\author{Dylan Pentland}
\email{dpentland@math.harvard.edu}

\begin{document}

\begin{abstract}
Let $p$ be a prime, and let $\X$ be a smooth $p$-adic formal scheme over $\Spf \O_K$ where $K/\Q_p$ is a finite extension. We show that reflexive sheaves on the stack $\X^{\Syn}$ are equivalent to $\Z_p$-lattices in crystalline local systems on the rigid generic fiber $\X_\eta$, and then use this to study the essential image of the \'{e}tale realization functor on the isogeny category of perfect complexes on $\X^{\Syn}$. We also show when $\X/\Spf \O_K$ is smooth and proper that $\Perf(\X^{\Syn})[1/p]$ is equivalent to a category of admissible filtered $F$-isocrystals in perfect complexes.
\end{abstract}

\maketitle

\tableofcontents

\setcounter{tocdepth}{1}

\section{Introduction}

Let $K/\Q_p$ be a finite extension with residue field $k$, and let $\Rep_{\Q_p}^\cris(\rG_K)$ denote the category of crystalline Galois representations in $\Q_p$-vector spaces of $\rG_K:=\Gal(\bar{K}/K)$. In \cite{KisinCris} Kisin gave a fully faithful functor
\[\D_{\mathfrak{S}}: \Rep_{\Z_p}^\cris(\rG_K) \to \Mod^{\varphi}_{\mathfrak{S}}\]
to the category $\Mod^{\varphi}_{\mathfrak{S}}$ of Breuil-Kisin modules over $\mathfrak{S}=\W(k)\llbracket u_0 \rrbracket$ and characterized the essential image.

A prismatic variant of this result was proven by Bhatt-Scholze. Let $\Vect^{\varphi}((\Spf \O_K)_{\Prism},\O_{\Prism})$ denote the category of prismatic $F$-crystals as defined in \cite{prismFcris}.

\begin{theorem}[Bhatt-Scholze, \cite{prismFcris}]
Let $K/\Q_p$ be a finite extension. Then there is a symmetric monoidal equivalence 
\[\T_\et: \Vect^{\varphi}((\Spf \O_K)_{\Prism},\O_{\Prism}) \to \Rep_{\Z_p}^\cris(\rG_K).\]
Moreover, the diagram
\begin{display}
\Vect^{\varphi}((\Spf \O_K)_{\Prism},\O_{\Prism}) \ar{rd}{\mathrm{ev}} \ar{rr}{\T_\et} \ar[swap]{rr}{\sim} & & \Rep_{\Z_p}^\cris(\rG_K) \ar{ld}{\D_{\mathfrak{S}}} \\
& \Mod^{\varphi}_{\mathfrak{S}} &
\end{display}
commutes, where $\mathrm{ev}$ is the evaluation functor on the Breuil-Kisin prism $(\mathfrak{S},\E(u_0))$.
\end{theorem}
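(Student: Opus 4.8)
The plan is to build $\T_\et$ from the \'{e}tale comparison for prismatic cohomology and then match it against Kisin's functor $\D_{\mathfrak{S}}$. Since $\Spf\O_K$ has rigid generic fiber the single point $\Spa(K,\O_K)$, a $\Z_p$-local system there is nothing but a continuous representation of $\rG_K$ on a finite free $\Z_p$-module, so the target makes sense once we know the construction produces \emph{crystalline} representations. Concretely, given a prismatic $F$-crystal $\mathcal{E}$ on $(\Spf\O_K)_{\Prism}$, I would base change along $\O_K\to\O_{\C_p}$ and evaluate on $(A_{\mathrm{inf}},\ker\theta)$, the final object of $(\Spf\O_{\C_p})_{\Prism}$; this gives a finite projective $A_{\mathrm{inf}}$-module with a semilinear $\rG_K$-action and a Frobenius. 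Inverting a generator of $\ker\theta$ and then $p$-completing produces a $\varphi$-module over $W(\C_p^{\flat})$, whose $\rG_K$-stable, $\varphi$-invariant vectors form the finite free $\Z_p$-module $\T_\et(\mathcal{E})$ with its induced $\rG_K$-action. That $\T_\et$ is exact and symmetric monoidal is then immediate, since tensor products, duals and the $\varphi=1$ equalizer are all computed termwise.

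The next step is to bring in the Breuil--Kisin prism $(\mathfrak{S},\E)$. Fixing a compatible system $(\pi^{1/p^{n}})_{n}$ gives $\pi^{\flat}\in\O_{\C_p}^{\flat}$ and a map of prisms $(\mathfrak{S},\E)\to(A_{\mathrm{inf}},\ker\theta)$, $u_{0}\mapsto[\pi^{\flat}]$ --- exactly the embedding underlying Kisin's theory and the $(\varphi,\Gamma)$-module formalism. Since $(\mathfrak{S},\E)$ is a covering object of the final object of $(\Spf\O_K)_{\Prism}$, the category $\Vect^{\varphi}((\Spf\O_K)_{\Prism},\O_{\Prism})$ is the totalization of the cosimplicial category attached to the \v{C}ech nerve of $(\mathfrak{S},\E)$; carrying out this computation, which amounts to a calculation of prismatic envelopes, shows that $\mathrm{ev}$ is fully faithful, with essential image the subcategory of $\Mod^{\varphi}_{\mathfrak{S}}$ spanned by the Breuil--Kisin modules that admit the extra stratification datum. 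Moreover the crystal property gives $\mathcal{E}(A_{\mathrm{inf}})\cong\mathcal{E}(\mathfrak{S})\otimes_{\mathfrak{S}}A_{\mathrm{inf}}$, and chasing this through the Laurent localization identifies the base change of $\mathcal{E}(\mathfrak{S})$ to $\mathfrak{S}[1/\E]^{\wedge}_{p}$ with the \'{e}tale $\varphi$-module attached to $\T_\et(\mathcal{E})$; this says that the prismatic \'{e}tale realization refines classical Kisin descent, and it is essentially bookkeeping with period rings.

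It remains to show that $\T_\et$ is an equivalence onto $\Rep_{\Z_p}^{\cris}(\rG_K)$, equivalently that the essential image of $\mathrm{ev}$ coincides with that of $\D_{\mathfrak{S}}$. One inclusion is a crystallinity assertion: $\mathcal{E}(\mathfrak{S})$ is a finite $\E$-height module, so base changing it along $\mathfrak{S}\to\O_K$ ($u_{0}\mapsto\pi$) and $\mathfrak{S}\to W(k)$ ($u_{0}\mapsto0$) assembles a filtered $\varphi$-module, and Kisin's ``finite height $\Rightarrow$ crystalline'' argument identifies it with $D_{\cris}$ of $\T_\et(\mathcal{E})$ and, simultaneously, $\mathcal{E}(\mathfrak{S})$ with $\D_{\mathfrak{S}}(\T_\et(\mathcal{E}))$; thus $\T_\et$ lands in $\Rep_{\Z_p}^{\cris}(\rG_K)$ and the asserted triangle commutes. \textbf{The main obstacle is the reverse inclusion}: given a $\Z_p$-lattice in a crystalline representation, one must exhibit the stratification on Kisin's Breuil--Kisin module along the self-products of $(\mathfrak{S},\E)$, equivalently promote Kisin's connection --- a priori defined only after inverting $p$ --- to an integral, $\varphi$-compatible descent datum. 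This requires genuine input beyond Kisin's theorem: the $\rG_K$-equivariant structure of $A_{\cris}$, the behavior of the Frobenius near the Hodge--Tate divisor, and a boundedness argument controlling denominators. Granting this, $\mathrm{ev}$ and $\D_{\mathfrak{S}}$ are fully faithful with the same essential image, so $\T_\et=\D_{\mathfrak{S}}^{-1}\circ\mathrm{ev}$ is an equivalence, and it is symmetric monoidal because $\mathrm{ev}$, $\D_{\mathfrak{S}}$ and their quasi-inverses all are.
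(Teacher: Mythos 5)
This statement is quoted background in the paper: it is Bhatt--Scholze's theorem, cited from \cite{prismFcris} without proof, so there is no internal argument to compare against and your proposal has to stand on its own as a sketch of the original result. As such it has two genuine gaps. First, your forward direction rests on the assertion that $\mathcal{E}(\mathfrak{S})$ having finite $\E$-height lets one run a ``finite height $\Rightarrow$ crystalline'' argument. No such implication exists: a finite-height Breuil--Kisin module with its Frobenius alone only controls the restriction of the representation to $\rG_{K_\infty}$ (for $K_\infty$ the field generated by compatible $p$-power roots of $\pi$), and finite-height $\rG_{K_\infty}$-representations extend to non-crystalline (indeed non-de Rham, or merely semistable) $\rG_K$-representations. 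Crystallinity of $\T_\et(\mathcal{E})$ is forced precisely by the full crystal structure --- the stratification over the self-products of $(\mathfrak{S},\E)$, equivalently the $\rG_K$-equivariance of $\mathcal{E}(A_{\mathrm{inf}})$ --- and extracting it requires evaluating on crystalline-type prisms (e.g.\ $A_{\cris}$) and comparing the resulting filtered $\varphi$-module with $D_{\cris}$, which is a substantive part of Bhatt--Scholze's argument that your sketch does not supply.

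Second, and more seriously, you explicitly defer the converse: given a lattice in a crystalline representation, you must upgrade Kisin's module $\D_{\mathfrak{S}}(T)$ to a prismatic $F$-crystal, i.e.\ produce the integral, $\varphi$-compatible descent datum along the \v{C}ech nerve of $(\mathfrak{S},\E)$. You correctly identify this as needing input beyond Kisin's theorem (controlling denominators in the connection, the $\rG_K$-structure on $A_{\mathrm{inf}}$/$A_{\cris}$, behavior of $\varphi$ near the Hodge--Tate divisor), but then write ``Granting this'' and conclude. That granted step is the heart of the theorem --- in \cite{prismFcris} it occupies the technical core (boundedness/descent for $\varphi$-modules over $A_{\mathrm{inf}}$-type rings and the extension of the crystal across the relevant locus) --- so the proposal as written is a correct outline of the architecture of the proof rather than a proof: both the crystallinity of the essential image and the essential surjectivity onto crystalline lattices are asserted, not established. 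The commutativity of the triangle with $\D_{\mathfrak{S}}$ and the symmetric monoidality claims are fine once those two points are in place.
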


Viewing this as a type of Riemann-Hilbert equivalence, it is natural to ask whether or not it admits a derived variant. In \cite{Fgauge}, Bhatt-Lurie define a stack $\X^{\Syn}$ associated to a $p$-adic formal scheme $\X$. Similar to previously constructed stacks such as $\X^{\Prism}$ in \cite{APC2}, a central idea is that $\R\Gamma(\X^{\Syn},\O\{i\})$ produces the syntomic cohomology $\R\Gamma_{\Syn}(\X,\Z_p(i))$.

Let $\X_\eta$ denote the rigid generic fiber of $\X$, and define $\D^{(b)}_{\mathrm{lisse}}(\X_\eta,\Z_p)$ to be the full subcategory of $\D_{\proet}(\X_\eta,\Z_p)$ consisting of locally bounded derived $p$-complete objects whose mod $p$ reduction has cohomology sheaves that are locally constant with finitely generated stalks. Bhatt-Lurie construct a $t$-exact \'{e}tale realization functor
\[\T_\et: \Perf(\X^{\Syn}) \to \D^{(b)}_{\mathrm{lisse}}(\X_\eta,\Z_p)\]
utilizing the equivalence in \cite[Corollary 3.7]{prismFcris}. 

In the case of a point, they extend the result of Bhatt-Scholze to the syntomic stack.

\begin{theorem}[Bhatt-Lurie, \cite{Fgauge}]
Let $\mathcal{E}\in \Perf(\Z_p^{\Syn})$. Then for all $i$, $\H^i(\T_\et(\mathcal{E}))[1/p]$ is crystalline.

Moreover, for a general finite extension $K/\Q_p$ there is a subcategory of reflexive sheaves $\mathsf{Refl}(\O_K^{\Syn})\subset \Coh(\O_K^{\Syn})$ on which $\T_\et$ induces an equivalence $\mathsf{Refl}(\O_K^{\Syn}) \simeq \Rep_{\Z_p}^\cris(\rG_K)$.
\label{thm:bhatt-lurie}
\end{theorem}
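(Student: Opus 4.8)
The plan is to establish the two assertions in turn, deriving the crystalline‑cohomology statement from the reflexive‑sheaf equivalence. The latter is the ``Riemann--Hilbert'' core of the theorem, and I would reduce it to the Bhatt--Scholze equivalence by an analysis of the geometry of $\O_K^{\Syn}$; the former then follows formally, using only that the crystalline $\rG_K$-representations form a subcategory of $\Rep_{\Q_p}(\rG_K)$ closed under subquotients and extensions.

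\emph{The reflexive-sheaf equivalence.} Let $\mathsf{Refl}(\O_K^{\Syn})\subseteq\Coh(\O_K^{\Syn})$ be the reflexive coherent sheaves, i.e.\ those $\mathcal E$ with $\mathcal E\xrightarrow{\sim}\mathcal E^{\vee\vee}$. Recall (following \cite{Fgauge}) that $\O_K^{\Syn}$ is assembled from the Nygaard-filtered prismatization $\O_K^{\mathcal N}\to\A^1/\G_m$ --- whose generic fibre is $\O_K^{\Prism}$ and whose vertex fibre over $B\G_m$ records the Hodge--Tate/conjugate-filtration data --- by a gluing along $\O_K^{\Prism}$, via the de Rham and Frobenius-twisted sections, that incorporates the Frobenius structure. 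The key geometric input --- which I expect to be the main work --- is that the locus on which the extra ``$F$-gauge'' data (the Nygaard filtration, its degeneration over the vertex, and the special fibre) genuinely contributes has codimension $\ge 2$ in the relevant regular charts; for instance, in the chart supplied by a Breuil--Kisin prism $(\mathfrak S,\E)$ the ring $\mathfrak S=\W(k)\llbracket u_0\rrbracket$ is regular of dimension two. Granting this, a reflexive sheaf on $\O_K^{\Syn}$ is determined by and extends uniquely from its restriction to the complement $V$ of that locus, so restriction identifies $\mathsf{Refl}(\O_K^{\Syn})$ with the category of vector bundles on $V$; and a vector bundle on $V$ is precisely descent data of finite projective modules over Breuil--Kisin-type prisms equipped with a Frobenius which is an isomorphism after inverting $\E$, i.e.\ a prismatic $F$-crystal. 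This gives a symmetric monoidal equivalence $\mathsf{Refl}(\O_K^{\Syn})\simeq\Vect^{\varphi}((\Spf\O_K)_{\Prism},\O_{\Prism})$, the monoidal structure on the left being the reflexivized tensor product. Composing with the Bhatt--Scholze equivalence $\T_\et\colon\Vect^{\varphi}((\Spf\O_K)_{\Prism},\O_{\Prism})\xrightarrow{\sim}\Rep^{\cris}_{\Z_p}(\rG_K)$ yields the asserted equivalence, and one checks it agrees with the restriction of Bhatt--Lurie's syntomic $\T_\et$ to $\mathsf{Refl}(\O_K^{\Syn})$, since the latter is constructed on $V$ out of the same prismatic comparison (Corollary 3.7 of \cite{prismFcris}).

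\emph{The crystalline-cohomology statement.} Here I would use that $\Perf(\O_K^{\Syn})$ is generated as a thick subcategory by vector bundles --- one may take, say, the Breuil--Tate twists $\O\{n\}$ --- each of which is in particular reflexive. Since $\T_\et$ is $t$-exact and, by the first part, sends each such bundle to a $\Z_p$-lattice in a crystalline representation, for every $\mathcal E\in\Perf(\O_K^{\Syn})$ the object $\T_\et(\mathcal E)$ lies in the thick subcategory of $\D^{(b)}_{\mathrm{lisse}}(\X_\eta,\Z_p)$ generated by such lattices; hence each $\H^i(\T_\et(\mathcal E))$, after inverting $p$, is obtained by iterated extensions from subquotients of crystalline $\rG_K$-representations. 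As the crystalline representations are closed in $\Rep_{\Q_p}(\rG_K)$ under subquotients and extensions, $\H^i(\T_\et(\mathcal E))[1/p]$ is crystalline. (Alternatively, one may argue from the induced identification of $\Perf(\O_K^{\Syn})[1/p]$ with perfect complexes of admissible filtered $\varphi$-modules together with the stability of weak admissibility under subquotients.)

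\emph{Main obstacle.} The hard part will be the geometric input to the reflexive-sheaf equivalence: pinning down the presentation of $\O_K^{\Syn}$ near a Breuil--Kisin chart, isolating exactly the locus on which the Nygaard-filtration and Hodge--Tate degeneration data are forced and verifying that it has codimension $\ge 2$ (so that reflexive extension applies and recovers precisely the prismatic $F$-crystals, with no spurious sheaves), and matching the resulting equivalence with the two étale realizations. By contrast, once that equivalence is in hand, the crystalline-cohomology statement is a formal consequence.
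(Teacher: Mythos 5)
The decisive gap is in your deduction of the first assertion. Crystalline representations are \emph{not} closed under extensions in $\Rep_{\Q_p}(\rG_K)$ --- this is exactly the content of $\H^1_f(\rG_K,-)\subsetneq \H^1(\rG_K,-)$ --- so membership of $\T_\et(\mathcal{E})$ in the thick subcategory generated by lattices in crystalline representations does not force its cohomology to be crystalline. Concretely, a class $e\in \H^1(\rG_K,\Q_p(1))\setminus \H^1_f$ gives a cone of a map $\Z_p\to \Z_p(1)[1]$ lying in the thick subcategory generated by $\T_\et(\O)$ and $\T_\et(\O\{1\})$ whose only rational cohomology is a non-crystalline extension; so even if your generation claim held, the d\'{e}vissage would not prove the statement. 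The generation claim is itself unjustified, and in the form you propose it is false: since $\T_\et(\O\{n\})=\Z_p(n)$, every object of the thick subcategory generated by the twists has rational cohomologies whose Jordan--H\"{o}lder factors are Tate twists, which already excludes the line-bundle $F$-gauge realizing an unramified (or Lubin--Tate) crystalline character. The actual argument (used in \cite{Fgauge}, and in this paper's generalization, Theorem \ref{thm:generalsmoothTetcrys}) needs no generation statement: by $t$-exactness one reduces to coherent $\mathcal{E}$, and then, since $\T_\et$ commutes with duals (Lemma \ref{lem:dual}) and a $\Q_p$-local system is canonically its own double dual, $\T_\et(\mathcal{E})[1/p]\simeq \T_\et(\mathcal{E}^{\vee\vee})[1/p]$ with $\mathcal{E}^{\vee\vee}$ reflexive, so the first assertion follows from the second. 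Your parenthetical alternative via $\Perf(\O_K^{\Syn})[1/p]\simeq \D^b(\mathrm{MF}_K^{\varphi,\mathrm{wa}})$ is circular at this stage, since that equivalence is a later and harder result whose proof uses the present theorem.

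On the reflexive-sheaf equivalence, your skeleton (codimension-two extension relating reflexive objects to vector bundles on a punctured locus, identification of the latter with prismatic $F$-crystals, then Bhatt--Scholze) is the same one used in \cite{Fgauge} and in this paper's generalization, but the step you defer is more than ``pinning down the locus'': purity alone does not make the restriction of a reflexive coherent $F$-gauge to the punctured chart locally free. On the three-dimensional Rees chart $\Spf \mathfrak{S}\langle u,t\rangle/(ut-\E)$ a reflexive sheaf can fail to be locally free at closed points surviving in the complement of $\V(p,t)$, so ``reflexive $\Rightarrow$ vector bundle on $V$'' needs arithmetic input, not just regularity and codimension counting. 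In the actual proofs this is supplied by: the \'{e}tale realization of a reflexive object being a $\Z_p$-lattice, whence $\M[1/\I]^{\wedge}_p$ is a vector bundle; the fact that coherent $F$-crystals over $\mathfrak{S}$ become vector bundles after inverting $p$ (Kisin/BMS, cf.\ Proposition \ref{prop:refl_VB} here); and Beauville--Laszlo gluing, after which the codimension-two extension argument (Stacks 0EBJ) identifies reflexive $F$-gauges with prismatic $F$-crystals. Without this Frobenius/realization input your claimed identification of $\mathsf{Refl}(\O_K^{\Syn})$ with vector bundles on $V$, and hence with $\Vect^{\varphi}((\Spf\O_K)_{\Prism},\O_{\Prism})$, does not follow.
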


Restriction to the substack $\O_K^{\Prism}$ induces an equivalence 
\[\mathsf{Refl}(\O_K^{\Syn}) \simeq \Vect^{\varphi}((\Spf \O_K)_{\Prism},\O_{\Prism}).\]
The functor $\T_\et$ factors through this restriction, and so the equivalence is compatible with the result of Bhatt-Scholze (and hence also with \cite{KisinCris}). In \cite{Fgauge}, it is speculated that this result generalizes past a point in Remark 6.6.1. This is the first main result. In what follows, as in the notation section we assume $\X$ is smooth and quasicompact.

\begin{customthm}{A}[Theorem \ref{thm:generalsmoothTetcrys}]
For $\mathcal{E}\in \Perf(\X^{\Syn})$ we have 
\[\H^i(\T_\et(\mathcal{E}))[1/p]\in \Loc_{\Q_p}^\cris(\X_\eta)\]
for all $i$. Moreover, there is a subcategory of reflexive sheaves $\mathsf{Refl}(\X^{\Syn})\subset \Coh(\X^{\Syn})$ (see Definition \ref{def:reflexive}) on which $\T_\et$ induces an equivalence $\mathsf{Refl}(\X^{\Syn}) \simeq \Loc_{\Z_p}^\cris(\X_\eta)$.
\end{customthm}

The second assertion also holds without a quasicompactness assumption (which is only introduced to control isogeny categories). In \cite{Fgaugelift}, Guo-Li already partially answered this question. By combining this with the main result of Guo-Reinecke (\cite{Zpcrystalline}) they obtain a fully faithful functor
\[\Pi_\X: \Loc_{\Z_p}^\cris(\X_\eta) \to \Coh(\X^{\Syn})=\Perf(\X^{\Syn})^\heart\]
such that $\T_\et(\Pi_\X(\mathcal{L}))\simeq \mathcal{L}$. What remains is to characterize the essential image in a way analogous to the case of a point. We show the essential image of this functor is $\mathsf{Refl}(\X^{\Syn})$, the subcategory of reflexive objects in $\Coh(\X^{\Syn})$. This alternate description easily implies the first part of the theorem, as well as the following corollary deduced by understanding $\Coh(\X^{\Syn})[1/p]$. In what follows, the same remark about how we interpret isogeny categories applies when $\X$ is not quasicompact.

\begin{corollary}[Corollary \ref{cor:Perf_cris}]
The $t$-exact functor
\begin{display}
\Perf(\X^{\Syn})[1/p] \ar{r}{\T_\et[1/p]} & \D^{(b)}_{\mathrm{lisse}}(\X_\eta,\Z_p)[1/p]
\end{display}
induces an equivalence $\Coh(\X^{\Syn})[1/p]\simeq \Loc_{\Q_p}^\cris(\X_\eta)$ on the heart. The essential image of $\T_\et[1/p]$ contains the essential image of
\[\D^b(\Loc_{\Q_p}^\cris(\X_\eta)) \to \D^{(b)}_{\mathrm{lisse}}(\X_\eta,\Z_p)[1/p],\]
and is contained in the full subcategory of $\D^{(b)}_{\mathrm{lisse}}(\X_\eta,\Z_p)[1/p]$ where every cohomology sheaf is crystalline.
\end{corollary}

In fact, both inclusions are in general strict inclusions as explained in Remark \ref{rem:ess_im}. This corollary allows us to deduce a variant of the $\mathrm{C}_{\cris}$ conjecture later in Corollary \ref{cor:der_C_cris}. In future work, the author aims to understand the essential image of $\T_\et$ on coherent $F$-gauges without inverting $p$. 

It is desirable to have a more explicit description of the category $\Perf(\X^{\Syn})[1/p]$. In the case of $\X=\Spf \Z_p$, it is shown in \cite{Hauck} that
\[\Perf(\Z_p^{\Syn})[1/p] \simeq \D^b(\Rep_{\Q_p}^{\cris}(\rG_{\Q_p})).\]
One may also interpret this, using Colmez-Fontaine's equivalence $\Rep_{\Q_p}^\cris(\rG_{\Q_p}) \simeq \mathrm{MF}_{\Q_p}^{\varphi, \mathrm{wa}}$ in \cite{ColmezFontaine} between crystalline Galois representations and weakly admissible filtered $F$-isocrystals, as an equivalence $\Perf(\Z_p^{\Syn})[1/p] \simeq \D^b(\mathrm{MF}_{\Q_p}^{\varphi, \mathrm{wa}})$. This statement is what generalizes; it is the second main result.

\begin{customthm}{B}[Theorem \ref{thm:rational_equiv}]
Assume that $\X/\Spf\O_K$ is smooth proper. Then there is an equivalence of categories
\[\Perf(\X^{\Syn})[1/p]\simeq \Perf^{\mathrm{adm}}_{\mathrm{fIsoc}^{\varphi}}(\X).\]
Here $\Perf_{\mathrm{fIsoc}^{\varphi}}(\X)$ is as in Definition \ref{def:admfil_Fisoc}, and the superscript $\mathrm{adm}$ denotes the full subcategory of objects with admissible cohomologies (as in Definition \ref{def:fIsoc}).
\end{customthm}

While we initially define $\Perf_{\mathrm{fIsoc}^{\varphi}}(\X)$ using formal stacks, we show there is an explicit description of this category using $\D$-modules. One might be tempted to generalize Hauck's result to general $\X$ by trying to relate $\Perf(\X^{\Syn})[1/p]$ to $\D^b(\Loc_{\Q_p}^\cris(\X_\eta))$; we show that in Example \ref{ex:derived_counter} this already fails for $\P^1_{\Z_p}$ due to a failure of full faithfulness and we expect that it rarely works outside of the case of $\Spf \O_K$, where we confirm it indeed generalizes this way (Proposition \ref{prop:O_K_equiv}). As a positive result in this direction, in the category $\D^b(\Loc_{\Q_p}^\cris(\X_\eta))$ the $\R\Hom$ would be computed by crystalline extension groups. The following result shows this still holds in a limited sense.

\begin{proposition}[Proposition \ref{prop:ext1_cohom}]
Assume $\X$ is smooth and quasicompact over $\Spf \O_K$ and let $\mathcal{E}\in \Coh(\X^{\Syn})$. Then the \'{e}tale realization induces an isomorphism
\[\H^1_{\Syn}(\X,\mathcal{E})[1/p] \simeq \Ext^1_{\Loc_{\Q_p}^\cris(\X_\eta)}(\Q_p,\T_\et(\mathcal{E})[1/p]).\]
In particular, if $\X=\Spf \O_K$ then $\H^1_{\Syn}(\X,\mathcal{E})[1/p] \simeq \H^1_f(\rG_K,\T_\et(\mathcal{E})[1/p])$ where $\H^1_f$ denotes the Bloch-Kato Selmer group.
\end{proposition}

\textbf{Overview of the proofs}. To make sense of our results, we first need to set up some general theory about $t$-structures on $\Perf(\X^{\Syn})$ and coherent sheaves, which is done in \S 2.2. For Theorem A, we first show the second part of the theorem that $\mathsf{Refl}(\X^{\Syn})\simeq \Loc_{\Z_p}^\cris(\X_\eta)$. The first step in this argument is to analyze the kernel of the \'{e}tale realization in \S 3.1. We first show that coherent prismatic $F$-crystals in the kernel of the \'{e}tale realization are $p$-power torsion. The main difficulty in deducing the claimed kernel of the \'{e}tale realization in Theorem \ref{thm:general_Tet_kernel} is showing the same claim for $F$-gauges, namely that $F$-gauges in the kernel of $\T_\et$ are also $p$-power torsion. This is done by adapting the method of \cite{Fgauge} by constructing a graded connection on the Nygaard associated graded. We then show that Guo-Reinecke's category $\Vect^{\varphi,\mathrm{an}}(\X_{\Prism},\O_{\Prism})$ can be viewed as reflexive objects in coherent prismatic $F$-crystals (using \S 3.1 as input). Using Guo-Li's functor $\Pi_\X$ from \cite{Fgaugelift} to lift these to sheaves on $\X^{\Syn}$, we then use \S 3.1 again to show the output is reflexive (one must also verify coherence first, as their definition differs and does not use the stack). We can then deduce the first part of Theorem A from the second part.

For Theorem B, we use Theorem A to deduce Corollary \ref{cor:Perf_cris} which is used to describe the essential image in Theorem B (by using the statement about the heart). Full faithfulness is the main difficulty, which is shown using a variant of the Beilinson fiber square over $\O_K$ with coefficients (see \cite{BeilinsonFib}). The key input for this is a cohomology calculation on $\O_K^{\Syn}$ using the methodology of \cite{KtheoryZpn}. Similar squares appear in \cite{Hauck} and \cite{devalapurkar}, but restrict either to the case that $\X/\Spf \Z_p$ is smooth or work relative to the $q$-de Rham prism instead of the Breuil-Kisin prism, and so cannot be used to deduce Theorem B for smooth proper $\X/\Spf \O_K$.

\textbf{Notation}. Throughout, we fix a finite extension $K/\Q_p$ with residue field $k$ and ring of integers $\O_K$. We write $\X$ for a smooth and quasicompact $p$-adic formal scheme over $\Spf \O_K$, and $\X_\eta/\Spa K$ for its rigid generic fiber. We will adopt the conventions of \cite{Fgauge} for the various stacks $\X^{\Syn}, \X^{\Nyg}$, $\X^{\Prism}$, etc. We write $\X_{\Prism}$ for the absolute prismatic site of $\X$, and $\O_{\Prism}$ for its structure sheaf. We use $(-)^{\wedge}_{\I}$ to denote derived $\I$-adic completion (and similarly for $p$). We also use the slightly nonstandard term ``flat-local surjection'' in the sense of Definition \ref{def:flat_epi} to mean a map which is locally surjective in the fpqc topology; this notion suffices for the descent results we need. All categories are regarded as $(\infty,1)$-categories, although these will not play an essential role in the paper. We use cohomological shift conventions everywhere.

For a smooth $p$-adic formal scheme $\X/\Spf \O_K$, we set $\Loc_{\Z_p}^\cris(\X_\eta)$ to be the category of pro-\'{e}tale crystalline $\widehat{\Z}_p$-local systems as in \cite[Definition 2.31]{Zpcrystalline}. We write $\Loc_{\Q_p}^\cris(\X_\eta)$ for the isogeny category $\Loc_{\Z_p}^\cris(\X_\eta)[1/p]$ (so in particular they always admit a lattice), and in general use $(-)[1/p]$ when applied to a $\Z_p$-linear category to denote its isogeny category.

\textbf{Acknowledgements}. The author would like to thank Mark Kisin for his advice and encouragement, and Kush Singhal, Bhargav Bhatt, Maximilian Hauck, Sasha Petrov, Oakley Edens, and Daishi Kiyohara for useful conversations. We hope the intellectual debt to \cite{Fgauge} is clear to the reader. The author would like to thank Peter Scholze for pointing out an error in an earlier version of Corollary \ref{cor:Perf_cris}.

This work was supported by the NSF GRFP under Grant No. DGE 2140743.
\section{Preliminaries}

\subsection{Stacks}

In \cite{prismatization} Drinfeld introduced the formal stack\footnote{By a \textit{formal stack} $\X$ we will always mean an accessible sheaf $\R \mapsto \X(\R)$ for the flat (fpqc) topology on $p$-nilpotent rings valued in groupoids.} $\Z_p^{\Prism}$, and later in \cite{APC2} this was extended to the relative prismatization $(\X/\rA)^{\Prism}$ and the absolute prismatization $\X^{\Prism}$ for $p$-adic formal schemes $\X$. As we will only ever need $p$-adic formal schemes which are at worst quasisyntomic, we will make definitions in this generality.

We first recall the definition of the absolute prismatization, using the definition of a Cartier-Witt divisor (\cite[Definition 3.1.4]{APC}). The functor assigning a $p$-nilpotent ring $\R$ to the groupoid of Cartier-Witt divisors gives a stack $\Z_p^{\Prism}$, and the construction assigning a Cartier-Witt divisor $\alpha: \I \to \W(\R)$ to the derived quotient $\overline{\W(\R)}=\W(\R)/\I$ gives a stack $\G_a^{\Prism} \to \Z_p^{\Prism}$. Carrying out transmutation, one arrives at the following definition.

\begin{definition}[\cite{APC2}, Definition 3.1]
Let $\X$ be a quasisyntomic $p$-adic formal scheme. The absolute prismatization $\X^{\Prism}$ is the formal stack sending a $p$-nilpotent ring $\R$ to the groupoid of pairs $(\alpha: \I \to \W(\R), \eta: \Spec \overline{\W(\R)} \to \X)$ where $\alpha$ is a Cartier-Witt divisor.
\end{definition}

In this generality, as shown in \cite[Corollary 8.17]{APC2} we know that $\R\Gamma(\X^{\Prism},\O)$ computes the prismatic cohomology of $\X$. We note also that \cite[Proposition 8.15]{APC2} shows that $\D_{\mathrm{qc}}(\X^{\Prism})$ is the category of prismatic crystals in quasicoherent sheaves.

In \cite{Fgauge}, the additional stacks $\X^{\Nyg}$ and $\X^{\Syn}$ are defined, which have similar properties. Namely, following Drinfeld in \cite{prismatization} the notion of a \textit{filtered Cartier-Witt divisor} is introduced in \cite[Definition 5.3.1]{Fgauge}. Analogously, $\Z_p^{\Nyg}$ sends a $p$-nilpotent ring $\R$ to the groupoid of filtered Cartier-Witt divisors $d: \M \to \W$ over $\R$. The construction sending a filtered Cartier-Witt divisor $d: \M \to \W$ to
\[(\W/\M)(\R):= \R\Gamma(\Spec \R, \W/\M)\]
produces an animated $\W$-algebra stack $\G_a^{\Nyg} \to \Z_p^{\Nyg}$. Transmutation provides the following general definition. 

\begin{definition}[\cite{Fgauge}, Definition 5.3.10]
Let $\X$ be a quasisyntomic $p$-adic formal scheme. The absolute Nygaard-filtered prismatization $\X^{\Nyg}$ is the formal stack sending a $p$-nilpotent ring $\R$ to the groupoid of pairs $(d: \M \to \W, \eta: \Spec (\W/\M)(\R)\to \X)$ where $d$ is a filtered Cartier-Witt divisor.
\end{definition}

The stack $\X^{\Nyg}$ comes with two maps
\[j_{\dR},j_{\HT}: \X^{\Prism} \to \X^{\Nyg}\]
defined via transmutation from \cite[Constructions 5.3.5 and 5.3.2]{Fgauge}.

Finally, we will primarily be dealing with the stack $\X^{\Syn}$. In \cite[Construction 7.4.1]{APC}, the definition of syntomic cohomology $\R\Gamma_{\Syn}(\X,\Z_p(i))$ is given as the fiber
\[\varphi\{i\}-\mathrm{can}: \mathrm{Fil}^i_{\Nyg} \Prism_\X\{i\} \to \Prism_\X\{i\}.\]
Here, $\varphi\{i\}$ is the $i$th divided Frobenius. This construction can be realized in the stacky perspective by defining $\X^{\Syn}$ as the pushout in formal stacks
\begin{display}
\X^{\Prism} \sqcup \X^{\Prism} \ar{d} \ar{r}{j_{\dR} \sqcup j_{\HT}} & \X^{\Nyg} \ar{d}{j_{\Nyg}} \\
\X^{\Prism} \ar{r}{j_{\Prism}} & \X^{\Syn}
\end{display}
as in \cite[Definition 6.1.1]{Fgauge}. This pushout is in particular a coequalizer. We will use several times that the map $\X^{\Nyg} \to \X^{\Syn}$ is \'{e}tale.

The map $j_{\dR}$ corresponds to $\mathrm{can}$, while $j_{\HT}$ corresponds to $\varphi$. One may deduce that
\[\D_{\mathrm{qc}}(\X^{\Syn}) \simeq \begin{tikzcd} \mathrm{eq}(\D_{\mathrm{qc}}(\X^{\Nyg}) \ar[shift left]{r}{j^*_{\dR}} \ar[shift right,swap]{r}{j^*_{\HT}} & \D_{\mathrm{qc}}(\X^{\Prism}))\end{tikzcd},\]
and in particular on cohomology we get a natural exact triangle
\begin{display}
\R\Gamma(\X^{\Syn},\mathcal{E}) \ar{r} & \R\Gamma(\X^{\Nyg},j^*_{\Nyg} \mathcal{E}) \ar{r}{j_{\HT}^*-j_{\dR}^*} & \R\Gamma(\X^{\Prism},j^*_{\Prism} \mathcal{E})
\end{display}
where $j_{\Nyg}$ and $j_{\Prism}$ are as in the pushout square defining $\X^{\Syn}$. Using \cite[Example 6.1.8]{Fgauge}, putting $\mathcal{E}=\O\{i\}$ we recover the definition in \cite{APC}.

As pointed out in Remark 6.3.4 in \cite{Fgauge}, the category $\Perf(\X^{\Syn})$ admits a restriction functor yielding perfect prismatic $F$-crystals. We use $\Perf^{\varphi}(\X_{\Prism},\O_{\Prism})$ to refer to prismatic $F$-crystals in perfect complexes as defined in \cite{Fgaugelift} \S 3.1. We will often use the following observation from \cite{Fgauge}.

\begin{lemma}[\cite{Fgauge}, Remark 6.3.4]
Restriction to $\X^{\Prism}$ gives a functor
\[\Perf(\X^{\Syn}) \to \Perf^{\varphi}(\X_{\Prism},\O_{\Prism}).\]
\label{lem:restriction_Fcris}
\end{lemma}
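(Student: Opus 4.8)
The plan is to produce the functor in two steps: restrict a perfect complex along $j_\Prism$ to obtain a perfect prismatic crystal, and then read off a Frobenius structure from the gluing data defining $\X^{\Syn}$. The first step is formal, and the content lies in the second.

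\textbf{Restriction to $\X^{\Prism}$.} The map $j_\Prism\colon\X^{\Prism}\to\X^{\Syn}$ in the defining pushout square induces a symmetric monoidal exact functor $j_\Prism^*\colon\D_{\mathrm{qc}}(\X^{\Syn})\to\D_{\mathrm{qc}}(\X^{\Prism})$, and pullback along a morphism of formal stacks carries perfect complexes to perfect complexes, so $j_\Prism^*$ restricts to $\Perf(\X^{\Syn})\to\Perf(\X^{\Prism})$. As recalled above, $\D_{\mathrm{qc}}(\X^{\Prism})$ is the category of prismatic crystals in quasicoherent sheaves (\cite{APC2} Theorem 6.5); since perfectness of such a crystal may be checked prism-by-prism, $\Perf(\X^{\Prism})$ is identified with the category of prismatic crystals in perfect complexes. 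Hence $j_\Prism^*\mathcal{E}$ is already such a crystal, and all that is missing is its Frobenius structure, i.e. a lift along the functor forgetting the Frobenius out of $\Perf^\varphi(\X_{\Prism},\O_{\Prism})$.

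\textbf{The Frobenius structure.} For this I would use the equalizer presentation $\D(\X^{\Syn})\simeq\mathrm{eq}(\D(\X^{\Nyg})\rightrightarrows\D(\X^{\Prism}))$ along $j_{\dR}^*$ and $j_{\HT}^*$ recorded above: an object $\mathcal{E}\in\Perf(\X^{\Syn})$ is the datum of $\mathcal{N}:=j_{\Nyg}^*\mathcal{E}\in\D(\X^{\Nyg})$ together with an equivalence $\alpha\colon j_{\dR}^*\mathcal{N}\xrightarrow{\ \sim\ }j_{\HT}^*\mathcal{N}$ in $\D(\X^{\Prism})$ and the requisite higher coherences. Since the defining square commutes (its left leg being the fold map), $j_\Prism=j_{\Nyg}\circ j_{\dR}=j_{\Nyg}\circ j_{\HT}$, so both $j_{\dR}^*\mathcal{N}$ and $j_{\HT}^*\mathcal{N}$ are canonically identified with $\mathcal{F}:=j_\Prism^*\mathcal{E}$, and under these identifications $\alpha$ becomes a self-comparison of $\mathcal{F}$ through its two embeddings into $\X^{\Nyg}$. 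Recalling from \cite{Fgauge} (Constructions 5.3.2 and 5.3.5) that $j_{\dR}$ implements the canonical inclusion $\mathrm{can}$ of the Nygaard filtration while $j_{\HT}$ implements the divided Frobenius $\varphi\{\bullet\}$ — concretely, $j_{\HT}$ differs from $j_{\dR}$ by the Frobenius of the prismatic site twisted along the Hodge-Tate ideal $\I\subset\O_{\Prism}$ — the equivalence $\alpha$ transports to a Frobenius-semilinear endomorphism $\varphi_{\mathcal{F}}$ of $\mathcal{F}$, i.e. an $\O_{\Prism}$-linear map $\varphi^*\mathcal{F}\to\mathcal{F}$, which is an isomorphism after inverting $\I$ precisely because the offending twist becomes invertible there. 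Thus $(\mathcal{F},\varphi_{\mathcal{F}})\in\Perf^\varphi(\X_{\Prism},\O_{\Prism})$; the assignment is functorial in $\mathcal{E}$ and compatible with the forgetful functor to perfect prismatic crystals, yielding the claimed functor.

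\textbf{Main obstacle.} The one point requiring care is the bookkeeping with the Breuil-Kisin twist in the second step: one must check that $\alpha$ yields a Frobenius structure which is an isomorphism only after inverting $\I$, rather than on the nose — the difference between a prismatic $F$-crystal and a Laurent (\'{e}tale) one. The cleanest way to verify this is to evaluate prism-by-prism on a Breuil-Kisin prism $(\mathfrak{S},\E)$, where restriction of an $F$-gauge to the prismatic site forgets the Nygaard filtration of the underlying module $M$ and retains exactly the semilinear $\varphi_M\colon M\to M$ with $\varphi^*M[1/\E]\xrightarrow{\ \sim\ }M[1/\E]$ — precisely the data of a prismatic $F$-crystal. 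The higher coherences in the equalizer presentation are carried along automatically, since everything in sight is induced by pullback functors between the stacks.
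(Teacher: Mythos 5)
The paper offers no argument for this lemma at all: it is quoted verbatim from \cite{Fgauge}, Remark 6.3.4, so there is no internal proof to compare against. Your sketch is, in outline, exactly the unwinding that remark performs: restrict along $j_{\Prism}$ to get the underlying perfect prismatic crystal, and read off the Frobenius from the descent datum over the pushout defining $\X^{\Syn}$, using that $j_{\dR}$ is the canonical ($t\neq 0$) embedding while $j_{\HT}$ is built from the divided Frobenius; the final check on a Breuil--Kisin prism is legitimate since the comparison maps are canonical and the verification is local.

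Two points in the second step need repair, though. First, the sentence ``both $j_{\dR}^*\mathcal{N}$ and $j_{\HT}^*\mathcal{N}$ are canonically identified with $\mathcal{F}$, and under these identifications $\alpha$ becomes a self-comparison of $\mathcal{F}$'' proves too little as written: composing those two canonical identifications merely returns a canonical automorphism of $\mathcal{F}=j_{\Prism}^*\mathcal{E}$ and carries no semilinear information. What actually produces the Frobenius is an $\mathcal{E}$-independent comparison of the two pullback functors, namely $j_{\HT}^*(-)[1/\I_{\Prism}]\simeq \varphi^*\,j_{\dR}^*(-)[1/\I_{\Prism}]$ (equivalently: over the locus where the Hodge--Tate ideal is invertible, $j_{\HT}$ factors through the $t\neq 0$ locus via the Frobenius of $\X^{\Prism}$). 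Combining that comparison with the pushout identifications yields $\varphi^*\mathcal{F}[1/\I_{\Prism}]\simeq\mathcal{F}[1/\I_{\Prism}]$. You do gesture at this input (``$j_{\HT}$ differs from $j_{\dR}$ by the Frobenius twisted along $\I$''), but it must be invoked as a statement about the maps of stacks after inverting $\I_{\Prism}$, not as a reinterpretation of $\alpha$ alone. Second, your claim that one obtains an $\O_{\Prism}$-linear map $\varphi^*\mathcal{F}\to\mathcal{F}$ integrally is false for non-effective gauges (e.g.\ negative Breuil--Kisin twists $\O\{i\}$): since the Nygaard filtration of a gauge need not be effective, the divided Frobenius only gives $\varphi^*\mathcal{F}\to \I^{-N}\mathcal{F}$ locally for some $N$. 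This is harmless for the conclusion, because $\Perf^{\varphi}(\X_{\Prism},\O_{\Prism})$ as used in the paper only requires the isomorphism after inverting $\I_{\Prism}$, but the integral assertion should be dropped. With these corrections your argument is a correct proof of the quoted statement.
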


This will be useful for us since the construction of the \'{e}tale realization factors through this restriction functor, so many arguments may be reduced to arguments about prismatic $F$-crystals.

\subsection{Coherent sheaves}

We will also need a theory of coherent sheaves for $\X^{\Syn}$ when $\X/\Spf \O_K$ is smooth. One possible definition is given in \cite{Fgaugelift}, but it is likely not the same as the natural category to associate using the stacky approach to $F$-gauges. In this subsection we will use an approach which is closer to how \cite{Fgaugelift} treats coherent prismatic $F$-crystals but adapted to $\X^{\Nyg}$ and $\X^{\Syn}$.

We will first show when $\X/\Spf \O_K$ is smooth that $\Perf(\X^{\Syn})$ comes equipped with a natural $t$-structure. To do so, we will need to use flat-local surjections. However, we note that we do not mean a flat covering of stacks. Rather, we use the following slightly weaker notion which suffices for descent.

\begin{definition}
Let $\X \to \Y$ be a map of $p$-adic formal stacks in the flat topology. Then we say $\X \to \Y$ is a \textit{flat-local surjection} if it is surjective locally in the flat topology.
\label{def:flat_epi}
\end{definition}

This differs from the usual notion of a \textit{flat covering of $p$-adic formal stacks}, which would ask that the map is representable and the pullback along $\Spec \R \to \Y$ for a $p$-nilpotent ring $\R$ induces a flat covering of $\Spec \R$. We will distinguish this second stronger notion by calling the map a flat cover of formal stacks.

This notion is sufficient for most results. Indeed, it suffices if we want to deduce descent for $\D_{\mathrm{qc}}(-)$, $\Perf(-)$, and $\Vect(-)$. This property is sufficient to deduce that $\X \to \Y$ is an effective epimorphism in the topos of flat sheaves, which means $\Y \simeq \colim \X^{\times_\Y(-)}$ where $\X^{\times_\Y (-)}$ are the terms of the \v{C}ech nerve. See \cite{Dirac} Appendix B for a proof of this result. Since $\D_{\mathrm{qc}}(-)$, $\Perf(-)$ and $\Vect(-)$ are sheaves for the flat topology, the desired claim follows.

\begin{lemma}[\cite{APC2}, Lemma 6.3]
Let $\X \to \Y$ be a quasisyntomic cover of quasisyntomic $p$-adic formal schemes. Then $\X^{\Prism} \to \Y^{\Prism}$ is surjective locally in the flat topology.
\end{lemma}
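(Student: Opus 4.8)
The plan is to unwind the meaning of ``flat-local surjection'' into a lifting property, reduce by standard manipulations to the case where both formal schemes are affine and quasiregular semiperfectoid, and then conclude from the flatness of prismatic cohomology along quasisyntomic covers together with the explicit description of $\X^{\Prism}$ for quasiregular semiperfectoid $\X$ recorded earlier in \cite{APC2}.

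First I would unwind: the claim is that for every $p$-nilpotent ring $\R$ and every point $(\alpha\colon\I\to\W(\R),\ \eta\colon\Spec\overline{\W(\R)}\to\Y)$ of $\Y^{\Prism}(\R)$ there is a faithfully flat $p$-nilpotent $\R$-algebra $\R'$ such that, with $\I'=\I\otimes_{\W(\R)}\W(\R')$ and $\overline{\W(\R')}=\W(\R')/\I'$, the induced map $\Spec\overline{\W(\R')}\to\Y$ lifts along $\X\to\Y$. Two reductions then apply. Since the formation of $(-)^{\Prism}$ is compatible with Zariski gluings in the base, we may assume $\Y=\Spf A$ is affine. And since $g\circ f$ being a flat-local surjection forces $g$ to be one, we may replace $\X$ by any $\X'$ admitting a map to $\X$ for which $\X'\to\Y$ is still a quasisyntomic cover; covering $\X$ by affine opens, refining to an affine quasisyntomic cover, and then using that every quasisyntomic ring admits a quasisyntomic cover by a quasiregular semiperfectoid ring, we reduce to $\X=\Spf B\to\Y=\Spf A$ with $A\to B$ a quasisyntomic cover, first treating the case in which $A$, too, is quasiregular semiperfectoid (the reduction to this case is discussed at the end).

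In this situation I would use that for a quasiregular semiperfectoid ring $R$ the prismatic cohomology $(\Prism_R,\I_R)$ is an honest prism, and that by its universal property a map $R\to\overline{\W(\R')}$ compatible with the Cartier--Witt divisor $(\W(\R'),\I')$ is precisely a map of prisms $(\Prism_R,\I_R)\to(\W(\R'),\I')$. So the lifting problem becomes: extend the prism map encoding $\eta$ from $\Prism_A$ to $\Prism_B$ after a faithfully flat base change on $\R$. Here I invoke the structural fact from \cite{APC2} that for $R$ quasiregular semiperfectoid there is a flat cover $V_R\to(\Spf R)^{\Prism}$, functorial in $R$, classifying Cartier--Witt divisors equipped with a prism map from $(\Prism_R,\I_R)$. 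Unwinding the universal property identifies $(\Spf B)^{\Prism}\times_{(\Spf A)^{\Prism}}V_A$ with $V_B$, compatibly with the projections, so that $V_B\to V_A$ is the map induced by $\Prism_A\to\Prism_B$. The key input is that, since $A\to B$ is a quasisyntomic cover, $\Prism_A\to\Prism_B$ is $(p,\I_A)$-completely faithfully flat; hence $V_B\to V_A$ is faithfully flat, in particular a flat-local surjection. As flat-local surjectivity may be checked after base change along the flat cover $V_A\to(\Spf A)^{\Prism}$, the map $(\Spf B)^{\Prism}\to(\Spf A)^{\Prism}$ is a flat-local surjection, and undoing the two reductions gives the statement for $\X^{\Prism}\to\Y^{\Prism}$.

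The step I expect to be the main obstacle is the structural input: one genuinely needs the flat cover $V_R\to(\Spf R)^{\Prism}$ for $R$ quasiregular semiperfectoid and its functoriality, which is what powers the fiber-product identification. One also has to remove the quasiregular semiperfectoid hypothesis on the target: for $\Y=\Spf A$ with $A$ merely quasisyntomic, pick a quasisyntomic cover $A\to A_\infty$ with $A_\infty$ quasiregular semiperfectoid, apply the argument above to a quasiregular semiperfectoid refinement of $\X\times_\Y\Spf A_\infty\to\Spf A_\infty$, and compose with $(\Spf A_\infty)^{\Prism}\to(\Spf A)^{\Prism}$; this last map is the special case of the lemma for a perfectoid-type cover, which one can handle directly by adjoining to $\R$ compatible systems of $p$-power roots of Teichm\"uller lifts of the relevant elements of $\overline{\W(\R)}$. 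That hands-on input, rather than the flatness of $\Prism_A\to\Prism_B$ (which one cites), is the only computation that really has to be carried out.
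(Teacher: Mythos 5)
The paper itself does not prove this statement: it is quoted verbatim from \cite{APC2}, Lemma 6.3, so your proposal can only be measured against the argument in that reference and against its own internal soundness. Your core case, a quasisyntomic cover $A \to B$ of quasiregular semiperfectoid rings, is essentially fine once one grants the structural input, which in \cite{APC2} takes the stronger form $(\Spf R)^{\Prism}\simeq \Spf(\Prism_R)$ for $R$ quasiregular semiperfectoid: combined with the $(p,\I)$-completely faithful flatness of $\Prism_A\to\Prism_B$, flat-local surjectivity follows much as you say (with the small gloss that one gets complete faithful flatness rather than honest faithful flatness, which suffices because an $\R$-point of $\Spf \Prism_A$ with $\R$ $p$-nilpotent factors through $\Prism_A/(p,\I)^n$, and one base changes there). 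You should, however, check that the proof of that structural input in \cite{APC2} does not itself rely on the present lemma, since both are theorems of the same paper.

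The genuine gap is the step to which you defer all of the difficulty: flat-local surjectivity of $(\Spf A_\infty)^{\Prism}\to(\Spf A)^{\Prism}$ for $A$ merely quasisyntomic and $A\to A_\infty$ the standard quasiregular semiperfectoid cover. This configuration (qrsp source, general quasisyntomic target) is not covered by your qrsp-over-qrsp argument, and your proposed direct treatment does not work. An $\R$-point of $(\Spf A)^{\Prism}$ is a Cartier--Witt divisor $\I\to\W(\R)$ together with $\eta\colon A\to\overline{\W(\R)}$, and lifting it to $A_\infty$ after a faithfully flat extension $\R\to\R'$ requires compatible $p$-power roots in $\overline{\W(\R')}$ of the images $\eta(x_i)$ of the chosen topological generators (and of the perfectoid base data, e.g.\ of $p$). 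These images are arbitrary classes in $\W(\R)/\I$, not Teichm\"uller representatives: adjoining $p$-power roots of elements of $\R$ only manufactures roots of Teichm\"uller classes $[r]$, and $\eta(x_i)$ differs from any such class by a $V$-term, so the trick produces nothing (nor can one fix this by hand, since even $1+\text{nilpotent}$ need not admit $p$-th roots without a further extension, and that extension must again be induced from $\R$). In fact the missing case is, via the universal example $\Z_p\langle x\rangle\to \Z_p\langle x^{1/p^\infty}\rangle$ completed over a perfectoid base, essentially equivalent to the lemma itself. The arguments in the literature instead show that, flat-locally on $\R$, the point factors through an honest prism $(\B,\mathrm{J})$ of the prismatic site of $A$ (after trivializing the divisor), and then lift along $A\to A_\infty$ at the level of prisms using $(p,\mathrm{J})$-completely faithfully flat prismatic envelopes, finally base changing $\R$ along $\B/(p,\mathrm{J})^n\to\B_\infty/(p,\mathrm{J})^n$; that prism-level input, not a Teichm\"uller root extraction on $\R$, is what the proof genuinely needs, so as written your argument does not establish the lemma.
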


\begin{remark}
We will later show something slightly stronger in Proposition \ref{prop:prism_preserve_flat_covers}, but this will not be needed for any arguments.
\end{remark}

To define a $t$-structure on $\Perf(\X^{\Syn})$, we will pick Rees stacks as flat-local surjections and show that the $t$-structure is independent of the choice we made.

\begin{definition}
Let $(\rA,\I)$ be a prism. The completed Rees stack $\mathrm{Rees}_{\I^\bullet}(\rA)$ is defined as the stack
\[\Spf (\bigoplus_{i\in \Z} (\mathrm{Fil}^i_{\I^\bullet} \rA) t^{-i})_{(p,\I)}^{\wedge}/\G_m\]
where $t$ is given degree 1. We endow the graded ring $(\bigoplus_{i\in \Z} (\mathrm{Fil}^i_{\I^\bullet} \rA) t^{-i})^{\wedge}_{(p,\I)}$ with the $(p,\I)$-adic topology.
\end{definition}

When $\I=(d)$, we have
\[\mathrm{Rees}_{\I^\bullet}(\rA) \simeq (\Spf \rA\langle u,t \rangle/(ut-d))/\G_m\]
where $\deg(t)=1, \deg(u)=-1$.

Analogously to how \cite{Fgaugelift} defines coherent prismatic $F$-crystals, we use Breuil-Kisin prisms to construct coherent $F$-gauges.

\begin{proposition}[\cite{Fgaugelift} Proposition 3.7, Corollary 3.6]
Let $\X/\Spf \O_K$ be a smooth affine formal scheme. Let $\tilde{\R}$ be a choice of smooth lift of the special fiber $\X_s/\Spec k$ to $\W(k)$ and $\E(u_0)$ an Eisenstein polynomial for a uniformizer $\pi\in \O_K$. Then there is a prism
\[(\rA,\I) = (\tilde{\R}\llbracket u_0\rrbracket,(\E(u_0)))\]
such that $\Spf \rA/\I \simeq \X$, and $(\rA,\I)$ covers the final object $\ast \in \mathsf{Sh}(\X_{\Prism})$. Moreover the Frobenius on $\rA$ is finite, quasisyntomic, and faithfully flat.
\label{prop:BK_prism}
\end{proposition}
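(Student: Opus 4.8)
The plan is to reduce the statement to concrete, well-known descriptions of the prismatic site of a smooth affine formal scheme and then quote the standard deformation-theoretic construction of Breuil--Kisin prisms. First I would recall that for $\X/\Spf\O_K$ smooth affine, any smooth lift $\tilde\R$ of the special fiber $\X_s$ to $\W(k)$ exists by formal smoothness of $\W(k)\to k$, and one can further arrange compatibility so that $\tilde\R\llbracket u_0\rrbracket$ reduces to $\X$ upon setting $u_0\mapsto\pi$; concretely, $\tilde\R\llbracket u_0\rrbracket/\E(u_0)$ is $(\tilde\R\otimes_{\W(k)}\O_K)^{\wedge}_{(p)}$, which by the smoothness hypothesis and the fact that $\tilde\R$ lifts $\X_s$ is canonically identified with the $p$-completed coordinate ring of $\X$. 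Thus $\Spf\rA/\I\simeq\X$ with $\rA=\tilde\R\llbracket u_0\rrbracket$ and $\I=(\E(u_0))$.

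Next I would verify that $(\rA,\I)$ is a (bounded, oriented) prism: $\rA$ is $(p,\E)$-complete since $\tilde\R$ is $p$-complete, $\E(u_0)$ is a distinguished element because it is an Eisenstein polynomial (its constant term is a uniformizer of $\W(k)$, hence $\equiv p$ up to a unit times $u_0$-terms, so $\E\in(p,u_0)$ and $\varphi(\E)\equiv\E^p\bmod p$ forces $\delta(\E)$ to be a unit), and $\rA$ is $(p,\E)$-torsion-free, so boundedness is automatic. The Frobenius $\varphi$ on $\rA$ acts as the canonical lift of Frobenius on $\tilde\R$ (using smoothness to extend the one on $\W(k)$) and sends $u_0\mapsto u_0^p$; one then checks directly that $\varphi(\E)\equiv\E^p\bmod p$ in $\rA$, so $(\rA,\I)$ is indeed a prism with $\Spf\rA/\I\simeq\X$.

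For the covering assertion I would invoke the construction of $\X_{\Prism}$ and the deformation-theoretic universal property: given any prism $(\B,\J)$ in $\X_{\Prism}$, i.e. a map $\Spf\B/\J\to\X$, one wants to produce (flat-locally on $\B$) a map of prisms $(\rA,\I)\to(\B,\J)$. This is the usual argument: $\tilde\R$ being a smooth $\W(k)$-lift of $\X_s$ and $\B$ being $p$-complete (with $\B/\J$ mapping to $\X$, hence $\B/p\to\X_s$ after further reduction), formal smoothness lifts the composite $\tilde\R\to\X_s\leftarrow$ ... to a map $\tilde\R\to\B$ compatible with Frobenius, and one sends $u_0$ to any element of $\J$ that is distinguished --- existence of such an element is exactly where one passes to a flat (quasisyntomic) cover of $\B$, using that $\J$ is locally principal generated by a distinguished element on a quasisyntomic cover. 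This gives $(\rA,\I)\to(\B,\J)$, i.e. $(\rA,\I)$ is weakly final, and combined with the fact that such maps exist after a flat-local surjection on the target we conclude $\Spec\B\to\Spec\rA$-type covering, so $(\rA,\I)$ covers the final object $\ast\in\mathsf{Sh}(\X_{\Prism})$. The main obstacle here is the bookkeeping in making the lift $\tilde\R\to\B$ simultaneously Frobenius-equivariant and compatible with the map to $\X$; this is handled by the standard fact that $\tilde\R$ is the unique Frobenius-equivariant smooth $\W(k)$-lift and by working $\varphi$-equivariantly throughout, or alternatively by citing \cite{Fgaugelift} Proposition 3.7 and Corollary 3.6 directly.

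Finally, for the properties of Frobenius on $\rA=\tilde\R\llbracket u_0\rrbracket$: since $\tilde\R$ is smooth over $\W(k)$, its Frobenius lift is finite flat (relative Frobenius of a smooth $\mathbf{F}_p$-algebra is finite flat, and this lifts), and $u_0\mapsto u_0^p$ makes $\rA$ finite free of rank $p$ over $\varphi(\rA)\llbracket u_0\rrbracket$-type subrings; assembling, $\varphi:\rA\to\rA$ is finite. Faithful flatness follows because it is finite and an isomorphism modulo nilpotents after base change, or more simply because it is finite flat with nonzero fibers everywhere; quasisyntomicity of $\varphi$ is then immediate since a finite flat map of Noetherian (or bounded) rings is quasisyntomic. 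I would present these last points briefly as they are routine once the finiteness is established, and I expect the genuine content of the proof to be the covering statement, which I would ultimately attribute to the cited results of Guo--Li.
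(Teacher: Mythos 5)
The paper itself offers no proof of this proposition: it is imported verbatim from Guo--Li (\cite{Fgaugelift}, Proposition 3.7 for the covering statement and Corollary 3.6 for the properties of Frobenius), so insofar as you ultimately "attribute the covering statement to the cited results," you are consistent with the paper. The problem is that the sketch you give of that covering step, which you yourself identify as the genuine content, contains a real error rather than just a compression. A morphism in $\X_{\Prism}$ from $(\rA,\I)$ to (a flat cover of) $(\B,\mathrm{J})$ must be a map of $\delta$-rings lifting the \emph{given} structure map $\rA/\I\simeq \R \to \B/\mathrm{J}$. Since $\delta(u_0)=0$ in $\rA$, this forces $u_0$ to be sent to an element $b$ with $\delta(b)=0$ lifting the image of the uniformizer $\pi$ modulo $\mathrm{J}$ --- not, as you write, to "any element of $\mathrm{J}$ that is distinguished," which would neither be compatible with the structure map to $\X$ nor address the $\delta$-constraint at all. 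Similarly, formal smoothness of $\tilde\R$ only produces a ring-theoretic lift $\tilde\R\to\B$; making it $\varphi$-equivariant is exactly the issue, and "the unique Frobenius-equivariant smooth $\W(k)$-lift" is not a fact you can invoke (lifts are unique only up to non-unique isomorphism, and that says nothing about maps into $\B$). The place where one passes to a flat cover is not "because $\mathrm{J}$ is locally principal generated by a distinguished element" --- that is true for every prism by definition --- but because one must adjoin, by a prismatic-envelope-type construction, an element $b$ as above together with a $\delta$-compatible lift of $\tilde\R$, and then prove that the resulting $\delta$-$\B$-algebra is $(p,\mathrm{J})$-completely faithfully flat. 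That flatness verification is the actual content of Guo--Li's Proposition 3.7 and your outline does not engage with it.

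Two further points would not survive refereeing. First, your justification of quasisyntomicity, "a finite flat map of Noetherian rings is quasisyntomic," is false in general (e.g.\ $k\to k[x,y]/(x^2,xy,y^2)$ is finite flat but not l.c.i., so its cotangent complex is not concentrated in $[-1,0]$); here one must use that $\rA=\tilde\R\llbracket u_0\rrbracket$ is regular --- for instance, a flat map between regular Noetherian rings has complete intersection fibers, or one argues modulo $p$ via Kunz --- or simply quote Corollary 3.6. Second, your argument that $\E(u_0)$ is distinguished is not one: the congruence $\varphi(\E)\equiv \E^p \bmod p$ holds for \emph{every} element of a $\delta$-ring and cannot "force $\delta(\E)$ to be a unit"; one must compute $\delta(\E)$ modulo $u_0$ using that the constant term of $\E$ is $p$ times a unit, and then use that a power series is invertible iff its constant term is. Finally, the identification $\tilde\R\llbracket u_0\rrbracket/\E(u_0)\simeq \Gamma(\X,\O)$ is not canonical; it is a choice, supplied by uniqueness up to (non-unique) isomorphism of smooth affine lifts, which is all the statement requires.
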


Given such a prism $(\rA,\I)$, there is a natural cover we can produce. In what follows, $\rA^{(1)}$ is the Frobenius twist $\rA \otimes_{\rA,\varphi} \rA$ with the relative Frobenius sending $a \otimes b \mapsto \varphi(a)b$ (i.e. the right factor is used for the $\rA$-module structure). We also use the symbol $(1)$ to denote the Frobenius twist of prismatic cohomology when applicable, as well as $\varphi^*$ for clarity in some situations.

\begin{proposition}
Assume that $\X$ is a smooth affine formal scheme over $\Spf \O_K$, and $(\rA,\I)$ is a Breuil-Kisin prism such that $\X\simeq \Spf(\rA/\I)$. Then
\[\rho_{\rA}: \mathrm{Rees}_{\I^\bullet}(\rA) \to \X^{\Nyg}\]
is a flat-local surjection. There is also a flat-local surjection
\[\rho_{\rA}': \mathrm{Rees}_{\Fil^\bullet_{\Nyg}}(\rA^{(1)}) \to \X^{\Nyg}\]
where the Nygaard filtration is defined as $\Fil^i_{\Nyg}=\{x\in \rA^{(1)}: \varphi(x) \in \I^i \rA\}$.
\label{prop:BK_flatcover}
\end{proposition}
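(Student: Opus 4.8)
The plan is to build each map by transmutation and then verify flat-local surjectivity by the usual device --- trivialize filtered Cartier-Witt divisors locally, then feed in that the Breuil--Kisin prism covers $\X_\Prism$. This parallels the way one shows $\Spf \rA \to \X^\Prism$ is a flat-local surjection, with the Nygaard/Rees filtration carried along.

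\emph{Construction of the maps.} The $\delta$-ring $\rA$ comes with the canonical ring homomorphism $\rA \to \W(\rA)$; transporting the $\I$-adic filtration along it, and $(p,\I)$-completing, produces a filtered Cartier-Witt divisor over $\mathrm{Rees}_{\I^\bullet}(\rA)$, and the identification $\rA/\I \simeq \O_\X$ coming from $\Spf \rA/\I \simeq \X$ equips its quotient $\Spec(\W/M)$ with a map to $\X$. By transmutation this is the desired $\rho_\rA : \mathrm{Rees}_{\I^\bullet}(\rA) \to \X^\Nyg$. For $\rho'_\rA$ one runs the same recipe with the $\delta$-ring $\rA^{(1)}$ and the Nygaard filtration $\Fil^\bullet_\Nyg$, using that the relative Frobenius $\varphi : \rA^{(1)} \to \rA$ is filtered for $(\Fil^\bullet_\Nyg, \I^\bullet)$ and surjective, so that $\gr^0_\Nyg \rA^{(1)} = \rA^{(1)}/\varphi^{-1}(\I) \cong \rA/\I \simeq \O_\X$ supplies the map to $\X$.

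\emph{Flat-local surjectivity.} Since $\D_{\mathrm{qc}}(-)$, $\Perf(-)$ and $\Vect(-)$ are flat sheaves, it suffices (cf.\ the discussion after Definition \ref{def:flat_epi}) to show each $\rho$ is an effective epimorphism of flat sheaves, i.e.\ that any point of $\X^\Nyg$ valued in a $p$-nilpotent ring $\R$ lifts along $\rho$ after a faithfully flat extension of $\R$. Given such a point --- a filtered Cartier-Witt divisor $d : \M \to \W$ over $\R$ together with a map $\Spec(\W/M)(\R) \to \X$ --- one first trivializes the filtered line $\M$ locally on $\Spec \R$, which reduces the datum to an ordinary Cartier-Witt divisor with its Rees bookkeeping; the underlying map $\Spec \overline{\W(\R)} \to \X$ then factors, after a further faithfully flat base change $\R \to \R'$, through a map of prisms $(\rA, \I) \to (\W(\R'), (d))$ compatibly with the map to $\X$, by \cite{APC2} Lemma 6.3 and the fact that $(\rA, \I)$ covers the final object of $\Sh(\X_\Prism)$ (Proposition \ref{prop:BK_prism}); since a map of prisms is automatically filtered for the $\I$-adic filtrations, this is exactly a lift of the point to $\mathrm{Rees}_{\I^\bullet}(\rA)(\R')$, which settles $\rho_\rA$. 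The same argument handles $\rho'_\rA$ once $\rA^{(1)}$ is under control, the essential extra input being that $\varphi : \rA \to \rA$ is finite, faithfully flat and quasisyntomic (Proposition \ref{prop:BK_prism}): this makes $\rA^{(1)}$ finite faithfully flat over $\rA$, so the ``generic fibre'' $\Spf \rA^{(1)} \to \X^\Prism$ of $\rho'_\rA$ remains a flat-local surjection; it gives $\gr^0_\Nyg \rA^{(1)} \simeq \O_\X$; and it lets the lifting argument run using the $\delta$-structure on $\rA^{(1)}$ and the filtered map $\varphi : (\rA^{(1)}, \Fil^\bullet_\Nyg) \to (\rA, \I^\bullet)$ to manufacture the required filtered map of prisms.

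\emph{Main obstacle.} The delicate part is the filtered bookkeeping: matching the definition of a filtered Cartier-Witt divisor precisely enough that the local trivialization reduces to the prism-theoretic input, and, in the twisted case, controlling the interplay of the Frobenius twist $\rA^{(1)}$, the Nygaard filtration, and the faithful flatness of $\varphi$ --- in particular checking that the ``special fibre'' $\Spf \gr^\bullet_\Nyg \rA^{(1)}/\G_m$ of $\rho'_\rA$ surjects, flat-locally, onto the corresponding Hodge-Tate-type substack of $\X^\Nyg$. By contrast, surjectivity over the ordinary prismatic locus is immediate from \cite{APC2} Lemma 6.3 and Proposition \ref{prop:BK_prism}. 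An alternative to the pointwise argument is to work stratum by stratum over the Rees line $[\mathbf{A}^1/\G_m]$, but this requires essentially the same inputs.
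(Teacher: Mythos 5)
Your construction of the two maps is fine as a sketch, but the surjectivity argument has a genuine gap precisely at the point you yourself flag as the ``main obstacle''. The step ``one first trivializes the filtered line $\M$ locally on $\Spec \R$, which reduces the datum to an ordinary Cartier-Witt divisor with its Rees bookkeeping'' is not valid where the filtration degenerates: a point of $\X^{\Nyg}$ whose image in $\A^1/\G_m$ lies in the closed stratum $\{t=0\}$ is \emph{not} an ordinary Cartier-Witt divisor plus bookkeeping, and the inputs you invoke --- \cite{APC2} Lemma 6.3 and Proposition \ref{prop:BK_prism}, i.e.\ that $(\rA,\I)$ covers the final object of $\Sh(\X_{\Prism})$ --- only produce flat-local lifts of points of $\X^{\Prism}$, that is, of the open locus $t\neq 0$ of $\X^{\Nyg}$. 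So your argument proves surjectivity only over that open substack; the essential content, flat-local surjectivity of $(\mathrm{Rees}_{\I^\bullet}\rA)_{t=0} \to (\X^{\Nyg})_{t=0}$ (the Hodge/conjugate-type stratum), is acknowledged but never established. Relatedly, the claim that a map of prisms $(\rA,\I)\to (\W(\R'),(d))$ ``is exactly a lift of the point to $\mathrm{Rees}_{\I^\bullet}(\rA)(\R')$'' holds only when the Rees parameter is invertible; in general an $\R'$-point of the Rees stack is a graded map out of the completed Rees algebra, which is strictly more data than a filtered map of prisms. The same gap propagates to your treatment of $\rho'_{\rA}$.

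For comparison, the paper does not reprove the first statement at all: it is exactly Remark 5.5.19 in \cite{Fgauge}, whose proof handles the $t=0$ locus via the quasiregular semiperfectoid computation $\R^{\Nyg}\simeq \mathrm{Rees}_{\Fil^\bullet_{\Nyg}}\Prism_{\R}$ and quasisyntomic descent. The second statement is then deduced formally rather than by a fresh lifting argument: the faithfully flat Frobenius $\varphi:\rA^{(1)}\to \rA$ induces a map of Rees stacks $\mathrm{Rees}_{\I^\bullet}(\rA)\to \mathrm{Rees}_{\Fil^\bullet_{\Nyg}}(\rA^{(1)})$ (the target Rees algebra being a base change of the source when $\I=(d)$), and $\rho_{\rA}$ factors as $\rho'_{\rA}\circ\varphi$ --- checked in the qrsp case and then in general by quasisyntomic descent --- so flat-local surjectivity of $\rho'_{\rA}$ follows from that of $\rho_{\rA}$. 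If you want to keep your pointwise strategy you would have to supply the missing analysis of filtered Cartier-Witt divisors with $t$ not invertible, essentially redoing the proof of \cite{Fgauge} Remark 5.5.19; otherwise cite that remark for $\rho_{\rA}$ and derive $\rho'_{\rA}$ from the factorization through $\varphi$.
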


\begin{proof}
This is \cite[Remark 5.5.19]{Fgauge} for the first item. For the second item there is a map
\[\varphi: \mathrm{Rees}_{\I^\bullet}(\rA) \to \mathrm{Rees}_{\Fil^\bullet_{\Nyg}}(\rA^{(1)})\]
induced by $\varphi: \rA^{(1)} \to \rA$ (which is faithfully flat in this case). In fact the Rees algebra in the target is simply the base change $\rA \otimes_{\rA,\varphi} \rA\langle u,t \rangle/(ut-d)$ if $\I=(d)$. The map $\rho_{\rA}$ will then factor as $\rho_{\rA}' \circ \varphi$ -- in the quasiregular semiperfectoid case this is easy to see (by virtue of the computation of $\R^{\Nyg}$ for $\R$ quasiregular semiperfectoid in \cite{Fgauge} as $\mathrm{Rees}_{\Fil^\bullet_{\Nyg}} \Prism_\R$, where the Nygaard filtration on $\Prism_\R$ has $\Fil^i = \{x\in \Prism_\R: \varphi(x)\in \I^i \Prism_\R\}$ where $\I$ is the prismatic ideal of $\Prism_\R$), and we can deduce the general factorization from this by quasisyntomic descent and computing \v{C}ech nerves.
\end{proof}

In general, one may use the construction of \cite[Remark 5.5.19]{Fgauge} to give a map $\rho_{\rA}: \mathrm{Rees}_{\I^\bullet}(\rA) \to \X^{\Nyg}$ when $\X$ is over $\Spf(\rA/\I)$ (with $\I$ principal). We may perform the same factorization to also produce $\rho_{\rA}': \mathrm{Rees}_{\Fil^\bullet_{\Nyg}}(\rA^{(1)}) \to \X^{\Nyg}$, and alternatively one may also write down an explicit Cartier-Witt divisor as in \cite[Construction 1.1.6]{LahotiManam}.

We may produce relative Nygaard stacks for $\X/\Spf (\rA/\I)$ for a prism $(\rA,\I)$ with $\I$ principal, which we define via the Cartesian square
\begin{display}
(\X/\rA)^{\Nyg} \ar{d}{\pi} \ar{r} & \X^{\Nyg} \ar{d} \\
\mathrm{Rees}_{\Fil^\bullet_{\Nyg}} \rA^{(1)} \ar{r}{\rho_{\rA}'} & (\rA/\I)^{\Nyg}
\end{display}
for a prism $(\rA,\I)$. This is generalized to work relative to $\delta$-pairs in \cite[Remark 1.1.10]{LahotiManam}. The cohomology of this stack computes Nygaard filtered relative prismatic cohomology via applying $\R \pi_*$.

When $\X=\Spf \rA/\I=\Spf \bar{\rA}$, the syntomic stack of $\X$ relative to $\rA$ appears as a pushout
\begin{display}
\Spf \rA^{(1)} \sqcup \Spf \rA^{(1)} \ar{d} \ar{r}{j_{\dR} \sqcup j_{\HT}} & \mathrm{Rees}_{\Fil^\bullet_{\Nyg}}(\rA^{(1)}) \ar{d}{j_{\Nyg}} \\
\Spf \rA^{(1)} \ar{r}{j_{\Prism}} & (\bar{\rA}/\rA)^{\Syn}
\end{display}
The map $j_{\HT}$ is given by $\Spf \rA^{(1)} \to \Spf \rA \simeq \Spf \left(\bigoplus_{i\in \Z} \I^i t^{-i}\right)/\G_m \to \mathrm{Rees}_{\Fil^\bullet_{\Nyg}}(\rA^{(1)})$, where the first map is induced by extension of scalars, the second map is induced by the filtered Frobenius. The map $j_{\dR}$ is the inclusion of the $t\neq 0$ locus. This pins down the general definition of the relative syntomic stack as the pushout of formal stacks
\begin{display}
\varphi^* (\X/\rA)^{\Prism} \sqcup \varphi^*(\X/\rA)^{\Prism} \ar{d} \ar{r}{j_{\dR} \sqcup j_{\HT}} & (\X/\rA)^{\Nyg} \ar{d}{j_{\Nyg}} \\
\varphi^* (\X/\rA)^{\Prism} \ar{r}{j_{\Prism}} & (\X/\rA)^{\Syn}
\end{display}
where the morphisms $j_{\dR}, j_{\HT}$ of relative stacks are induced by the morphisms of absolute stacks and the morphisms from the previous square. Here, we use $\varphi^*$ to mean that as a stack over $\Spf \rA$ we perform base change along $\Spf \rA^{(1)} \to \Spf \rA$. This construction is a stacky variant of the relative syntomic cohomology in \cite{PrismaticDelta}.

\begin{lemma}
Suppose that $\X=\Spf \R$ and $\Prism_{\X/\rA}$ is discrete. Then we have a canonical identification
\[(\X/\rA)^{\Nyg} \simeq \mathrm{Rees}_{\Fil^\bullet_{\Nyg}} \Prism^{(1)}_{\X/\rA}.\]
\label{lem:relqrsp_nygstack}
\end{lemma}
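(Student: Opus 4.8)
The plan is to bootstrap from the quasiregular semiperfectoid case by quasisyntomic descent; the hypothesis that $\Prism_{\X/\rA}$ is discrete enters only to keep the relevant completed Rees construction a classical (rather than animated) formal stack.

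I would first record the base case $\X=\Spf S$ with $S$ quasiregular semiperfectoid over $\bar\rA:=\rA/\I$. Then $\Prism_{S/\rA}$ is automatically discrete, $(\Spf S/\rA)^\Prism=\Spf\Prism_{S/\rA}$, and the relative Nygaard stack is $\mathrm{Rees}_{\Fil^\bullet_\Nyg}\Prism^{(1)}_{S/\rA}$ with $\Fil^i_\Nyg\Prism^{(1)}_{S/\rA}=\{x:\varphi(x)\in\I^i\Prism_{S/\rA}\}$ --- exactly the shape of the displayed pushout presentation for $\X=\Spf\bar\rA$ (where $\Prism_{\bar\rA/\rA}=\rA$). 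One can obtain this either directly, or by base changing the qrsp computation $S^\Nyg\simeq\mathrm{Rees}_{\Fil^\bullet_\Nyg}\Prism_S^{(1)}$ of \cite{Fgauge} along $\rho'_\rA$ in the Cartesian square defining $(\X/\rA)^\Nyg$ and identifying the resulting filtered tensor product of $\rA^{(1)}$-algebras with $\Fil^\bullet_\Nyg\Prism^{(1)}_{S/\rA}$ (here one uses that the Frobenius on $\rA$ is faithfully flat, Proposition \ref{prop:BK_prism}, so the twist does no harm).

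To globalize I would choose a quasisyntomic cover $\X^0\to\X$ by a disjoint union of spectra of quasiregular semiperfectoid $\bar\rA$-algebras, with \v{C}ech nerve $\X^\bullet$; since $p$-completed derived tensor products of (relatively) quasiregular semiperfectoid rings over a quasisyntomic base are again such, each $\Prism_{\X^n/\rA}$ is discrete. The functor $\X\mapsto\X^\Nyg$ carries quasisyntomic covers to flat-local surjections (as in the prismatic statement recalled above; part of the descent formalism for these stacks in \cite{Fgauge}), and $(\X/\rA)^\Nyg$ is a base change of $\X^\Nyg$ along the fixed map $\rho'_\rA$, so the same holds for it; hence $(\X/\rA)^\Nyg=\colim_{[n]\in\Delta^{\mathrm{op}}}((\X^n)/\rA)^\Nyg$. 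On the other hand $\Fil^\bullet_\Nyg\Prism^{(1)}_{-/\rA}$ is a quasisyntomic sheaf of filtered $\rA^{(1)}$-algebras and $\mathrm{Rees}_{\Fil^\bullet_\Nyg}\Prism^{(1)}_{\X/\rA}$ is affine over the fixed base $\mathrm{Rees}_{\Fil^\bullet_\Nyg}\rA^{(1)}$, so, since relative $\Spf$ over a fixed base turns totalizations of algebras into colimits of affine-over-base stacks, $\mathrm{Rees}_{\Fil^\bullet_\Nyg}\Prism^{(1)}_{\X/\rA}=\colim_{[n]\in\Delta^{\mathrm{op}}}\mathrm{Rees}_{\Fil^\bullet_\Nyg}\Prism^{(1)}_{\X^n/\rA}$ as well; discreteness of the $\Prism_{\X^n/\rA}$ and of $\Prism_{\X/\rA}$ (hence, using faithful flatness of Frobenius again, of all the $\Fil^i_\Nyg\Prism^{(1)}$ involved) keeps every term classical. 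The base case identifies the two cosimplicial diagrams compatibly, so their colimits --- the two stacks in question --- agree.

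I expect the main obstacle to be the descent bookkeeping in the last step: verifying that a quasisyntomic cover induces an effective epimorphism on the (absolute, hence relative) Nygaard stacks, and that the completed-Rees-stack functor commutes with the resulting totalization while staying classical. The latter is precisely where the hypothesis is used --- dropping discreteness still yields an identification, but with an animated Rees stack on the right --- and the Frobenius-twist conventions need care throughout, mitigated by the faithful flatness of the Frobenius on the Breuil-Kisin prism (Proposition \ref{prop:BK_prism}).
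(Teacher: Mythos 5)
Your route is genuinely different from the paper's: the paper does not descend from the quasiregular semiperfectoid case at all, but instead builds a canonical comparison map $\eta\colon (\X/\rA)^{\Nyg}\to \mathrm{Rees}_{\Fil^\bullet_{\Nyg}}\Prism^{(1)}_{\X/\rA}$ (identify $\R\pi_*\O_{(\X/\rA)^{\Nyg}}$ with Nygaard-filtered relative prismatic cohomology under the Rees dictionary in the smooth case, Kan extend, and use discreteness plus adjunction), and then checks $\eta$ is an isomorphism stratum by stratum on $\{t\neq 0\}$, $\{t=0,u\neq 0\}$, $\{t=u=0\}$ via Theorem 7.17 of \cite{APC2}. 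As written, your descent argument has a genuine gap at its central step: the principle ``relative $\Spf$ over a fixed base turns totalizations of algebras into colimits of affine-over-base stacks'' is not true. Relative $\Spf$ is contravariant and carries colimits of algebras to limits of stacks; a totalization of algebras gives a colimit of stacks only when the cosimplicial object is the \v{C}ech nerve of a flat (or at least effective-epimorphic) cover. So to get $\mathrm{Rees}_{\Fil^\bullet_{\Nyg}}\Prism^{(1)}_{\X/\rA}=\colim_{[n]}\mathrm{Rees}_{\Fil^\bullet_{\Nyg}}\Prism^{(1)}_{\X^n/\rA}$ you must prove two things you do not address: that $\mathrm{Rees}_{\Fil^\bullet_{\Nyg}}\Prism^{(1)}_{\X^0/\rA}\to \mathrm{Rees}_{\Fil^\bullet_{\Nyg}}\Prism^{(1)}_{\X/\rA}$ is a flat cover (flatness at the level of the Nygaard/Rees algebras, which is exactly the kind of statement the paper proves separately and only for absolute stacks of quasiregular semiperfectoids, Corollary \ref{cor:Nyg_preserve_flat_covers}), and that its \v{C}ech nerve is computed termwise by $\Fil^\bullet_{\Nyg}\Prism^{(1)}_{\X^n/\rA}$, i.e.\ a completed filtered base-change formula along the cover. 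Neither follows from the quasisyntomic sheaf property of $\Fil^\bullet_{\Nyg}\Prism^{(1)}_{-/\rA}$ alone, and without them the colimit presentation of the right-hand side is unestablished.

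Two further points need care. First, your base case is not simply ``the shape of the displayed pushout presentation'': that presentation is for $\X=\Spf\bar{\rA}$ itself, and for a general quasiregular semiperfectoid $S$ over $\bar{\rA}$ the identification of $S^{\Nyg}\times_{(\rA/\I)^{\Nyg}}\mathrm{Rees}_{\Fil^\bullet_{\Nyg}}\rA^{(1)}$ with $\mathrm{Rees}_{\Fil^\bullet_{\Nyg}}\Prism^{(1)}_{S/\rA}$ is a fiber product over the non-affine stack $(\rA/\I)^{\Nyg}$; turning it into a ``filtered tensor product of $\rA^{(1)}$-algebras'' is precisely a relative-versus-absolute comparison of the sort the lemma is meant to furnish, and which the paper's Corollary \ref{cor:Nyg_relqrsp_desc} deduces \emph{from} the lemma, so leaning on it risks circularity. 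Second, discreteness of $\Prism_{\X^n/\rA}$ for the \v{C}ech nerve terms is not automatic from qrsp-ness when the base $\bar{\rA}$ is not perfectoid; the paper verifies the analogous discreteness via a cotangent complex computation (using Remark 4.21 of \cite{BMS2}) where it is needed. The paper's map-plus-stratification argument sidesteps all of this bookkeeping, which is why it is the shorter route.
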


\begin{proof}
This is essentially a simpler version of an argument identical to \cite[Theorem 5.5.10]{Fgauge}, so we will omit some details in what follows. Since the prismatic cohomology is discrete one may produce a map of stacks
\[\eta: (\X/\rA)^{\Nyg} \to \mathrm{Rees}_{\Fil^\bullet_{\Nyg}} \Prism^{(1)}_{\X/\rA},\]
exactly as in step 3 of the proof. We have a structure map $\pi: (\X/\rA)^{\Nyg} \to  \mathrm{Rees}_{\Fil^\bullet_{\Nyg}} \rA^{(1)}$. Using the description of cohomology on this stack (again by a similar argument as in Step 2 of \cite[Theorem 5.5.10]{Fgauge}) we see for $\X/\Spf(\rA/\I)$ smooth that $\R\pi_* \O_{(\X/\rA)^{\Nyg}}$ is identified with Nygaard filtered prismatic cohomology under the Rees dictionary as a sheaf on $\mathrm{Rees}_{\Fil^\bullet_{\Nyg}} \rA^{(1)}$. After Kan extension to the general case of a bounded $p$-adic formal scheme, we get a map $\mathrm{Rees}( \Fil^\bullet_{\Nyg} \Prism_{\X/\rA}^{(1)}) \to \R\pi_* \O_{(\X/\rA)^{\Nyg}}$ (as commutative algebras in $\D_{\mathrm{qc}}(\mathrm{Rees}_{\Fil^\bullet_{\Nyg}} \rA^{(1)})$). Specializing to our $\X$ the prismatic cohomology is discrete, and then by adjunction we get $\eta$.

Then stratifying the target by the $\{t \neq 0\}$, $\{t=0, u\neq 0\}$, and $\{t=u=0\}$ loci we may use \cite[Theorem 7.17]{APC2} to show this map is an isomorphism on each stratum, hence an isomorphism via the same method as \cite[Theorem 5.5.10]{Fgauge}. Note that the final identification for the Hodge stack is not automatic from this result for the prismatization, but still follows from a similar argument.
\end{proof}

These relative stacks allow us to give a description of $\O_K^{\Nyg}$ analogous to the description via quasisyntomic descent from quasiregular semiperfectoid rings, but importantly where the first Rees stack in the equivalence is Noetherian regular (see the footnote of \cite[Remark 5.5.19]{Fgauge}; in general all the Rees charts we use are regular by the same argument). We will use this later in \S 4 to work with coefficients in the relative setting easily.

\begin{corollary}
Let $(\rA,\I)=(\W(k)\llbracket u_0\rrbracket,(\E(u_0)))$ be a Breuil-Kisin prism for $\O_K$. Then there is an equivalence of stacks
\[\O_K^{\Nyg} \simeq \colim_{[n]\in \Delta^\op} (\O_K/\W(k)\llbracket u_0,\ldots,u_{n}\rrbracket)^{\Nyg}.\]

In particular, we get an equivalence of categories
\[\Perf(\O_K^{\Nyg}) \simeq \lim_{[n]\in \Delta} \Perf(\mathrm{Rees}_{\Fil^\bullet_{\Nyg}}(\rA^{(n)}))\]
where 
\[\rA^{(n)} = \Prism^{(1)}_{\O_K/\W(k)\llbracket u_0,\ldots,u_{n}\rrbracket} = \varphi^* \W(k)\llbracket u_0,\ldots,u_{n}\rrbracket \left\{\frac{u_1-u_0}{\E(u_0)}, \ldots, \frac{u_{n}-u_0}{\E(u_0)}\right\}^{\wedge}\]
with the Nygaard filtration coming from prismatic cohomology. The maps in the limit are induced by $u_i \mapsto u_j$ (with the obvious face and degeneracy maps), and the prism structure on $\W(k)\llbracket u_0, \ldots, u_{n}\rrbracket$ is given by the ideal $(\E(u_0))$.
\label{cor:Nyg_relqrsp_desc}
\end{corollary}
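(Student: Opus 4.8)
The plan is to present $\O_K^{\Nyg}$ by flat descent along the cover of Proposition~\ref{prop:BK_flatcover}, and then to identify the resulting \v{C}ech terms with relative Nygaard stacks that fall under Lemma~\ref{lem:relqrsp_nygstack}. Write $(\rA,\I)=(\W(k)\llbracket u_0\rrbracket,(\E(u_0)))$. By Proposition~\ref{prop:BK_flatcover} the map $\rho_{\rA}'\colon\mathrm{Rees}_{\Fil^\bullet_{\Nyg}}(\rA^{(1)})\to\O_K^{\Nyg}$ is a flat-local surjection, hence --- by the discussion of \S 2.2, following \cite{Dirac} --- an effective epimorphism in the topos of flat sheaves, so that $\O_K^{\Nyg}$ is the colimit over $\Delta^{\op}$ of the \v{C}ech nerve of $\rho_{\rA}'$.

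An immediate observation is that $\mathrm{Rees}_{\Fil^\bullet_{\Nyg}}(\rA^{(1)})=(\O_K/\rA)^{\Nyg}$, directly from the Cartesian square defining the relative Nygaard stack, the structure map $\Spf\O_K\to\Spf(\rA/\I)$ being the identity. The substantive step is to identify the iterated fiber power $(\O_K/\rA)^{\Nyg}\times_{\O_K^{\Nyg}}\cdots\times_{\O_K^{\Nyg}}(\O_K/\rA)^{\Nyg}$ with $n$ factors with $(\O_K/\W(k)\llbracket u_0,\dots,u_{n-1}\rrbracket)^{\Nyg}$, where $\W(k)\llbracket u_0,\dots,u_{n-1}\rrbracket$ carries the $\delta$-structure $u_i\mapsto u_i^p$, the prism ideal $(\E(u_0))$, and the structure map $u_i\mapsto\pi$ to $\O_K$, and where the face and degeneracy maps are induced by $u_i\mapsto u_j$. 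I would do this by iterating the defining Cartesian square, using functoriality of $(\O_K/-)^{\Nyg}$ in the base prism together with the fact that the self-products of the Breuil--Kisin prism $(\rA,\I)$ in the absolute prismatic site of $\O_K$ relevant here are precisely these power-series rings (no extra copies of $\W(k)$ occurring, since $k$ is perfect) --- essentially the Bhatt--Scholze computation of $\mathfrak{S}^{(\bullet)}$ recast stackily. The point to watch is that the bases appearing are honest \emph{regular} power-series rings; the prismatic envelope enters only at the next step.

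Next I would apply Lemma~\ref{lem:relqrsp_nygstack} termwise. As $\O_K=\W(k)\llbracket u_0,\dots,u_{n-1}\rrbracket/(\E(u_0),\,u_1-u_0,\dots,u_{n-1}-u_0)$ is a quotient by a regular sequence, the relative prismatic cohomology $\Prism_{\O_K/\W(k)\llbracket u_0,\dots,u_{n-1}\rrbracket}$ is the discrete prismatic envelope
\[\W(k)\llbracket u_0,\dots,u_{n-1}\rrbracket\Big\{\tfrac{u_1-u_0}{\E(u_0)},\dots,\tfrac{u_{n-1}-u_0}{\E(u_0)}\Big\}^{\wedge}\]
computed as in \cite{prismFcris} and \cite{APC}, and Frobenius-twisting recovers $\rA^{(n)}$; thus Lemma~\ref{lem:relqrsp_nygstack} gives $(\O_K/\W(k)\llbracket u_0,\dots,u_{n-1}\rrbracket)^{\Nyg}\simeq\mathrm{Rees}_{\Fil^\bullet_{\Nyg}}(\rA^{(n)})$, with Nygaard filtration the one induced from prismatic cohomology, which agrees with $\Fil^i=\{x:\varphi(x)\in\I^i\}$ by the envelope description exactly as in the proof of Proposition~\ref{prop:BK_flatcover}. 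Assembling these equivalences over $\Delta^{\op}$ gives the asserted equivalence of stacks, and since $\Perf(-)$ is a sheaf for the flat topology it converts this colimit presentation into the claimed limit description of $\Perf(\O_K^{\Nyg})$.

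The hardest part is the \v{C}ech-nerve identification in the second step: one needs the correct base-change statement showing that fiber powers of $(\O_K/\rA)^{\Nyg}$ over the \emph{absolute} stack $\O_K^{\Nyg}$ are again relative stacks --- over the regular power-series bases $\W(k)\llbracket u_0,\dots,u_{n-1}\rrbracket$ --- together with careful bookkeeping of the prism ideals, the Frobenius twists, and the cosimplicial maps $u_i\mapsto u_j$. The remaining ingredients (the effective-epimorphism input, the discreteness and explicit shape of the relative prismatic cohomology, and flat descent for $\Perf$) are all established earlier in the paper or standard.
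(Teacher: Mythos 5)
There is a genuine gap at the step you yourself flag as the hardest one: the identification of the \v{C}ech nerve of $\rho'_{\rA}\colon \mathrm{Rees}_{\Fil^\bullet_{\Nyg}}(\rA^{(1)})=(\O_K/\rA)^{\Nyg}\to \O_K^{\Nyg}$ with the stacks $(\O_K/\W(k)\llbracket u_0,\dots,u_{n-1}\rrbracket)^{\Nyg}$. Your justification is that this follows by ``iterating the defining Cartesian square'' together with the claim that the self-products of $(\rA,\I)$ in the absolute prismatic site of $\O_K$ are the power-series rings $\W(k)\llbracket u_0,\dots,u_{n-1}\rrbracket$. That claim is not correct: the self-products in $(\O_K)_{\Prism}$ are the prismatic envelopes (the untwisted versions of the rings $\rA^{(n)}$ in the statement), while the power-series rings with ideal $(\E(u_0))$ and map $u_i\mapsto\pi$ are only the terms of the $\delta$-pair \v{C}ech nerve of \cite{PrismaticDelta}; that Nygaard-filtered prismatic cohomology satisfies descent along this $\delta$-pair nerve is exactly Theorem 1.2(6) (together with 1.2(3) to pass between absolute and $\W(k)$-relative theories), a theorem and not a site-theoretic formality. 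More seriously, even granting the computation of the prismatic-site self-products, nothing established earlier in the paper lets you identify the fiber products $\mathrm{Rees}(\rA^{(1)})\times_{\O_K^{\Nyg}}\cdots\times_{\O_K^{\Nyg}}\mathrm{Rees}(\rA^{(1)})$ with Rees stacks of those envelopes: the ``commutes with Tor-independent limits'' argument applies to covers induced by maps of formal schemes over $\O_K$ (e.g. quasisyntomic covers), not to the Rees-stack cover attached to a prism, and the defining Cartesian square of $(\O_K/\B)^{\Nyg}$ for $\B=\W(k)\llbracket u_0,u_1\rrbracket$ sits over $(\B/\I\B)^{\Nyg}$, which does not formally reproduce the fiber product over $\O_K^{\Nyg}$. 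So your argument presupposes essentially the content of the corollary.

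The paper's proof is structured precisely to avoid this direct \v{C}ech computation. It first uses Lemma \ref{lem:relqrsp_nygstack} (plus discreteness of the envelopes) to rewrite the right-hand side as Rees stacks, and then, instead of descending along $\rho'_{\rA}$, descends along the perfectoid quasisyntomic cover $\Spf \mathrm{S}\to\Spf\O_K$ with $\mathrm{S}=\rA_{\perf}/\I$: since the Nygaard stack and the relative Nygaard stacks send quasisyntomic covers to flat-local surjections and commute with Tor-independent limits of formal schemes, both sides become colimits of Nygaard stacks of quasiregular semiperfectoid rings, where everything is an honest Rees stack of a filtered ring and the comparison reduces to the filtered-ring descent statement of \cite{PrismaticDelta} Theorem 1.2(6) and (3). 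If you want to salvage your route, you would need to prove the base-change statement identifying the \v{C}ech nerve of $\rho'_{\rA}$ termwise, which in effect requires the same quasisyntomic-descent input; as written, the proposal does not supply it.
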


\begin{proof}
Observe that as defined in the statement of the corollary we have
\[\mathrm{Rees}_{\Fil^\bullet_{\Nyg}}(\rA^{(n)}) \simeq (\O_K/\W(k)\llbracket u_0,\ldots,u_{n}\rrbracket)^{\Nyg},\]
using Lemma \ref{lem:relqrsp_nygstack} and \cite[Theorem 9.6]{PrismaticDelta} (due to Bhatt-Scholze), so all claims follow from showing that as stacks we have
\[\O_K^{\Nyg} \simeq \colim_{[n]\in \Delta^\op} (\O_K/\W(k)\llbracket u_0,\ldots,u_{n}\rrbracket)^{\Nyg}.\]
By Lemma \ref{lem:relqrsp_nygstack} and \cite{PrismaticDelta} we see that the right hand side can be constructed by using Nygaard filtered prismatic cohomology extended to $\delta$-pairs; it consists of the Rees stacks corresponding to the \v{C}ech nerve for Nygaard filtered prismatic cohomology in Theorem 1.2(6). In what follows, we also use Theorem 1.2(3) to identify Nygaard filtered relative prismatic cohomology over $\W(k)$ with absolute Nygaard filtered prismatic cohomology.

For the perfectoid cover $\mathrm{S}=\rA_{\perf}/\I$ we have a commutative diagram
\begin{display}
\colim_{[n]\in \Delta^\op} (\mathrm{S}^{(n)})^{\Nyg}  \ar{r} &  \colim_{[n]\in \Delta^\op} \colim_{[m]\in \Delta^\op} (\Spf \mathrm{S}^{(n)}/\W(k)\llbracket u_0,\ldots,u_{m}\rrbracket)^{\Nyg} \\ 
\O_K^{\Nyg} \ar{u}{\sim} \ar{r} & \colim_{[m]\in \Delta^\op} (\O_K/\W(k)\llbracket u_0,\ldots,u_{m}\rrbracket)^{\Nyg} \ar{u}{\sim}
\end{display}
where $\Spf \mathrm{S}^{(n)}$ is the $n$th term in the completed \v{C}ech nerve of $\Spf \mathrm{S} \to \Spf \O_K$. The vertical maps are equivalences since the Nygaard stack as well as $(-/\rA)^{\Nyg}$ send quasisyntomic covers to flat-local surjections (see for example Corollary \ref{cor:Nyg_preserve_flat_covers} for a stronger version) and are compatible with Tor-independent limits. By the first property we may then deduce $\mathrm{S}^{\Nyg} \to \O_K^{\Nyg}$ is a flat-local surjection and by the second property we may identify terms of the \v{C}ech nerve with $(\mathrm{S}^{(n)})^{\Nyg}$ giving the left vertical equivalence. The right equivalence is similar.

It then suffices to verify the top horizontal map is an equivalence. For a quasiregular semiperfectoid $\W(k)\llbracket u_0\rrbracket$-algebra $\mathrm{S}^{(n)}$, we have by Theorem 1.2(6) and (3) in \cite{PrismaticDelta} that
\[\Fil^\bullet_{\Nyg} \Prism_{\mathrm{S}^{(n)}} \simeq \Fil^\bullet_{\Nyg} \Prism^{(1)}_{\mathrm{S}^{(n)}/\W(k)} \simeq \lim_{[m]\in \Delta} \Fil^\bullet_{\Nyg} \Prism^{(1)}_{\mathrm{S}^{(n)}/\W(k)\llbracket u_0,\ldots,u_{m}\rrbracket}\]
as $p$-complete filtered rings. Fortunately, it turns out that $\Prism^{(1)}_{\mathrm{S}^{(n)}/\W(k)\llbracket u_0,\ldots,u_{m}\rrbracket}$ is actually still discrete, so the claim follows from Lemma \ref{lem:relqrsp_nygstack} and the Rees equivalence. This follows by a cotangent complex computation. Via the transitivity triangle we get
\[\L_{\O_K/\O_K\llbracket v_1,\ldots, v_m\rrbracket} \widehat{\otimes}_{\O_K} \mathrm{S}^{(n)} \to \L_{\mathrm{S}^{(n)}/\O_K\llbracket v_1,\ldots, v_m\rrbracket} \to \L_{\mathrm{S}^{(n)}/\O_K}\] 
and thus the claim follows by checking the $p$-complete Tor amplitude of the leftmost and rightmost terms (see \cite[Remark 4.21]{BMS2} for the right term). Here $\O_K\llbracket v_1,\ldots, v_m\rrbracket := \W(k)\llbracket u_0,\ldots,u_{m}\rrbracket/\E(u_0)$ with coordinates $v_i:=u_i-u_0$. Thus applying the Hodge-Tate comparison the desired discreteness claim follows.
\end{proof}

\begin{remark}
One may check the ring maps $\rA \to \rA^{(n)}$ are flat (rather than just $(p,\I)$-completely flat), since the source is Noetherian and both source and target are derived $(p,\I)$-complete (\cite[Lemma 5.15]{Bhatt}). Note also we crucially use that $\W(k)$ is a perfect $\delta$-ring, so this construction does not generalize.
\label{rem:flat}
\end{remark}

We now move to defining a $t$-structure on $\Perf(\X^{\Syn})$. Analogously to how $\Coh(\O_K^{\Syn})$ is defined in \cite{Fgauge}, we can make the following construction.

\textbf{Construction}. Choose a covering of $\X$ by smooth affines $\U_i$, and for each affine choose a Breuil-Kisin prism $(\rA_i,\I_i)$ for $\U_i$.

Then there is a flat-local surjection
\[\bigsqcup_i \mathrm{Rees}_{\I_i^\bullet} \rA_i \to \X^{\Nyg} \to \X^{\Syn}.\]
This covering is a disjoint union of quotients of Noetherian formal schemes by $\G_m$, and thus we can define $\Coh(\coprod_i \mathrm{Rees}_{\I_i^\bullet} \rA_i)$ in the usual way. We induce a $t$-structure on $\Perf(\X^{\Syn})$ from this covering by defining the $\le 0$ and $\ge 0$ parts to be perfect complexes which pull back to the $\le 0$ and $\ge 0$ parts of the natural $t$-structure on $\Perf(\coprod_i \mathrm{Rees}_{\I_i^\bullet} \rA_i)$.

It is possible to check manually that this $t$-structure does not depend on the choice of a Breuil-Kisin prism using an argument similar to \cite{Fgaugelift}, making it canonical. Let $(\rA,\I)$ and $(\rA',\I')$ be two different choices of Breuil-Kisin prisms for $\X$, and then take $(\B,\mathrm{J})$ to be the product in the absolute prismatic site of $\X$. Then using $\mathrm{Rees}_{\Fil_{\Nyg}^\bullet} \B^{(1)}$ we obtain a flat-local surjection which refines both coverings using Breuil-Kisin prisms. We can then argue both Breuil-Kisin prisms give the same $t$-structure entirely analogously to \cite[Proposition 3.11]{Fgaugelift}.

When working with formal stacks, we can also do this by instead giving a construction of a $t$-structure which makes no choices and then checking it agrees with the previous construction. There is a canonical $t$-structure on $\D_{\mathrm{qc}}(\X^{\Syn})$ for any $\X$, by defining $\D^{\le 0}_{\mathrm{qc}}(\X^{\Syn})$ to be sheaves $\mathcal{E}$ such that for every $p$-nilpotent test object $f:\Spec \R \to \X^{\Syn}$ we have $f^* \mathcal{E}\in \D^{\le 0}(\R)$. We define $\D^{\ge 0}$ via orthogonality: that is, we say $\mathcal{E}\in \D_{\mathrm{qc}}^{\ge 0}(\X^{\Syn})$ when $\Map(\mathcal{E}',\mathcal{E})\simeq *$ for all $\mathcal{E}'\in \D_{\mathrm{qc}}^{\le -1}(\X^{\Syn})$ (defined analogously). This definition works for $\D_{\mathrm{qc}}(\Y)$ on any formal stack $\Y$, and when $\Y$ is $\mathrm{Rees}_{\Fil^{\bullet}_{\Nyg}}(\rA^{(1)})$ or $\mathrm{Rees}_{\I^\bullet} \rA$ for a Noetherian regular prism $\rA$ we see restricting to $\Perf$ gives the standard $t$-structure on $\Perf$. For the assertion about $\Perf$, it is crucial that we are in a Noetherian regular situation, otherwise the $t$-structure on quasicoherent sheaves might fail to restrict to $\Perf$.

\begin{remark}
Pullback to the cover $\mathrm{Rees}_{\I^\bullet} \rA$ for a Breuil-Kisin prism such that $\X\simeq \Spf \rA/\I$ is $t$-exact for the canonical $t$-structure on $\D_{\mathrm{qc}}(\X^{\Syn})$ by Footnote 70 in \cite[Remark 5.5.19]{Fgauge} (combined with $\X^{\Nyg} \to \X^{\Syn}$ being an \'{e}tale cover).
\label{rem:t-exact}
\end{remark}

\begin{proposition}
Let $\X/\Spf \O_K$ be smooth. The $t$-structure on $\Perf(\X^{\Syn})$ does not depend on the choice of Breuil-Kisin prisms or affine covering.
\label{prop:t_struct_XSyn}
\end{proposition}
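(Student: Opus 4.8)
The plan is to identify the $t$-structure produced from any Breuil-Kisin prism cover $q\colon \coprod_i \mathrm{Rees}_{\I_i^\bullet}\rA_i \to \X^{\Syn}$ with the restriction to $\Perf(\X^{\Syn})$ of the canonical $t$-structure on $\D_{\mathrm{qc}}(\X^{\Syn})$ defined above. Since that canonical $t$-structure is built without any choices --- neither the affine cover $\{\U_i\}$ nor the prisms $(\rA_i,\I_i)$ enter its definition --- this yields the proposition (in fact the slightly stronger statement that the affine cover is also irrelevant).

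First I would record the three properties of $q$ that make this work. It is a flat-local surjection, so $q^*$ is conservative on $\D_{\mathrm{qc}}$ and, since $\Perf$ is a flat sheaf, perfectness of an object of $\D_{\mathrm{qc}}(\X^{\Syn})$ can be tested after applying $q^*$. Next, $q^*$ is $t$-exact for the canonical $t$-structures on $\D_{\mathrm{qc}}(\X^{\Syn})$ and on $\D_{\mathrm{qc}}(\coprod_i\mathrm{Rees}_{\I_i^\bullet}\rA_i)$, by Remark \ref{rem:t-exact} applied over each $\U_i$ (restriction to the open substacks $\U_i^{\Syn}$ being $t$-exact), using also that $\X^{\Nyg}\to\X^{\Syn}$ is \'{e}tale. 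Finally, since $\X/\Spf\O_K$ is smooth each $\rA_i\simeq\tilde{\R}_i\llbracket u_0\rrbracket$ is a Noetherian regular prism, so by the observation recorded above the canonical $t$-structure on $\D_{\mathrm{qc}}(\coprod_i\mathrm{Rees}_{\I_i^\bullet}\rA_i)$ restricts to the standard $t$-structure on $\Perf$.

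Granting these, the argument is short. For $\mathcal{E}\in\Perf(\X^{\Syn})$, the truncation $\tau^{\ge 1}\mathcal{E}$ formed in $\D_{\mathrm{qc}}(\X^{\Syn})$ for the canonical $t$-structure satisfies $q^*\tau^{\ge 1}\mathcal{E}\simeq\tau^{\ge 1}q^*\mathcal{E}$, and the right-hand side is perfect because $q^*\mathcal{E}$ is and the cover is Noetherian regular; by flat descent of perfectness, $\tau^{\ge 1}\mathcal{E}$, and hence also $\tau^{\le 0}\mathcal{E}$, lies in $\Perf(\X^{\Syn})$, so the canonical $t$-structure does restrict to a $t$-structure on $\Perf(\X^{\Syn})$. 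Since $q^*$ is moreover $t$-exact and conservative it detects both connectivity and coconnectivity, so $\mathcal{E}$ lies in the connective (respectively coconnective) part of $\Perf(\X^{\Syn})$ for the canonical $t$-structure if and only if $q^*\mathcal{E}$ lies in the connective (respectively coconnective) part of the standard $t$-structure on $\Perf(\coprod_i\mathrm{Rees}_{\I_i^\bullet}\rA_i)$ --- which is precisely the definition of the cover-dependent $t$-structure. So the two coincide, and the cover-dependent $t$-structure agrees with the choice-free canonical one.

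The step I expect to demand the most care is controlling $q^*$ on the coconnective part: the $\le 0$ part of the canonical $t$-structure on $\D_{\mathrm{qc}}(\X^{\Syn})$ is defined by right-orthogonality to $\D_{\mathrm{qc}}^{\ge 1}(\X^{\Syn})$ rather than pointwise, so a priori it is not clear it can be detected after pullback. I would sidestep this by working inside $\Perf$, where an object is $\le 0$ exactly when its cohomology sheaves vanish in positive degrees --- a condition that a $t$-exact conservative functor reflects and preserves --- after first verifying that the restricted canonical $t$-structure on $\Perf(\X^{\Syn})$ really is characterized by this vanishing (which again follows from $t$-exactness of $q^*$ together with the corresponding fact over the Noetherian regular cover). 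If one prefers to avoid the canonical $t$-structure altogether, the alternative is to compare two Breuil-Kisin prism covers $(\rA,\I)$ and $(\rA',\I')$ over a common affine directly, through their product $(\B,J)$ in $\X_{\Prism}$ and the common refinement $\mathrm{Rees}_{\Fil^\bullet_{\Nyg}}(\B^{(1)})$, which maps compatibly to $\mathrm{Rees}_{\I^\bullet}(\rA)$ and to the analogous stack for $(\rA',\I')$, and then to run the argument of \cite{Fgaugelift} Proposition 3.11; this trades the canonical $t$-structure for some additional faithful-flatness bookkeeping.
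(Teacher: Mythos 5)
Your proposal is correct and follows essentially the same route as the paper: both identify the cover-dependent $t$-structure with the restriction of the choice-free canonical $t$-structure on $\D_{\mathrm{qc}}(\X^{\Syn})$, using flat descent of perfectness along the flat-local surjection, $t$-exactness of the pullback (Remark \ref{rem:t-exact}), and Noetherian regularity of the Rees stacks to see that truncations stay perfect. Your extra care about detecting the coconnective part and the alternative comparison via the product prism as in \cite{Fgaugelift} Proposition 3.11 are both consistent with what the paper sketches.
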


\begin{proof}
A priori trying to restrict the $t$-structure on $\D_{\mathrm{qc}}(\X^{\Syn})$ to $\Perf(\X^{\Syn})$ might not define a $t$-structure since truncations or cotruncations might fail to be perfect. However perfectness can be tested after pullback along flat-local surjections by descent, so we can test this after pullback to the cover $\sqcup_i \mathrm{Rees}_{\I_i^\bullet} \rA_i$ for a covering family of Breuil-Kisin prisms after noting this pullback is $t$-exact by Remark \ref{rem:t-exact}. But as $\rA_i$ and the associated Rees stacks are Noetherian regular, truncations/cotruncations of perfect complexes are again perfect so we conclude that we get a canonical $t$-structure on $\Perf(\X^{\Syn})$. Thus this gives a well-defined $t$-structure. Moreover, the argument also shows it agrees with our previous construction.
\end{proof}

We can then call this the canonical $t$-structure on $\Perf(\X^{\Syn})$.

\begin{definition}
For the canonical $t$-structure on $\Perf(\X^{\Syn})$, define $\Coh(\X^{\Syn}):= \Perf(\X^{\Syn})^{\heart}$.
\end{definition}

We will also frequently use that this $t$-structure is compatible with \'{e}tale realization.

\begin{lemma}
The restriction functor
\[(-)|_{\X^{\Prism}}: \Perf(\X^{\Syn}) \to \Perf^{\varphi}(\X_{\Prism},\O_{\Prism})\]
is $t$-exact for the canonical $t$-structure on perfect prismatic $F$-crystals in \cite[Lemma 3.9]{Fgaugelift}. Thus $\T_\et$ is $t$-exact.
\label{lem:Tet_texact}
\end{lemma}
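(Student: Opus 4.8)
The plan is to check $t$-exactness after evaluating on a Breuil--Kisin prism, where both $t$-structures become the standard one on perfect $\rA$-complexes. First reduce to the restriction functor: by construction $\T_\et$ is the composite of $(-)|_{\X^\Prism}\colon\Perf(\X^\Syn)\to\Perf^{\varphi}(\X_\Prism,\O_\Prism)$ from Lemma~\ref{lem:restriction_Fcris} with the \'etale realization on perfect prismatic $F$-crystals, and the latter is $t$-exact (it carries $F$-crystals in vector bundles to $\Z_p$-local systems and is exact, cf.\ \cite{prismFcris}, \cite{Fgaugelift}), so it suffices to show $(-)|_{\X^\Prism}$ is $t$-exact. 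Both $t$-structures, and $t$-exactness of a functor between them, may be checked locally on $\X$; so choose a cover of $\X$ by smooth affines $\U_i$ together with Breuil--Kisin prisms $(\rA_i,\I_i)$ with $\Spf\rA_i/\I_i\simeq\U_i$ (Proposition~\ref{prop:BK_prism}) and argue one $\U_i$ at a time. Hence assume $\X=\Spf\rA/\I$ is affine.

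By the very definition of the canonical $t$-structure on $\Perf(\X^\Syn)$ (together with Proposition~\ref{prop:t_struct_XSyn} and Remark~\ref{rem:t-exact}), an object $\mathcal{E}$ lies in $\Perf(\X^\Syn)^{\le 0}$ (resp.\ $\Perf(\X^\Syn)^{\ge 0}$) if and only if its pullback $\rho_\rA^*\mathcal{E}$ along the flat-local surjection $\mathrm{Rees}_{\I^\bullet}\rA\to\X^\Nyg\to\X^\Syn$ does, and this pullback is $t$-exact. On the other side, by \cite{Fgaugelift} Lemma~2.9 --- which uses that $(\rA,\I)$ covers the final object of $\mathsf{Sh}(\X_\Prism)$ and that the Frobenius on $\rA$ is faithfully flat (Proposition~\ref{prop:BK_prism}) --- an object $\mathcal{M}$ lies in $\Perf^{\varphi}(\X_\Prism,\O_\Prism)^{\le 0}$ (resp.\ $\ge 0$) if and only if the evaluation $\mathcal{M}(\rA)\in\Perf(\rA)$ does. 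It therefore suffices to exhibit $\mathcal{E}\mapsto(\mathcal{E}|_{\X^\Prism})(\rA)$ as a composite of $t$-exact functors. Unwinding the pushout defining $\X^\Syn$ gives $j_\Prism=j_\Nyg\circ j_{\dR}$; the map $j_{\dR}$ is the inclusion of the open locus $\{t\neq 0\}\subseteq\mathrm{Rees}_{\I^\bullet}\rA$, which is canonically $\Spf\rA$ (the relative picture of \S2.1); and the factorization $\rho_\rA=\rho_{\rA}'\circ\varphi$ through $\mathrm{Rees}_{\Fil^\bullet_{\Nyg}}\rA^{(1)}$ of Proposition~\ref{prop:BK_flatcover} identifies the composite $\Spf\rA=\{t\neq 0\}\hookrightarrow\mathrm{Rees}_{\I^\bullet}\rA\to\X^\Syn$ with $\Spf\rA\to\X^\Prism\xrightarrow{j_\Prism}\X^\Syn$ up to the Frobenius base change $\varphi\colon\rA^{(1)}\to\rA$. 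Consequently $\mathcal{E}\mapsto(\mathcal{E}|_{\X^\Prism})(\rA)$ is the composite of $\rho_\rA^*$, restriction along the open immersion $\{t\neq 0\}\hookrightarrow\mathrm{Rees}_{\I^\bullet}\rA$, and (if one needs it) $\varphi^*$ --- each $t$-exact, the first by Remark~\ref{rem:t-exact}, the second as a flat pullback, the third because $\varphi$ is faithfully flat here. Feeding this back into the two detection statements: $\mathcal{E}\in\Perf(\X^\Syn)^{\le 0}$ forces $\rho_\rA^*\mathcal{E}\in\Perf(\mathrm{Rees}_{\I^\bullet}\rA)^{\le 0}$, hence $(\mathcal{E}|_{\X^\Prism})(\rA)\in\Perf(\rA)^{\le 0}$, hence $\mathcal{E}|_{\X^\Prism}\in\Perf^{\varphi}(\X_\Prism,\O_\Prism)^{\le 0}$, and the same argument with $\ge 0$ gives $t$-exactness of $(-)|_{\X^\Prism}$, and therefore of $\T_\et$.

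The step I expect to demand the most care is the identification in the middle of the second paragraph: that restricting the underlying prismatic $F$-crystal of an $F$-gauge and then evaluating on $(\rA,\I)$ agrees, up to the ($t$-exact) Frobenius base change, with restricting the Rees-stack pullback of the $F$-gauge to the de Rham locus $\{t\neq 0\}$. This is where one must match the two Breuil--Kisin constructions and keep track of the twist $\rA^{(1)}\to\rA$. Everything else is formal once both $t$-structures are recognized as ``detected, $t$-exactly, by a Breuil--Kisin prism.''
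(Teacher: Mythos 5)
Your proof is correct and takes essentially the same route as the paper: the paper's argument is simply that both $t$-structures are defined via the same Breuil--Kisin coverings (so $t$-exactness of the restriction holds by construction), plus $t$-exactness of the \'etale realization on prismatic $F$-crystals (\cite{Fgaugelift} Lemma 2.16), and your write-up is a careful unwinding of exactly this, including the identification $j_{\Prism}=j_{\Nyg}\circ j_{\dR}$ matching evaluation on $(\rA,\I)$ with restriction of the Rees-stack pullback to the $\{t\neq 0\}$ locus.
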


\begin{proof}
This is by construction, as the definitions for both $t$-structures use Breuil-Kisin prisms for a covering of $\X$ (so we just use the same covering).

The second claim follows from \cite[Lemma 3.16]{Fgaugelift} and that a composite of $t$-exact functors is $t$-exact.
\end{proof}

We will also need some basic claims about duals of coherent sheaves and how $\T_\et$ interacts with these.

\begin{definition}
Let $\X$ be a smooth $p$-adic formal scheme over $\Spf \O_K$, and $\mathcal{E}\in \Coh(\X^{\Syn})$. We define
\[\mathcal{E}^\vee := \underline{\Hom}(\mathcal{E},\O_{\X^{\Syn}})\]
as the $\O$-linear dual, using the internal Hom in coherent sheaves. We set 
\[\mathcal{E}^{\vee \vee}:= \underline{\Hom}(\underline{\Hom}(\mathcal{E},\O_{\X^{\Syn}}),\O_{\X^{\Syn}}).\]
\label{defn:doubledual}
\end{definition}

In a closed monoidal category, in this setup we always have a natural map $\mathcal{E} \to \mathcal{E}^{\vee \vee}$.

We remark that $(-)^{\vee}$ sends coherent $F$-gauges to coherent $F$-gauges, as $\O_K^{\Syn}$ and in general $\X^{\Syn}$ for smooth quasicompact $\X/\Spf \O_K$ admit a flat-local surjection from a Noetherian regular affine formal scheme (and pullback along this cover preserves duals).

\begin{lemma}
Let $\X$ be a smooth $p$-adic formal scheme over $\Spf \O_K$. Let $\mathcal{E}\in \Coh(\X^{\Syn})$. Then $\T_\et$ is symmetric monoidal, and preserves the internal Hom so that $\T_\et(\mathcal{E}^\vee) = \T_\et(\mathcal{E})^\vee$.
\label{lem:dual}
\end{lemma}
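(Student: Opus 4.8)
The plan is to bootstrap from two facts that are already available: that $\T_\et$ is symmetric monoidal, and that it is $t$-exact (Lemma \ref{lem:Tet_texact}).

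First I would spell out the symmetric monoidality. By construction $\T_\et$ is the composite of the restriction functor $(-)|_{\X^{\Prism}}$ of Lemma \ref{lem:restriction_Fcris}, which is symmetric monoidal since it is pullback along $j_{\Prism}\colon\X^{\Prism}\to\X^{\Syn}$, followed by the \'{e}tale realization on perfect prismatic $F$-crystals; the latter is symmetric monoidal because it is obtained by gluing the Bhatt--Scholze equivalence of Corollary 3.7 in \cite{prismFcris}, which is symmetric monoidal, over a cover of $\X$. In particular $\T_\et$ preserves the unit and tensor products, hence carries any dualizable object $\mathcal{E}$ together with its dual $R\underline{\Hom}(\mathcal{E},\O)$ to a dualizable object and its dual $R\underline{\Hom}(\T_\et(\mathcal{E}),\Z_p)$.

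Next I would record that every $\mathcal{E}\in\Perf(\X^{\Syn})$ is dualizable, with monoidal dual the derived internal Hom $R\underline{\Hom}_{\D_{\mathrm{qc}}(\X^{\Syn})}(\mathcal{E},\O)$. This is flat-local, so it may be checked after pullback along the flat-local surjection $\coprod_i \mathrm{Rees}_{\I_i^\bullet}\rA_i \to \X^{\Syn}$ used to set up the $t$-structure: on each $\mathrm{Rees}_{\I_i^\bullet}\rA_i$, a $\G_m$-quotient of a Noetherian regular affine formal scheme, perfect complexes are dualizable with dual the derived sheaf-Hom into $\O$, and dualizability descends along the \v{C}ech nerve. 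The same local computation shows that for $\mathcal{E}\in\Coh(\X^{\Syn})$ the complex $R\underline{\Hom}_{\D_{\mathrm{qc}}(\X^{\Syn})}(\mathcal{E},\O)$ is coconnective, its cohomology sheaves being the $\mathcal{E}xt^i(\mathcal{E},\O)$ which vanish in negative degrees over the Noetherian regular cover; consequently the internal Hom of Definition \ref{defn:doubledual}, formed in the abelian category $\Coh(\X^{\Syn})$, is recovered as $\mathcal{E}^\vee = \H^0\big(R\underline{\Hom}_{\D_{\mathrm{qc}}(\X^{\Syn})}(\mathcal{E},\O)\big)$. By $t$-exactness of $\T_\et$ the object $R\underline{\Hom}(\T_\et(\mathcal{E}),\Z_p)$ is again coconnective, and the internal Hom in $\D^{(b)}_{\mathrm{lisse}}(\X_\eta,\Z_p)^{\heart}$ is its $\H^0$.

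Finally I would assemble the chain
\[
\T_\et(\mathcal{E}^\vee)=\T_\et\big(\H^0 R\underline{\Hom}(\mathcal{E},\O)\big)=\H^0\,\T_\et\big(R\underline{\Hom}(\mathcal{E},\O)\big)=\H^0\,R\underline{\Hom}\big(\T_\et(\mathcal{E}),\Z_p\big)=\T_\et(\mathcal{E})^\vee,
\]
where the second equality is $t$-exactness (a $t$-exact functor commutes with $\H^0$) and the third is symmetric monoidality together with the dualizability of $\mathcal{E}$. The one genuine obstacle is the bookkeeping in the third paragraph---matching the abelian-categorical $(-)^\vee$ on both sides with $\H^0$ of the derived internal Hom, which hinges on the coconnectivity of $R\underline{\Hom}(\mathcal{E},\O)$ checked on the Rees cover; granting that, the rest is formal.
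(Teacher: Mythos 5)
Your proposal is correct, but the mechanism you use for the dual-preservation step differs from the paper's. The symmetric monoidality part is the same in both: factor $\T_\et$ through restriction to $\X^{\Prism}$ followed by the (symmetric monoidal) equivalence between Laurent $F$-crystals and lisse $\Z_p$-sheaves. For the statement $\T_\et(\mathcal{E}^\vee)\simeq\T_\et(\mathcal{E})^\vee$, the paper stays entirely at the underived, heart level: it argues that each functor in the factorization $\Coh(\X^{\Syn})\to\Coh(\X_{\Prism},\O_{\Prism}[1/\I_{\Prism}]^{\wedge}_p)^{\varphi=1}\simeq\D^{(b)}_{\mathrm{lisse}}(\X_\eta,\Z_p)^{\heart}$ preserves internal Homs (pullback and $\I$-localization/$p$-completion by a flatness-type argument, the last step because it is a symmetric monoidal equivalence, with the dual of a lisse sheaf being the $\Z_p$-linear dual with its Galois action). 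You instead pass to the derived level: perfect complexes on $\X^{\Syn}$ are dualizable with monoidal dual $R\underline{\Hom}(-,\O)$, symmetric monoidal functors preserve monoidal duals on the nose, and then you descend to the underived statement via $t$-exactness of $\T_\et$ together with the coconnectivity of $R\underline{\Hom}(\mathcal{E},\O)$ checked on the Noetherian regular Rees cover, identifying both underived duals as $\H^0$ of the monoidal duals. This is a legitimate and arguably more robust route, since it replaces the paper's (rather terse) claim that the intermediate underived functors preserve internal Homs by the formal fact that any symmetric monoidal functor preserves duals of dualizable objects; the only bookkeeping you need, and do supply, is that $\H^0$ of the derived dual computes the abelian-category dual on both sides (for which you should also note, as the paper's $t$-structure discussion guarantees, that $\H^0 R\underline{\Hom}(\mathcal{E},\O)$ is again perfect, so that applying $\T_\et$ and invoking its $t$-exactness is licit). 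The paper's approach buys a direct identification at the level of Laurent $F$-crystals without invoking dualizability; yours buys independence from checking Hom-preservation for the localization/completion step.
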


\begin{proof}
We only add hypotheses on $\X$ so that it has a good theory of coherent sheaves.

Let us factor $\T_\et$ as
\[\Coh(\X^{\Syn}) \to \Coh(\X_{\Prism},\O_{\Prism}[1/\I_{\Prism}]_p^{\wedge})^{\varphi=1} \simeq \D^{(b)}_{\mathrm{lisse}}(\X_\eta,\Z_p)^\heart.\]
The first map is obtained by pullback to $\X^{\Prism}$ and then inverting $\I_{\Prism}$. Pullback to a substack is symmetric monoidal, as is tensoring with $\O_{\Prism}[1/\I_{\Prism}]_p^{\wedge}$. The final equivalence is symmetric monoidal as well, see Section 2.2 in \cite{Tannakian} (this is the underived version).

To check the claim about duals, it suffices to check this construction preserves internal Homs. The same argument reduces us to checking this for 
\[\Coh(\X_{\Prism},\O_{\Prism}[1/\I_{\Prism}]_p^{\wedge})^{\varphi=1} \simeq \D^{(b)}_{\mathrm{lisse}}(\X_\eta,\Z_p)^\heart,\]
where it is straightforward (note that for representations we take the $\Z_p$-linear dual and then give this the natural Galois action).
\end{proof}

It will be useful to know that when $\X \to \Y$ is a quasisyntomic cover, $\X^{\Syn} \to \Y^{\Syn}$ is a flat-local surjection, as this is important for descent results. We explain how to deduce this claim for expository reasons here. One can show using \cite{drinfeldalg} and \cite{Fgaugelift} something slightly stronger, which is that it is a flat-local surjection as well as a flat morphism of stacks in a non-representable sense. By this, we mean that the morphism $\X \to \Y$ of $p$-adic formal stacks has the property that there exist stacks $\U, \V$ and a commutative diagram
\begin{display}
\U \ar{d} \ar{r} & \X \ar{d} \\
\V \ar{r} & \Y
\end{display}
so $\U \to \V$, $\V \to \Y$ and $\U \to \X \times_\Y \V$ are all fpqc coverings in the usual representable sense. In what follows, we are able to take $\U$ and $\V$ to be reasonable stacks; for example for the prismatization these can be taken to be disjoint unions of affine formal schemes.

For example, $\Spf \Z_p \sqcup \Spec \F_p \to \Spf \Z_p$ is a flat-local surjection, but does not have this property. This property also suffices to check $t$-exactness of the pullback on $\D_{\mathrm{qc}}(-)$ for $p$-adic formal stacks. The author would like to thank Sasha Petrov and Kush Singhal for help in arguing this flatness property; any mistakes are due to the author.

\begin{proposition}[\cite{APC2}, Lemma 6.3]
Suppose $f:\X\to \Y$ is a quasisyntomic cover of quasisyntomic $p$-adic formal schemes. Then the induced map $f: \X^\Prism \to \Y^\Prism$ is a flat-local surjection of formal stacks. It is moreover flat.
\label{prop:prism_preserve_flat_covers}
\end{proposition}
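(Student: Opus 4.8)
The plan is to compute the base change $\X^\Prism \times_{\Y^\Prism} \Spec \R$ over a test point and recognize it as a transmutation. The assertion that $f$ is a flat-local surjection is exactly the Lemma above (\cite{APC2}, Lemma 6.3), so the only additional content is that for every $p$-nilpotent ring $\R$ and every map $\Spec \R \to \Y^\Prism$, the fiber product $\mathcal{W} := \X^\Prism \times_{\Y^\Prism} \Spec \R$ is representable by a $p$-nilpotent affine scheme that is flat over $\R$. A point $\Spec \R \to \Y^\Prism$ unwinds to a pair $(\alpha : \I \to \W(\R),\, \eta : \Spec \overline{\W(\R)} \to \Y)$ with $\alpha$ a Cartier--Witt divisor and $\overline{\W(\R)}$ the (possibly non-discrete) derived quotient. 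Since Cartier--Witt divisors base change along $\W(\R) \to \W(\R')$, with $\overline{\W(\R')} \simeq \overline{\W(\R)} \otimes^{\L}_{\W(\R)} \W(\R')$, unwinding the definition of $\X^\Prism$ identifies $\mathcal{W}$, as a functor on $\R$-algebras, with
\[
\mathcal{W}(\R') \;\simeq\; \Map_{/\Y}\bigl(\Spec\bigl(\overline{\W(\R)} \otimes^{\L}_{\W(\R)} \W(\R')\bigr),\, \X\bigr),
\]
the space of lifts of $\eta_{\R'}$ along $f$. In other words, $\mathcal{W}$ is the transmutation, along the functor $\Phi : \R' \mapsto \overline{\W(\R)} \otimes^{\L}_{\W(\R)} \W(\R')$, of the scheme $\mathcal{Z} := \X \times_\Y \Spec \overline{\W(\R)}$ over $\Spec \overline{\W(\R)}$; this $\mathcal{Z}$ is $p$-completely flat and quasisyntomic because $f$ is a quasisyntomic cover.

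Next I would reduce to $\X$ and $\Y$ affine. For any affine open $\Spf C \hookrightarrow \mathcal{Z}$, the locus in $\mathcal{W}$ on which a lift factors through $\Spf C$ is an open subfunctor, and these cover $\mathcal{W}$: for a $p$-nilpotent ring $\R'$ the scheme $\Spec \overline{\W(\R')}$ has the same underlying space as $\Spec \R'$ (a standard consequence of the Cartier--Witt conditions for $p$-nilpotent $\R'$, since $V\W(\R')$ consists of nilpotents and the generator of the divisor is distinguished), so the condition that the lift lands in $\Spf C$ is cut out by an open subset of $\Spec \R'$. It thus suffices to take $\mathcal{Z} = \Spf C$ with $C$ a $p$-completely flat quasisyntomic $\overline{\W(\R)}$-algebra, and to fix --- locally on $\Spf C$, and after $p$-completion --- an lci presentation $C \simeq \bigl(\overline{\W(\R)}[x_i : i \in S]/(g_j : j \in T)\bigr)^{\wedge}$ with $(g_j)_{j \in T}$ a $p$-completely Koszul-regular sequence. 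Because transmutation along $\Phi$ preserves finite limits, imposing $g_j = 0$ exhibits $\mathcal{W}$ as a closed subscheme of a product of copies of the transmutation $\G_a^\Prism \times_{\Z_p^\Prism} \Spec \R$ of $\mathbb{A}^1$. The latter is an affine scheme, flat over $\Z_p^\Prism$ --- this is the explicit structure of the $\overline{\W}$-algebra stacks in \cite{prismatization} and \cite{APC} --- and since everything here takes place over $\Spec \R$ with $p$ nilpotent, the product, hence the closed subscheme $\mathcal{W}$, is again a $p$-nilpotent affine scheme. Flatness of $\mathcal{W}$ over $\R$ then reduces to the assertion that $\Phi$ carries $(g_j)$ to a sequence that is Koszul-regular relative to $\R$. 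The general (non-affine) case follows by gluing over $\X$ and $\Y$, refining the cover by affine quasisyntomic covers if necessary.

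I expect the main obstacle to be precisely the last point: controlling the functor $\Phi : \R' \mapsto \overline{\W(\R)} \otimes^{\L}_{\W(\R)} \W(\R')$ well enough to see that transmutation along it preserves the two defining properties of a quasisyntomic morphism --- $p$-complete flatness, and cotangent complex of $p$-complete Tor-amplitude in $[-1,0]$ --- so that both $\G_a^\Prism \to \Z_p^\Prism$ is flat and the presenting sequence $(g_j)$ stays Koszul-regular relative to the base. One natural route is to establish a transmutation base-change formula computing the cotangent complex of $\mathcal{W}/\R$ from that of $\mathcal{Z}/\overline{\W(\R)}$ and to read off flatness from it. This is exactly where the hypothesis that $f$ is a quasisyntomic cover enters (it provides the lci presentation after $p$-completion), and the boundedness of $\X$ and $\Y$ is used to ensure that the derived $p$-completions and the possibly infinite Koszul complexes involved are well behaved and that $\overline{\W(\R)}$, hence $\mathcal{W}$, is genuinely $p$-nilpotent.
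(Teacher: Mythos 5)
There is a genuine gap: the flatness itself is never proved. Your argument reduces everything to the assertion that the functor $\Phi : \R' \mapsto \overline{\W(\R)} \otimes^{\L}_{\W(\R)} \W(\R')$ carries a ($p$-completely) Koszul-regular presenting sequence of $\X\times_\Y \Spec\overline{\W(\R)}$ to a sequence that is Koszul-regular relative to $\R$, and you then explicitly flag this as the ``main obstacle'' and only gesture at a transmutation base-change formula for cotangent complexes. But that assertion \emph{is} the content of the proposition; without it the proposal establishes nothing beyond the flat-local surjectivity already quoted from \cite{APC2}. The paper resolves exactly this point by a different mechanism: it first reduces (by descent along quasiregular semiperfectoid covers and Tor-independence of the relevant fiber products) to a quasisyntomic cover $\Spf\R \to \Spf\rS$ of quasiregular semiperfectoids, where both prismatizations are affine formal spectra of the initial prisms, and then proves $(p,\I)$-complete flatness of $\Prism_\R \to \Prism_\rS$ by passing to the conjugate filtration: the Hodge--Tate comparison identifies the associated graded with symmetric algebras on cotangent complexes, Remark 4.21 of \cite{BMS2} gives that $\L_{\rS/\R}$ has $p$-complete Tor-amplitude in degree $-1$ (so the relevant graded pieces are finite projective and the extension splits), and a graded-to-filtered flatness lemma concludes. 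Your sketch contains no substitute for this computation.

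There are also secondary problems with the route as written. The cone $\G_a^{\Prism} \to \Z_p^{\Prism}$ is a ring \emph{stack} (the cone of $\I \to \W$), not an affine scheme flat over $\Z_p^{\Prism}$, so the step exhibiting $\mathcal{W}$ as a closed subscheme of a product of copies of $\G_a^{\Prism}\times_{\Z_p^{\Prism}}\Spec\R$ does not literally make sense; since $\overline{\W(\R)}$ is in general a non-discrete animated ring, the lifting space $\Map_{/\Y}(\Spec(\overline{\W(\R)}\otimes^{\L}_{\W(\R)}\W(\R')),\X)$ must be handled derived-functorially, and representability of $\mathcal{W}$ by a $p$-nilpotent affine scheme (which the paper's notion of flat cover requires) is itself something to prove, not something that follows from ``gluing over $\X$ and $\Y$.'' Note also that the target definition of flat cover is tested against maps from $p$-nilpotent test rings into $\Y^{\Prism}$, and the paper's proof obtains the needed representability and flatness by factoring such test maps through quasiregular semiperfectoid covers and using Cartesian squares, rather than by a direct pointwise computation; if you want to pursue your transmutation approach you would need to supply both the flatness of the relevant ring stacks and the derived Koszul-regularity statement, which together amount to redoing the Hodge--Tate/cotangent-complex analysis that the paper's proof carries out.
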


\begin{proof}
Suppose first that $\X = \Spf \rS$ and $\Y = \Spf \R$ where $\rS$ and $\R$ are quasiregular semiperfectoid rings such that $\R\to \rS$ is a quasisyntomic cover. We want to show that the corresponding map of initial prisms $\Prism_\R\to \Prism_\rS$ is $(p,\I)$-completely faithfully flat, where $\I$ is the prismatic ideal of $\Prism_\R$ (and $\Prism_{\rS}$ by rigidity of prisms). We want to show that $\bar{\Prism}_\R\to \bar{\Prism}_{\rS}$ is $p$-completely faithfully flat. By Lemma \ref{lem:flat_on_associated_graded}, it suffices to check that the map 
\[\gr_\ast^{\conj} \bar{\Prism}_\R \to \gr_\ast^{\conj} \bar{\Prism}_{\rS}\]
is $p$-completely faithfully flat, where $\Fil^\conj_\ast \bar{\Prism}_\R$ is the conjugate filtration.

By the Hodge-Tate comparison, we know that 
\[\gr_i^{\conj} \bar{\Prism}_\R/p = \left({\bigwedge}_\R^i \L_{\R/\tilde \R} \otimes_\R^{\L} \R/p\right)\{-i\}[-i]\]
where $\tilde{\R}$ is any perfectoid ring with a map $\tilde \R\to \R$, and similarly for $\bar \Prism_{\rS}$. The following argument which also appears in the proof of \cite[Proposition 3.30]{Fgaugelift} allows us to deduce the desired faithful flatness.

By \cite[Remark 4.21]{BMS2} (see also \cite[Lemma 4.25]{BMS2}), $\L_{\rS/\R}$ actually has $p$-complete Tor amplitude in $[-1,-1]$ (as we already know it is in $[-1,0]$ since the map is quasisyntomic), and so we have an exact sequence of $\rS/p$-modules 
\[0\to (\L_{\R/\tilde \R}[-1]\otimes_\R^\L \R/p) \otimes_{\R/p} \rS/p \to \L_{\rS/\tilde \R}[-1]\otimes_{\rS}^\L \rS/p \to \L_{\rS/\R}[-1]\otimes_{\rS}^\L \rS/p \to 0.\]
Here all three modules are flat.
    
Using Propositions 25.2.4.2 and 25.2.3.4 in \cite{SAG}, we have
\[\left({\bigwedge}^i \L_{\R/ \tilde \R} \otimes_\R^{\L} \R/p \right)[-i] = \Gamma^i_{\R/p}(\L_{\R/\tilde \R}[-1] \otimes_\R^\L \R/p)\]
and similarly for $\rS$. Thus the claim we want to check is that $\Gamma^\ast_{\rS/p}(\L_{\rS/\tilde \R}[-1]\otimes_\rS^\L \rS/p)$ is flat over $\Gamma^\ast_{\R/p}(\L_{\R/\tilde \R}[-1]\otimes_\R^\L \R/p)$. Indeed faithful flatness of $\Gamma^\ast_{\rS/p}(\L_{\rS/\tilde \R}[-1]\otimes_{\rS}^\L \rS/p)$ over $\Gamma^\ast_{\R/p}(\L_{\R/\tilde \R}[-1]\otimes_\R^\L \R/p)$, by virtue of $\R/p \to \rS/p$ being faithfully flat, follows from faithful flatness of $\Gamma^\ast_{\rS/p}((\L_{\R/\tilde \R}[-1]\otimes_\R^\L \R/p) \otimes_{\R/p} \rS/p) \to \Gamma^\ast_{\rS/p}(\L_{\rS/\tilde \R}[-1]\otimes_\rS^\L \rS/p)$. This last claim about faithful flatness of divided power algebras follows from the exact sequence of flat modules. We explain one general argument for this claim about divided power algebras. For an exact sequence of modules
\[0 \to \rA \to \B \to \rC \to 0\]
over a ring $k$, if $\rC$ is a flat $k$-module then $\Gamma_k^*(\rA) \to \Gamma_k^*(\B)$ is faithfully flat. Indeed, flatness of the third term $\rC$ implies the exact sequence is universally exact by Stacks Project \href{https://stacks.math.columbia.edu/tag/058M}{058M}, and must then be a filtered colimit of split exact sequences by (6) of Stacks Project \href{https://stacks.math.columbia.edu/tag/058K}{058K}. This filtered colimit is constructed for any choice of a filtered colimit presentation of the third object so we may refine this so each third term in the exact sequence is finite free by Lazard's theorem. As $\Gamma_k^*$ commutes with filtered colimits we can now assume the exact sequence is split and $\rC$ is finite free, in which case the claim is easy to verify. It follows that $\Prism_\R \to \Prism_\rS$ induces an fpqc covering of formal schemes.

The reduction method in \cite[Corollary 6.12.3]{drinfeldalg} then shows how to deduce the flat-local surjection claim from the quasiregular semiperfectoid case. Indeed, we may reduce to the case $\X$ and $\Y$ are affine: all claims are local on the source and target, and to reduce to the affine case we also use Remark 3.9 in \cite{APC2} specialized to Zariski covers. The claim that $\Y_\infty^\Prism \to \Y^\Prism$ is a flat-local surjection is shown in the case of $\Y_\infty \to \Y$ for the specific quasiregular semiperfectoid cover $\Y_\infty$ in \cite[Corollary 6.12.3]{drinfeldalg} for arbitrary $\Y$ (we use $(-)_\infty$ to denote this covering construction applied to a general affine $p$-adic formal scheme).

Using this case and compatibility of prismatization with Tor-independent limits, as the property is flat-local on the target we are reduced to checking $\X^\Prism \times_{\Y^\Prism} (\Y_\infty)^\Prism \simeq (\X \times_\Y \Y_\infty)^\Prism \to \Y_\infty^\Prism$ is a flat-local surjection. Now considering $(\X \times_\Y \Y_\infty)_\infty \to \X \times_\Y \Y_\infty \to \Y_\infty$, as the composite map is now a quasisyntomic cover of quasiregular semiperfectoid rings, the composite is a flat-local surjection on prismatizations. Using the 2 out of 3 property for flat-local surjections, since $(\X \times_\Y \Y_\infty)_\infty^\Prism \to (\X \times_\Y \Y_\infty)^\Prism$ is a flat-local surjection the claim follows. 

To deduce the flatness claim the previous reduction to the affine case also applies, and we may use the same diagram
\begin{display}
(\X \times_\Y \Y_\infty)_\infty^\Prism \ar{r} \ar{d} & \X^{\Prism} \ar{d} \\
\Y_\infty^\Prism \ar{r} & \Y^{\Prism}
\end{display}
where it remains to verify $\Y_\infty^\Prism \to \Y^{\Prism}$ is an fpqc covering of formal stacks (and also the same check for $(\X \times_\Y \Y_\infty)_\infty^\Prism \to (\X \times_\Y \Y_\infty)^{\Prism}$). In the quasiregular semiperfectoid case $(\X \times_\Y \Y_\infty)_\infty^\Prism \to \Y_\infty^\Prism$ we have already checked the stronger claim.

To address both of these remaining claims at once, suppose that $\X$ is quasiregular semiperfectoid, but $\Y$ is general and $\X\to \Y$ is a quasisyntomic cover. We already know $\X^{\Prism} \to \Y^{\Prism}$ is a flat-local surjection, so we will aim to show the map is an fpqc covering of stacks. Since the prismatization commutes with Tor-independent limits, we have a pull-back diagram
\[
\begin{tikzcd}
(\X\times_\Y \X)^\Prism \ar{r}\ar{d} & \X^\Prism \ar{d}\\
\X^\Prism \ar{r} & \Y^\Prism
\end{tikzcd}
\]
Since $\X\times_\Y \X$ is also quasiregular semiperfectoid by \cite[Lemma 4.30]{BMS2}, we see that the left vertical map and the upper horizontal map are fpqc covers. Now, as $\X^\Prism \to \Y^\Prism$ is a surjection of sheaves in the flat topology, for any test object $\mathrm{S} \to \Y^{\Prism}$ there exists an fpqc cover $\mathrm{S}'\to \mathrm{S}$ with a map $\mathrm{S}' \to \Y^\Prism$ factoring through $\X^\Prism$, i.e. we have a diagram 
\[
\begin{tikzcd}
\Box & {(\X\times_\Y \X)^\Prism} & {\X^\Prism} \\
\mathrm{S}' & {\X^\Prism} & {\Y^\Prism}
\arrow[from=1-1, to=1-2]
\arrow["{\text{fpqc}}", from=1-1, to=2-1]
\arrow["{\text{fpqc}}", from=1-2, to=1-3]
\arrow["{\text{fpqc}}", from=1-2, to=2-2]
\arrow[from=1-3, to=2-3]
\arrow[from=2-1, to=2-2]
\arrow[bend right, from=2-1, to=2-3]
\arrow[from=2-2, to=2-3]
\end{tikzcd}
\]
where, \textit{a priori}, only the indicated arrows are fpqc covers of formal stacks. By virtue of the left square being Cartesian $\Box \to \mathrm{S}'$ is an fpqc cover of formal stacks, as indicated in the diagram. As both the left and right squares are Cartesian, we deduce the outer square is as well and so $\Box \simeq \X^{\Prism} \times_{\Y^{\Prism}} \mathrm{S}' \to \mathrm{S}'$ is fpqc. This suffices to check $\X^{\Prism} \times_{\Y^{\Prism}} \mathrm{S} \to \mathrm{S}$ is an fpqc cover as we may check this fpqc locally on the target, and since the test object was arbitrary it follows $\X^{\Prism} \to \Y^{\Prism}$ is an fpqc cover.
\end{proof}

We needed the following lemma in the proof of the proposition.

\begin{lemma}
Suppose $\rA$ is a filtered ring and $\M$ a filtered $\rA$-module, both of which are equipped with $\N$-indexed exhaustive (honest) increasing filtrations. If the associated graded module $\gr^\ast \M$ is faithfully flat over $\gr^\ast \rA$, then $\M$ is faithfully flat over $\rA$.
\label{lem:flat_on_associated_graded}
\end{lemma}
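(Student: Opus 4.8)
The plan is to establish flatness of $\M$ via the standard criterion that the multiplication map $\mu_J\colon J\otimes_\rA\M\to\M$ is injective for every ideal $J\subseteq\rA$ — equivalently $\Tor_1^\rA(\rA/J,\M)=0$ — and in fact I will prove this for \emph{all} ideals, not merely finitely generated ones. First I would transport the given filtrations: equip $J$ with the induced filtration $F_nJ:=J\cap F_n\rA$ and $\rA/J$ with the quotient filtration $F_n(\rA/J):=\im(F_n\rA\to\rA/J)$. A short diagram chase (this is where I use that the filtrations are ``honest'') shows that the sequence of associated gradeds $0\to\gr^\ast J\to\gr^\ast\rA\to\gr^\ast(\rA/J)\to 0$ is exact, so $\gr^\ast J$ is a homogeneous ideal of $\gr^\ast\rA$ with quotient $\gr^\ast(\rA/J)$. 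I would then put on $J\otimes_\rA\M$ the tensor-product filtration
\[F_n(J\otimes_\rA\M):=\sum_{p+q\le n}\im\bigl(F_pJ\otimes_{\Z}F_q\M\to J\otimes_\rA\M\bigr),\]
which is again exhaustive and vanishes in negative degrees (because $F_pJ=0=F_q\M$ when $p,q<0$), and observe that $\mu_J$ is a filtered map since $F_p\rA\cdot F_q\M\subseteq F_{p+q}\M$.

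The heart of the argument is a comparison on associated gradeds. There is a natural surjection of graded $\gr^\ast\rA$-modules
\[\gr^\ast J\otimes_{\gr^\ast\rA}\gr^\ast\M\twoheadrightarrow\gr^\ast(J\otimes_\rA\M),\]
coming from the pairings $F_pJ\times F_q\M\to F_{p+q}(J\otimes_\rA\M)$: these descend to the associated gradeds (a simple tensor with one factor of strictly lower order lies in strictly lower filtration of the target) and are $\gr^\ast\rA$-bilinear precisely because $(\tilde a x)\otimes y=x\otimes(\tilde a y)$ already holds in $J\otimes_\rA\M$; surjectivity holds because $\gr_n(J\otimes_\rA\M)$ is generated by the images of the terms with $p+q=n$. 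By construction, the composite
\[\gr^\ast J\otimes_{\gr^\ast\rA}\gr^\ast\M\twoheadrightarrow\gr^\ast(J\otimes_\rA\M)\xrightarrow{\gr^\ast(\mu_J)}\gr^\ast\M\]
is exactly the multiplication map $\gr^\ast J\otimes_{\gr^\ast\rA}\gr^\ast\M\to\gr^\ast\M$. Feeding the hypothesis — flatness of $\gr^\ast\M$ over $\gr^\ast\rA$ — into the exact sequence $0\to\gr^\ast J\to\gr^\ast\rA\to\gr^\ast(\rA/J)\to 0$ shows that this multiplication map is injective; hence the displayed surjection is an isomorphism and, crucially, $\gr^\ast(\mu_J)$ is injective.

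Finally I would invoke the elementary ``leading term'' principle: a filtered homomorphism $\phi\colon P\to Q$ of abelian groups carrying exhaustive filtrations that vanish in negative degrees is injective as soon as $\gr^\ast(\phi)$ is. Indeed, if $0\ne x\in\ker\phi$, let $n$ be least with $x\in F_nP$ (such $n$ exists since the filtration is $\N$-indexed, exhaustive, and $x\ne 0$); then the symbol of $x$ in $\gr_nP$ is nonzero while $\gr_n(\phi)$ sends it to the class of $\phi(x)=0$, contradicting injectivity of $\gr^\ast(\phi)$. Applying this to $\mu_J$ gives $\ker\mu_J=0$ for every ideal $J$, which is the desired flatness of $\M$.

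The only place I expect genuine bookkeeping is the middle paragraph: correctly defining the tensor-product filtration and verifying both that $\gr^\ast J\otimes_{\gr^\ast\rA}\gr^\ast\M$ surjects onto $\gr^\ast(J\otimes_\rA\M)$ and that this surjection is compatible with $\gr^\ast(\mu_J)$. Everything else — the flatness criterion, the exactness of the graded sequence associated to $0\to J\to\rA\to\rA/J\to 0$, and the leading-term principle — is routine, and the hypotheses on the filtrations ($\N$-indexing, exhaustiveness, multiplicativity, and the module compatibility $F_i\rA\cdot F_j\M\subseteq F_{i+j}\M$) are exactly what is needed at each of these steps.
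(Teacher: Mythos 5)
Your proof is correct. Note that the paper does not give an argument at all: it disposes of this lemma by citing Bj\"ork (Rings of Differential Operators, Ch.~2, Prop.~3.12), and what you have written is essentially a self-contained version of the standard argument behind that citation. The three ingredients you isolate are exactly right and fit together as claimed: (i) exactness of $0 \to \gr^\ast J \to \gr^\ast \rA \to \gr^\ast(\rA/J) \to 0$ for the induced and quotient filtrations; (ii) the natural surjection $\gr^\ast J \otimes_{\gr^\ast \rA} \gr^\ast \M \twoheadrightarrow \gr^\ast(J\otimes_\rA \M)$ whose composite with $\gr^\ast(\mu_J)$ is the graded multiplication map, which is injective by flatness of $\gr^\ast\M$; and (iii) the leading-term principle, which applies because the tensor-product filtration on $J\otimes_\rA\M$ is exhaustive and vanishes in negative degrees, so every nonzero element has a well-defined order --- this is precisely where the $\N$-indexing hypothesis enters, and it is the reason no completeness assumption is needed. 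Proving injectivity of $\mu_J$ for all ideals rather than only finitely generated ones is harmless (it is stronger than what the flatness criterion requires), and no step of your argument uses anything beyond the stated hypotheses.
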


\begin{proof}
This follows from \cite[Proposition 3.12]{bjork} Chapter 2 (plus a short argument for faithfulness). See \cite[Lemma 3.29]{Fgaugelift} for another argument.
\end{proof}

\begin{corollary}[\cite{drinfeldalg}, Corollary 6.12.5]
Suppose $f:\X\to \Y$ is a quasisyntomic cover of quasisyntomic $p$-adic formal schemes. Then the induced map $f: \X^{\Nyg} \to \Y^{\Nyg}$ is a flat-local surjection of formal stacks, and similarly for $\X^{\Syn}$. The map is flat in both cases.
\label{cor:Nyg_preserve_flat_covers}
\end{corollary}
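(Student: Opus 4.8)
The plan is to mirror the induction carried out in the proof of Proposition \ref{prop:prism_preserve_flat_covers}: first prove the statement for $(-)^{\Nyg}$, reducing to the quasiregular semiperfectoid case, and then deduce the $\Syn$ statement formally from the pushout presentation of $(-)^{\Syn}$. In both cases the ``flat-local surjection'' half of the assertion is already available — for $(-)^{\Nyg}$ this is the property invoked in the proof of Corollary \ref{cor:Nyg_relqrsp_desc} (the Nygaard stack sends quasisyntomic covers to flat-local surjections), and for $(-)^{\Syn}$ it follows since $\X^{\Nyg}\to\X^{\Syn}$ is an \'{e}tale cover — so, exactly as in Proposition \ref{prop:prism_preserve_flat_covers}, the remaining content is representability together with flatness of $f^{\Nyg}$ (resp.\ $f^{\Syn}$), and both are fpqc-local on the target.

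For the Nygaard statement I would first treat $\X=\Spf\rS$, $\Y=\Spf\R$ with $\R\to\rS$ a quasisyntomic cover of quasiregular semiperfectoid rings. Then $\Prism_\R$ and $\Prism_\rS$ are discrete and $\R^{\Nyg}\simeq\mathrm{Rees}_{\Fil^\bullet_{\Nyg}}\Prism_\R$, $\rS^{\Nyg}\simeq\mathrm{Rees}_{\Fil^\bullet_{\Nyg}}\Prism_\rS$ (the computation of $\R^{\Nyg}$ for quasiregular semiperfectoid $\R$ recalled in the proof of Proposition \ref{prop:BK_flatcover}), where $\Fil^i_{\Nyg}=\{x\in\Prism_\R:\varphi(x)\in\I^i\Prism_\R\}$. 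A map of such completed Rees stacks is a flat cover of formal stacks precisely when the induced map of completed Rees algebras is completely flat, and since $t$ acts without torsion on these algebras, Lemma \ref{lem:flat_on_associated_graded} (applied to the filtration by powers of $t$) reduces this to checking that the map on associated gradeds
\[\gr^\bullet_{\Nyg}\Prism_\R \longrightarrow \gr^\bullet_{\Nyg}\Prism_\rS\]
is completely flat. By the description of the Nygaard graded pieces via the conjugate filtration (Bhatt-Scholze), $\gr^i_{\Nyg}\Prism_\R\simeq\Fil^{\conj}_i\overline{\Prism}_\R\{i\}$, and the conjugate filtration equips $\gr^\bullet_{\Nyg}\Prism_\R$ with a further weightwise-finite, exhaustive, multiplicative filtration whose associated graded — after trivialising Breuil-Kisin twists over the fixed perfectoid base $\tilde\R$ and accounting for the extra Nygaard-weight direction — is $\Sym^\bullet_{\R/p}(\L_{\R/\tilde\R}[-1]\otimes^\L\F_p)$ with a polynomial generator adjoined, i.e.\ exactly the ring appearing in the proof of Proposition \ref{prop:prism_preserve_flat_covers}. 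The flatness of this object over the corresponding one for $\R$ was established there from Remark 4.21 of \cite{BMS2} (the cotangent complex $\L_{\rS/\R}$ has $p$-complete Tor-amplitude in degree $-1$, so the relevant transitivity triangle splits and $\Sym^\bullet$ is flat over $\Sym^\bullet$), and flatness propagates back up the two filtrations by Lemma \ref{lem:flat_on_associated_graded} again. Hence $\R^{\Nyg}\to\rS^{\Nyg}$ is a flat cover.

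From here the general Nygaard statement is obtained by repeating, essentially verbatim, the last three cases of the proof of Proposition \ref{prop:prism_preserve_flat_covers}, using only that $(-)^{\Nyg}$ commutes with Tor-independent limits (so that the fibre-product squares of quasiregular semiperfectoid covers become Cartesian squares of Nygaard stacks) and sends quasisyntomic covers to flat-local surjections. For the $\Syn$ case I would then use that $\X^{\Syn}=\X^{\Nyg}\sqcup_{\X^{\Prism}\sqcup\X^{\Prism}}\X^{\Prism}$ with $\X^{\Nyg}\to\X^{\Syn}$ and $\X^{\Prism}\to\X^{\Syn}$ \'{e}tale covers, and likewise for $\Y$: since colimits are universal in the topos of flat sheaves and the constructions $j_{\dR},j_{\HT}$ are compatible with the functoriality in the formal scheme, base change of $f^{\Syn}$ along the \'{e}tale cover $\Y^{\Nyg}\to\Y^{\Syn}$ is identified with $f^{\Nyg}$ (concretely, one computes $\Y^{\Nyg}\times_{\Y^{\Syn}}\Y^{\Nyg}$ using \'{e}taleness and matches the two coequalizer presentations), so that representability and flatness of $f^{\Syn}$ descend from the Nygaard case. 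I expect the main obstacle to lie in the first (quasiregular semiperfectoid) step of the Nygaard argument; but since the needed flatness of $\gr^\bullet_{\Nyg}$ is in substance the cotangent-complex computation already performed for Proposition \ref{prop:prism_preserve_flat_covers}, the genuinely new work is organisational — translating between the Rees-algebra/graded picture and flatness of the Rees stack, and checking the base-change compatibilities used to pass from $(-)^{\Nyg}$ to $(-)^{\Syn}$.
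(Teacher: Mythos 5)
Your overall skeleton (settle the quasiregular semiperfectoid case, run the formal d\'{e}vissage copied from Proposition \ref{prop:prism_preserve_flat_covers}, then pass from $\Nyg$ to $\Syn$) is the same as the paper's, and your appeal to the flat-local surjectivity statements from Corollary \ref{cor:Nyg_relqrsp_desc} is fine. But at the one step that carries content you take a genuinely different and much heavier route, and as written it has gaps. For a quasisyntomic cover $\R \to \rS$ of quasiregular semiperfectoid rings the paper does no new computation at all: the square with $\rS^{\Nyg} \to \R^{\Nyg}$ on top and $\Spf \Prism_\rS \to \Spf \Prism_\R$ on the bottom (vertical maps the structure maps $\pi$ of the Rees stacks) is Cartesian, so flatness of $\rS^{\Nyg} \to \R^{\Nyg}$ is inherited by base change from the flatness of $\Prism_\R \to \Prism_\rS$ already established in Proposition \ref{prop:prism_preserve_flat_covers}. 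Your plan instead re-runs an associated-graded/cotangent-complex computation for the Nygaard filtration. Two concrete problems: (i) Lemma \ref{lem:flat_on_associated_graded} is stated for $\N$-indexed \emph{increasing exhaustive} filtrations, whereas the $t$-adic filtration on the completed Rees algebra is decreasing and complete, so the lemma as cited does not apply; you would need the Zariskian/complete-filtration variant (also in Bj\"{o}rk), plus careful bookkeeping of the $(p,\I)$-completions and of the passage mod $p$ before the Hodge--Tate/$\Sym$ identification is available. (ii) Even granting all of this, you are reproving by hand what the base-change observation gives for free; I would replace this step by the Cartesian square.

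For the $\Syn$ statement you assert the base-change identification $\Y^{\Nyg} \times_{\Y^{\Syn}} \X^{\Syn} \simeq \X^{\Nyg}$, which is plausible but is an extra claim you do not justify (matching the two pushout presentations under a fiber product is not automatic), and it is not needed. The softer argument, which is what the paper's one-line remark amounts to, is: the composite $\X^{\Nyg} \to \X^{\Syn} \to \Y^{\Syn}$ agrees with $\X^{\Nyg} \to \Y^{\Nyg} \to \Y^{\Syn}$, hence is flat (flat followed by \'{e}tale), and since $\X^{\Nyg} \to \X^{\Syn}$ is an \'{e}tale cover, flatness (and representability) of $\X^{\Syn} \to \Y^{\Syn}$ descend along precomposition with this cover. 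With these two replacements your argument matches the paper; as written, the misapplied filtration lemma and the unproved Cartesianness claim are genuine gaps, though fixable.
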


\begin{proof}
As before, for the flat-local surjection claim the argument in \cite[Corollary 6.12.3]{drinfeldalg} reduces the claim to checking that the induced map from a quasisyntomic cover $\Spf \rS \to \Spf \R$ of quasiregular semiperfectoid rings induces an fpqc covering of formal stacks on the associated Nygaard stacks. This claim for quasiregular semiperfectoids appears already in \cite[Lemma 6.12.4]{drinfeldalg}.

We now explain one possible argument for quasiregular semiperfectoids, noting $\R^{\Nyg}$ is the Rees stack of the Nygaard filtration on $\Prism_\R$ for $\R$ quasiregular semiperfectoid (see \cite[Theorem 6.11.7]{drinfeldalg}). We have already checked that the map of Nygaard stacks $\rS^{\Nyg} \to \R^{\Nyg}$ is faithfully flat on the $t\neq 0$ locus, and one can check also faithful flatness on the $t=0$ locus. We have essentially already done the nontrivial input to check this, since $\mathrm{gr}_{\Nyg} \Prism_\R = \bigoplus_{i\in \N} \Fil_i^{\conj} \overline{\Prism}_\R$. Now define a multiplicative filtration on the associated graded, given by $F_m = \bigoplus_{i\in \N} \Fil_{\min(m,i)}^{\conj} \overline{\Prism}_\R$. That is, on the $i$th summand we place a truncated conjugate filtration. The $m$th associated graded is then a direct sum $\bigoplus_{i\ge m} \gr_m^{\conj} \overline{\Prism}_\R$, and in particular after appropriately indexing the total associated graded is the polynomial extension $\mathrm{gr}_{\conj}^* \overline{\Prism}_\R[u]$ as a graded ring (the degree in the variable $u$ records $i-m$). The previous flatness check used for the conjugate filtration's associated graded then suffices, using Lemma \ref{lem:flat_on_associated_graded}. Since the Nygaard filtration is an honest filtration on quasiregular semiperfectoids these two checks suffice. More precisely, the derived $t=0$ fiber is classical due to the filtration being honest. The flatness claim is then also handled the same way as Proposition \ref{prop:prism_preserve_flat_covers} once we have the stronger claim in the quasiregular semiperfectoid case; we note \cite[Proposition 6.12.1]{drinfeldalg} replaces the use of Remark 3.9 in \cite{APC2}.

The claim for $\X^{\Syn} \to \Y^{\Syn}$ follows since $\Y^{\Nyg}$ is an \'{e}tale cover of $\Y^{\Syn}$ (also using that $\X^{\Nyg} \simeq \X^{\Syn} \times_{\Y^{\Syn}} \Y^{\Nyg}$).
\end{proof}

This also allows us to give a proof of the following property of $\X^{\Syn}$ in the $p$-adic setting, asserted for $\X/\Spec k$ in Warning 4.1.3 in \cite{Fgauge} (the warning is that separatedness can fail).

\begin{lemma}
Let $\X/\Spf \Z_p$ be a quasisyntomic $p$-adic formal scheme with affine diagonal. Then $\X^{\Syn}$ has affine diagonal.
\label{lem:aff_diag}
\end{lemma}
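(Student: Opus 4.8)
The plan is to establish a general descent principle for affine-diagonality along affine faithfully flat covers, apply it along the \'{e}tale cover $j_{\Nyg}\colon\X^{\Nyg}\to\X^{\Syn}$, and then along a quasiregular semiperfectoid cover, reducing to a statement about Rees stacks; this mirrors the argument indicated in \cite{Fgauge} for $\X/\Spec k$, now powered by Corollary~\ref{cor:Nyg_preserve_flat_covers}.

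First I would prove the following: \emph{if $\pi\colon\mathcal Z\to\mathcal Y$ is a representable, affine, faithfully flat morphism of formal stacks and $\mathcal Z$ has affine diagonal, then so does $\mathcal Y$.} The proof rests on the elementary remark that a morphism $W\to\mathcal Z\times\mathcal Z$ whose first projection to $\mathcal Z$ is affine is itself affine whenever $\mathcal Z$ has affine diagonal: writing $g$ for the second projection, it factors as $W\xrightarrow{(\mathrm{id},\,g)}W\times\mathcal Z\longrightarrow\mathcal Z\times\mathcal Z$, where the first map is the base change of $\Delta_{\mathcal Z}$ along $g\times\mathrm{id}$ and the second is the base change of $W\to\mathcal Z$ along a projection, both affine. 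Since affineness of a morphism is local for the flat topology on the target and $\pi\times\pi$ is an affine faithfully flat (hence effective-epimorphic) cover, $\Delta_{\mathcal Y}$ is affine as soon as its base change $\mathcal Z\times_{\mathcal Y}\mathcal Z\to\mathcal Z\times\mathcal Z$ is, and this holds by the remark applied to $W=\mathcal Z\times_{\mathcal Y}\mathcal Z$, whose first projection to $\mathcal Z$ is a base change of $\pi$.

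Next I would invoke that $j_{\Nyg}\colon\X^{\Nyg}\to\X^{\Syn}$ is an \'{e}tale cover which is moreover affine---it is finite \'{e}tale, $\X^{\Syn}$ being obtained by gluing the two copies $j_{\dR}(\X^{\Prism})$ and $j_{\HT}(\X^{\Prism})$ inside $\X^{\Nyg}$---so that by the descent principle it suffices to show $\X^{\Nyg}$ has affine diagonal. For this I would choose a quasisyntomic cover $\coprod_i\Spf S_i\to\X$ with each $S_i$ quasiregular semiperfectoid, taken affine by quasicompactness of $\X$; by Corollary~\ref{cor:Nyg_preserve_flat_covers} the map $\coprod_i(\Spf S_i)^{\Nyg}\to\X^{\Nyg}$ is a flat cover of formal stacks, hence a representable affine faithfully flat surjection, and a second application of the descent principle reduces us to the case $\X=\Spf S$ with $S$ quasiregular semiperfectoid. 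There the computation of the Nygaard stack of such rings in \cite{Fgauge} identifies $(\Spf S)^{\Nyg}$ with a Rees stack $\mathrm{Rees}_{\Fil^\bullet_{\Nyg}}(\Prism^{(1)}_S)$, i.e.\ the quotient of an affine formal scheme by $\G_m$, which has affine diagonal since $\G_m$ is affine. This completes the argument; the affine-diagonal hypothesis on $\X$ is used only to keep the covers and fibre products in the reduction under control.

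I expect the step needing the most care to be the affineness of $j_{\Nyg}\colon\X^{\Nyg}\to\X^{\Syn}$ in the $p$-adic setting, equivalently a precise understanding of how $j_{\dR}$ and $j_{\HT}$ sit inside $\X^{\Nyg}$ and of the coequalizer presenting $\X^{\Syn}$. One expects to verify this after the quasiregular semiperfectoid base change of Corollary~\ref{cor:Nyg_preserve_flat_covers}, where (as with Proposition~\ref{prop:BK_flatcover}) $j_{\dR}$ becomes the open immersion inverting the Rees parameter and $j_{\HT}$ a composite of affine morphisms. Equivalently---and this is the formulation that actually feeds the descent---one needs that the first \v{C}ech term $\X^{\Nyg}\times_{\X^{\Syn}}\X^{\Nyg}$ is affine over $\X^{\Nyg}$, which together with affine-diagonality of $\X^{\Nyg}$ (already established) and the elementary remark of the first paragraph forces $\Delta_{\X^{\Syn}}$ to be affine by flat descent along $j_{\Nyg}\times j_{\Nyg}$.
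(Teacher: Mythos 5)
Your descent principle for affine-diagonality along representable affine faithfully flat covers is correct as you prove it, and the endgame (the Nygaard stack of a quasiregular semiperfectoid ring is an affine formal scheme modulo $\G_m$, hence has affine diagonal) is fine; this is a genuinely different route from the paper, which instead verifies affineness of the pulled-back diagonal of $\X^{\Prism}$ and $\X^{\Nyg}$ chart by chart on quasiregular semiperfectoid \v{C}ech covers and then handles $\X^{\Syn}$ by a gluing observation. The problem is that the load-bearing step of your route is exactly the one you do not prove: affineness of $j_{\Nyg}\colon\X^{\Nyg}\to\X^{\Syn}$. Your stated justification is wrong: $j_{\Nyg}$ is not finite \'{e}tale (its base change along itself contains the open immersions $j_{\dR},j_{\HT}$, which are not closed, so the map is not finite). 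What is true, and what your argument actually needs, is an identification $\X^{\Nyg}\times_{\X^{\Syn}}\X^{\Nyg}\simeq \X^{\Nyg}\sqcup\X^{\Prism}\sqcup\X^{\Prism}$, with the nontrivial projections given by $j_{\dR}$ and $j_{\HT}$. This requires two inputs: (i) the images of $j_{\dR}$ and $j_{\HT}$ are \emph{disjoint} open substacks of $\X^{\Nyg}$ over $p$-nilpotent test rings (on the overlap both Rees parameters would be invertible, forcing $p$ invertible); without disjointness the equivalence relation generated by the gluing involves longer chains and the \v{C}ech term is not this finite disjoint union; and (ii) $j_{\dR}$ and $j_{\HT}$ are affine morphisms, because they are nonvanishing loci of sections of line bundles (locally the $t\neq 0$ and $u\neq 0$ loci of a Rees presentation). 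You gesture at (ii) in your last paragraph but never address (i), which is where the actual structure of the pushout enters; until this is supplied, the first application of your descent principle has nothing to run on.

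A second gap is the appeal to quasicompactness of $\X$, which is not among the hypotheses (and the paper's proof does not use it). Your method genuinely needs an \emph{affine} cover of $\X^{\Nyg}$: for non-quasicompact $\X$ the cover $\coprod_i(\Spf S_i)^{\Nyg}\to\X^{\Nyg}$ furnished by Corollary \ref{cor:Nyg_preserve_flat_covers} is faithfully flat but not affine (an infinite disjoint union of affines is not affine), and affine-diagonality is not Zariski-local, so one cannot simply patch over an open cover of $\X$. The paper sidesteps this by checking affineness of $\widetilde{\X^{\Prism}}\times_{\X^{\Prism}}\widetilde{\X^{\Prism}}\to\widetilde{\X^{\Prism}}\times\widetilde{\X^{\Prism}}$ directly on the affine charts $\Spf\Prism_{\R_i}\times\Spf\Prism_{\R_j}$, where the preimage is $\Spf\Prism_{\R_i\times_{\X}\R_j}$; this is also precisely where the affine-diagonal hypothesis on $\X$ does concrete work (it makes $\Spf\R_i\times_\X\Spf\R_j$ affine, hence quasiregular semiperfectoid), a point your sketch leaves vague.
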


\begin{proof}
Suppose first that $\X$ is affine, equipped with $\Spf \rA \to \X^{\Prism}$ a flat-local surjection where $\rA = \Prism_\R$ for a quasiregular semiperfectoid ring $\R$ (which exists if $\Spf \R \to \X$ is a quasisyntomic cover by \cite[Lemma 6.3]{APC2}). Then consider the diagram
\begin{display}
\Spf \rA \times_{\X^{\Prism}} \Spf \rA \ar{d} \ar{r} & \Spf \rA \times_{\Spf \Z_p} \Spf \rA \ar{d} \\
\X^{\Prism} \ar{r} & \X^{\Prism} \times_{\Spf \Z_p} \X^{\Prism}.
\end{display}
This diagram is cartesian by writing $\X^{\Prism}=\X^{\Prism} \times_{\X^{\Prism}} \X^{\Prism}$ and using that limits commute with limits.

For the top map in this diagram, we can simplify to get
\[\Spf \rA \times_{\X^{\Prism}} \Spf \rA \simeq (\Spf \R \times_\X \Spf \R)^{\Prism} \simeq \Spf \Prism_{\Spf \R \times_\X \Spf \R}.\]
For the first equivalence we use that prismatization commutes with finite Tor-independent limits (Remark 3.5 in \cite{APC2}). Thus the top morphism is affine. It follows $\X^{\Prism}$ has affine diagonal as this property is flat local on the target (here one can use the fpqc covering property for a quasiregular semiperfectoid source provided by the proof of Proposition \ref{prop:prism_preserve_flat_covers}).

For general $\X$, we have a quasisyntomic covering $\tilde{\X}=\sqcup_i \Spf \R_i$ of $\X$ by quasiregular semiperfectoid rings by choosing an affine covering $\sqcup_i \Spf \rA_i$ of $\X$ and picking quasiregular semiperfectoid covers of each affine. Then there is a flat-local surjection (really an fpqc cover by Proposition \ref{prop:prism_preserve_flat_covers}) of $\X^{\Prism} \times_{\Spf \Z_p} \X^{\Prism}$ by $\widetilde{\X}^{\Prism} \times_{\Spf \Z_p} \widetilde{\X}^{\Prism}$ where $\widetilde{\X}^{\Prism} = \sqcup_i \Spf \Prism_{\R_i}$. We can then obtain the same diagram, but with $\widetilde{\X}^{\Prism}$ in place of $\Spf \rA$:
\begin{display}
\widetilde{\X}^{\Prism} \times_{\X^{\Prism}} \widetilde{\X}^{\Prism} \ar{d} \ar{r} & \widetilde{\X}^{\Prism} \times_{\Spf \Z_p} \widetilde{\X}^{\Prism} \ar{d} \\
\X^{\Prism} \ar{r} & \X^{\Prism} \times_{\Spf \Z_p} \X^{\Prism}.
\end{display}
We claim the preimages $\Spf \R_i \times_{\X} \Spf \R_j$ of the affines $\Spf \R_i \times_{\Spf \Z_p} \Spf \R_j$ covering $\tilde{\X} \times_{\Spf \Z_p} \tilde{\X}$ are affine and quasiregular semiperfectoid. If $\Spf \R_i$ and $\Spf \R_j$ cover $\Spf \rA_i, \Spf \rA_j \subset \X$ then since $\X$ has affine diagonal we see $\Spf \rA_i \times_\X \Spf \rA_j = \Spf \rA_{ij}$ for some ring $\rA_{ij}$. We then see $\Spf \R_i \times_{\X} \Spf \R_j = \Spf(\R'_i \widehat{\otimes}_{\rA_{ij}}\R'_j)$ where $\R'_i = \R_i \widehat{\otimes}_{\rA_i} \rA_{ij}$ and $\R'_j$ is defined analogously. This shows the result is affine; the rings $\R'_i, \R'_j$ are actually again quasiregular semiperfectoid (for example \cite[Lemma 1.6]{Tannakian} applies). It then follows $\Spf \R_i \times_{\X} \Spf \R_j$ is quasiregular semiperfectoid, as $\R'_i \widehat{\otimes}_{\rA_{ij}}\R'_j$ is easily checked to be quasiregular semiperfectoid.

Using the analogous affine covering $\Spf \Prism_{\R_i} \times_{\Spf \Z_p} \Spf \Prism_{\R_j}$ of $\widetilde{\X}^{\Prism} \times_{\Spf \Z_p} \widetilde{\X}^{\Prism}$ the preimages $\Spf \Prism_{\R_i} \times_{\X^{\Prism}} \Spf \Prism_{\R_j} \simeq \Spf \Prism_{\Spf \R_i \times_{\X} \Spf \R_j}$ are then all prismatizations of quasiregular semiperfectoids and hence affine formal schemes (using again the compatibility of prismatization with finite Tor-independent limits). Thus the map of formal schemes
\[\widetilde{\X}^{\Prism} \times_{\X^{\Prism}} \widetilde{\X}^{\Prism} \to \widetilde{\X}^{\Prism} \times_{\Spf \Z_p} \widetilde{\X}^{\Prism}\]
is affine, and the same argument shows $\X^{\Prism}$ has affine diagonal.

A similar argument shows $\X^{\Nyg}$ has affine diagonal, where we must also make the analogous argument for the Nygaard stack that the cover by the Rees stack of a quasiregular semiperfectoid is a flat-local surjection (using Corollary \ref{cor:Nyg_preserve_flat_covers}). Note that $(-)^{\Nyg}$ also commutes with finite Tor-independent limits.

The claim for $\X^{\Syn}$ can be checked by a similar argument, or by noting that it is obtained from a stack $\X^{\Nyg}$ with affine diagonal by gluing along two open immersions $j_{\dR},j_{\HT}$ with disjoint images (see \cite[Remark 5.3.6]{Fgauge}) that are affine monomorphisms.
\end{proof}

\section{Reflexive sheaves on \texorpdfstring{$\X^{\Syn}$}{XSyn}}

We can define reflexive $F$-gauges for general smooth $\X/\Spf \O_K$ in a similar way to the case of $\X=\Spf \Z_p$ (done in \cite{Fgauge}).

\begin{definition}
\label{def:reflexive}
Let $\X/\Spf \O_K$ be smooth. An $F$-gauge in $\Coh(\X^{\Syn})$ is \textit{reflexive} if the natural map $\mathcal{E} \to \mathcal{E}^{\vee \vee}$ is an isomorphism. We denote this full subcategory by $\mathsf{Refl}(\X^{\Syn})$.
\end{definition}

This definition is slightly different, so we verify it is compatible with the one in \cite{Fgauge}. That is, after picking a Breuil-Kisin prism $(\W(k)\llbracket u_0 \rrbracket,\E(u_0))$ we ask that the pullback along
\[p: (\Spf \W(k)\llbracket u_0\rrbracket\langle u,t\rangle/(ut-\E(u_0)))/\G_m \to \O_K^{\Syn}\]
is a reflexive sheaf after forgetting the grading.

\begin{lemma}
Let $\mathcal{E}\in \Coh(\O_K^{\Syn})$. Then $\mathcal{E} \to \mathcal{E}^{\vee \vee}$ is an equivalence if and only if $\mathcal{E}$ is reflexive in the sense of \cite{Fgauge}.
\label{lem:reflexivedef_equiv}
\end{lemma}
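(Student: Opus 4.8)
The plan is to compare two a priori different notions of reflexivity for a coherent $F$-gauge $\mathcal{E}$ on $\O_K^{\Syn}$: the intrinsic one from Definition \ref{def:reflexive}, asking that $\mathcal{E} \to \mathcal{E}^{\vee\vee}$ is an equivalence where the dual is the internal $\underline{\Hom}$ into $\O_{\O_K^{\Syn}}$, and the one from \cite{Fgauge}, which asks that the pullback $p^*\mathcal{E}$ along the cover $p$ from the Rees stack of the Breuil-Kisin prism is reflexive as a (non-graded) coherent sheaf on the Noetherian regular formal scheme $\Spf \W(k)\llbracket u_0\rrbracket\langle u,t\rangle/(ut-\E(u_0))$. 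The key point is that formation of $\O$-linear duals and double duals commutes with pullback along $p$. First I would record that $p$ is a flat-local surjection (Construction preceding Proposition \ref{prop:t_struct_XSyn}, or Proposition \ref{prop:BK_flatcover} combined with $\O_K^{\Nyg}\to\O_K^{\Syn}$ being \'etale), and that it is $t$-exact (Remark \ref{rem:t-exact}), so that $p^*\mathcal{E}$ again lies in $\Coh$ of the Rees stack.

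The main step is the base-change statement: for $\mathcal{E}\in\Coh(\O_K^{\Syn})$ one has a canonical identification $p^*(\mathcal{E}^\vee) \simeq (p^*\mathcal{E})^\vee$, compatible with the biduality maps. This I would deduce from the fact that $p$ is flat (affine-locally on the source, pullback by a flat ring map) together with $\mathcal{E}$ being \emph{coherent}, so that locally $\mathcal{E}$ is represented by a bounded complex of finitely generated modules over a Noetherian ring; for such complexes $\R\underline{\Hom}(-,\O)$ commutes with flat base change. Here it is essential — and worth emphasizing, as the excerpt stresses repeatedly — that the Rees stack is the $\G_m$-quotient of a \emph{Noetherian regular} affine formal scheme, so that $\Coh$ genuinely consists of perfect complexes and duals behave well; this is exactly the content pointed to in the remark after Definition \ref{defn:doubledual}. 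Iterating, $p^*(\mathcal{E}^{\vee\vee}) \simeq (p^*\mathcal{E})^{\vee\vee}$ compatibly with $\mathcal{E}\to\mathcal{E}^{\vee\vee}$.

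Granting this, the lemma follows by descent: since $p$ is a flat-local surjection, a map of coherent sheaves on $\O_K^{\Syn}$ is an equivalence if and only if its pullback along $p$ is (the \v{C}ech descent for $\Perf$/$\D_{\mathrm{qc}}$ used throughout \S2.2). Applying this to $\mathcal{E}\to\mathcal{E}^{\vee\vee}$ shows it is an equivalence iff $p^*\mathcal{E}\to (p^*\mathcal{E})^{\vee\vee}$ is, i.e.\ iff $p^*\mathcal{E}$ is reflexive on the Rees stack; and reflexivity of a $\G_m$-equivariant coherent sheaf is equivalent to reflexivity after forgetting the grading, since the internal $\Hom$ and biduality are computed on the underlying (equivariant) module and forgetting the $\G_m$-action is conservative and exact. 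That last comparison — graded vs.\ ungraded reflexivity — is the one genuinely fiddly point, but it is standard: the forgetful functor from $\G_m$-equivariant to ordinary coherent sheaves is exact and faithful, and intertwines the two dualities, so it detects isomorphisms. The expected main obstacle is not any of these steps individually but rather being careful that ``$\Coh(\O_K^{\Syn})=\Perf^\heart$'' really does land, after pullback, in bounded complexes of finitely generated modules over a Noetherian regular ring so that flat base change for $\R\underline{\Hom}$ applies without completion subtleties — which is precisely why the Breuil-Kisin-prism presentation with its Noetherian regular Rees stack was set up in the first place.
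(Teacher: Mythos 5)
Your proposal is correct and follows essentially the same route as the paper: pull back along the flat-local surjection $p$ from the Breuil--Kisin Rees stack, use flatness to identify $p^*(\mathcal{E}^{\vee\vee})\simeq (p^*\mathcal{E})^{\vee\vee}$ compatibly with the biduality map, and conclude by descent (conservativity of $p^*$). The only difference is that you spell out the graded-versus-ungraded reflexivity comparison via the exact, conservative forgetful functor, a point the paper's proof leaves implicit; this is a harmless and accurate elaboration.
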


\begin{proof}
We again consider the flat-local surjection
\[\rho: (\Spf \W(k)\llbracket u_0\rrbracket\langle u,t\rangle/(ut-\E(u_0)))/\G_m \to \O_K^{\Nyg} \to \O_K^{\Syn}.\]

Suppose first that $\mathcal{E} \to \mathcal{E}^{\vee \vee}$ is an equivalence. Then applying $\rho^*$, from $t$-exactness (and symmetric monoidality) of the pullback we have $\rho^*(\mathcal{E}^{\vee})=(\rho^* \mathcal{E})^{\vee}$. Thus, this pulls back to $\rho^* \mathcal{E} \simeq (\rho^* \mathcal{E})^{\vee \vee}$, and $\mathcal{E}$ is reflexive in the sense of \cite{Fgauge}. Conversely, suppose we are only given the data of $\rho^* \mathcal{E} \simeq (\rho^* \mathcal{E})^{\vee \vee}$. We can check equivalences after pullback by a flat-local surjection by descent, so it follows the natural map $\mathcal{E}\to \mathcal{E}^{\vee \vee}$ inducing this must be an equivalence.
\end{proof}

Our goal in this section will be to prove that \'{e}tale realization induces an equivalence of categories $\mathsf{Refl}(\X^{\Syn}) \simeq \Loc_{\Z_p}^\cris(\X_\eta)$ when $\X/\Spf \O_K$ is smooth. We regard this as an improvement of the following result of Guo-Li, which already produces $F$-gauges lifting analytic prismatic $F$-crystals; we simply provide a characterization of the essential image. The advantage of this characterization is that it makes it clear that there is always a natural map
\[\mathcal{E} \to \mathcal{E}^{\vee \vee}\]
to the reflexive hull of an $F$-gauge, where the target is reflexive.

\begin{theorem}[Guo-Li, Theorem 3.32 in \cite{Fgaugelift}]
Let $\X/\Spf \O_K$ be a smooth $p$-adic formal scheme.

There is a fully faithful functor
\[\Pi_\X: \Coh^{\varphi,\I-\mathrm{tf}}(\X_\Prism) \to \Perf(\X^{\Syn})\]
uniquely characterized by the condition that for $S \to \X$ a $p$-completely flat quasiregular semiperfectoid cover it assigns the $F$-gauge with filtration
\[\mathrm{Fil}^\bullet(\Pi_\X(\mathcal{E})(\Prism_S)) = \varphi_{\mathcal{E}}^{-1}(\I^\bullet \mathcal{E}(\Prism_S))\]
but with the same underlying prismatic crystal and Frobenius.

Moreover, $\Pi_\X$ is the right adjoint of the forgetful functor $(-)|_{\X^{\Prism}}$ (on $\I$-torsionfree coherent objects)\footnote{Their notion of a coherent $F$-gauge is a priori slightly different, but in Lemma \ref{lem:PiX_essim} we at least check $\Coh(\X^{\Syn})$ contains their category.}; as the counit is an equivalence by the description, $\Pi_\X$ is fully faithful.
\label{thm:Fgaugelift}
\end{theorem}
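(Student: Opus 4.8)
The plan is to use the presentation of $\X^{\Syn}$ as the pushout of $j_{\dR}\sqcup j_{\HT}:\X^{\Prism}\sqcup\X^{\Prism}\to\X^{\Nyg}$ along the fold map $\X^{\Prism}\sqcup\X^{\Prism}\to\X^{\Prism}$, which gives
\[
\D(\X^{\Syn})=\mathrm{eq}\!\left(\D(\X^{\Nyg})\rightrightarrows\D(\X^{\Prism})\right)
\]
with arrows $j_{\dR}^{*},j_{\HT}^{*}$. So from $\mathcal{E}\in\mathsf{Coh}^{\varphi,\I-\mathrm{tf}}(\X_\Prism)$ I would build $\Pi_\X(\mathcal{E})$ by producing a Nygaard-filtered object $N(\mathcal{E})$ on $\X^{\Nyg}$ together with an identification of its two prismatic restrictions. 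I would define $N(\mathcal{E})$ \emph{locally}: after pulling back along a Breuil-Kisin cover $\mathrm{Rees}_{\I^{\bullet}}(\rA)\to\X^{\Nyg}$ (equivalently, after evaluating on a $p$-completely flat quasiregular semiperfectoid cover $S\to\X$), take $N(\mathcal{E})$ to be the Rees object of $(\varphi^{*}\mathcal{E}(\rA),\Fil^{\bullet})$ with $\Fil^{i}:=\varphi_{\mathcal{E}}^{-1}(\I^{i}\mathcal{E}(\rA))$, retaining the underlying crystal and Frobenius. The $\I$-torsion-freeness hypothesis is essential here: it guarantees that forming the preimage $\varphi_{\mathcal{E}}^{-1}(\I^{\bullet}\mathcal{E})$ commutes with the flat base-change maps between the charts of such a cover, so the local constructions glue to a well-defined, functorial $N(\mathcal{E})\in\D(\X^{\Nyg})$.

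Next I would compute the two prismatic legs: inverting the Rees parameter (the de Rham leg) forgets the filtration and returns $\varphi^{*}\mathcal{E}$, while the divided-Frobenius leg returns $\mathcal{E}$, using that $\I^{i}\mathcal{E}\subseteq\mathrm{im}(\varphi_{\mathcal{E}})$ for $i\gg0$ so that $\bigcup_{i}\I^{-i}\varphi_{\mathcal{E}}(\Fil^{i})=\mathcal{E}$. The whole point of taking the Nygaard filtration to be the \emph{full} preimage $\varphi_{\mathcal{E}}^{-1}(\I^{\bullet})$ is that the comparison between these two legs — which a priori exists only after inverting $\I$, since $\varphi_{\mathcal{E}}$ is merely an isogeny — then upgrades, once the Frobenius twists implicit in $j_{\dR},j_{\HT}$ are accounted for, to an honest isomorphism of filtered objects; this isomorphism, induced by $\varphi_{\mathcal{E}}$, is the gluing datum, so feeding $(N(\mathcal{E}),\varphi_{\mathcal{E}})$ into the equalizer produces $\Pi_\X(\mathcal{E})\in\D(\X^{\Syn})$ with $\Pi_\X(\mathcal{E})|_{\X^{\Prism}}\simeq\mathcal{E}$ as a prismatic $F$-crystal, tautologically. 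To see $\Pi_\X(\mathcal{E})$ is perfect, pull back along $\mathrm{Rees}_{\I^{\bullet}}(\rA)\to\X^{\Nyg}\to\X^{\Syn}$ for a covering family of Breuil-Kisin prisms: since $\mathcal{E}(\rA)$ is coherent and $\rA$ is Noetherian, each $\Fil^{i}$ is finitely generated, so the Rees module is finitely generated over the Noetherian \emph{regular} Rees algebra, hence perfect, and perfectness descends along this flat-local surjection. Uniqueness of $\Pi_\X$ subject to the quasiregular semiperfectoid description follows immediately, since a sheaf on $\X^{\Syn}$ is determined by its pullback to such a cover.

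For the last clause, under the equalizer description the restriction $(-)|_{\X^{\Prism}}:\Perf(\X^{\Syn})\to\Perf^{\varphi}(\X_\Prism,\O_\Prism)$ of Lemma \ref{lem:restriction_Fcris} is the functor that forgets the Nygaard filtration but keeps the underlying crystal and Frobenius, while $\Pi_\X$ equips a prismatic $F$-crystal with \emph{the largest} Nygaard filtration compatible with its Frobenius, namely $\varphi^{-1}(\I^{\bullet})$. Any morphism of prismatic $F$-crystals $\mathcal{F}|_{\X^{\Prism}}\to\mathcal{E}$ automatically carries the Nygaard filtration of $\mathcal{F}$ — which satisfies $\varphi(\Fil^{i})\subseteq\I^{i}$ by the very construction of $\X^{\Syn}$ from $\varphi\{i\}-\mathrm{can}$ — into $\varphi_{\mathcal{E}}^{-1}(\I^{i}\mathcal{E})$, hence is automatically filtered and so lifts uniquely to $\mathcal{F}\to\Pi_\X(\mathcal{E})$; this is the adjunction isomorphism $\Hom(\mathcal{F}|_{\X^{\Prism}},\mathcal{E})\simeq\Hom(\mathcal{F},\Pi_\X(\mathcal{E}))$, with $\Pi_\X$ right adjoint to restriction. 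The counit $(\Pi_\X\mathcal{E})|_{\X^{\Prism}}\to\mathcal{E}$ is the identity by construction, hence an isomorphism, so $\Pi_\X$ is fully faithful. The step I expect to demand the most care is the gluing in the second paragraph — tracking the Frobenius twists so that the two prismatic legs of $N(\mathcal{E})$ become canonically identified, while simultaneously keeping $\Pi_\X(\mathcal{E})$ perfect — since this is exactly where one uses that $\mathcal{E}$ is a coherent $\I$-torsion-free $F$-crystal rather than an arbitrary perfect complex with Frobenius.
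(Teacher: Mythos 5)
First, a caveat: this paper does not prove Theorem \ref{thm:Fgaugelift} at all --- it is imported verbatim from Guo--Li --- so you are in effect re-proving their Theorem 2.31, and your outline does follow the natural route (and theirs): define the filtration $\varphi_{\mathcal{E}}^{-1}(\I^{\bullet}\mathcal{E})$ chart-by-chart, glue using $\I$-torsion-freeness, identify the two prismatic legs using $\I^{c}\mathcal{E}\subseteq\mathrm{im}(\varphi_{\mathcal{E}})$, and extract the adjunction from maximality of the filtration. Two steps, however, are genuinely gapped as written. The first is perfectness: the inference ``each $\Fil^{i}$ is finitely generated, so the Rees module is finitely generated over the Noetherian regular Rees algebra'' is a non sequitur --- a graded module whose graded pieces are all finitely generated need not be finitely generated (take each piece nonzero with the Rees variable acting by zero). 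What one actually needs is an eventual stability statement for the preimage filtration: e.g.\ that $\varphi_{\mathcal{E}}$ is injective on $\varphi^{*}\mathcal{E}(\rA)$ (here $\I$-torsion-freeness and flatness of $\varphi$ enter) and identifies $\Fil^{i}$ with $\I^{i}\mathcal{E}(\rA)$ for $i\ge c$, together with finiteness of $\mathrm{Rees}_{\I^{\bullet}}(\rA)$ over $\mathrm{Rees}_{\Fil^{\bullet}_{\Nyg}}(\rA^{(1)})$ (finiteness of $\varphi$, Proposition \ref{prop:BK_prism}), so that the Rees module is generated in degrees at most $c$. This is exactly where coherence and $\I$-torsion-freeness do their real work, and it is the bulk of Guo--Li's argument; your sketch treats it as automatic. (Relatedly, the filtered module $\varphi_{\mathcal{E}}^{-1}(\I^{\bullet}\mathcal{E}(\rA))$ is filtered over $\Fil^{\bullet}_{\Nyg}\rA^{(1)}$ and so naturally lives on the cover $\mathrm{Rees}_{\Fil^{\bullet}_{\Nyg}}(\rA^{(1)})$ of Proposition \ref{prop:BK_flatcover}, not on $\mathrm{Rees}_{\I^{\bullet}}(\rA)$; one pulls back along $\varphi$ to the latter. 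Cosmetic, but it should be straightened out.)

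The second gap is the adjunction clause. Your argument --- any map of prismatic $F$-crystals automatically respects the maximal filtration --- gives the universal property at the level of $\Hom$-sets for gauges in the heart with honest filtrations. But the assertion is that $\Pi_{\X}$ is right adjoint to $(-)|_{\X^{\Prism}}$ on $\Perf(\X^{\Syn})$, i.e.\ an equivalence of mapping spectra $\R\Hom_{\X^{\Syn}}(\mathcal{F},\Pi_{\X}\mathcal{E})\simeq\R\Hom_{\X^{\Prism}}(\mathcal{F}|_{\X^{\Prism}},\mathcal{E})$ for every perfect $\mathcal{F}$, hence isomorphisms on all $\Ext$ groups. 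Because $\Fil^{i}\Pi_{\X}(\mathcal{E})$ is the \emph{underived} preimage (the map $\varphi^{*}\mathcal{E}\oplus\I^{i}\mathcal{E}\to\mathcal{E}$ is not surjective in general), maps into $\Pi_{\X}(\mathcal{E})$ are not computed as a limit of mapping spectra into the constituents, so the $1$-categorical maximality argument does not upgrade for free; one has to carry out an actual computation over the Nygaard/Rees charts (as Guo--Li do). The unit and counit statements that the present paper actually uses downstream would follow from your weaker $\Hom$-level argument, but as a proof of the theorem as stated, both this step and the finite-generation step above need to be filled in.
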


As we will be interested in $F$-gauges up to isogeny later (and we will need it to argue $\mathsf{Refl}(\X^{\Syn}) \simeq \Loc_{\Z_p}^\cris(\X_\eta)$), it will be useful to first determine the kernel of
\[\T_\et: \Coh(\X^{\Syn}) \to \D^{(b)}_{\mathrm{lisse}}(\X_\eta,\Z_p)^{\heart}.\]
We provide a similar characterization of the kernel as in the case of a point shown in \cite{Fgauge}, namely that it consists of $(p,v_{1,\X})$-power torsion $F$-gauges.

\subsection{The kernel of the \'{e}tale realization}

We will want to understand the kernel of the \'{e}tale realization in order to show $\Pi_\X$ induces an equivalence
\[\mathsf{Refl}^{\varphi}(\X_{\Prism},\O_{\Prism})\simeq \mathsf{Refl}(\X^{\Syn}).\]

In order to state the result, we will need to understand the section $v_{1,\X}$ of $\O\{p-1\}/p$. This can be defined very generally.

\begin{definition}[\cite{Fgauge}, Construction 6.2.1]
Let $\X$ be a quasisyntomic formal scheme. To define a class
\[v_{1,\X}\in \H^0_{\Syn}(\X,\O\{p-1\}/p)\]
we can characterize it locally on quasiregular semiperfectoid rings and then check that it descends to a well-defined class.

For a $p$-torsionfree quasiregular semiperfectoid ring $\R$ with associated prism $(\Prism_\R,\I)$, we identify $\Prism_\R \{p-1\}/p \simeq \I^{-1}/p$. We can identify $\H^0(\R^{\Nyg},\O\{p-1\}/p)$ with $\mathrm{Fil}^{p-1}_{\Nyg} \Prism_\R \{p-1\}/p \subset \Prism_\R \{p-1\}/p\simeq \I^{-1}/p$. To produce the desired section it then suffices to produce a nonzero map
\[\I/p \to \mathrm{Fil}^{p-1}_{\Nyg} \Prism_\R/p\]
which exists because there is a natural inclusion $\I/p \subset \mathrm{Fil}^p_{\Nyg} \Prism_\R/p \subset \mathrm{Fil}^{p-1}_{\Nyg} \Prism_\R/p$. This map defines $v_{1,\R}$ after checking the section descends to $\R^{\Syn}$.
\label{def:v_1}
\end{definition}

In this subsection we will prove the following theorem by adapting the strategy for $\X=\Spf \Z_p$.

\begin{theorem}
Let $\X/\Spf \O_K$ be smooth and quasicompact. Then the kernel of $\T_\et$ on $\Coh(\X^{\Syn})$ consists precisely of coherent $F$-gauges which are killed by $(p,v_{1,\X})^n$ for $n \gg 0$.
\label{thm:general_Tet_kernel}
\end{theorem}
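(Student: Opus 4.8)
The plan is to reduce the statement to the known case $\X=\Spf\O_K$ (i.e.\ Theorem~\ref{thm:bhatt-lurie} together with the analysis of the kernel carried out in \cite{Fgauge} at a point), using the fact that everything in sight is compatible with the quasisyntomic covering $\Spf\rA/\I\to\X$ by a Breuil--Kisin prism, and more generally with base change to relative stacks. Concretely, I would first record that the section $v_{1,\X}$ is compatible with pullback along maps of quasisyntomic formal schemes: this is immediate from Definition~\ref{def:v_1}, since the construction is made on quasiregular semiperfectoid covers and is manifestly functorial there. Hence for $\mathcal{E}\in\Coh(\X^{\Syn})$ the property of being $(p,v_{1,\X})^n$-torsion can be checked after pullback along a flat-local surjection, and in particular after pullback along a Rees stack $\mathrm{Rees}_{\I^\bullet}\rA$ for a covering family of Breuil--Kisin prisms (using Proposition~\ref{prop:BK_flatcover} and the $t$-exactness of this pullback, Remark~\ref{rem:t-exact}).

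Next I would set up the key compatibility: $\T_\et$ factors through restriction to $\X^{\Prism}$ followed by inverting $\I_{\Prism}$ and taking $\varphi=1$ (as in the proof of Lemma~\ref{lem:dual}), and this whole pipeline commutes with quasisyntomic base change. So $\mathcal{E}\in\ker\T_\et$ if and only if its restriction $\mathcal{E}|_{\X^{\Prism}}$, a coherent prismatic $F$-crystal, becomes zero after inverting $\I_{\Prism}$ and taking Galois-invariants — equivalently (by faithfully flat descent along the cover of $\X_{\Prism}$ by a Breuil--Kisin prism, Proposition~\ref{prop:BK_prism}) the evaluation $\mathcal{E}(\rA,\I)$, a coherent $\rA$-module with Frobenius, is $\I$-power torsion, i.e.\ $\varphi$-isogenous to a $p$-power torsion module. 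This is exactly the content of the ``kernel of $\T_\et$'' description for prismatic $F$-crystals; the point $\Spf\O_K$ case gives it over $\rA=\W(k)\llbracket u_0\rrbracket$, and the general $\rA$ case follows either from the same argument verbatim or by further descent to relative stacks over $\W(k)\llbracket u_0\rrbracket$. One direction is now easy: if $\mathcal{E}$ is $(p,v_{1,\X})^n$-torsion, then pulling back to $\X^{\Prism}$ and inverting $\I_\Prism$ kills it, since $v_{1}$ maps into the ideal generated by $\I$ after inverting $p$ is not quite the statement — rather one uses that $v_1$ becomes a unit multiple of (a twist of) $\I$ modulo $p$, so that a $(p,v_{1,\X})$-torsion sheaf has $\I$-power-torsion evaluation.

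For the converse — the substantive direction — suppose $\mathcal{E}\in\Coh(\X^{\Syn})$ lies in $\ker\T_\et$. Pull back along the Rees cover $\mathrm{Rees}_{\I^\bullet}\rA\to\X^{\Nyg}\to\X^{\Syn}$; since $\rA$ is Noetherian regular the pullback $p^*\mathcal{E}$ is a genuine coherent sheaf on a quotient of a Noetherian scheme by $\G_m$, i.e.\ a finitely generated graded module $M$ over $\bigoplus_i\mathrm{Fil}^i_{\I^\bullet}\rA\cdot t^{-i}$. By the previous paragraph, after inverting $\I$ (on the $\rA$-crystal side, i.e.\ forgetting the filtration and inverting $\I$) the module dies, so $M$ is $\I$-power torsion; since $M$ is finitely generated, $M$ is annihilated by $\I^N$ for some $N$. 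I then want to promote this to annihilation by $(p,v_{1})^n$ after taking into account the Nygaard filtration. The mechanism, exactly as in \cite{Fgauge} at a point, is that on the Rees stack the section $v_1$ is represented by $\I$ sitting in Frobenius-divided-degree $p-1$, so that the graded module being $\I^N$-torsion forces, modulo $p$, a bound on the $v_1$-torsion; combined with finite generation (to control the transition between $(p,v_1)$-torsion and $p$-then-$v_1$-torsion filtrations) one gets $(p,v_{1})^n M=0$ for $n\gg0$. Finally, since $(p,v_{1,\X})^n\mathcal{E}$ pulls back to $(p,v_1)^n M=0$ and the pullback is along a flat-local surjection, descent gives $(p,v_{1,\X})^n\mathcal{E}=0$.

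The main obstacle I anticipate is the last step of the converse: translating ``$\I^N$-torsion as an $\rA$-module'' into ``$(p,v_{1,\X})^n$-torsion as an $F$-gauge'' rigorously, because this requires understanding precisely how $v_1$ interacts with both the Nygaard filtration and the Frobenius structure on the Rees stack — it is not merely a statement about the underlying module but genuinely uses the graded/filtered structure and the fact that $v_1$ is a non-unit of a specific degree. In the $\X=\Spf\O_K$ case this is handled in \cite{Fgauge} by an explicit computation on $\W(k)\llbracket u_0\rrbracket\langle u,t\rangle/(ut-\E(u_0))$; I expect the general smooth case reduces to this either directly (each covering piece is étale-locally of the same shape, now over $\tilde\R\llbracket u_0\rrbracket$ instead of $\W(k)\llbracket u_0\rrbracket$, and the relevant computation only involves $u_0,u,t$ and $\E$) or, more cleanly, via Corollary~\ref{cor:Nyg_relqrsp_desc}-style descent to relative stacks where the base Rees stack is Noetherian regular and the point-case computation applies on each term of the simplicial object. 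Making sure the bound $n$ can be taken uniformly over a quasicompact cover — which is where ``quasicompact'' in the hypothesis is used — is a routine but necessary bookkeeping point.
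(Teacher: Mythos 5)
Your scaffolding (Breuil--Kisin/Rees covers, functoriality of $v_{1,\X}$, the easy direction, uniformity of $n$ via quasicompactness) matches the paper's in outline, but the two steps that carry the actual content are missing or wrong. First, your characterization of $\ker \T_\et$ on coherent prismatic $F$-crystals, and your claim that it follows from the point case ``verbatim or by further descent,'' is exactly the gap. The statement the paper actually needs is Proposition \ref{prop:p_tors_Tet}: if $\T_\et(\mathcal{E})$ is torsion then $\mathcal{E}|_{\X^{\Prism}}$ is $p$-power torsion (not ``$\I$-power torsion, i.e.\ $\varphi$-isogenous to a $p$-power torsion module,'' which is not a meaningful equivalence --- note that a typical $\I$-power torsion coherent sheaf such as $\overline{\O}_{\Prism}$ does not even admit an $F$-crystal structure, since $\varphi^*\M[1/\I]$ need not vanish when $\M[1/\I]$ does). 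The essential input for that proposition is Proposition \ref{prop:Fcris_pinv_VB}, that every coherent prismatic $F$-crystal on smooth $\X$ becomes a vector bundle after inverting $p$; only with this in hand does vanishing of the rank of the associated local system force the crystal to be $p$-power torsion. In the point case this is a classical Breuil--Kisin module fact over the two-dimensional regular ring $\W(k)\llbracket u_0\rrbracket$, but for higher-dimensional $\X$ it does not follow by flat-local descent or by repeating that argument: one has to rule out torsion of $\mathcal{E}[1/p]$ supported in higher codimension, and the paper does this with the $q$-connection/Fitting-ideal argument of Lemmas \ref{lem:diff_ideal_classification} and \ref{lem:unram_Fcris_pinv_VB} (alternatively by invoking Guo--Reinecke's generalized Breuil--Kisin modules). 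Your proposal simply assumes this away.

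Second, even granting the prismatic statement, your converse conflates the crystal with the graded Rees module: from ``the module dies after inverting $\I$ on the $\rA$-crystal side'' you conclude the full graded module $M$ is $\I$-power torsion, but the restriction to the $t\neq 0$ (and, via Frobenius, $u\neq 0$) charts says nothing about the $t=u=0$ stratum, where an a priori $p$-torsionfree piece could live; ruling this out is precisely the hard part of passing from the prismatic restriction to the full Nygaard-filtered gauge. The paper's mechanism is: check $\mathcal{E}/p$ is $v_{1,\X}$-torsion on perfectoid covers (where realization just inverts $v_1$), then pass to the $p$-torsionfree quotient, observe the Rees pullback is $t$-power torsion, and on the $t=0$ stratum use the graded connection coming from the descent datum on $\X^{\Nyg}$ (the $q$-connection in the unramified direction, resp.\ $\nabla_{\M}$ of Definition \ref{def:nabla} over $\O_K$) to show a finitely generated graded module that vanishes after inverting $u$ must vanish after inverting $p$. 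Your sketch of the ``promotion'' step (degree bookkeeping with $v_1$ in degree $p-1$) is built on the unjustified $\I$-torsion claim and never uses the absolute descent/connection data, so it does not establish the converse; as written, nothing in your argument excludes a coherent gauge concentrated on $t=u=0$ that is $p$-torsionfree and hence not $(p,v_{1,\X})^n$-torsion.
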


Precisely, this means that the $F$-gauge is $p$-power torsion and its mod $p$ reduction is further $v_1$-power torsion.

We will first show some results about coherent prismatic $F$-crystals to help deduce the $p$-power torsion part of this claim, which is the main content (the $v_{1,\X}$-power torsion is easy to deduce afterward). Recall that when $\X/\Spf \O_K$ is smooth there is a standard $t$-structure on $\Perf^{\varphi}(\X_{\Prism},\O_{\Prism})$ via \cite{Fgaugelift} \S 3, so we may make the following definition.

\begin{definition}[\cite{Fgaugelift}]
A coherent prismatic $F$-crystal is an $F$-crystal in perfect complexes in the heart of the standard $t$-structure on $\Perf^{\varphi}(\X_{\Prism},\O_{\Prism})$. We denote the category of coherent prismatic $F$-crystals by $\Coh^{\varphi}(\X_{\Prism},\O_{\Prism})$.
\end{definition}

This of course can also be understood as $\Coh^{\varphi}(\X^{\Prism})$, using the stack instead. Recall that restriction of a coherent $F$-gauge to $\X^{\Prism}$ naturally acquires a coherent $F$-crystal structure (as implied by Lemma \ref{lem:Tet_texact}). Despite the name, coherent prismatic $F$-crystals turn out to have strict restrictions on possible torsion, and in fact are vector bundles after inverting $p$.

\begin{proposition}
If $\X$ is smooth and quasicompact over $\Spf \O_K$ and $\mathcal{E}\in \Coh^{\varphi}(\X_{\Prism},\O_\Prism)$, the underlying prismatic crystal satisfies $\mathcal{E}[1/p]\in \Vect(\X^\Prism) [1/p]$.
\label{prop:Fcris_pinv_VB}
\end{proposition}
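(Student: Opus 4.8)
The plan is to reduce the statement to the local case treated in Lemma~\ref{lem:unram_Fcris_pinv_VB} by a combination of descent along a suitable cover and a Frobenius-invariance argument. First I would show that it suffices to prove the analogous statement \'etale-locally on $\X_s$: being a vector bundle after inverting $p$ can be checked after pullback along the flat-local surjection $\X'^{\Prism} \to \X^{\Prism}$ induced by an \'etale cover $\X' \to \X$ (by Lemma~6.3 of \cite{APC2} and flat descent for $\Vect(-)[1/p]$). So I may assume $\X = \Spf \O_K\langle \T_1,\dots,\T_n\rangle$-\'etale, and in fact, since $\O_K\langle \T_1,\dots,\T_n\rangle$ is finite \'etale over $\W(k)\langle \T_1,\dots,\T_n\rangle$ after a further localization (this uses $\pi$ satisfies an Eisenstein polynomial over $\W(k)$ — though one must be slightly careful, the residue field and unramified base mean we get $p$-completely \'etale over a torus or an affine formal scheme \'etale over $\Spf \W(k)\langle \T_i\rangle$), I am reduced to the hypothesis of Lemma~\ref{lem:unram_Fcris_pinv_VB}, namely $\X_\W \to \Spf \W(k)\langle \T_1,\dots,\T_n\rangle$ \'etale.

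Next I would apply Lemma~\ref{lem:unram_Fcris_pinv_VB} to conclude that evaluation on the $q$-de Rham prism $(\rA,[p]_q)$ gives a module $\M = \mathcal{E}(\rA)$ such that $\M[1/p,1/\varphi^i([p]_q)]$ is a vector bundle for some $i \ge 0$. The remaining task is to upgrade this to: $\M[1/p]$ is already a vector bundle, or equivalently that the $F$-crystal structure forces the localization at $\varphi^i([p]_q)$ to be unnecessary. Here is where the Frobenius structure enters (so far only the underlying crystal was used): the isomorphism $\varphi_\mathcal{E}: (\varphi^* \mathcal{E})[1/\I] \xrightarrow{\sim} \mathcal{E}[1/\I]$ on the prismatic site, evaluated on $(\rA,[p]_q)$, relates $\M$ to its Frobenius twist after inverting $[p]_q$. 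Since $\varphi$ on $\rA$ sends $[p]_q$ to a unit multiple of something involving $\Phi_{p^2}(q)$ and more generally permutes the cyclotomic factors $\varphi^j([p]_q) = \Phi_{p^{j+1}}(q)$, I would argue that the locus where $\M[1/p]$ fails to be locally free, being cut out by a Frobenius-stable Fitting ideal (this is already established inside the proof of Lemma~\ref{lem:unram_Fcris_pinv_VB}), must be disjoint from the translates under $\varphi$ of the vanishing locus of $[p]_q$; combined with the fact that after inverting $p$ and $\varphi^i([p]_q)$ it is empty, and that the various $\Phi_{p^k}(q)$ generate the unit ideal away from $p$ and away from each other (they are coprime in $\W(k)[q]\,[1/p]$), one deduces the Fitting ideal is the unit ideal after inverting $p$ alone. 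A clean way to package this: the first nonzero Fitting ideal $\I_0 = \mathrm{Fitt}(\M)[1/p]$ is stable under $\partial_\rA$ and $\nabla_{\rA,i}$ as shown in Lemma~\ref{lem:unram_Fcris_pinv_VB}; if it were a proper ideal, Lemma~\ref{lem:diff_ideal_classification} forces $\Phi_{p^k}(q) \in \I_0$ for some $k$; but the $F$-crystal condition then propagates this to $\Phi_{p^{k'}}(q) \in \I_0$ for all $k' \ge k$ (pulling back along $\varphi$, using that $\varphi_\mathcal{E}$ is an isomorphism away from $[p]_q = \Phi_p(q)$), and since distinct cyclotomic polynomials $\Phi_{p^{k'}}(q)$ are coprime after inverting $p$, the ideal $\I_0$ contains a unit — contradiction unless $\M$ was torsion, but $\mathcal{E}[1/p] \ne 0$ rules that out, or rather if $\mathcal{E}[1/p] = 0$ the conclusion holds trivially.

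The main obstacle I anticipate is the precise bookkeeping in the Frobenius-propagation step: one needs the map $\varphi_\mathcal{E}$ evaluated on $(\rA,[p]_q)$ to genuinely relate the Fitting ideal of $\M$ to that of its pullback $\varphi^*\M$, and to track how $\varphi$ acts on the cyclotomic factors, including the subtlety that $\varphi$ on the $q$-de Rham prism is $q \mapsto q^p$ (not $q \mapsto q^{p+1}$; the latter is the \emph{absolute} structure $\gamma_\rA$ used in Lemma~\ref{lem:diff_ideal_classification}), so one should be careful which operator is doing the propagation. A secondary concern is the reduction step from $\O_K$ to $\W(k)$: one must ensure the chosen \'etale-local model really does become $p$-completely \'etale over an unramified base of the form $\Spf \W(k)\langle \T_1,\dots,\T_n\rangle$, which is where smoothness of $\X/\Spf \O_K$ together with $K/\Q_p$ finite (hence $\O_K$ finite flat over $\Z_p$ with the residue extension $k/\F_p$) is used; alternatively one can run Liu's $q$-de Rham construction directly over $\O_K$ with the relevant prism $(\rA, \E(u_0))$ or its cyclotomic variant and argue analogously, which may be cleaner and avoids this reduction entirely.
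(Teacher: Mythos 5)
There is a genuine gap in your reduction step. You claim that after an \'etale localization your $\X$ becomes ($p$-completely) \'etale over a torus $\Spf \W(k)\langle \T_1,\dots,\T_n\rangle$, essentially asserting that $\O_K\langle \T_i\rangle$ is finite \'etale over $\W(k)\langle \T_i\rangle$ after further localization. This is false whenever $K/\Q_p$ is ramified: $\O_K$ is finite flat but \emph{not} \'etale over $\W(k)$ (the uniformizer satisfies an Eisenstein polynomial, so the special fiber is non-reduced over $\W(k)/p$), and no Zariski or \'etale localization of $\X$ smooth over $\Spf\O_K$ makes it \'etale over an unramified torus. So you cannot place yourself in the hypotheses of Lemma~\ref{lem:unram_Fcris_pinv_VB} this way. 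Your fallback suggestion, running the $q$-de Rham construction ``directly over $\O_K$ with the prism $(\rA,\E(u_0))$,'' does not repair this as stated: the Breuil--Kisin prism carries no $q$-variable and no operator $\partial$ with $q\mapsto q^{p+1}$, so Lemma~\ref{lem:diff_ideal_classification} (which is formulated over $\W(k)\llbracket \T_i\rrbracket\llbracket q-1\rrbracket$) does not apply, and one would need a genuinely new classification of stable ideals in the ramified setting. The paper instead avoids the ramified direction altogether: Zariski-locally $\X\simeq \X_\W\widehat{\otimes}_{\W(k)}\O_K$ with $\X_\W$ \'etale over the $\W(k)$-torus, the statement is already known for the two points $\Spf\W(k)$ and $\Spf\O_K$ (via \cite{BMS1} Proposition 4.3, evaluating on a Breuil--Kisin prism), and Tor-independence gives $\X^{\Prism}\simeq \X_\W^{\Prism}\times_{\W(k)^{\Prism}}\O_K^{\Prism}$, hence $\Vect(\X^{\Prism})[1/p]$ is the corresponding fiber product of categories; this reduces everything to the unramified $\X_\W$, where Lemma~\ref{lem:unram_Fcris_pinv_VB} applies. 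Some such mechanism for handling the arithmetic (ramified) direction separately is the missing idea in your proposal.

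Your second step, upgrading $\M[1/p,1/\varphi^i([p]_q)]$ locally free to $\M[1/p]$ locally free using Frobenius-stability of the Fitting ideal and coprimality of the cyclotomic polynomials $\Phi_{p^k}(q)$ away from $p$, is essentially the paper's argument, and your caution about $\varphi\colon q\mapsto q^p$ versus $\gamma_\rA\colon q\mapsto q^{p+1}$ is well placed. One bookkeeping point: the case where $[p]_q=\Phi_p(q)$ itself divides the Fitting ideal needs separate treatment, since the comparison $\I[1/[p]_q]\simeq\varphi(\I)[1/[p]_q]$ only holds away from $[p]_q$; the paper handles it by observing that powers of both $\Phi_p$ and $\Phi_{p^2}$ then lie in $\varphi(\I)[1/p]$ and $p\in(\Phi_p,\Phi_{p^2})$, which is not covered by ``distinct cyclotomic polynomials are coprime after inverting $p$'' alone.
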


\begin{proof}
We may test this locally for $\X=\Spf \R$ small affine in the sense of \cite{Zpcrystalline2} (here we use quasicompactness to ensure we only have finite powers of $p$ in denominators), and therefore can choose a Breuil-Kisin prism $(\rA,\I)$ so that $\X=\Spf \R$. This then gives a flat-local surjection $\Spf \rA \to \X^{\Prism}$, so we may test the property $\mathcal{E}[1/p]\in \Vect(\X^\Prism)[1/p]$ after evaluation on the Breuil-Kisin prism $(\rA,\I)$. By definition the $\rA$-module $\M := \mathcal{E}(\rA,\I)$ is a finite height $\varphi$-module over $\rA$, so then \cite[Proposition 4.13]{Zpcrystalline2} applies and we deduce $\M[1/p]$ is a vector bundle (we remark the argument does not require a torsionfree hypothesis). 
\end{proof}

\subsubsection{The locally free locus in the cyclotomic case} In the cyclotomic case it is possible to show a stronger result about the locus where a coherent prismatic crystal (without the Frobenius isogeny condition) is a vector bundle, which we now explain. If the reader only wishes to understand the proof of Theorem \ref{thm:general_Tet_kernel}, they should skip to Proposition \ref{prop:p_tors_Tet}.

We will need some preliminaries about the $q$-connection that evaluation on a cyclotomic prism produces. If $\Spf \R$ is $p$-completely \'{e}tale over $\Spf \W(k)\langle \T_1^\pm, \ldots, \T_\ell^\pm \rangle$ for $\ell\ge 0$, then as in \cite{liustacky} \S 4.1 we can produce a variant of the $q$-de Rham prism $\rA=(\R\llbracket q-1 \rrbracket, [p]_{q^{p^n}}) = (\R\llbracket q-1 \rrbracket, \varphi^{n}([p]_q))$ as well as a $q$-connection on the evaluation on this prism. Letting $\X_\W=\Spf \R$, this prism induces a flat-local surjection $\Spf \rA \to (\X_\W[\zeta_{p^{n+1}}])^{\Prism} \to (\X_\W[\zeta_{p^{n}}])^{\Prism}$ (the second map using \cite[Lemma 6.3]{APC2}; we use $\X_\W[\zeta_{p^n}]$ as shorthand for $\Spf \R \widehat{\otimes}_{\W(k)} \W(k)[\zeta_{p^{n}}]$). This allows us to study sheaves on $(\X_\W[\zeta_{p^{n}}])^{\Prism}$ via pullback to this covering. Note that we use $\zeta_{p^n}$ here rather than $\zeta_{p^{n+1}}$ for $n\ge 0$ to also treat the unramified case, as otherwise this would be omitted. We fix $n\ge 0$ for this subsection.

For any coherent prismatic crystal $\mathcal{E}$ after evaluation on $(\rA,\varphi^{n}([p]_q))$ we obtain a module $\M$, and using \S 4.1 in \cite{liustacky} in the unramified case we may produce maps $\nabla_{\M,i}: \M \to \M$ satisfying a twisted Leibniz rule
\[\nabla_{\M,i}(am) = \gamma_i(a) \nabla_{\M,i}(m) + \nabla_{\rA,i}(a) m\]
where $\gamma_i$ sends $\T_i \mapsto q^{p^{n+1}} \T_i$ and fixes all other $\T_j$ (i.e. is the unique lift to $\rA$ of this map on $\W(k)\langle \T_1^\pm, \ldots, \T_\ell^\pm \rangle \llbracket q-1 \rrbracket$). Here, $\nabla_{\rA,i}: \rA \to \rA$ is given by $f \mapsto [p]_{q^{p^n}}\frac{\gamma_i(f) - f}{q^{p^{n+1}}\T_i - \T_i}$. These assemble to a full $q$-connection $\nabla_\M = \sum_i \nabla_{\M,i} \mathrm{d} \T_i: \M \to \M \otimes \mathrm{q}\Omega^1$, where $\mathrm{q}\Omega^1 = \rA \widehat{\otimes}_{\W(k)\langle \T_1^\pm, \ldots, \T^\pm_\ell \rangle \llbracket q-1 \rrbracket}  \Omega^1_{\W(k)\langle \T_1^\pm, \ldots, \T^\pm_\ell \rangle \llbracket q-1 \rrbracket/\W(k)\llbracket q-1 \rrbracket}$.

We can further produce a $q$-derivation $\partial: \M \to \M$ satisfying
\[\partial(am) = \gamma_\rA(a) \partial(m) + \partial_{\rA}(a) m\]
where $\gamma_{\rA}$ sends $q \mapsto q^{p^{n+1}+1}$ and $\partial_{\rA} = \frac{\gamma_{\rA}-\id}{q^{p^n}-1}$. Note that this differs from the convention in \cite{liustacky} by a unit.

For handling the case of $\X_\W$, we will need the following lemma. We use $\Phi_{p^k}(q)$ to denote the $p^k$th cyclotomic polynomial in $q$. When $k\ge 1$ this agrees with $[p]_{q^{p^{k-1}}}$. Via a gcd calculation one may check that in $\Spec \rA[1/p]$ the loci $\V(\Phi_{p^j}(q))$ for $j \ge 1$ are disjoint subsets of $\Spec \rA[1/p]$ for an algebra $\rA$ over $\W(k)\llbracket q-1 \rrbracket$. Note that these loci are indeed nonempty after inverting $p$, as while $\Phi_{p^j}(q)$ has an inverse in $\W(k)[1/p]\llbracket q-1 \rrbracket$ it does not in $\W(k)\llbracket q-1 \rrbracket[1/p]$.

\begin{lemma}
Let $\rA=\W(k)\llbracket q-1 \rrbracket$ and equip it with $\partial_{\rA}$ (as defined above).

Now suppose $\I\le \rA$ is a nonzero ideal which is stable under $\partial_{\rA}$. Then $\V(\I[1/p])\subset \Spec \rA[1/p]$ is contained in finitely many of the disjoint loci $\V(\Phi_{p^j}(q))\subset \Spec \rA[1/p]$ for $j > n$.
\label{lem:diff_ideal_classification}
\end{lemma}

\begin{proof}
We are given that $\I$ is a nonzero ideal which is stable under $\partial=\partial_{\W(k)\llbracket q-1 \rrbracket}$, which implies it is stable under $\gamma=\gamma_{\W(k)\llbracket q-1 \rrbracket}$. Since $\gamma$ is an automorphism fixing $p$ and must then have $\gamma(\I[1/p])=\I[1/p]$ (using that $\rA[1/p]$ is Noetherian), we deduce the finitely many minimal primes $\q_i$ of $\I[1/p]$ have finite $\gamma$-orbits. We will now classify such primes.

After inverting $p$ we find $\W(k)\llbracket q-1 \rrbracket[1/p]$ is a PID and we may then assume the minimal primes $\q_i \le \W(k)\llbracket q-1 \rrbracket[1/p]$ are generated by $f \in \W(k)[q-1]$ for some irreducible polynomial $f \equiv (q-1)^d \pmod{p}$. As $\gamma$ sends $q \mapsto q^{p^{n+1}+1}$, by factoring $f$ over $\C_p$ we note its roots in the variable $q$ lie in $\m+1 \subset \O_{\C_p}$ due to $f(q-1) \equiv (q-1)^d \pmod{p}$. Since $\gamma^j(q)=q^{(p^{n+1}+1)^j}$, to have a finite orbit we require $q\in \mu_{p^\infty}(\O_{\C_p})$ (any other roots of unity fail to lie in $\m+1$). Thus we allow only $\Phi_{p^j}(q)$ for $j\ge 0$ as possibilities for $f$ when we invert $p$. It follows that $\I[1/p]$ contains a product of finitely many powers of cyclotomic polynomials $\Phi_{p^j}(q)$.

Finally, we must show we can eliminate the remaining cyclotomic factors $\Phi_{p^j}(q)$ for $0\le j \le n$ from this product lying in $\I[1/p]$. Indeed, since we know $\W(k)\llbracket q-1 \rrbracket[1/p]$ is a PID we may assume $\I[1/p]$ is generated by $\prod_{s\in \rS} \Phi_{p^{s}}(q)$ for some finite multiset $\rS$ (i.e. allowing repeated factors). If one of these cyclotomic polynomials is $\Phi_{p^j}(q)$ for $0\le j \le n$, we may assume $\I[1/p]$ is generated by $\Phi_{p^j}(q)^N x$ where $x$ is a finite product of other cyclotomic factors. Applying $\partial$ we get $\gamma(\Phi_{p^j}(q)^N) \partial(x) + \partial(\Phi_{p^j}(q)^N) x\in \I[1/p]$. Using $j\le n$, one may compute that the first summand is divisible by $\Phi_{p^j}(q)^N$. The second summand is not: we know from the earlier gcd computation that $x$ is coprime to $\Phi_{p^j}(q)^N$ in $\W(k)\llbracket q-1 \rrbracket[1/p]$. A computation shows $\partial(\Phi_{p^j}(q)^N)$ is not divisible by $\Phi_{p^j}(q)^N$. This then gives a contradiction.
\end{proof}

The following technical lemma can then be shown by reducing to the previous one.

\begin{lemma}
Let $\X_\W\to \Spf \W(k)\langle \T_1^\pm, \ldots, \T_\ell^\pm \rangle$ be \'{e}tale, with $\rA$ the associated $q$-de Rham prism. Then if $\mathcal{E}\in \Coh(\X_\W[\zeta_{p^{n}}]^{\Prism})$, after evaluating on the prism $(\rA,[p]_{q^{p^n}})$ the module $\mathcal{E}(\rA)[1/p]$ is a vector bundle away from finitely many of the disjoint loci $\V(\Phi_{p^j}(q)) \subset \Spec \rA[1/p]$ for $j > n$.
\label{lem:unram_Fcris_pinv_VB}
\end{lemma}

\begin{proof}
We first may further reduce to the case that $\X_\W=\Spf \R$ is connected; by modifying the toric chart, potentially enlarging $k$ by a finite extension, we can further assume the special fiber is geometrically integral over $k$. It suffices to show we obtain a vector bundle on the desired locus after evaluating on the cyclotomic prism $(\rA,[p]_{q^{p^n}})=(\R\llbracket q-1\rrbracket, [p]_{q^{p^n}})$ and inverting $p$. Evaluating $\mathcal{E}$ on this prism yields a module $\M$ equipped with $\nabla_{\M,i}$ and $\partial$. If $\I=\mathrm{Fitt}(\M)$ is the first nonzero Fitting ideal it then suffices to show that $\I$ is the unit ideal after inverting $p$ and finitely many of $\Phi_{p^j}(q)$ for $j>n$ by using Stacks Project \href{https://stacks.math.columbia.edu/tag/07ZD}{07ZD}. Fitting ideals are stable under base change, so using this property we may claim 
\[\partial_{\rA}(\I)\subseteq \I, \nabla_{\rA,i}(\I) \subseteq \I\] 
where $\partial_{\rA}$ and $\nabla_{\rA,i}$ are the $q$-derivation and connection on the base prism $\rA$. We now show this for $\partial_{\rA}$; the proof for $\nabla_{\rA,i}$ is similar. Let $\rA_{\varepsilon} = \rA[\varepsilon]/(\varepsilon^2 - (q^{p^n}-1) \varepsilon)$, which is equipped with a map \[\sigma = \id + \varepsilon \partial_{\rA}: \rA \to \rA_{\varepsilon}.\] The existence of the $q$-derivation on $\M$ yields an isomorphism $\sigma^* \M \simeq \M_{\varepsilon}$ where on the right the module is the base change $\M \otimes_{\rA} \rA_{\varepsilon}$ along the canonical inclusion $\rA \to \rA_{\varepsilon}$. Indeed, one sends 
\[m \otimes 1 \mapsto m \otimes 1 + \partial_\M (m) \otimes \varepsilon\] 
and the axioms of a $q$-derivation make this an $\rA_{\varepsilon}$-module isomorphism. Base change of Fitting ideals implies $\mathrm{Fitt}(\M_{\varepsilon}) = \I \rA_{\varepsilon}$ and $\mathrm{Fitt}(\sigma^* \M)= \sigma(\I) \rA_\varepsilon$. Equality of these two ideals forces $\partial_{\rA}(\I) \subseteq \I$ by looking at the $\varepsilon$ component.

One may also similarly deduce stability of the ideal under $\nabla_{\rA,i}$. We will show that any ideal 
\[\I \le \R\llbracket q-1\rrbracket\]
stable under the operators $\nabla_{\rA,i}$ in fact contains a nonzero element of $\W(k)\llbracket q-1 \rrbracket$.

We may reduce to the weaker condition of considering ideals stable under $\gamma_{\rA,i}$ (as this is implied by $\nabla_{\rA,i}$-stability). Observe that $\R\llbracket q-1 \rrbracket$ can equivalently be viewed as $(p,q-1)$-completely \'{e}tale over $\W(k)\llbracket q-1 \rrbracket \langle \T_1^\pm,\ldots, \T_\ell^\pm \rangle_{(p,q-1)}$ (where the completion is $(p,q-1)$-adic). Now one can consider the analytic locus with respect to the maximal ideal of $\W(k)\llbracket q-1 \rrbracket$, defined as the adic space $\rA_a := \Spa(\rA,\rA) \setminus \V(p,q-1)$. For the $q$-de Rham prism $\Spf \W(k)\llbracket q-1 \rrbracket$ with the $(p,q-1)$-adic topology this construction decomposes as an adic space $\Y=(\Spf \W(k)\llbracket q-1 \rrbracket)_a=\mathbb{D}^\circ_{\W(k)[1/p]} \cup y_\partial$, which is quasicompact; the point $y_\partial$ has residue field $k\laur{q-1}$. The rational neighborhoods $\U_N = \{|p|\le |q-1|^N\}$ form a basis of opens around $y_\partial$. The following argument is somewhat technical, but the idea is simple. On fibers over $y\in\Y$, since $\gamma_{\rA,i}$ fix $\W(k)\llbracket q-1\rrbracket$ the $\gamma_{\rA,i}$-stability of $\I$ gives stability on all fibers $\I_y$. This forces the zero locus of $\I_y$ to be trivial by a rigid analytic accumulation point argument on many such fibers, constraining the zero locus of $\I$ enough to deduce the claim. To make this argument more straightforward we have already reduced to the case where the special fiber of $\X_\W$ is geometrically integral, forcing these fibers to all be connected and reduced.

One first $p$-saturates the ideal $\I$ (which preserves stability under $\gamma_{\rA,i}$) to ensure $\I/p$ is nonzero, deducing from $q-1$-torsionfreeness that on $\rA_a$ the ideal $\I_{y_\partial}$ is nonzero in the fiber $\rA_{k\laur{q-1}}$ over $y_\partial$; we will show $\I$ is then forced to become the unit ideal in this fiber $(\rA_a)_{y_\partial}$, meaning that $\V(\I)\subset \rA_a$ does not intersect some $\pi^{-1}(\U_N)$. Indeed, if $\pi:\rA_a\to \Y$ denotes the projection, then $C=\V(\I)\cap \pi^{-1}(\U_1)$ is a quasicompact spectral space. The subsets $C\cap \pi^{-1}(\U_N)$ are decreasing constructibly closed subsets of $C$, since the $\U_N$ are quasicompact rational opens. If all were nonempty, compactness for the constructible topology would force their intersection to be nonempty, but this intersection is $C\cap \pi^{-1}(y_\partial)$, which we will show to be empty. Thus $\V(\I)\cap \pi^{-1}(\U_N)=\emptyset$ for $N\gg 0$.
 
Now we study the fiber over $y_\partial$. We note this fiber is connected since we reduced at the start to the case where the special fiber of $\Spf \R$ is geometrically integral. If the ideal $\I_{y_\partial}$ is proper, we may pick a classical point $x$ in $\V(\I_{y_\partial})$. Base changing to a completed residue field of $x$, the \'{e}tale map is then a local isomorphism at the lift of the classical point $x$: we may then lift a small polydisk in the base $k\laur{q-1} \langle \T_i^\pm \rangle_{(q-1)}$ to a polydisk around $x$. Large $p$-powers of $\gamma_i$ preserve the disk, hence also lift. By studying the action of $\gamma_i$, since $|q-1|<1$ applying sufficiently high powers of $\prod_i \gamma_i^{p^{m_i}}$ we force the polydisk to contain infinitely many zeroes of local sections of $\I_{y_\partial}$ with nonzero stalk at $x$. These zeroes appear as a product of sets of infinitely many zeroes in each coordinate direction accumulating at $x$, forcing the section to be identically zero by the rigid analytic identity theorem. This applies at every point in $\V(\I_{y_\partial})$, and thus would contradict $\I_{y_\partial}$ being nonzero. Using connectedness we deduce $\I_{y_\partial}$ must be the unit ideal.

Now we study the other fibers for points in $\mathbb{D}^\circ_{\W(k)[1/p]}$. Take some nonzero $f\in \I$ and write $f=\sum_{n\ge 0} f_n (q-1)^n$. Let $n_0$ be the smallest integer such that $f_{n_0}\neq 0$, and choose a closed point $x$ away from the zero locus of $f_{n_0}$ in $\Spa(\R[1/p],\R)$ induced by a map $\R \to \O_L$. This induces an evaluation map $\R\llbracket q-1 \rrbracket \to \O_L\llbracket q-1 \rrbracket$; taking the norm $\mathrm{Nm}_{\O_L\llbracket q-1 \rrbracket/\W(k)\llbracket q-1 \rrbracket}$ of this evaluation map applied to $f$ produces $c\in \W(k)\llbracket q-1 \rrbracket$ with the property that $c(y)\neq 0$ implies $\I_y \neq 0$ for the fiber over $y\in \Y$. On points $y$ away from $\U_N$ we may again analyze fibers, using the same method to deduce that $\I_y$ is the unit ideal unless $q(y)$ is a root of unity. Away from $\U_N$, there are only finitely many such points as we have $|(q-1)(y)|<|p|^{1/N}$, constraining the support of $\rA/\I$ on the analytic locus to the zero locus of $g=c(q-1) \Pi_N(q)$ where $\Pi_N$ is some product of cyclotomic factors. So the module $g^m(\rA/\I)$ is sent to zero on $(\rA/\I)_a$, hence killed by a power of $(p,q-1)$. This shows the claim.

Once we know $\I \cap \W(k)\llbracket q-1 \rrbracket$ is nonzero, we observe the intersection remains $\partial_{\rA}$-stable. So Lemma \ref{lem:diff_ideal_classification} applies, and the desired claim follows.
\end{proof}

For an ordinary connection (as opposed to a $q$-connection) we have the following result, using \cite[Corollary 2.5.2.2]{andre2001}. 

\begin{corollary} 
Let $\X/\Spf \W(k)$ be smooth and quasicompact. If $\M\in \Coh(\X)$ admits a connection, then $\M[1/p]$ is a vector bundle. 
\label{cor:pinv_VB} 
\end{corollary}

\begin{proof}
As $\X$ is smooth and quasicompact, there is a finite covering by formal smooth affines and it suffices to prove the claim for these. In this case, taking the adic generic fiber induces a coherent sheaf with connection on the adic generic fiber, which is then a vector bundle by applying \cite[Corollary 2.5.2.2]{andre2001}.
\end{proof}

This torsion control also implies Proposition \ref{prop:Fcris_pinv_VB} in the cyclotomic case. We may test the claim of Proposition \ref{prop:Fcris_pinv_VB} Zariski locally in $\X$. We may then assume that $\X=\X_\W[\zeta_{p^n}]$ as in Lemma \ref{lem:unram_Fcris_pinv_VB}. Let $\I$ denote the first nonzero Fitting ideal of the module $\M$ we get after evaluation on the corresponding cyclotomic prism $(\rA,[p]_{q^{p^n}})$. Our goal is to show $\I[1/p]=\rA[1/p]$ using the prismatic $F$-crystal structure, so then applying Stacks Project \href{https://stacks.math.columbia.edu/tag/07ZD}{07ZD} the desired claim follows (using that $(\X/\rA)^{\Prism} \to \X^{\Prism}$ is a flat-local surjection). We may use $\I[1/[p]_{q^{p^n}}]\simeq \varphi(\I)[1/[p]_{q^{p^n}}]$, which follows from stability of Fitting ideals under base change. Recall that via a gcd calculation one may check in $\Spec \rA[1/p]$ that the loci $\V(\Phi_{p^j}(q))$ for $j \ge 1$ are disjoint subsets of $\Spec \rA[1/p]$. Now $\I$ is not stable under the faithfully flat map $\varphi$, but $\I[1/\Phi_{p^{n+1}}(q)]=\varphi(\I)[1/\Phi_{p^{n+1}}(q)]$ implies that $\V(\varphi(\I))$ agrees with $\V(\I)$ restricted to $\Spec \rA[1/p,1/\Phi_{p^{n+1}}(q)]$. We know 
\[\V(\I[1/p])\subset \bigcup_{j\in \rS} \V(\Phi_{p^j}(q))\]
as loci in $\Spec \rA[1/p]$ for some finite subset $\rS \subset \Z_{> n}$ by Lemma \ref{lem:unram_Fcris_pinv_VB}; we choose the finite set $\rS \subset \Z_{> n}$ to be minimal. Since $\varphi$ is faithfully flat and the preimage of $\V(\Phi_{p^j}(q))$ is $\V(\Phi_{p^{j+1}}(q))$ in $\Spec \rA[1/p]$, we learn $\rS \setminus \{n+1\} = (\rS+1) \setminus \{n+1\}$, forcing $\rS=\emptyset$. That is, $\V(\I[1/p])$ is empty in $\Spec \rA[1/p]$ as desired.

\subsubsection{Proving Theorem \ref{thm:general_Tet_kernel}} 

Next, we apply Proposition \ref{prop:Fcris_pinv_VB} to understand the kernel of the \'{e}tale realization to work towards Theorem \ref{thm:general_Tet_kernel}.

\begin{proposition}
Assume $\X$ is smooth and quasicompact over $\Spf \O_K$ and let $\mathcal{E}$ be a coherent prismatic $F$-crystal. If $\T_\et(\mathcal{E})$ is $p$-power torsion, then so is $\mathcal{E}$. 
\label{prop:p_tors_Tet}
\end{proposition}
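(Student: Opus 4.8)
The plan is to bootstrap from Proposition~\ref{prop:Fcris_pinv_VB}, which identifies the obstruction to $\mathcal{E}$ being $p$-power torsion with a genuine vector bundle $F$-crystal, and then to invoke the already-cited Bhatt--Scholze equivalence at classical points --- this is the direct analogue of Bhatt--Lurie's argument at a point in \cite{Fgauge}.

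\emph{Reduction to $\mathcal{E}[1/p]=0$.} Since $\X$ is quasicompact, cover it by finitely many smooth affine opens $\U_i$ carrying Breuil--Kisin prisms $(\rA_i,\I_i)$ as in Proposition~\ref{prop:BK_prism}. The maps $\Spf\rA_i\to\U_i^\Prism\to\X^\Prism$ form a flat cover (Proposition~\ref{prop:prism_preserve_flat_covers}), so by descent $\mathcal{E}$ is $p$-power torsion if and only if each finitely generated $\rA_i$-module $\mathcal{E}(\rA_i)$ is. As $\rA_i$ is Noetherian, the vanishing $\mathcal{E}(\rA_i)[1/p]=0$ already forces $\mathcal{E}(\rA_i)$ to be annihilated by a fixed power of $p$; taking the maximum over the finitely many $i$ yields a uniform exponent killing $\mathcal{E}$. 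Hence it is enough to show $\mathcal{E}[1/p]=0$ in the isogeny category of coherent prismatic $F$-crystals.

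\emph{Identifying the realization and concluding.} By Proposition~\ref{prop:Fcris_pinv_VB} the underlying crystal of $\mathcal{E}[1/p]$ lies in $\Vect(\X^\Prism)[1/p]$, so with its inherited Frobenius it is an object of $\Vect^\varphi(\X_\Prism,\O_\Prism)[1/p]$. The \'etale realization is assembled from restriction to $\X^\Prism$, inverting $\I_\Prism$, $p$-completion and passage to $\varphi=1$, each of which is $\Z_p$-linear and exact and thus commutes with $(-)[1/p]$; consequently $\T_\et(\mathcal{E})[1/p]\simeq\T_\et(\mathcal{E}[1/p])$, and this vanishes since $\T_\et(\mathcal{E})$ was assumed $p$-power torsion. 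Now for any classical point $x\in\X_\eta$ the valuation ring $\O_{K(x)}$ of $\mathcal{H}(x)$ is finite over $\O_K$ and the point extends to a map $\Spf\O_{K(x)}\to\X$; pulling $\mathcal{E}[1/p]$ back along it, functoriality of $\T_\et$ identifies its \'etale realization with the fiber $x^*\bigl(\T_\et(\mathcal{E})[1/p]\bigr)=0$, so by the Bhatt--Scholze equivalence $\Vect^\varphi((\Spf\O_{K(x)})_\Prism,\O_\Prism)[1/p]\simeq\Rep_{\Q_p}^\cris(\rG_{K(x)})$ (which is rank-preserving) the pullback is $0$. Testing on a Breuil--Kisin prism $\rA_i$ for a nonempty $\U_i$: the locally free module $\mathcal{E}(\rA_i)[1/p]$ has constant rank on $\Spec\rA_i[1/p]$, and that rank equals the rank of $\mathcal{E}(\rA_i)\otimes_{\rA_i}\rA_{i,x}$ over $\rA_{i,x}[1/p]$ for the Breuil--Kisin prism of any classical point of $\U_{i,\eta}$, which we have just shown is $0$. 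Hence $\mathcal{E}[1/p]=0$, and by the first step $\mathcal{E}$ is $p$-power torsion.

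The real content is Proposition~\ref{prop:Fcris_pinv_VB}, which is already in hand; granting it the argument is essentially formal. The only points deserving a moment's care are the compatibility of $\T_\et$ with pullback along $\Spf\O_{K(x)}\to\X$ (and with the corresponding base change of generic fibers) and the verification that the rank of $\mathcal{E}[1/p]$ on $\X^\Prism$ is detected at classical points; alternatively, in place of this last reduction one may directly quote the conservativity of the rational \'etale realization on prismatic $F$-crystals in vector bundles provided by \cite{Zpcrystalline}.
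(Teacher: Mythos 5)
Your proposal is correct in substance and runs on the same engine as the paper's proof --- both rest on Proposition \ref{prop:Fcris_pinv_VB} together with the fact that the rational \'{e}tale realization of a vector-bundle prismatic $F$-crystal remembers the rank --- but you implement the final step differently. The paper pulls $\mathcal{E}$ back along $\rA \to \rA_{\perf}$, the perfection of a Breuil--Kisin prism, uses the rank-preserving equivalence $\Vect(\W(\R_{\perf}^\flat)[1/p])^{\varphi=1}\simeq \Loc_{\Z_p}(\R_{\perf,\eta})[1/p]$ over a perfectoid to see the pullback of $\mathcal{E}[1/p]$ vanishes, and then descends; the one delicate point there is that $\rA\to\rA_{\perf}$ is honestly (not merely $(p,\I)$-completely) faithfully flat, which the paper checks via Noetherianity, derived completeness and the Jacobson radical. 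You instead stay on $\rA_i$ and test at classical points of $\U_{i,\eta}$ via the Bhatt--Scholze equivalence over $\O_{K(x)}$. That route can be made to work, but the details you flag are genuine: $\T_\et$ does commute with pullback along $\Spf \O_{K(x)}\to\X$, yet comparing the rank of the locally free module $\mathcal{E}(\rA_i)[1/p]$ with the rank over the Breuil--Kisin prism of the point is not a literal base change --- one must produce an object of the prismatic site of $\O_{K(x)}$ receiving maps from both prisms (possible since $(\rA_i,\I_i)$ covers the final object, Proposition \ref{prop:BK_prism}) and check its $p$-inverted ring is nonzero, and one should shrink the $\U_i$ to be connected so that $\rA_i$ is a domain and the rank is constant. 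Your fallback --- quoting rank-preservation/conservativity of the rational realization on $\Vect^{\varphi}(\X_{\Prism},\O_{\Prism})[1/p]$, which comes from the Laurent $F$-crystal equivalence of \cite{prismFcris} --- sidesteps all of this and is essentially the paper's argument in different clothing. Two small corrections: the flat-local surjectivity of $\Spf\rA_i\to\U_i^{\Prism}$ is Proposition \ref{prop:BK_prism} (or \cite{Fgauge} Remark 5.5.19), not Proposition \ref{prop:prism_preserve_flat_covers}, which concerns maps induced by quasisyntomic covers of formal schemes; and your opening reduction (a finitely generated module over the Noetherian ring $\rA_i$ with vanishing $p$-inversion is killed by a single power of $p$, uniformly over the finitely many $i$) is fine and makes explicit a uniformity the paper leaves implicit.
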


\begin{proof}
We may work locally, so we can assume that $\X$ is smooth affine so that $\X^{\Prism}$ has a Breuil-Kisin prism $(\rA,\I)$ where $\rho: \Spf \rA\to \X^{\Prism}$ is a flat-local surjection. Using Proposition \ref{prop:Fcris_pinv_VB} and letting $\rA_{\perf}$ denote the perfection of $\rA$, the pullback of $\mathcal{E}$ to $\Spf \rA_{\perf}$ must then be a vector bundle over $\rA_{\perf}[1/p] = \W(\R_{\perf}^\flat)[1/p]$. For a perfectoid ring, there is an equivalence
\[\T_\et[1/p]: \Vect(\W(\R_{\perf}^\flat)[1/\I]_p^{\wedge}[1/p])^{\varphi=1} \simeq \Loc_{\Z_p}(\R_{\perf,\eta})[1/p].\]
Since $\T_\et(\mathcal{E})[1/p]$ is assumed to be zero, it follows $\mathcal{E}(\W(\R_{\perf}^\flat),\I)[1/\I]^{\wedge}_p[1/p]$ is zero. Hence we conclude $\mathcal{E}(\W(\R_{\perf}^\flat),\I)[1/\I]^{\wedge}_p$ is $p$-power torsion. It follows by $p$-completely flat descent that the $F$-crystal $\mathcal{E}(\rA,\I)[1/\I]_p^{\wedge}$ is $p$-power torsion by replacing $\mathcal{E}$ with its $p$-torsionfree quotient $\mathcal{E}^{p-\mathrm{tf}}$ and using descent to conclude this is zero (the Frobenius on $\rA$ is faithfully flat, so $\Spf \rA_{\perf}[1/\I]_p^{\wedge} \to \Spf \rA[1/\I]_p^{\wedge}$ is a flat cover). Finally, knowing that $\mathcal{E}(\rA,\I)[1/p]$ is a vector bundle by Proposition \ref{prop:Fcris_pinv_VB} this forces $\mathcal{E}(\rA,\I)[1/p]=0$ (as it would be a projective module which becomes zero after an injective base change), after which the flat-local surjection $\rho$ proves the claim.
\end{proof}

We can now understand the kernel of $\T_\et$ using the argument in Remark 6.3.5 of \cite{Fgauge}.

\begin{proof}[Proof of Theorem \ref{thm:general_Tet_kernel}]
It is easy to check that $(p,v_1)$-power torsion objects are killed by $\T_\et$. Let $\mathcal{E}\in \Coh(\X^{\Syn})$ be killed by $\T_\et$. First, we note that $\mathcal{E}/p\mathcal{E}$ is $v_{1,\X}$-power torsion. As $\X$ is smooth over $\O_K$, picking a framing reduces us to the case where $\X=\Spf \R$ is \'{e}tale over $\Spf \O_{K} \langle \X_1^\pm, \ldots, \X_d^\pm \rangle$ and thus admits a quasisyntomic cover by a perfectoid $\Spf \tilde{\R}$. We may check the $v_{1,\X}$-power torsion claim locally, and on a perfectoid this is obvious since \'{e}tale realization simply inverts $v_{1,\tilde{\R}}$ for $p$-torsion $F$-gauges. Using quasicompactness and coherence we get a uniform exponent.

Next, we must show that $\mathcal{E}$ is $p$-power torsion. We know that $\mathcal{E}|_{\X^{\Prism}}$ is $p$-power torsion using Proposition \ref{prop:p_tors_Tet}, as the \'{e}tale realization is zero and compatible with restriction to $\X^{\Prism}$, and then we can deduce $\mathcal{E}$ is $p$-power torsion analogously to Remark 6.3.5 of \cite{Fgauge}, except we use a cover of $\X^{\Syn}$.

Again by quasicompactness, we may test the $p$-power torsion claim Zariski locally for $\X=\Spf \R$ using a Breuil-Kisin prism $(\rA,(d))$. Consider the associated cover
\[\rho_{\rA}: \mathrm{Rees}_{\I^\bullet}(\rA) \to \X^{\Syn}\]
and identify the Rees stack with $(\Spf \rA\langle u,t \rangle/(ut-d))/\G_m$ where $\deg(t)=1, \deg(u)=-1$. We simplify the argument by replacing $\mathcal{E}$ by its $p$-torsionfree quotient, and we now want to show it pulls back to $0$ on this cover (in which case we are done by flat descent). We know the restriction of $\mathcal{E}$ to the $t\neq 0$ locus of $\X^{\Nyg}$ (i.e. after pullback along $j^*_{\dR}$) is zero as this yields $\mathcal{E}|_{\X^{\Prism}}$, so we deduce that $\mathcal{E}$ is $t$-power torsion. Here one can use the grading on $\M:=\rho_{\rA}^* \mathcal{E}$ to deduce this, as $\M[1/t]^{\wedge}_p=0$ implies $\M$ is $t$-power torsion for a graded module.

Now filtering by powers of $t$, we can assume $t\M=0$ or that $\M$ is a sheaf on 
\[\Spf(\rA\langle u,t\rangle/(ut-d))/\G_m |_{t=0} \simeq [\Spf \R\langle u \rangle/\G_m].\]
The sheaf $\M$ vanishes after pullback to $[\Spf \R\langle u^\pm \rangle/\G_m]$ as the pullback along $j_{\HT}^*$ also vanishes by virtue of $\mathcal{E}$ being an $F$-gauge: both $j^*_{\dR}, j^*_{\HT}$ identify with $j^*_\Prism$ which we know vanishes. We obtain a finitely generated graded module $\M=\rho_{\rA}^* \mathcal{E}|_{t=0}$ over $\R \langle u \rangle$ such that $\M[1/u]_p^{\wedge}=0$. We will show a graded connection on 
\[\M^{(1)} = (\rho'_{\rA})^* \mathcal{E}|_{t=0} \in \Coh((\mathrm{Rees}_{\Fil^\bullet_{\Nyg}} \rA^{(1)})_{t=0})\] 
arises naturally from $\mathcal{E}$ on $\Coh(\X^{\Nyg})$. Then we will use this to deduce $\M[1/p]=0$, which since we replaced $\mathcal{E}$ by its $p$-torsionfree quotient forces $\rho_{\rA}^* \mathcal{E}=0$ as desired. Recalling Proposition \ref{prop:BK_flatcover}, showing $\M^{(1)}[1/p]=0$ implies $\M[1/p]=0$ via pullback. We also know $\M^{(1)}[1/u]_p^{\wedge}=0$ for the same reason as $\M[1/u]_p^{\wedge}=0$.

We now turn to producing the graded connection. Write $(\rA,(d))=(\R_0\llbracket u_0 \rrbracket, (\E(u_0)))$ where $\E(u_0)$ is an Eisenstein polynomial for a uniformizer $\pi$ of $\O_K$. We may adapt the Nygaard-filtered variant of $\nabla_\M$ from Definition \ref{def:nabla} to the module $\M^{(1)}$. We may use the same construction, just replacing $\W(k)$ with $\R_0$: we have $\Gamma_0 = \R_0\llbracket u_0 \rrbracket$ and $\Gamma_1 = \R_0\llbracket u_0,u_1 \rrbracket \left \{\frac{u_1-u_0}{\E(u_0)}\right\}^{\wedge}$ where we take the $(p,\E(u_0))$-adic completion and also $\mathrm{F}$-complete. The map $\Theta$ can be constructed completely analogously, just now using a $\Gamma_0$-linear basis. The connection is similarly Nygaard filtered, and again has a twisted variant on $\M^{(1)}$ which is what we will use and denote $\nabla_{\M^{(1)}}$.\footnote{It is worth noting in the general case $\nabla_{\M^{(1)}}$ doesn't capture the full data of a prismatic crystal: using the analogous descent complex as in the discussion preceding Lemma \ref{lem:trunc_desc_complex}, this is specifying instead the data of a coherent sheaf on $(\X/\R_0)^{\Prism}$, the prismatic stack relative to the $\delta$-ring $\R_0$ as in \cite{PrismaticDelta}. However we only need $\nabla_{\M^{(1)}}$ in the argument.}

Passing to the Nygaard associated graded, we obtain an induced map $\overline{\nabla}_{\M^{(1)}}$ on 
\[\mathrm{Rees}_{\Fil^\bullet_{\Nyg}}(\rA^{(1)})_{t=0}\simeq \rA^{(1)}/(1 \otimes \E(u_0)) \langle u \rangle/\G_m.\] 
For $\M^{(1)}$ coming from the structure sheaf, we may express $\nabla = \Theta \circ (\eta_R - \eta_L)$ in the notation of \cite{KtheoryZpn}, and then on the twist $\R_0\llbracket u_0 \rrbracket^{(1)}$ the Nygaard filtration has $\Fil_{\Nyg}^i$ generated by $u^i:=1 \otimes \E(u_0)^i$; following these elements through the definition of $\nabla$, one expands $(\eta_R - \eta_L)(1 \otimes \E(u_0)^i)=1\otimes (\E(u_1)^i-\E(u_0)^i)$ and applies $\Theta$ to deduce $\overline{\nabla}$ satisfies $\mathrm{gr}^{i-1}_{\Nyg} \overline{\nabla}(u^i)=i \E'(\pi) u^{i-1}$. In particular one uses a similar argument as in \cite[Lemma 4.51(ii)]{KtheoryZpn} to obtain divided powers of $g_0:=u_1-u_0$ and rewrite powers $g_0^n$ for $n\ge p$ in the $g_u$ basis that $\Theta$ is defined in; the identity for simplifying $g_u^p$ in the $g_u$ basis actually becomes $g_u^p=-p g_{u+1}$ in the Nygaard associated graded. Using these divided powers we may Taylor expand $\E(u_1)^i$, and then applying $\Theta$ the only term contributing is $1 \otimes i \E'(u_0) \E(u_0)^{i-1} g_0$, which is sent by $\Theta$ to the desired output $i \E'(\pi) u^{i-1}$ on the associated graded (note $\rA^{(1)}/(1 \otimes \E(u_0)) \simeq \R$ and $1 \otimes \E'(u_0) \mapsto \E'(\pi)$).

We may twist $\M^{(1)}$ so that $\Fil^0_{\Nyg} \M^{(1)} = \M^{(1)}$ for convenience. For general $\M^{(1)}$, since on the Nygaard associated graded we have $g_u^p=-p g_{u+1}$ we deduce the Hopf algebroid $(\mathrm{gr}^\ast_{\Nyg} \Gamma_0^{(1)},\mathrm{gr}^\ast_{\Nyg} \Gamma_1^{(1)})$ (with $\Gamma_0, \Gamma_1$ as in \cite{KtheoryZpn} Section 4.6 for $\R=\O_K$; see \S 4.2 for more details) has $\mathrm{gr}^\ast_{\Nyg} \Gamma_1^{(1)}$ given as a free divided power algebra in $g_0$ over $\mathrm{gr}^\ast_{\Nyg} \Gamma_0^{(1)}$ with $g_0$ primitive, and moreover $\Theta$ is similar to \cite[Example 4.49]{KtheoryZpn} since now $\Theta(g_0^{[n]})=0$ unless $n=1$ (where we get $1$). This situation suffices to deduce a Leibniz rule for a general finitely presented comodule $\M^{(1)}$. Define $\overline{\nabla}_{\M^{(1)}}:=(1\otimes \Theta)\circ \rho$. Writing $\rho(m)=\sum_{n\ge 0} m_n \otimes g_0^{[n]}$, we have $m_0=m$ by counitality, and we write $\eta_R(a)=\sum_{i\ge 0} \overline{\nabla}^{[i]}(a)g_0^{[i]}$ for $a \in \mathrm{gr}^\ast_{\Nyg}\Gamma_0^{(1)}$. Here $\overline{\nabla}^{[i]}(a)$ are by definition the coefficients of $g_0^{[i]}$, agreeing with $\overline{\nabla}$ when $i=1$ and giving the identity when $i=0$. Hence
\[\rho(ma)=\rho(m)\eta_R(a)=\sum_{i,j\ge 0} m_j\overline{\nabla}^{[i]}(a) \otimes g_0^{[j]}g_0^{[i]}.\]
Using $g_0^{[i]}g_0^{[j]}=\binom{i+j}{i}g_0^{[i+j]}$, the only terms contributing to the $g_0^{[1]}$-coefficient are $(i,j)=(1,0)$ and $(0,1)$. Moreover, from the expansion of $\rho(m)$ and the definition $\overline{\nabla}_{\M^{(1)}}=(1\otimes \Theta)\circ \rho$ we have $m_1=\overline{\nabla}_{\M^{(1)}}(m)$. Thus this coefficient is $m\overline{\nabla}(a)+m_1a=m\overline{\nabla}(a)+\overline{\nabla}_{\M^{(1)}}(m)a$. Since $\Theta(g_0^{[n]})=0$ for $n\neq 1$ and $\Theta(g_0)=1$, this coefficient is exactly $\overline{\nabla}_{\M^{(1)}}(ma)$, giving the Leibniz rule, or equivalently $\overline{\nabla}_{\M^{(1)}}(am)=\overline{\nabla}(a)m+a\overline{\nabla}_{\M^{(1)}}(m)$.

Since $\M^{(1)}[1/u]_p^{\wedge}=0$ and since it is a \textit{graded} module over $\rA \otimes_{\rA,\varphi} \R\langle u\rangle$ we may conclude it must be $u$-torsion (again using the grading). Now consider $\M^{(1)}[1/p]$. Take the minimal $i>0$ such that $u^i m=0$ for all $m$, and then applying $\overline{\nabla}_{\M^{(1)}}$ to both sides contradicts minimality by showing $0=i \E'(\pi) u^{i-1} m$; the factor in front is now a unit since we inverted $p$ (the other factor coming from the Leibniz rule vanishes, being divisible by $u^i$). Thus after we invert $p$ the module is forced to be zero and thus $\M[1/p]=0$ as desired.
\end{proof}

\subsection{Reflexive F-gauges}

We can define reflexive prismatic $F$-crystals in a similar way to reflexive $F$-gauges.

\begin{definition}
A coherent prismatic $F$-crystal $\M$ on a smooth $p$-adic formal scheme over $\O_K$ is reflexive if the canonical map $\M \to \M^{\vee \vee}$ is an equivalence. We let $\mathsf{Refl}^{\varphi}(\X_{\Prism},\O_{\Prism})$ denote the category of reflexive prismatic $F$-crystals.

Here, $(-)^\vee$ means we take the $\O_{\Prism}$-linear dual and endow it with the structure of a coherent prismatic $F$-crystal.
\end{definition}

There is something to check in this definition, namely that the $\O_{\Prism}$-linear dual still has the structure of a coherent prismatic $F$-crystal for $\X/\Spf \O_K$ smooth. For a prism $(\rA,\I)$ and $\M\in \Perf^\heart(\rA)$ such that $\varphi^* \M[1/\I]\simeq \M[1/\I]$ we claim that
\[\varphi^*\Hom(\M,\rA)[1/\I] \simeq \Hom(\M,\rA)[1/\I]\]
if the Frobenius $\varphi$ on $\rA$ is flat.

Observe $\Hom(\M,\rA)[1/\I]=\Hom(\M[1/\I],\rA[1/\I])$. Then \[\varphi^* \Hom(\M[1/\I],\rA[1/\I]) \simeq \Hom(\varphi^* \M[1/\I],\rA[1/\I]) \simeq \Hom(\M[1/\I],\rA[1/\I])\]
where we used flatness and $\varphi^* \rA[1/\I]\simeq \rA[1/\I]$ in the first isomorphism and the assumption on $\M$ that $\varphi^* \M[1/\I]\simeq \M[1/\I]$ in the second. For $\X$ smooth over $\Spf \O_K$, we can describe $\Coh^{\varphi}(\X_{\Prism},\O_{\Prism})$ by descent using Breuil-Kisin prisms. The Frobenius on such a prism is faithfully flat by \cite[Corollary 3.6]{Fgaugelift}, and the same will hold for prisms in the \v{C}ech nerve. Thus, we have a good notion of duals.

For the following lemma, we regard a reflexive module as a module $\M$ in $\Perf^\heart(\rA)$ such that the canonical map $\M \to \M^{\vee \vee}$ is an isomorphism, and denote this category by $\mathsf{Refl}(\rA)$. For a prism $(\rA,\I)$, we write
\[\mathsf{Refl}^{\varphi}(\rA) = \{(\M,f) : \M\in \mathsf{Refl}(\rA), f: \varphi_{\rA}^* \M[1/\I] \simeq \M[1/\I]\}.\]

\begin{lemma}
The category $\mathsf{Refl}^{\varphi}(\X_{\Prism},\O_{\Prism})$ satisfies $p$-complete \'{e}tale descent in smooth $p$-adic formal schemes over $\O_K$.

If $\X$ is smooth affine, then given any Breuil-Kisin prism $(\rA,\I)$ for $\X$ we have
\[\mathsf{Refl}^{\varphi}(\X_{\Prism},\O_{\Prism}) \simeq \lim_{n\in \Delta} \mathsf{Refl}^{\varphi}(\rA^{(n)})\]
where $\rA^{(n)}$ is the $n$th term in the \v{C}ech nerve of $(\rA,\I)$.
\label{lem:refl_desc}
\end{lemma}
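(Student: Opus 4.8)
The plan is to deduce both statements from the single observation that reflexivity of $\mathcal{E}\in\Coh^{\varphi}(\X_{\Prism},\O_{\Prism})$ is detected on one Breuil--Kisin prism. Fix a Breuil--Kisin prism $(\rA,\I)$ for a smooth affine $\X/\Spf\O_K$; as recalled above one has $\Coh^{\varphi}(\X_{\Prism},\O_{\Prism})\simeq\lim_{n\in\Delta}\Coh^{\varphi}(\rA^{(n)})$, with $\mathcal{E}$ corresponding to $(\mathcal{E}(\rA^{(n)}))_n$. Since $\Spf\rA\to\X^{\Prism}$ is a flat cover from a regular Noetherian affine and pullback along a flat cover preserves internal Homs, $(-)^{\vee}$ is computed evaluation-wise on $\rA$, i.e. $\mathcal{E}^{\vee}(\rA)=\mathcal{E}(\rA)^{\vee}$ and the canonical map $\mathcal{E}\to\mathcal{E}^{\vee\vee}$ evaluates on $\rA$ to the canonical map $\mathcal{E}(\rA)\to\mathcal{E}(\rA)^{\vee\vee}$ of $\rA$-modules. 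As $\Spf\rA\to\X^{\Prism}$ is also a surjection, evaluation on $\rA$ is conservative, so $\mathcal{E}$ is reflexive if and only if the $\rA$-module $\mathcal{E}(\rA)$ is.

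Next I would record the flatness that powers the rest. The ring $\rA\simeq\tilde{\R}\llbracket u_0\rrbracket$ of a Breuil--Kisin prism (notation of Proposition \ref{prop:BK_prism}) is Noetherian, and each coface map $\rA=\rA^{(0)}\to\rA^{(n)}$ of its \v{C}ech nerve is flat: it is $(p,\I)$-completely flat (as for the \v{C}ech nerve of any Breuil--Kisin prism), and this upgrades to honest flatness exactly as in Remark \ref{rem:flat}, using that $\rA$ is Noetherian and both rings are derived $(p,\I)$-complete. Likewise, a surjective $p$-complete \'{e}tale map of smooth affines lifts to a faithfully flat map of Breuil--Kisin prisms. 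Because $(-)^{\vee}$ (hence $(-)^{\vee\vee}$) commutes with flat base change for finitely presented modules, flat base change preserves reflexive modules, and faithfully flat base change detects reflexivity; in particular $\mathcal{E}(\rA^{(n)})=\mathcal{E}(\rA^{(0)})\widehat{\otimes}_{\rA}\rA^{(n)}$ is reflexive for every $n$ as soon as $\mathcal{E}(\rA^{(0)})$ is, and the transition functors of $[n]\mapsto\Coh^{\varphi}(\rA^{(n)})$ restrict to the reflexive subcategories, so that $\lim_{n\in\Delta}\mathsf{Refl}^{\varphi}(\rA^{(n)})$ is well defined and equals the full subcategory of $\lim_{n\in\Delta}\Coh^{\varphi}(\rA^{(n)})$ on objects all of whose components are reflexive.

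Part (2) is then immediate: by the first paragraph a family $(\mathcal{E}(\rA^{(n)}))_n$ comes from a reflexive $\mathcal{E}$ exactly when $\mathcal{E}(\rA^{(0)})$ is reflexive, and by the second paragraph this happens exactly when all $\mathcal{E}(\rA^{(n)})$ are reflexive; hence $\mathsf{Refl}^{\varphi}(\X_{\Prism},\O_{\Prism})$ and $\lim_{n\in\Delta}\mathsf{Refl}^{\varphi}(\rA^{(n)})$ are the same full subcategory of $\lim_{n\in\Delta}\Coh^{\varphi}(\rA^{(n)})$. For part (1): by the first and second paragraphs reflexivity of an object of $\Coh^{\varphi}(\X_{\Prism},\O_{\Prism})$ can be tested after pullback along a $p$-complete \'{e}tale cover and is preserved by such pullback; since $\Coh^{\varphi}(-)$ satisfies $p$-complete \'{e}tale descent on smooth $p$-adic formal schemes over $\O_K$, its full subcategory $\mathsf{Refl}^{\varphi}(-)$ does too.

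The main obstacle is the honest flatness of the \v{C}ech-nerve transition maps (and of the \'{e}tale lifts): reflexivity is not homotopy-invariant and is destroyed by non-flat base change, so for the cosimplicial diagram to restrict to reflexive objects one genuinely needs flatness rather than mere $(p,\I)$-complete flatness, which is where the Noetherianity of the Breuil--Kisin prism $\rA$ enters as in Remark \ref{rem:flat}. One should also check that reflexivity is unaffected by the derived $(p,\I)$-completion implicit in the base change $\mathcal{E}(\rA)\widehat{\otimes}_{\rA}\rA^{(n)}$, which again follows from Noetherianity of $\rA$ together with finite generation of the modules in play.
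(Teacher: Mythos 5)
Your proposal is correct and takes essentially the same route as the paper: descent is established at the level of $\Coh^{\varphi}(\X_{\Prism},\O_{\Prism})$ and then restricted to reflexive objects by noting that duals and reflexivity are compatible with the (faithfully) flat maps in play, with the key flatness upgrade for the Breuil--Kisin \v{C}ech nerve being exactly the Noetherian-plus-derived-$(p,\I)$-complete argument of Remark \ref{rem:flat}. The one input you quote as known --- $p$-complete \'{e}tale descent and \v{C}ech descent along a Breuil--Kisin prism for $\Coh^{\varphi}$ itself --- is the part the paper actually proves (from quasisyntomic descent of $\Perf^{\varphi}$ together with $t$-exactness of \'{e}tale pullback, respectively descent of $\Perf^{\varphi}$ plus the same flatness of $\rA \to \rA^{(n)}$), but your observation that \'{e}tale covers lift to flat maps of Breuil--Kisin prisms supplies the needed $t$-exactness, so nothing essential is missing.
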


\begin{proof}
We argue these claims for $\Coh^{\varphi}(\X_{\Prism},\O_{\Prism})$, as the assertions for reflexive objects in this category follow formally from the descent equivalences being compatible with duals.

For \'{e}tale descent, let $f: \tilde{\X} \to \X$ be an \'{e}tale covering. Then $\tilde{\X}$ remains smooth, so we have a well-defined $t$-structure on $\Perf^{\varphi}(\tilde{\X}_{\Prism},\O_{\Prism})$. As $\Perf^{\varphi}(\X_{\Prism},\O_{\Prism})$ satisfies quasisyntomic descent (\cite[Proposition 2.14]{prismFcris}), it also satisfies descent for this covering. Now it suffices to check that the pullback map $f^*: \Perf^{\varphi}(\X_{\Prism},\O_{\Prism}) \to \Perf^{\varphi}(\tilde{\X}_{\Prism},\O_{\Prism})$ is $t$-exact, which can be seen from \cite[Proposition 3.11]{Fgaugelift}.

In the situation where $\X$ is smooth affine with a Breuil-Kisin prism $(\rA,\I)$, we have
\[\Perf^{\varphi}(\X_{\Prism},\O_{\Prism}) \simeq \lim_{n\in \Delta} \Perf^{\varphi}(\rA^{(n)}).\]
The claim for $\Coh^{\varphi}(\X_{\Prism},\O_{\Prism})$ follows because the maps $\rA \to \rA^{(n)}$ are flat (rather than just $(p,\I)$-completely flat) as the source is Noetherian and both source and target are derived $(p,\I)$-complete.
\end{proof}

We now show that, when $\X/\Spf \O_K$ is smooth, reflexive prismatic $F$-crystals coincide with analytic prismatic $F$-crystals.

\begin{proposition}
Let $\X$ be affine and \'{e}tale over $\Spf \O_K \langle \X_1^\pm,\ldots,\X_n^\pm \rangle$, and let $(\rA,\I)$ be a Breuil-Kisin prism associated to $\X$. If $\mathcal{E}\in \mathsf{Refl}^{\varphi}(\X_{\Prism},\O_{\Prism})$, then
\[\mathcal{E}(\rA,\I)|_{\Spec \rA \setminus \V(p,\I)}\]
is a vector bundle.
\label{prop:refl_VB}
\end{proposition}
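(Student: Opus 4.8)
The idea is to cover $\Spec\rA\setminus\V(p,\I)$ by the two opens $\Spec\rA[1/p]$ and $\Spec\rA[1/\I]$ — these do cover, since a prime avoids $\V(p,\I)$ exactly when it avoids $p$ or avoids $\I$ — and to check that $\M := \mathcal{E}(\rA,\I)$ restricts to a vector bundle on each. Recall $\M$ is a finitely generated $\rA$-module (as $\rA$ is Noetherian regular, $\Perf^\heart(\rA)$ consists of finitely generated modules), it is reflexive, and it carries an isomorphism $f\colon \varphi^*\M[1/\I]\xrightarrow{\sim}\M[1/\I]$. On $\Spec\rA[1/p]$ there is nothing to prove: by Proposition~\ref{prop:Fcris_pinv_VB} the underlying prismatic crystal of $\mathcal{E}$ becomes a vector bundle after inverting $p$, so evaluating a vector-bundle representative of $\mathcal{E}[1/p]$ on $(\rA,\I)$ and comparing with $\M$ shows $\M[1/p]$ is a vector bundle over $\rA[1/p]$. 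So the real content is that $\M[1/\I]$ is a vector bundle over $\rA[1/\I]$.

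To get this, I would pass to the $p$-adic completion $B := (\rA[1/\I])^{\wedge}_p$, which is Noetherian and $p$-torsion-free, with $\rA[1/\I]\to B$ flat and $B/p = \rA[1/\I]/p$. The key observation is that Frobenius extends to a ring endomorphism of $B$: writing $\I = (d)$ with $d$ distinguished, one has $\varphi(d) = d^p + p\,\delta(d)$ and $p\in(\I,\varphi(\I))$; since $d$ is already invertible in $\rA[1/\I]$ and $1 + p\,\delta(d)d^{-p}$ is a unit in the $p$-complete ring $B$, both $d$ and $\varphi(d)$ are units in $B$, so $(\rA[1/\varphi(\I)])^{\wedge}_p = B$ and the Frobenius $\rA[1/\I]\to\rA[1/\varphi(\I)]$ completes to $\varphi\colon B\to B$. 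Put $N_B := \M\otimes_\rA B$; then base change of $f$ along $\rA[1/\I]\to B$ gives an isomorphism $\varphi^*N_B\simeq N_B$ of $B$-modules, $N_B$ is $p$-torsion-free (flat base change of a torsion-free module — reflexive modules are torsion-free), and $N_B$ is $p$-adically complete (finitely generated over the $p$-complete ring $B$).

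Now reduce modulo $p$: the isomorphism becomes $\varphi^*(N_B/p)\simeq N_B/p$ over $B/p$, with $\varphi$ the absolute Frobenius $x\mapsto x^p$. Because Fitting ideals commute with base change, each $J_i := \mathrm{Fitt}_i(N_B/p)$ satisfies $J_i^{[p]} = J_i$, where $J_i^{[p]}$ is the ideal generated by the $p$-th powers of a finite generating set of $J_i$. Since $J_i^{[p]}\subseteq J_i^2\subseteq J_i$, this forces $J_i = J_i^2$, and as $J_i$ is finitely generated the determinant trick shows $J_i$ is generated by an idempotent. All Fitting ideals of $N_B/p$ being idempotent implies $N_B/p$ is locally free over $B/p$ (localizing at any prime, the idempotent Fitting ideals jump from $0$ to the unit ideal at a single index). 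Together with $N_B$ being $p$-torsion-free, $p$-adically complete, and finitely generated over the Noetherian ring $B$, the local criterion of flatness then gives that $N_B$ is flat — hence finite projective, i.e.\ a vector bundle — over $B$.

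Finally I would descend. The module $\M[1/\I]$ is finitely generated over the Noetherian ring $\rA[1/\I]$; it is free at every prime not containing $p$ by the first paragraph, and at a prime $\mathfrak{q}\ni p$ the completion $\rA[1/\I]_{\mathfrak{q}}\to(\rA[1/\I]_{\mathfrak{q}})^{\wedge}_p$ is faithfully flat and receives a flat map from $B$ along which $N_B$ base-changes to $\M[1/\I]_{\mathfrak{q}}\otimes(\rA[1/\I]_{\mathfrak{q}})^{\wedge}_p$; so the latter is free, hence $\M[1/\I]_{\mathfrak{q}}$ is free. Thus $\M$ restricts to a vector bundle on $\Spec\rA[1/\I]$ as well, and since these two opens cover $\Spec\rA\setminus\V(p,\I)$ we are done. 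I expect the main obstacle to be managing $B$: checking it is Noetherian and that Frobenius extends to an endomorphism of it — which rests entirely on the distinguished-element relation $p\in(\I,\varphi(\I))$ — and pinning down the final faithfully-flat descent onto $\rA[1/\I]$.
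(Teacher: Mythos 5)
Your proof is correct, and its key step takes a genuinely different route from the paper's. Both arguments share the input that $\M[1/p]$ is a vector bundle (Proposition \ref{prop:Fcris_pinv_VB}) and both end with the same Beauville--Laszlo-type gluing of the $p$-inverted and $p$-completed pieces of $\Spec \rA[1/\I]$. The difference is how one sees that $\M[1/\I]^{\wedge}_p$ is a vector bundle: the paper goes through the \'etale realization --- reflexivity makes $\T_\et(\mathcal{E})$ torsion-free, hence an honest $\Z_p$-local system, and the equivalence $\Vect(\X_{\Prism},\O_{\Prism}[1/\I_{\Prism}]_p^{\wedge})^{\varphi=1}\simeq \Loc_{\Z_p}(\X_\eta)$ of Bhatt--Scholze then forces the Laurent $F$-crystal to be locally free --- whereas you prove it directly, Katz-style: $\varphi$ extends to $B=(\rA[1/\I])^{\wedge}_p$ because $\varphi(d)\equiv d^p \bmod p$ is a unit there (this congruence, rather than $p\in(\I,\varphi(\I))$, is what your computation actually uses); mod $p$ the isomorphism $\varphi^*(N_B/p)\simeq N_B/p$ makes every Fitting ideal satisfy $J=J^{[p]}\subseteq J^2$, hence idempotent, so $N_B/p$ is finite locally free (finite presentation holds since $B/p$ is Noetherian); and the local criterion of flatness together with $p$-torsion-freeness of $N_B$ --- which is where reflexivity of the evaluation enters, justified by Lemma \ref{lem:refl_desc} --- lifts this to $N_B$. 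Your route is more elementary and self-contained (it never invokes $\T_\et$ or the prismatic--\'etale comparison, and in fact only needs the evaluations to be $p$-torsion-free rather than fully reflexive), while the paper's route is shorter given the machinery already developed and keeps the statement tied to the local-system picture used throughout \S 3. Two cosmetic remarks: in your final descent the flatness of $B\to(\rA[1/\I]_{\q})^{\wedge}_p$ is not needed, since base change of a finite projective module is finite projective along any ring map; and faithful flatness of $\rA[1/\I]_{\q}\to(\rA[1/\I]_{\q})^{\wedge}_p$ indeed holds because $p\in\q$, exactly as you indicate.
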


\begin{proof}
We observe that $\T_\et(\mathcal{E})$ is a reflexive object in $\D^{(b)}_{\mathrm{lisse}}(\X_\eta,\Z_p)^\heart$ due to compatibility with duals and $t$-exactness of $\T_\et$ (see \cite[Lemma 3.16]{Fgaugelift}; we use the underived dual). This implies it is in the subcategory $\Loc_{\Z_p}(\X_\eta)$ as the condition $\mathbb{L}\simeq \mathbb{L}^{\vee \vee}$ implies $\mathbb{L}\in \D^{(b)}_{\mathrm{lisse}}(\X_\eta,\Z_p)^{\heart}$ has no $p$-torsion. 

By \cite[Corollary 3.8]{prismFcris}, there is an equivalence
\[\Vect(\X_{\Prism},\O_{\Prism}[1/\I_{\Prism}]_p^{\wedge})^{\varphi=1} \simeq \Loc_{\Z_p}(\X_\eta).\]
The \'{e}tale realization will factor through this equivalence, so we deduce $\mathcal{E}[1/\I_{\Prism}]_p^{\wedge}$ is a vector bundle (here we are using the more general equivalence in \cite[Corollary 3.7]{prismFcris} and the $t$-exactness of this equivalence, from the argument in \cite[Lemma 3.16]{Fgaugelift}). Thus if $\M=\mathcal{E}(\rA,\I)$ then $\M[1/\I]_p^{\wedge}$ is a vector bundle.

As Proposition \ref{prop:Fcris_pinv_VB} applies to any prismatic $F$-crystal, we deduce from it that $\M[1/p]$ is a vector bundle. By Beauville-Laszlo gluing\footnote{In the Noetherian situation we are working in, this amounts to faithfully flat descent via the map $\rA[1/\I] \to \rA[1/\I]^{\wedge}_p \times \rA[1/\I,1/p]$, as completion is flat.}, we have
\[\Vect(\rA[1/\I]) \simeq \Vect(\rA[1/\I,1/p]) \times_{\Vect(\rA[1/\I]_p^{\wedge}[1/p])} \Vect(\rA[1/\I]_p^{\wedge})\]
so we deduce that $\M[1/\I]$ is a vector bundle. It then follows that $\M|_{\Spec \rA \setminus \V(p,\I)}$ is a vector bundle.
\end{proof}

We can now use the \'{e}tale descent of $\mathsf{Refl}^{\varphi}(\X_{\Prism},\O_{\Prism})$ to finish.

\begin{theorem}
Let $\mathcal{E}$ be a coherent prismatic $F$-crystal on some smooth $\X/\Spf \O_K$. Then $\mathcal{E}$ is reflexive if and only if $\mathcal{E}$ is an analytic prismatic $F$-crystal.
\label{thm:prismFcris_reflexive}
\end{theorem}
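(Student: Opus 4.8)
The plan is to test both implications after evaluating on a single Breuil-Kisin prism, where the assertion reduces to commutative algebra together with the vector bundle criterion already established in Proposition~\ref{prop:refl_VB}. First I would reduce to the case that $\X$ is affine and \'{e}tale over a torus $\Spf\O_K\langle\X_1^{\pm},\ldots,\X_n^{\pm}\rangle$ (Lemma~2.9 in \cite{Zpcrystalline}), using that both $\mathsf{Refl}^{\varphi}(\X_{\Prism},\O_{\Prism})$ (Lemma~\ref{lem:refl_desc}) and Guo-Reinecke's category of analytic prismatic $F$-crystals satisfy \'{e}tale descent compatibly with the comparison functor, so that ``reflexive'' and ``analytic'' are both Zariski-local conditions on $\mathcal{E}$. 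Fix a Breuil-Kisin prism $(\rA,\I)$ for $\X$ as in Proposition~\ref{prop:BK_prism}, so that $\rA\simeq\tilde{\R}\llbracket u_0\rrbracket$ is Noetherian regular, $\I=(\E(u_0))$ and $\Spf\rA/\I\simeq\X$; since $\E(u_0)$ is a nonzerodivisor and $p$ is a nonzerodivisor in the $\O_K$-flat ring $\rA/\I$, the closed subscheme $\V(p,\I)\subset\Spec\rA$ is cut out by a regular sequence and has codimension $2$. Write $j\colon U:=\Spec\rA\setminus\V(p,\I)\hookrightarrow\Spec\rA$. Because $\Spf\rA\to\X^{\Prism}$ is a flat-local surjection, evaluation on $(\rA,\I)$ is conservative on $\Coh^{\varphi}(\X_{\Prism},\O_{\Prism})$, and (the Frobenius on $\rA$ being flat) it is compatible with $\O_{\Prism}$-linear duality; hence $\mathcal{E}$ is reflexive if and only if $\mathcal{M}:=\mathcal{E}(\rA,\I)$ is a reflexive $\rA$-module, while $\mathcal{E}$ is an analytic prismatic $F$-crystal if and only if $\mathcal{M}\simeq j_*\mathcal{V}$, compatibly with the Frobenius, for some vector bundle $\mathcal{V}$ on $U$ --- this last description being the Breuil-Kisin-prism picture of the comparison functor, where flat base change along the (flat) transition maps $\rA\to\rA^{(n)}$ is what guarantees that $j_*$ of a vector bundle on the analytic locus is again a crystal.

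With these identifications in place the two directions are short. For ``reflexive $\Rightarrow$ analytic'', suppose $\mathcal{M}$ is reflexive; then $\rA$ is normal and $\V(p,\I)$ has codimension $\ge 2$, so $\mathcal{M}\xrightarrow{\sim}j_*j^*\mathcal{M}$, and by Proposition~\ref{prop:refl_VB} the restriction $j^*\mathcal{M}$ is a vector bundle on $U$; thus $\mathcal{M}=j_*(j^*\mathcal{M})$ with its induced Frobenius, so $\mathcal{E}$ is analytic. For ``analytic $\Rightarrow$ reflexive'', write $\mathcal{M}=j_*\mathcal{V}$ with $\mathcal{V}$ a vector bundle on $U$. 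A vector bundle is reflexive, and the pushforward of a reflexive coherent sheaf along the inclusion of the complement of a closed subscheme of codimension $\ge 2$ in a normal Noetherian scheme is again reflexive: concretely, writing $\mathcal{V}=\mathcal{G}^{\vee}$ and extending $\mathcal{G}$ to a coherent sheaf $\bar{\mathcal{G}}$ on $\Spec\rA$, one gets $j_*\mathcal{V}=j_*\bigl((j^{*}\bar{\mathcal{G}})^{\vee}\bigr)=\bar{\mathcal{G}}^{\vee}$ using $j^{*}\dashv j_{*}$ and $j_*\O_U=\O_{\Spec\rA}$, and a dual over a normal Noetherian ring is reflexive. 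Hence $\mathcal{M}$ is a reflexive $\rA$-module, so $\mathcal{E}$ is reflexive.

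The essential input --- and the only place where the $F$-crystal structure, as opposed to pure commutative algebra, is used --- is Proposition~\ref{prop:refl_VB}. Reflexivity of $\mathcal{M}$ over $\rA$ by itself only forces $j^{*}\mathcal{M}$ to be reflexive on the $(n+2)$-dimensional scheme $U$, and reflexive sheaves on $U$ need not be locally free once $n\ge 1$; it is precisely the Frobenius --- entering through $\T_\et$, the perfectoid comparison, and the ``$p$-inverted'' statement of Proposition~\ref{prop:Fcris_pinv_VB} --- that upgrades reflexive to locally free on the analytic locus. Everything else (conservativity and duality-compatibility of evaluation on $(\rA,\I)$, the identification of reflexive modules over a normal Noetherian ring with pushforwards from the complement of a codimension $\ge 2$ locus, and flat base change for the open immersion $j$) is routine.
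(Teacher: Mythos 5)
Your argument is correct and follows essentially the same route as the paper: reduce to the affine case \'{e}tale over a torus via the descent of Lemma \ref{lem:refl_desc}, test reflexivity on a Breuil--Kisin prism, use Proposition \ref{prop:refl_VB} for the direction ``reflexive $\Rightarrow$ analytic'', and use that $j_*$ of a vector bundle on the complement of the codimension-two locus $\V(p,\I)$ in the regular $\Spec \rA$ is reflexive for the converse. The only cosmetic difference is that you prove this last fact by hand (via $j_*\mathcal{V}=\bar{\mathcal{G}}^{\vee}$ and reflexivity of duals over a normal Noetherian ring) where the paper cites Stacks Project 0EBJ, and you make explicit the identification $\M \simeq j_*j^*\M$ that the paper leaves implicit.
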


\begin{proof}
First, we claim there is a fully faithful inclusion functor
\[\Vect^{\varphi,\mathrm{an}}(\X_\Prism, \O_\Prism) \to \mathsf{Refl}^{\varphi}(\X_{\Prism},\O_{\Prism})\]
where the target is the category of reflexive prismatic $F$-crystals. Full faithfulness is automatic, so we only need to show analytic prismatic $F$-crystals are reflexive. By Zariski descent of both categories (see Lemma \ref{lem:refl_desc}), we can assume $\X$ is \'{e}tale over $\Spf \O_K\langle \X_1^\pm, \ldots, \X_n^\pm \rangle$ by choosing a framing. We can choose a Breuil-Kisin prism $(\rA,\I)$ so $\Spf \rA/\I\simeq \X$. Noting that we can test reflexivity after evaluation on this prism by the second claim in Lemma \ref{lem:refl_desc}, we use that any $*$-extension of a vector bundle on $\Spec \rA \setminus \V(p,\I)$ is a reflexive module over $\rA$ as $\rA$ is Noetherian regular. This is shown in Stacks Project \href{https://stacks.math.columbia.edu/tag/0EBJ}{0EBJ} (one may reduce to this situation). 

Next, it suffices to show for every reflexive prismatic $F$-crystal $\mathcal{E}$ that
\[\M:=\mathcal{E}(\rA,\I)\in \Vect^{\varphi}(\Spec \rA \setminus \V(p,\I)).\]
Indeed, by the descent results of Lemma \ref{lem:refl_desc} as well as the analogous results for analytic prismatic $F$-crystals shown in \cite{Zpcrystalline}, this allows us to verify $\mathcal{E}$ is an analytic prismatic $F$-crystal as it will automatically lie in $\Vect^{\varphi}(\Spec \rA^{(n)} \setminus \V(p,\I))$ for each transversal prism in the \v{C}ech nerve of $(\rA,\I)$. As $\X$ is assumed \'{e}tale over $\O_K \langle \X_1^\pm, \ldots, \X_n^\pm \rangle$, we know by Proposition \ref{prop:refl_VB} that $\M\in \Vect^{\varphi}(\Spec \rA \setminus \V(p,\I))$, as $\varphi^* \M[1/\I] \simeq \M[1/\I]$ and $\M|_{\Spec \rA \setminus \V(p,\I)}$ is a vector bundle.
\end{proof}

We can now characterize reflexive sheaves on $\X^{\Syn}$ as being precisely those lifted from analytic prismatic $F$-crystals via $\Pi_\X$.

\begin{lemma}
The functor $\Pi_\X$
\[\Vect^{\varphi,\mathrm{an}}(\X_{\Prism},\O_{\Prism})\to \Perf(\X^{\Syn})\]
has essential image contained in $\mathsf{Refl}(\X^{\Syn})$.
\label{lem:PiX_essim}
\end{lemma}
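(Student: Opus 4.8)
The plan is to verify two things: that $\Pi_\X(\mathcal{E})$ lies in $\Coh(\X^{\Syn})$ for every $\mathcal{E}\in\Vect^{\varphi,\mathrm{an}}(\X_{\Prism},\O_{\Prism})$, and that the canonical map $c\colon\Pi_\X(\mathcal{E})\to\Pi_\X(\mathcal{E})^{\vee\vee}$ is then an isomorphism. For coherence, I would pull back along a Breuil--Kisin Rees cover $\rho_{\rA}'\colon\mathrm{Rees}_{\Fil^\bullet_{\Nyg}}(\rA^{(1)})\to\X^{\Nyg}\to\X^{\Syn}$ (reducing to $\X$ affine). Combining Guo--Li's explicit description in Theorem~\ref{thm:Fgaugelift} with the descent relating this cover to quasiregular semiperfectoid covers should identify the pullback with the Rees module of the Nygaard-filtered module $(\mathcal{E}(\rA^{(1)}),\varphi_{\mathcal{E}}^{-1}(\I^\bullet\mathcal{E}(\rA)))$; since $\mathcal{E}$ is a coherent prismatic $F$-crystal this is a finitely generated graded module concentrated in cohomological degree $0$ over the Noetherian regular Rees algebra, hence in the heart, so by the construction of the canonical $t$-structure in Proposition~\ref{prop:t_struct_XSyn} we get $\Pi_\X(\mathcal{E})\in\Coh(\X^{\Syn})$ and the double dual of Definition~\ref{defn:doubledual} makes sense.

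Next I would compute the restriction of the double dual to $\X^{\Prism}$. By Theorem~\ref{thm:prismFcris_reflexive}, $\mathcal{E}$ --- being analytic --- is reflexive as a coherent prismatic $F$-crystal, so $c_{\mathcal{E}}\colon\mathcal{E}\to\mathcal{E}^{\vee\vee}$ is an isomorphism. The functor $(-)|_{\X^{\Prism}}\colon\Coh(\X^{\Syn})\to\Coh^{\varphi}(\X_{\Prism},\O_{\Prism})$ commutes with $(-)^\vee$ and with the natural map to the double dual: on underlying modules it is pullback along $j_{\Prism}=j_{\Nyg}\circ j_{\dR}$, which is flat (a composite of the \'etale map $\X^{\Nyg}\to\X^{\Syn}$ with an open immersion), so flat base change applies to $\underline{\Hom}(-,\O)$ of a coherent sheaf, and the $F$-structure on a dual is determined by its underlying module (compare Lemma~\ref{lem:dual} and the remark after Definition~\ref{defn:doubledual}). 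Using that the counit $\Pi_\X(\mathcal{E})|_{\X^{\Prism}}\to\mathcal{E}$ is an isomorphism (Theorem~\ref{thm:Fgaugelift}), this yields a canonical isomorphism $(\Pi_\X(\mathcal{E})^{\vee\vee})|_{\X^{\Prism}}\simeq\mathcal{E}^{\vee\vee}\simeq\mathcal{E}$ under which $c|_{\X^{\Prism}}$ becomes $c_{\mathcal{E}}$.

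Finally I would run a retract argument. Since $\Pi_\X$ is right adjoint to $(-)|_{\X^{\Prism}}$ with invertible counit (Theorem~\ref{thm:Fgaugelift}), $(-)|_{\X^{\Prism}}$ is a quasi-inverse of $\Pi_\X$ on its essential image, and in particular is faithful and reflects isomorphisms there. Let $g\colon\Pi_\X(\mathcal{E})^{\vee\vee}\to\Pi_\X(\mathcal{E})$ be adjoint to the isomorphism $c_{\mathcal{E}}^{-1}\colon(\Pi_\X(\mathcal{E})^{\vee\vee})|_{\X^{\Prism}}\simeq\mathcal{E}^{\vee\vee}\to\mathcal{E}$. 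Then $(g\circ c)|_{\X^{\Prism}}=c_{\mathcal{E}}^{-1}\circ c_{\mathcal{E}}=\mathrm{id}$, so $g\circ c=\mathrm{id}_{\Pi_\X(\mathcal{E})}$ by faithfulness of $(-)|_{\X^{\Prism}}$ on the essential image of $\Pi_\X$; thus $c$ is a split monomorphism, exhibiting $\Pi_\X(\mathcal{E})$ as a retract of $\Pi_\X(\mathcal{E})^{\vee\vee}$. The latter is reflexive --- a dual of a coherent $F$-gauge is reflexive, since on the Noetherian regular cover the dual of a coherent sheaf is reflexive and reflexivity descends (as in the remark after Definition~\ref{defn:doubledual}) --- and reflexive objects are stable under retracts: if $A$ is a retract of $B$, naturality of the double-dual map exhibits $c_A\colon A\to A^{\vee\vee}$ as a retract, in the arrow category, of the isomorphism $c_B\colon B\to B^{\vee\vee}$, hence $c_A$ is itself an isomorphism. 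Therefore $\Pi_\X(\mathcal{E})$ is reflexive, i.e.\ lies in $\mathsf{Refl}(\X^{\Syn})$.

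The point requiring the most care is the compatibility in the second paragraph --- that restriction to $\X^{\Prism}$ commutes with internal duals and the attendant natural transformations --- which hinges on flatness of $j_{\Prism}$; an alternative that avoids it is to argue directly on a Breuil--Kisin Rees cover, using Proposition~\ref{prop:refl_VB} to see that $\mathcal{E}(\rA)$ and its Frobenius twist are reflexive $\rA$-modules that are vector bundles away from the codimension-$2$ locus $\V(p,\I)$, deducing that the Nygaard filtration steps and hence the pullback of $\Pi_\X(\mathcal{E})$ to the Rees stack are $*$-extensions of vector bundles from the complement of a codimension-$\geq 2$ closed substack (namely $\{t=u=0\}\simeq\Spec\rA/\I$), concluding reflexivity of the pullback via Stacks Project \href{https://stacks.math.columbia.edu/tag/0EBJ}{0EBJ}, and descending along the flat-local surjection as in Lemma~\ref{lem:reflexivedef_equiv}; in that approach the codimension bookkeeping on the Rees stack is the delicate point.
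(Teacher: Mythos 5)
Your reflexivity argument is sound and takes a genuinely different route from the paper. The paper disposes of this step in one line: it asserts that $\Pi_\X$ is compatible with duals and then invokes Theorem \ref{thm:prismFcris_reflexive} to transport $\mathcal{E}\simeq\mathcal{E}^{\vee\vee}$ across $\Pi_\X$. You instead use only compatibility of $(-)|_{\X^{\Prism}}$ with duals (flat base change along $j_{\Prism}$), the invertible counit from Theorem \ref{thm:Fgaugelift}, and an adjunction/retract argument, together with reflexivity of $\Pi_\X(\mathcal{E})^{\vee\vee}$ checked on a Noetherian regular cover and descended as in Lemma \ref{lem:reflexivedef_equiv}. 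The retract-of-an-isomorphism lemma you appeal to is correct (if $r i=\mathrm{id}$ and $c_B$ is invertible, then $r\circ c_B^{-1}\circ i^{\vee\vee}$ inverts $c_A$), and your use of the adjunction to promote $(g\circ c)|_{\X^{\Prism}}=\mathrm{id}$ to $g\circ c=\mathrm{id}$ is legitimate because the target lies in the image of the right adjoint. This arguably supplies more detail than the paper's own treatment, and your closing alternative via Proposition \ref{prop:refl_VB} and Stacks 0EBJ is essentially the mechanism the paper uses later for essential surjectivity.

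The soft spot is the coherence step, where your skeleton agrees with the paper's but the phrase ``descent relating this cover to quasiregular semiperfectoid covers should identify the pullback with the Rees module of $(\mathcal{E}(\rA^{(1)}),\varphi_{\mathcal{E}}^{-1}(\I^\bullet\mathcal{E}(\rA)))$'' hides exactly the work to be done: Guo--Li's formula in Theorem \ref{thm:Fgaugelift} is given only on quasiregular semiperfectoid covers, and transferring anything to the Noetherian Breuil--Kisin Rees cover is the whole point (this is what the paper's footnote about the two notions of coherence is flagging). The paper never identifies the filtration on the Breuil--Kisin prism; it only needs discreteness, which it obtains by showing that $\mathrm{Rees}_{\I^\bullet}\rA_{\perf}\to\mathrm{Rees}_{\I^\bullet}\rA$ is honestly (not merely $(p,\I)$-completely) faithfully flat, via the Noetherian algebraization argument as at the end of Proposition \ref{prop:p_tors_Tet}, and then descending discreteness from the perfection, where Guo--Li's formula does apply. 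Your further claim that the preimage filtration gives a finitely generated graded module over the Rees algebra is both unjustified as stated (finite generation of a Rees module is not automatic for an arbitrary filtration) and unnecessary: once discreteness is known, $\Pi_\X(\mathcal{E})$ is already a perfect complex by Theorem \ref{thm:Fgaugelift}, and on a Noetherian regular cover a discrete perfect complex is coherent. If you replace your identification by this flatness-and-descent-of-discreteness argument, the proof is complete.
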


\begin{proof}
To check coherence, we may reduce to the affine case (the functor in Theorem \ref{thm:Fgaugelift} is compatible with \'{e}tale pullbacks). Then checking coherence amounts to verifying that the pullback to the stack $\mathrm{Rees}_{\I^\bullet} \rA \to \X^{\Nyg} \to \X^{\Syn}$ is coherent for a Breuil-Kisin prism $(\rA,\I)$\footnote{The notion of coherence in \cite{Fgaugelift} imposes that the pullback of $\mathcal{E}$ is discrete with an honest filtration in $\Perf(\mathrm{S}^{\Syn})$ for every quasiregular semiperfectoid ring with a quasisyntomic map $\Spf \mathrm{S} \to \X$, and our notion of coherence only obviously gives discreteness for quasiregular semiperfectoid rings related to the perfection of a Breuil-Kisin prism.}. Let $(\rA_{\mathrm{perf}},\I_{\mathrm{perf}})$ denote the perfection of this prism, so $\rA_{\mathrm{perf}}=(\colim_{\varphi} \rA)^{\wedge}_{(p,\I)}$. Since the Frobenius on $\rA$ is faithfully flat by \cite[Corollary 3.6]{Fgaugelift}, $\rA \to \colim_{\varphi} \rA$ is faithfully flat and $\rA \to \rA_{\mathrm{perf}}$ is derived $(p,\I)$-completely faithfully flat. 

We may argue the map of Rees stacks in this case is actually faithfully flat for their natural algebraizations since in
\[\mathrm{Rees}_{\I^\bullet} \rA_{\perf} \to \mathrm{Rees}_{\I^\bullet} \rA \to \X^{\Syn},\]
the (algebraization of) the target of the first map is a Noetherian algebraic stack. Indeed we may test this on the Rees algebra covers, where the claim we need to argue is that a $(p,\I)$-completely faithfully flat map $\R \to \R'$ of derived $(p,\I)$-complete rings $\R$ and $\R'$ is further faithfully flat if $\R$ is Noetherian. The flatness claim follows from \cite[Lemma 5.15]{Bhatt}, since the source (as a ring map) is Noetherian and derived $(p,\I)$-complete. It is further faithfully flat since maximal ideals of $\R$ contain $(p,\I)$ (since $(p,\I)$ lies in the Jacobson radical), and then since we know surjectivity of $\Spec \rA_{\perf}/(p,\I) \to \Spec \rA/(p,\I)$ this suffices.

The map of algebraizations of Rees stacks $\mathrm{Rees}_{\I^\bullet} \rA_{\perf} \to \mathrm{Rees}_{\I^\bullet} \rA$ is faithfully flat. It follows since the pullback to $\mathrm{Rees}_{\I^\bullet} \rA_{\perf}$ is discrete (i.e. in the heart for the $t$-structure on $\D_{\mathrm{qc}}(\mathrm{Rees}_{\I^\bullet} \rA_{\perf})$) that the pullback to $\mathrm{Rees}_{\I^\bullet} \rA$ lies in the heart and hence is coherent.

Having verified $\Pi_\X$ sends coherent prismatic $F$-crystals to coherent $F$-gauges, we may also easily check $\Pi_\X$ is compatible with duals. Since any object $\mathcal{E}\in \Vect^{\varphi,\mathrm{an}}(\X_{\Prism},\O_{\Prism})$ has $\mathcal{E} \simeq \mathcal{E}^{\vee \vee}$ by Theorem \ref{thm:prismFcris_reflexive} the same follows for $\Pi_{\X}(\mathcal{E})$.
\end{proof}

\begin{theorem}
Let $\X/\Spf \O_K$ be smooth and quasicompact. Then the functor $\Pi_\X$ induces an equivalence of categories
\[\Vect^{\varphi,\mathrm{an}}(\X_{\Prism},\O_{\Prism})\simeq \mathsf{Refl}(\X^{\Syn}).\]
\end{theorem}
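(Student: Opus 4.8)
The plan is to establish that $\Pi_\X$, which by the previous lemma and Theorem~\ref{thm:Fgaugelift} is already fully faithful with essential image contained in $\mathsf{Refl}(\X^{\Syn})$, is in addition essentially surjective onto $\mathsf{Refl}(\X^{\Syn})$. So I would fix $\mathcal{F}\in\mathsf{Refl}(\X^{\Syn})$ and set $\mathcal{E}:=\mathcal{F}|_{\X^{\Prism}}$, which is a coherent prismatic $F$-crystal by the $t$-exactness of restriction (Lemma~\ref{lem:Tet_texact}). The first observation is that $(-)|_{\X^{\Prism}}$ is compatible with duals: the restriction map $j_{\Prism}\colon\X^{\Prism}\to\X^{\Syn}$ factors as the open immersion $j_{\dR}\colon\X^{\Prism}\to\X^{\Nyg}$ (the $\{t\neq0\}$ locus on Rees charts) followed by the \'etale map $\X^{\Nyg}\to\X^{\Syn}$, hence is flat, so pullback along it commutes with the formation of internal Homs of coherent sheaves. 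Consequently $\mathcal{E}\to\mathcal{E}^{\vee\vee}$ is the restriction of the isomorphism $\mathcal{F}\to\mathcal{F}^{\vee\vee}$ and is itself an isomorphism, so $\mathcal{E}$ is a reflexive coherent prismatic $F$-crystal; by Theorem~\ref{thm:prismFcris_reflexive} we then have $\mathcal{E}\in\Vect^{\varphi,\mathrm{an}}(\X_{\Prism},\O_{\Prism})$, and $\Pi_\X(\mathcal{E})$ is defined and again reflexive by the previous lemma.

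By Theorem~\ref{thm:Fgaugelift}, $\Pi_\X$ is right adjoint to $(-)|_{\X^{\Prism}}$ with invertible counit, so the unit of the adjunction provides a morphism $u\colon\mathcal{F}\to\Pi_\X(\mathcal{E})$ in $\Coh(\X^{\Syn})$ whose restriction to $\X^{\Prism}$ is the identity of $\mathcal{E}$. It then suffices to prove that $u$ is an isomorphism, i.e.\ that $(-)|_{\X^{\Prism}}$ is conservative on $\mathsf{Refl}(\X^{\Syn})$. The kernel and cokernel of $u$ in $\Coh(\X^{\Syn})$ restrict to $0$ on $\X^{\Prism}$, hence lie in $\ker(\T_\et)$ (which factors through $(-)|_{\X^{\Prism}}$), so by Theorem~\ref{thm:general_Tet_kernel} they are killed by $(p,v_{1,\X})^{n}$ for $n\gg0$. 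I would then pass to a covering family of Breuil--Kisin Rees stacks $\rho\colon\coprod_i\mathrm{Rees}_{\I_i^\bullet}\rA_i\to\X^{\Syn}$ of the kind used to define the $t$-structure; since each ring $\rA_i\langle u,t\rangle/(ut-d_i)$ is Noetherian regular (in particular a normal domain) and $\rho$ is flat, the pullbacks of $\mathcal{F}$ and $\Pi_\X(\mathcal{E})$ are reflexive, hence torsion-free, modules there. Therefore $\ker(u)$, being $p$-power torsion and (after pullback) a submodule of the torsion-free $\rho^{\ast}\mathcal{F}$, vanishes, so $0\to\mathcal{F}\xrightarrow{u}\Pi_\X(\mathcal{E})\to C\to0$ is exact with $C$ killed by $(p,v_{1,\X})^{n}$.

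The decisive step is to control the support of $C$ on the cover. By the description in Definition~\ref{def:v_1}, the section $v_{1,\X}$ of $\O\{p-1\}/p$ restricts on the chart, up to a unit of the line bundle $\O\{p-1\}/p$, to the distinguished element $d_i$ of $\rA_i$, which is a nonzerodivisor on $\rA_i\langle u,t\rangle/(ut-d_i)$ and remains a nonzerodivisor modulo $p$; hence $\rho^{\ast}C$ is supported on $\V(p,d_i)$, a closed subset of codimension $2$. Since $\rA_i\langle u,t\rangle/(ut-d_i)$ is regular, the grade of the annihilator of $\rho^{\ast}C$ is at least $2$, so $\R\underline{\Hom}(\rho^{\ast}C,\O)$ is concentrated in cohomological degrees $\ge2$. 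Applying $\R\underline{\Hom}(-,\O)$ to the short exact sequence on the cover and taking cohomology then yields $\rho^{\ast}(\Pi_\X(\mathcal{E})^{\vee})\xrightarrow{\ \sim\ }\rho^{\ast}(\mathcal{F}^{\vee})$ via the map induced by $u$; by descent $u^{\vee}$, and hence $u^{\vee\vee}$, is an isomorphism, and since $\mathcal{F}$ and $\Pi_\X(\mathcal{E})$ are reflexive and the biduality map is natural, $u$ itself is an isomorphism. This gives $\mathcal{F}\simeq\Pi_\X(\mathcal{E})$ and completes the proof of essential surjectivity, hence of the theorem.

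I expect the main obstacle to be exactly the codimension-$2$ support statement for $(p,v_{1,\X})$-power-torsion coherent $F$-gauges: one must pin down the behaviour of $v_{1,\X}$ on a Breuil--Kisin Rees chart precisely enough to see that it is a nonzerodivisor modulo $p$, so that $v_{1,\X}$-torsion is genuinely a divisorial condition there, and this is the one point where the argument cannot remain purely formal but must combine the kernel computation of the previous subsection with the regularity of the chart. By contrast, the compatibility of $(-)|_{\X^{\Prism}}$ with duals (via flatness of $j_{\Prism}$) and the descent of reflexivity along $\rho$ are routine, though they should be written out carefully since the definition of $\mathsf{Refl}(\X^{\Syn})$ is phrased via such a cover.
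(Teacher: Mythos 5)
Your proposal follows the same skeleton as the paper's proof: reduce to showing the unit $u\colon\mathcal{F}\to\Pi_\X(\mathcal{F}|_{\X^{\Prism}})$ of the adjunction in Theorem \ref{thm:Fgaugelift} is an isomorphism, use flatness of $j_{\Prism}$ to see the restriction is reflexive (hence analytic by Theorem \ref{thm:prismFcris_reflexive}), control the kernel and cokernel via Theorem \ref{thm:general_Tet_kernel}, and finish with a codimension-two-plus-reflexivity argument on a Breuil--Kisin Rees chart. Where you diverge is the last step. The paper never analyzes $v_{1,\X}$ on the chart at all: since $u$ is already an isomorphism over the $\{t\neq0\}$ locus (which covers $\X^{\Prism}$) and the kernel/cokernel are $p$-power torsion, they vanish on the open $\U=\{t\neq0\}\cup\{p\neq0\}$, whose complement $\V(p,t)$ has codimension two; the paper then concludes by full faithfulness of restriction to $\U$ for reflexive modules (Stacks Project 0EBJ), which handles kernel and cokernel simultaneously. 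You instead kill the kernel by torsion-freeness and control the cokernel through the vanishing locus of $(p,v_{1,\X})$, finishing with a grade-$\geq2$/$\underline{\mathrm{Ext}}$-vanishing and biduality argument; that mechanism is perfectly valid and arguably a bit more self-contained than citing 0EBJ.

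Two caveats on the step you yourself flag as the crux. First, the formula ``$v_{1,\X}$ restricts on the chart, up to a unit, to $d_i$'' is not quite right: on the $\I$-adic Rees chart $\rA_i\langle u,t\rangle/(ut-d_i)$ the pullback of $v_{1,\X}$ goes through the Frobenius map to the Nygaard Rees chart, so modulo $p$ the relevant element is $\varphi(d_i)\equiv d_i^{\,p}$ placed in weight $p-1$, i.e.\ it acquires an extra factor of $u^{p-1}$ (and one must track the $\O\{p-1\}$ versus $\O(p-1)$ twists). This does not break your argument, because $\V(p,d_i u^{p-1})=\V(p,d_i)\cup\V(p,u)$ still has codimension two on the regular chart, so $v_{1,\X}$ is a nonzerodivisor mod $p$ and the grade estimate survives; but as written the support claim is imprecise and would need this correction. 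Second, the obstacle can be bypassed entirely in the way the paper does: once you know $u$ is an isomorphism after restriction to $\X^{\Prism}$, the cokernel already vanishes on $\{t\neq0\}$, so combined with $p$-power torsion its support sits inside $\V(p,t)$, which is codimension two with no input about $v_{1,\X}$ beyond Theorem \ref{thm:general_Tet_kernel}. Substituting that observation into your Ext-vanishing finish yields a complete proof without the delicate chart computation.
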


\begin{proof}
What remains is to show essential surjectivity. We need to show any $\mathcal{E}\in \mathsf{Refl}(\X^{\Syn})$ is isomorphic to $\Pi_\X$ applied to an analytic prismatic $F$-crystal. It suffices to show that the natural map
\[\eta: \mathcal{E} \to \Pi_\X(\mathcal{E}|_{\X^{\Prism}})\]
is an equivalence: $\M=\mathcal{E}|_{\X^{\Prism}}$ must be a reflexive prismatic $F$-crystal and hence an analytic prismatic $F$-crystal by Theorem \ref{thm:prismFcris_reflexive}. If $\mathcal{E}$ is coherent in the sense of \cite{Fgaugelift} this is the unit of the adjunction, but the map exists regardless for $\mathcal{E}$ (which can be seen on quasiregular semiperfectoids mapping to $\X$ using the filtered Frobenius in the description provided by \cite[Example 6.1.7]{Fgauge}, essentially because $\Pi_\X$ equips its output with a saturated Nygaardian filtration).

Let us consider the case where $\X$ is smooth affine and integral. We can deduce the general affine case from this via Zariski descent using Stacks Project \href{https://stacks.math.columbia.edu/tag/0357}{0357}, and further Zariski descent would show the general case where $\X/\Spf \O_K$ is smooth. We may then write $\X = \Spf \rA/\I$ for a Breuil-Kisin prism $(\rA,\I)$ where $\rA$ is integral. To check $\eta$ is an equivalence, it suffices to check the induced map on the flat-local surjection $\mathrm{Rees}_{\I^\bullet}\rA$ is an equivalence. Let $\U$ be the complement of $\V(t,p)$ viewed as a substack of the algebraization $(\mathrm{Rees}_{\I^\bullet} \rA)^{\mathrm{alg}} := (\Spec \rA \langle u,t \rangle/(ut-d))/\G_m$ when $\I=(d)$. As we are dealing with coherent sheaves on Rees stacks and $\rA$ is an adic Noetherian ring, $\Coh(\mathrm{Rees}_{\I^\bullet} \rA) \simeq \Coh((\mathrm{Rees}_{\I^\bullet} \rA)^{\mathrm{alg}})$.

The map $\eta$ induces an equivalence when restricted to the $t\neq 0$ substack which covers $\X^{\Prism}$ (as the underlying prismatic $F$-crystal of $\Pi_\X(\M)$ is $\M$). The map $\eta: \mathcal{E} \to \Pi_\X(\mathcal{E}|_{\X^{\Prism}})$ induces the identity on \'{e}tale realization, so the exact sequence
\[0 \to \ker(\eta) \to \mathcal{E} \to \Pi_\X(\mathcal{E}|_{\X^{\Prism}}) \to \coker \eta \to 0\]
is sent to $0 \to \T_\et(\mathcal{E}) \to \T_\et(\mathcal{E}) \to 0$. By Theorem \ref{thm:general_Tet_kernel}, the kernel and cokernel of $\eta$ are $p$-power torsion, so $\eta$ is also an equivalence on the $p\neq 0$ locus in the algebraization and hence on $\U$. The complement of $\U$ as a substack has everywhere codimension two, as the complement is
\[(\Spec \Gamma(\X_s,\O) \llbracket u_0\rrbracket [u]/u_0^n)/\G_m\]
where $u$ has degree $-1$ and $\deg \E(u_0)=n$.

Passing to the Rees algebra smooth cover $\Spec \rA \langle u,t \rangle/(ut-d)$ of the Rees stack, the hypotheses of Stacks Project \href{https://stacks.math.columbia.edu/tag/0EBJ}{0EBJ} are satisfied and tell us the pullback to the open $\D(t) \cup \D(p)$ over $\U$ on this cover is fully faithful on reflexive modules (hence conservative). Since $\eta$ induces an equivalence on this open, by descent we deduce that $\eta$ is an equivalence.
\end{proof}

\begin{theorem}
Let $\X/\Spf \O_K$ be smooth and quasicompact. The functor $\T_\et$ induces an equivalence
\[\mathsf{Refl}(\X^{\Syn}) \simeq \Loc_{\Z_p}^\cris(\X_\eta).\]
\label{thm:generalsmoothTetcrys}
Moreover, if $\mathcal{E}\in \Perf(\X^{\Syn})$ then $\T_\et(\H^i(\mathcal{E}))[1/p]$ is crystalline.
\end{theorem}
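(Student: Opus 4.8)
The first statement follows by composing equivalences already established. Recall that $\T_\et$ is the restriction functor $(-)|_{\X^\Prism}\colon\Perf(\X^{\Syn})\to\Perf^\varphi(\X_\Prism,\O_\Prism)$ of Lemma \ref{lem:restriction_Fcris}, followed by the \'{e}tale realization of prismatic $F$-crystals, $\mathcal{M}\mapsto(\mathcal{M}[1/\I_\Prism]_p^{\wedge})^{\varphi=1}$. By the previous theorem, $(-)|_{\X^\Prism}$ carries $\mathsf{Refl}(\X^{\Syn})$ isomorphically onto $\Vect^{\varphi,\mathrm{an}}(\X_\Prism,\O_\Prism)$: it is the quasi-inverse of $\Pi_\X$, since for reflexive $\mathcal{E}$ the unit $\mathcal{E}\to\Pi_\X(\mathcal{E}|_{\X^\Prism})$ is an isomorphism and $\mathcal{E}|_{\X^\Prism}$ is analytic by Theorem \ref{thm:prismFcris_reflexive}. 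On $\Vect^{\varphi,\mathrm{an}}(\X_\Prism,\O_\Prism)$ the functor $\mathcal{M}\mapsto(\mathcal{M}[1/\I_\Prism]_p^{\wedge})^{\varphi=1}$ is exactly the \'{e}tale realization occurring in the theorem of Guo--Reinecke (\cite{Zpcrystalline}), which is an equivalence onto $\Loc_{\Z_p}^\cris(\X_\eta)$. Hence $\T_\et$ restricted to $\mathsf{Refl}(\X^{\Syn})$ is a composite of two equivalences; in particular it sends $\mathsf{Refl}(\X^{\Syn})$ into $\Loc_{\Z_p}^\cris(\X_\eta)$. (Equivalently, the composite functor $\Pi_\X\colon\Loc_{\Z_p}^\cris(\X_\eta)\to\mathsf{Refl}(\X^{\Syn})$ of \cite{Fgaugelift} is an equivalence satisfying $\T_\et\circ\Pi_\X\simeq\mathrm{id}$, so $\T_\et|_{\mathsf{Refl}(\X^{\Syn})}$ is its quasi-inverse.)

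For the ``moreover'' it is enough, taking $\mathcal{F}=\H^i(\mathcal{E})$, to prove that $\T_\et(\mathcal{F})[1/p]$ is crystalline for every $\mathcal{F}\in\Coh(\X^{\Syn})$. Let $c\colon\mathcal{F}\to\mathcal{F}^{\vee\vee}$ be the biduality map; its target is reflexive, so $\T_\et(\mathcal{F}^{\vee\vee})[1/p]\in\Loc_{\Q_p}^\cris(\X_\eta)$ by the first statement. Since $\T_\et$ is symmetric monoidal and preserves the internal Hom (Lemma \ref{lem:dual}), one has $\T_\et(\mathcal{F}^{\vee\vee})\simeq\T_\et(\mathcal{F})^{\vee\vee}$ and $\T_\et(c)$ is identified with the biduality map of $\T_\et(\mathcal{F})$. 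After inverting $p$ (which commutes with forming $\Z_p$-linear duals of the coherent sheaf $\T_\et(\mathcal{F})$) this becomes the biduality map of the $\Q_p$-local system $\T_\et(\mathcal{F})[1/p]$, hence an isomorphism; therefore $\T_\et(\mathcal{F})[1/p]\xrightarrow{\sim}\T_\et(\mathcal{F}^{\vee\vee})[1/p]$ is crystalline. One can also arrive at this through the earlier machinery: the kernel and cokernel of $c$ become vector bundles after restriction to $\X^\Prism$ and inversion of $p$ by Proposition \ref{prop:Fcris_pinv_VB} while remaining torsion, so they are killed by $\T_\et(-)[1/p]$.

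The theorem is thus essentially a formal consequence of the reflexive-sheaves equivalence proved above together with the symmetric monoidality of $\T_\et$; the real work was carried out in the earlier sections. The one step deserving genuine care is the identification, used in the first statement, of the \'{e}tale realization restricted to $\mathsf{Refl}(\X^{\Syn})$ with the Guo--Reinecke equivalence, i.e. checking that Bhatt--Lurie's $\T_\et$ on $F$-gauges, Guo--Li's lifting functor $\Pi_\X$, and Guo--Reinecke's equivalence for analytic prismatic $F$-crystals are mutually compatible; this is a matter of unwinding the definitions of the three functors rather than a new difficulty.
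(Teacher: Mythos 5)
Your proposal is correct and follows essentially the same route as the paper: the first part is the composite of the restriction equivalence $\mathsf{Refl}(\X^{\Syn})\simeq\Vect^{\varphi,\mathrm{an}}(\X_\Prism,\O_\Prism)$ (inverse to $\Pi_\X$) with the Guo--Reinecke equivalence, and the ``moreover'' is the same biduality argument via Lemma \ref{lem:dual} and the fact that $\Q_p$-local systems are canonically isomorphic to their double duals.
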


\begin{proof}
Apply the main result from \cite{Zpcrystalline} that $\T^{\Prism}_\et$ induces an equivalence from analytic prismatic $F$-crystals to $\Loc_{\Z_p}^\cris(\X_\eta)$. The \'{e}tale realization for $F$-gauges factors as
\begin{display}
\mathsf{Refl}(\X^{\Syn}) \ar{r}{(-)|_{\X^{\Prism}}} &  \Vect^{\varphi,\mathrm{an}}(\X_{\Prism},\O_{\Prism}) \ar{r}{\sim} & \Loc_{\Z_p}^\cris(\X_\eta)
\end{display}
and we know now that $(-)|_{\X^{\Prism}}$ has an inverse $\Pi_\X$ by the previous theorem.

For the second claim, as $\H^i(\mathcal{E})$ is coherent it suffices to observe that on a smooth affine for a coherent $F$-gauge $\mathcal{E}$ we have
\[\T_\et(\mathcal{E})[1/p] \simeq \T_\et(\mathcal{E}^{\vee \vee})[1/p]\]
as the \'{e}tale realization is compatible with duals, and $\Q_p$-local systems are canonically isomorphic to their double dual. As $\mathcal{E}^{\vee \vee}$ is reflexive (the canonical map to the double dual being an isomorphism can be checked on the Noetherian regular cover of $\X^{\Syn}$ given by a covering family of Breuil-Kisin prisms via Proposition \ref{prop:BK_flatcover}), the claim follows.
\end{proof}

We remark that the first equivalence $\mathsf{Refl}(\X^{\Syn}) \simeq \Loc_{\Z_p}^\cris(\X_\eta)$ does not use that $\X$ is quasicompact; this is only used for the isogeny part of the statement.

\section{Perfect complexes up to isogeny on \texorpdfstring{$\X^{\Syn}$}{XSyn}}

\subsection{The essential image of \texorpdfstring{$\T_{\et}$}{Tet}}

We can now apply our results to study the essential image of $\T_\et[1/p]$ on $\Perf(\X^{\Syn})[1/p]$ when $\X/\Spf \O_K$ is smooth and quasicompact.

\begin{corollary}
The $t$-exact functor
\begin{display}
\Perf(\X^{\Syn})[1/p] \ar{r}{\T_\et[1/p]} & \D^{(b)}_{\mathrm{lisse}}(\X_\eta,\Z_p)[1/p]
\end{display}
induces an equivalence $\Coh(\X^{\Syn})[1/p]\simeq \Loc_{\Q_p}^\cris(\X_\eta)$ on the heart. The essential image of $\T_\et[1/p]$ contains the essential image of 
\[\D^b(\Loc_{\Q_p}^\cris(\X_\eta)) \to \D^{(b)}_{\mathrm{lisse}}(\X_\eta,\Z_p)[1/p],\]
and is contained in the full subcategory of $\D^{(b)}_{\mathrm{lisse}}(\X_\eta,\Z_p)[1/p]$ where every cohomology sheaf is crystalline.
\label{cor:Perf_cris}
\end{corollary}

\begin{proof}
The $t$-exactness of $\T_\et$ follows from Lemma \ref{lem:Tet_texact}.

We now show $\T_\et[1/p]$ induces an equivalence on the heart onto the subcategory of crystalline $\Q_p$-local systems, which by Zariski descent of $\Coh(\X^{\Syn})$ and $\Loc_{\Z_p}^\cris(\X_\eta)$ can be tested on a smooth affine (here we also use quasicompactness to ensure the covering is finite). The functor $\T_\et$ has the desired essential image on the heart using Theorem \ref{thm:generalsmoothTetcrys}. For full faithfulness, we have a map
\[\T_\et: \Hom_{\Coh(\X^{\Syn})}(\mathcal{E},\mathcal{E}')[1/p] \to \Hom(\T_\et(\mathcal{E})[1/p], \T_\et(\mathcal{E}')[1/p]).\]
We use that the map $\mathcal{E} \to \mathcal{E}^{\vee \vee}$ is an equivalence in the isogeny category $\Coh(\X^{\Syn})[1/p]$. Indeed, after \'{e}tale realization and inverting $p$ it becomes an equivalence, because an isogeny $\Z_p$-local system is isomorphic to its double dual. Thus the kernel and cokernel of the canonical map $\mathcal{E} \to \mathcal{E}^{\vee \vee}$ are killed by $\T_\et[1/p]$. Applying Proposition \ref{prop:p_tors_Tet} and the proof of Theorem \ref{thm:general_Tet_kernel} which upgrades the statement to the $F$-gauge being $p$-power torsion, the kernel and cokernel must be $0$ in $\Coh(\X^{\Syn})[1/p]$. Thus we can always assume both coherent $F$-gauges are reflexive. But there we already know $\T_\et$ is an equivalence by Theorem \ref{thm:generalsmoothTetcrys}.

The claim that the essential image is contained in the subcategory of $\D^{(b)}_{\mathrm{lisse}}(\X_\eta,\Z_p)[1/p]$ where every cohomology sheaf is crystalline follows immediately from the claim on the heart by $t$-exactness. For producing objects in the essential image, we use the commutative diagram
\begin{display}
\D^b(\Coh(\X^{\Syn}))[1/p] \ar{r}{\simeq} \ar{d} & \D^b(\Loc_{\Q_p}^\cris(\X_\eta)) \ar{d} \\
\Perf(\X^{\Syn})[1/p] \ar{r}{\T_\et[1/p]} & \D^{(b)}_{\mathrm{lisse}}(\X_\eta,\Z_p)[1/p].
\end{display}
The top arrow is an equivalence coming from the result on the heart and $\D^b(\Coh(\X^{\Syn}))[1/p] \simeq \D^b(\Coh(\X^{\Syn})[1/p])$, which is true since we work on bounded derived categories. Commutativity of the diagram is easy to verify, as both composite functors to $\D^{(b)}_{\mathrm{lisse}}(\X_\eta,\Z_p)[1/p]$ agree on the heart and commute with shifts and fibers (so using \cite[Lemma 5.4.3]{Hauck} we can deduce they agree).

Finally, the proposed functor $\D^b(\Coh(\X^{\Syn})) \to \Perf(\X^{\Syn})$ a priori might not land in $\Perf(\X^{\Syn})$ but rather $\D^b_{\Coh}(\X^{\Syn})$, the full subcategory of locally bounded objects in $\D_{\mathrm{qc}}(\X^{\Syn})$ where all cohomology sheaves are coherent. However this coincides with $\Perf(\X^{\Syn})$: since $\X$ is smooth there is a locally Noetherian regular flat-local surjection $\rho$ such that pullback to the covering is $t$-exact on $\D_{\mathrm{qc}}(-)$. The $t$-exactness follows from Remark \ref{rem:t-exact}. Then given $\mathcal{E}\in \D_{\mathrm{qc}}(\X^{\Syn})$, belonging to $\D^b_{\Coh}(\X^{\Syn})$ means $\H^i(\rho^* \mathcal{E})$ lands in $\Coh$ and the complex is locally bounded. But on the cover $\D^b_{\Coh}=\Perf$ and we may test membership in $\Perf$ after pullback along a flat-local surjection by descent. Thus the functor is well-defined. We deduce the essential image of $\T_\et[1/p]$ contains the essential image of $\D^b(\Loc_{\Q_p}^\cris(\X_\eta))$ in $\D^{(b)}_{\mathrm{lisse}}(\X_\eta,\Z_p)[1/p]$.
\end{proof}

\begin{remark}
All of the inclusions in Corollary \ref{cor:Perf_cris} are strict in general, although we will later find for a point $\X=\Spf \O_K$ the essential image coincides with the essential image of $\D^b(\Rep_{\Q_p}^\cris(\rG_K))$ in $\D^{(b)}_{\mathrm{lisse}}(\Spa K, \Z_p)[1/p]$.

In general there are objects in $\D^{(b)}_{\mathrm{lisse}}(\X_\eta,\Z_p)[1/p]$ which have crystalline cohomologies but are not in the essential image of $\T_\et[1/p]$. For example, on a point $\X_\eta=\Spa K$, on the target $\D^{(b)}_{\mathrm{lisse}}(\Spa K, \Z_p)[1/p]$ there exists a nonzero map $\Q_p \to \Q_p(1)[2]$ using that the second rationalized \'{e}tale cohomology of $\Z_p(1)$ is $\Q_p$. Taking the fiber of this map, we obtain a complex with crystalline cohomology sheaves which cannot be obtained from $\T_\et$ (one may see this by computing $\H^2_{\Syn}(\O_K,\O\{1\})[1/p]=0$ so there is no corresponding map of $F$-gauges and using full faithfulness on the heart after inverting $p$). However, as we will see in Proposition \ref{prop:O_K_equiv} in the case of a point the essential image is exactly the essential image of $\D^b(\Rep_{\Q_p}^\cris(\rG_K))$ in $\D^{(b)}_{\mathrm{lisse}}(\Spa K, \Z_p)[1/p]$.

There are also objects in the essential image of $\T_\et[1/p]$, but not contained in the essential image of $\D^b(\Loc_{\Q_p}^\cris(\X_\eta)) \to \D^{(b)}_{\mathrm{lisse}}(\X_\eta,\Z_p)[1/p]$ when $\X=\P^1_{\Z_p}$. Using Example \ref{ex:derived_counter} we can produce a map $f: \O \to \O\{1\}[2]$ in $\Perf(\X^{\Syn})$ so that $\T_\et(\mathrm{fib}(f))[1/p]$ fails to come from $\D^b(\Loc_{\Q_p}^\cris(\X_\eta))$ as $\Ext^2$ for objects in the heart vanishes for this category when $\X=\P^1_{\Z_p}$.
\label{rem:ess_im}
\end{remark}

We may also use this to identify $\Ext^1$ groups in crystalline $\Q_p$-local systems with rationalized syntomic cohomology.

\begin{proposition}
Assume $\X$ is smooth and quasicompact over $\Spf \O_K$ and let $\mathcal{E}\in \Coh(\X^{\Syn})$. Then the \'{e}tale realization induces an isomorphism
\[\H^1_{\Syn}(\X,\mathcal{E})[1/p] \simeq \Ext^1_{\Loc_{\Q_p}^\cris(\X_\eta)}(\Q_p,\T_\et(\mathcal{E})[1/p]).\]
In particular, if $\X=\Spf \O_K$ then $\H^1_{\Syn}(\X,\mathcal{E})[1/p] \simeq \H^1_f(\rG_K,\T_\et(\mathcal{E})[1/p])$ where $\H^1_f$ denotes the Bloch-Kato Selmer group.
\label{prop:ext1_cohom}
\end{proposition}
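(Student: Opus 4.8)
The plan is to realize both sides as $\Ext^1$ groups in appropriate categories and match them using the equivalence on the heart from Corollary \ref{cor:Perf_cris}. First I would recall that syntomic cohomology is computed by the stack, so $\H^1_{\Syn}(\X,\mathcal{E}) = \Hom_{\D(\X^{\Syn})}(\O_{\X^{\Syn}}, \mathcal{E}[1])$, and after inverting $p$ we get $\H^1_{\Syn}(\X,\mathcal{E})[1/p] = \Hom_{\Perf(\X^{\Syn})[1/p]}(\O, \mathcal{E}[1])$ since $\X$ is quasicompact (so cohomology commutes with the filtered colimit defining $[1/p]$, and $\O$ and $\mathcal{E}$ are perfect). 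Now $\mathcal{E}\in \Coh(\X^{\Syn})$ lies in the heart of the canonical $t$-structure, and $\O_{\X^{\Syn}}$ is also in the heart; because this $t$-structure on $\Perf(\X^{\Syn})$ (hence on $\Perf(\X^{\Syn})[1/p]$) has the property that $\Hom(\O, \mathcal{E}[1])$ for objects of the heart is computed by $\Ext^1$ in the abelian category $\Coh(\X^{\Syn})[1/p]$, we obtain
\[
\H^1_{\Syn}(\X,\mathcal{E})[1/p] \simeq \Ext^1_{\Coh(\X^{\Syn})[1/p]}(\O, \mathcal{E}[1/p]).
\]
Strictly speaking this last identification requires knowing $\Hom_{\Perf(\X^{\Syn})[1/p]}(\O,\mathcal{E}[1])$ agrees with the abelian-category $\Ext^1$; this is automatic for the degree-$1$ Hom group between two objects of the heart of a $t$-structure (Yoneda $\Ext^1$ in the heart equals $\Hom$ in the derived category in degree $1$), so no extra input is needed beyond the existence of the $t$-structure established in Proposition \ref{prop:t_struct_XSyn}.

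Next I would apply Corollary \ref{cor:Perf_cris}: $\T_\et[1/p]$ induces an equivalence of abelian categories $\Coh(\X^{\Syn})[1/p] \xrightarrow{\ \sim\ } \Loc_{\Q_p}^\cris(\X_\eta)$, sending $\O_{\X^{\Syn}}$ to $\Q_p$ and $\mathcal{E}$ to $\T_\et(\mathcal{E})[1/p]$. An equivalence of abelian categories induces isomorphisms on all Yoneda $\Ext$ groups, so
\[
\Ext^1_{\Coh(\X^{\Syn})[1/p]}(\O, \mathcal{E}[1/p]) \simeq \Ext^1_{\Loc_{\Q_p}^\cris(\X_\eta)}(\Q_p, \T_\et(\mathcal{E})[1/p]),
\]
and composing the two displays gives the first assertion. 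It should be checked that the composite isomorphism is the map induced by $\T_\et$ on cohomology, but this is clear since $\T_\et$ is the functor realizing the equivalence in Corollary \ref{cor:Perf_cris} and the identification of $\H^1_{\Syn}$ with $\Ext^1$ in the heart is functorial.

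For the final sentence, specialize to $\X = \Spf \O_K$, so $\X_\eta = \Spa K$ and $\Loc_{\Q_p}^\cris(\X_\eta) = \Rep_{\Q_p}^\cris(\rG_K)$ with $\T_\et(\mathcal{E})[1/p]$ a crystalline representation $V$. Here I would invoke the standard identification (Bloch--Kato, \cite{BeilinsonFib} or the original Bloch--Kato paper) that $\Ext^1$ in the category of crystalline representations computes exactly the Bloch--Kato subgroup: $\Ext^1_{\Rep_{\Q_p}^\cris(\rG_K)}(\Q_p, V) = \H^1_f(\rG_K, V)$, by definition of $\H^1_f$ as the classes whose pushout extension is crystalline. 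This is essentially a restatement, so the only content is citing it correctly. The main obstacle in the whole argument is the first step: pinning down that $\H^1_{\Syn}(\X,\mathcal{E})[1/p]$ really is $\Hom$ in $\Perf(\X^{\Syn})[1/p]$ between heart objects and hence an $\Ext^1$ in the heart — one must use quasicompactness of $\X$ to commute cohomology with inverting $p$ and use $t$-exactness/heart properties of the canonical $t$-structure; everything downstream is then formal.
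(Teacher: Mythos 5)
Your argument is correct, and its core is the same as the paper's: identify $\H^1_{\Syn}(\X,\mathcal{E})$ with $\Hom(\O,\mathcal{E}[1])$ in a derived category, interpret that as a Yoneda $\Ext^1$ between heart objects, and then transfer along the abelian equivalence $\Coh(\X^{\Syn})[1/p]\simeq \Loc_{\Q_p}^\cris(\X_\eta)$ of Corollary \ref{cor:Perf_cris}; the Bloch--Kato identification at the end is treated as a restatement in both. The one structural difference is where the extension-classification happens. The paper proves the stronger \emph{integral} statement $\H^1_{\Syn}(\X,\mathcal{E})\simeq \Ext^1_{\Coh(\X^{\Syn})}(\O,\mathcal{E})$: it first identifies $\H^1_{\Syn}$ with Yoneda $\Ext^1$ in $\QCoh(\X^{\Syn})$ via $\Hom_{\D_{\mathrm{qc}}(\X^{\Syn})}(\O,\mathcal{E}[1])$, and then needs the extra lemma that an extension in $\QCoh$ of coherent sheaves is coherent (checked after pullback to a Noetherian regular flat cover), before rationalizing. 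You instead stay rational throughout, working in $\Perf(\X^{\Syn})[1/p]$ and invoking the standard fact that degree-one Homs between heart objects of a $t$-structure compute Yoneda $\Ext^1$ in the heart. This buys you a shorter argument that avoids the coherence-of-extensions step, but it leans on the (unstated in your write-up, though consistent with how Corollary \ref{cor:Perf_cris} is phrased) fact that the canonical $t$-structure descends to the isogeny category with heart $\Coh(\X^{\Syn})[1/p]$; Proposition \ref{prop:t_struct_XSyn} alone only gives the integral $t$-structure, so you should say a word about why the heart of $\Perf(\X^{\Syn})[1/p]$ is the Serre quotient of $\Coh(\X^{\Syn})$ by its $p$-power-torsion objects. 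Also note that quasicompactness is not really what makes $\Hom_{\Perf(\X^{\Syn})[1/p]}(\O,\mathcal{E}[1])$ equal $\H^1_{\Syn}(\X,\mathcal{E})[1/p]$ --- that is just the definition of the isogeny category, since localizing the mapping spectrum at $p$ rationalizes its homotopy groups --- and, unlike your version, the paper's route also records the integral refinement, which it reuses in the remark following the proposition.
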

	
\begin{proof}
Due to $\T_\et$ inducing an equivalence $\Coh(\X^{\Syn})[1/p] \simeq \Loc_{\Q_p}^\cris(\X_\eta)$ as shown in Corollary \ref{cor:Perf_cris}, we immediately have
\[\Ext^i_{\Coh(\X^{\Syn})[1/p]}(\mathcal{E}[1/p],\mathcal{E}'[1/p]) \simeq \Ext^i_{\Loc_{\Q_p}^\cris(\X_\eta)}(\T_\et(\mathcal{E})[1/p],\T_\et(\mathcal{E}')[1/p])\]
for coherent $F$-gauges $\mathcal{E}$ and $\mathcal{E}'$, as an equivalence of abelian categories gives an equivalence of their Yoneda $\Ext$ groups.
	
It then suffices to prove the stronger integral claim that
\[\H^1_{\Syn}(\X,\mathcal{E}) \simeq \Ext^1_{\Coh(\X^{\Syn})}(\O,\mathcal{E}).\]
We will first argue
\[\H^1_{\Syn}(\X,\mathcal{E}) \simeq \Ext^1_{\QCoh(\X^{\Syn})}(\O,\mathcal{E})\]
where the $\Ext^1$ group here is a Yoneda $\Ext$ group (so it makes sense in any abelian category). This is easy to show in general, since we may identify $\H^1_{\Syn}(\X,\mathcal{E}) \simeq \Hom_{\D_{\mathrm{qc}}(\X^{\Syn})}(\O,\mathcal{E}[1])$ and then the latter classifies extensions: we map an extension to the induced connecting morphism, and a map $\O \to \mathcal{E}[1]$ to its fiber.

Given an extension in $\QCoh(\X^{\Syn})$ of the form
\begin{display}
0 \ar{r} & \mathcal{F} \ar{r} & \mathcal{G} \ar{r} & \mathcal{H} \ar{r} & 0
\end{display}
where $\mathcal{F}$ and $\mathcal{H}$ are coherent, we know that $\mathcal{G}$ is coherent. We can test this after pullback to a regular Noetherian flat cover, at which point it follows from the 2 out of 3 property for exact sequences of coherent sheaves. This then means $\H^1_{\Syn}(\X,\mathcal{E}) \simeq \Ext^1_{\Coh(\X^{\Syn})}(\O,\mathcal{E})$.
\end{proof}

Next, we will study what happens to both sides of Corollary \ref{cor:Perf_cris} under smooth proper pushforwards. We will need the following lemma to deal with pullbacks.

\begin{lemma}
The pullbacks $j^*_{\Nyg}$ and $j^*_{\Prism}$ on $\D_{\mathrm{qc}}((-)^{\Syn})$ commute with pushforwards of $F$-gauges along maps of qcqs quasisyntomic $p$-adic formal schemes.
\label{lem:realization_commute}
\end{lemma}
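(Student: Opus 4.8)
\emph{Proof plan.}
The statement is a base-change assertion, and the plan is to deduce it from two instances of flat base change. Write $f^{\Syn}$, $f^{\Nyg}$, $f^{\Prism}$ for the maps induced by $f\colon\X\to\Y$ on the three stacks; what must be produced are natural equivalences
\[
j_{\Nyg}^{*}\circ\R f^{\Syn}_{*}\simeq\R f^{\Nyg}_{*}\circ j_{\Nyg}^{*},\qquad
j_{\Prism}^{*}\circ\R f^{\Syn}_{*}\simeq\R f^{\Prism}_{*}\circ j_{\Prism}^{*}.
\]
In the pushout square defining $\X^{\Syn}$ (and likewise for $\Y^{\Syn}$) one has $j_{\Prism}=j_{\Nyg}\circ j_{\dR}$, hence $j_{\Prism}^{*}=j_{\dR}^{*}\circ j_{\Nyg}^{*}$. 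So the second equivalence will follow from the first together with base change along the square whose horizontal maps are $j_{\dR}\colon(-)^{\Prism}\to(-)^{\Nyg}$ and whose verticals are $f^{\Prism}$, $f^{\Nyg}$. It is therefore enough to establish base change for the two squares formed from $j_{\Nyg}$ (sitting over $f^{\Syn}$) and from $j_{\dR}$ (sitting over $f^{\Nyg}$).

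The first thing to check is that these two squares are Cartesian. Here the point is that the comparison maps $j_{\Nyg}$ and $j_{\dR}$ are pulled back from $\Spf\Z_p$, naturally in the formal scheme: the construction of $(-)^{\Syn}$ as a pushout in \cite{Fgauge}, together with the compatibility of $(-)^{\Prism}$ and $(-)^{\Nyg}$ with Tor-independent fibre products of $p$-adic formal schemes (\cite{APC2} Remark 3.5 and its Nygaard counterpart), yields functorial equivalences $\Z^{\Nyg}\simeq\Z^{\Syn}\times_{\Z_p^{\Syn}}\Z_p^{\Nyg}$ and $\Z^{\Prism}\simeq\Z^{\Nyg}\times_{\Z_p^{\Nyg}}\Z_p^{\Prism}$ (the latter along $j_{\dR}$). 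Instantiating at $\X$ and $\Y$ and cancelling fibre products then gives $\X^{\Nyg}\simeq\X^{\Syn}\times_{\Y^{\Syn}}\Y^{\Nyg}$ and $\X^{\Prism}\simeq\X^{\Nyg}\times_{\Y^{\Nyg}}\Y^{\Prism}$, which is the Cartesian-ness required; alternatively one can verify it by hand on the Rees-stack covers of \S2.2. I expect this step --- pinning down that the structure maps of the pushout are base changes from the point --- to be the main obstacle, the remainder being formal.

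Granting this, the proof finishes by flat base change. The map $j_{\Nyg}$ is \'{e}tale (as recorded in \S2.1) and $j_{\dR}$ is an open immersion --- the locus on $(-)^{\Nyg}$ where the Nygaard/Rees parameter is invertible, as seen on the covers of \S2.2 --- so both are flat, and the standard flat base-change theorem for $\D_{\mathrm{qc}}$ applies along each of the two Cartesian squares. This gives $j_{\Nyg}^{*}\R f^{\Syn}_{*}\simeq\R f^{\Nyg}_{*}j_{\Nyg}^{*}$ and $j_{\dR}^{*}\R f^{\Nyg}_{*}\simeq\R f^{\Prism}_{*}j_{\dR}^{*}$; composing the latter on the right with $j_{\Nyg}^{*}$ and using $j_{\Prism}^{*}=j_{\dR}^{*}\circ j_{\Nyg}^{*}$ produces $j_{\Prism}^{*}\R f^{\Syn}_{*}\simeq\R f^{\Prism}_{*}j_{\Prism}^{*}$, which completes the argument. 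If one only needs the case where $f$ is proper --- as in the applications in the sequel --- proper base change along the same two squares works equally well and avoids any appeal to flatness of $j_{\dR}$.
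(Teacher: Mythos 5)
Your reduction to two base-change squares (along $j_{\Nyg}$ and along $j_{\dR}$, using $j_{\Prism}=j_{\Nyg}\circ j_{\dR}$) is a reasonable plan, but the step you yourself flag as the main obstacle is where the argument has a genuine gap. The justification you offer for Cartesianness --- ``compatibility of $(-)^{\Prism}$ and $(-)^{\Nyg}$ with Tor-independent fibre products of $p$-adic formal schemes'' (\cite{APC2} Remark 3.5) --- is a statement about $(\X\times_{\Y}\X')^{\Prism}$ and does not bear on the claims you need, which are that the \emph{structure maps} $j_{\dR}$ and $j_{\Nyg}$ are pulled back from the point, i.e. $\X^{\Prism}\simeq\X^{\Nyg}\times_{\Z_p^{\Nyg}}\Z_p^{\Prism}$ and $\X^{\Nyg}\simeq\X^{\Syn}\times_{\Z_p^{\Syn}}\Z_p^{\Nyg}$. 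The first of these is believable, but for a different reason than the one you cite: the de Rham (and Hodge--Tate) locus is cut out by a condition on the filtered Cartier--Witt divisor alone, and over it the relevant ring stacks agree, so the transmutation description identifies the fibre product with $\X^{\Prism}$. The second is the serious one: $(-)^{\Syn}$ is \emph{not} a transmutation but a pushout, and whether that pushout is stable under base change along $\Z_p^{\Nyg}\to\Z_p^{\Syn}$ amounts to identifying the gluing groupoid $\Y^{\Nyg}\times_{\Y^{\Syn}}\Y^{\Nyg}$ and showing it is pulled back from the point. This is delicate precisely because the two open copies of $\Y^{\Prism}$ inside $\Y^{\Nyg}$ overlap, so the identification generates an equivalence relation involving iterated Frobenius twists on the overlap; ``cancelling fibre products'' presupposes the statement over the point that needs proof, and ``verify it by hand on the Rees-stack covers'' is not an argument, since those covers are only flat-local surjections and Cartesianness cannot be read off from them without further care. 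Even granting Cartesianness, you would still need a flat base-change theorem for $\D_{\mathrm{qc}}$ on these non-algebraic formal stacks, which again requires some descent input rather than being off-the-shelf.

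For comparison, the paper's proof deliberately avoids any global Cartesianness claim: it writes the stacks as colimits via quasisyntomic descent from quasiregular semiperfectoid rings and applies the formal push-pull lemma (\cite{PrismaticDelta} Lemma A.28) to reduce to base change for the explicit open immersions $j_{\dR}$, $j_{\HT}$ into the Rees stack of $\Prism_\R$, where the check is immediate; the case of $j_{\Nyg}^*$ is handled by the same mechanism. If you want to salvage your route, the missing ingredient is precisely a proof that $\X^{\Nyg}\simeq\X^{\Syn}\times_{\Y^{\Syn}}\Y^{\Nyg}$ (equivalently, control of $\Y^{\Nyg}\times_{\Y^{\Syn}}\Y^{\Nyg}$), which is a nontrivial structural statement not supplied by the sources you cite.
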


\begin{proof}
First consider the commutative diagram
\begin{display}
\X^{\Prism} \ar{r}{j_{\Prism}} \ar{d}{f} & \X^{\Syn} \ar{d}{f} \\
\Y^{\Prism} \ar{r}{j_{\Prism}} & \Y^{\Syn}
\end{display}
where $j_{\Prism}$ is an open immersion. We want to know the canonical natural transformation $j_{\Prism}^* \R f_* \to \R f_* j_{\Prism}^*$ is an equivalence when we have coefficients in the derived category of quasicoherent sheaves (see Definition A.20 in \cite{PrismaticDelta}).
	
The claim that $j_{\Prism}^* \R f_* \simeq \R f_* j_{\Prism}^*$ follows purely formally, by applying \cite[Lemma A.28]{PrismaticDelta} on the relevant stacks after presenting them as colimits using quasiregular semiperfectoid rings. The qcqs condition ensures there exists a hypercovering which degreewise consists of finite disjoint unions of $\Spf \R_i$ for $\R_i$ quasiregular semiperfectoid; we use this for the colimit presentation. Note that we can use hypercoverings in our setting as our stacks are 1-truncated.

This reduces us to showing base change for the diagram
\begin{display}
\Spf {\Prism}_\R \ar{r}{j_{\Prism}} \ar{d}{f} & \R^{\Syn} \ar{d}{f} \\
\Spf {\Prism}_{\R'} \ar{r}{j_{\Prism}} & (\R')^{\Syn}
\end{display}
for a map of quasiregular semiperfectoid rings $\Spf \R \to \Spf \R'$ (the edge base change squares required in this case are trivial to verify, reducible to the quasiregular semiperfectoid case by the choice of colimit presentation). The analogous square with the Nygaard stack replacing $\R^{\Syn}$ and $(\R')^{\Syn}$ has base change for both of the open immersions $j_{\HT}$ and $j_{\dR}$, so again applying \cite[Lemma A.28]{PrismaticDelta} for the pushout used to produce $\X^{\Syn}$ shows commutativity of the left square.

An identical argument shows the claim for $j^*_{\Nyg}$. We reduce to base change for the diagram
\begin{display}
\R^{\Nyg} \ar{r}{j_{\Nyg}} \ar{d}{f} & \R^{\Syn} \ar{d}{f} \\
(\R')^{\Nyg} \ar{r}{j_{\Nyg}} & (\R')^{\Syn}
\end{display}
which we may check in the same way.
\end{proof}

We then have the following (already known) pushforward stability result. This appears in \cite[Proposition 8.2.6]{madapusi2025perfect}; below we explain some of the argument.

\begin{proposition}[\cite{madapusi2025perfect}, Proposition 8.2.6]
Let $f: \X \to \Y$ be a smooth proper morphism of smooth $p$-adic formal schemes over $\Spf \O_K$. Then if $\mathcal{E}\in \Perf(\X^{\Syn})$, we have $\R f_* \mathcal{E} \in \Perf(\Y^{\Syn})$.
\label{prop:sm_prop_pushforward}
\end{proposition}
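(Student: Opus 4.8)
The plan is to reduce the statement, via the equalizer presentation of $\D(\X^{\Syn})$, to the analogous statement on the Nygaard stacks, and to deduce that from the perfectness of relative prismatic cohomology of smooth proper morphisms.

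First I would use the identification $\D_{\mathrm{qc}}(\X^{\Syn}) \simeq \mathrm{eq}(\D_{\mathrm{qc}}(\X^{\Nyg}) \rightrightarrows \D_{\mathrm{qc}}(\X^{\Prism}))$ from \S 2.1, under which an object of $\D_{\mathrm{qc}}(\X^{\Syn})$ is a pair $(\mathcal{M}, \alpha)$ with $\mathcal{M}\in\D_{\mathrm{qc}}(\X^{\Nyg})$ and $\alpha\colon j^*_{\dR}\mathcal{M}\simeq j^*_{\HT}\mathcal{M}$. Since the Breuil-Kisin Rees covers used to define the $t$-structure on $\Perf(\X^{\Syn})$ factor through $\X^{\Nyg}$, such a pair lies in $\Perf(\X^{\Syn})$ exactly when $\mathcal{M}\in\Perf(\X^{\Nyg})$ (the pullbacks $j^*_{\dR}\mathcal{M}$, $j^*_{\HT}\mathcal{M}$ being then automatically perfect). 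By Lemma \ref{lem:realization_commute} one has $j^*_{\Nyg}\R f_* \simeq \R f^{\Nyg}_* j^*_{\Nyg}$ and $j^*_{\Prism}\R f_* \simeq \R f^{\Prism}_* j^*_{\Prism}$, so $\R f_*$ is compatible with the equalizer presentation and $\R f_*\mathcal{E}$ is the pair $(\R f^{\Nyg}_*(j^*_{\Nyg}\mathcal{E}), \alpha)$ for the induced compatibility $\alpha$. It therefore suffices to show that $\R f^{\Nyg}_*$ carries $\Perf(\X^{\Nyg})$ into $\Perf(\Y^{\Nyg})$; that the output is bounded is immediate from properness of $f$.

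For this, perfectness on $\Y^{\Nyg}$ may be tested after pullback along a flat-local surjection (compare the proof of Proposition \ref{prop:t_struct_XSyn}), so I would work Zariski-locally on $\Y$, pick a Breuil-Kisin prism $(\rA,\I)$ with $\Y\simeq\Spf(\rA/\I)$, and use the flat cover $\mathrm{Rees}_{\Fil^\bullet_{\Nyg}}(\rA^{(1)})\to\Y^{\Nyg}$ from Proposition \ref{prop:BK_flatcover} (a flat cover of formal stacks by Corollary \ref{cor:Nyg_preserve_flat_covers}). Its base change of $\X^{\Nyg}$ is the relative Nygaard stack $(\X/\rA)^{\Nyg}$, and flat base change identifies the pullback of $\R f^{\Nyg}_*\mathcal{E}$ with $\R f^{\mathrm{rel}}_*(\mathcal{E}|_{(\X/\rA)^{\Nyg}})$, where $f^{\mathrm{rel}}\colon (\X/\rA)^{\Nyg}\to\mathrm{Rees}_{\Fil^\bullet_{\Nyg}}(\rA^{(1)})$. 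So the claim becomes: $\R f^{\mathrm{rel}}_*$ preserves perfect complexes. Here the target is a $\G_m$-quotient of a Noetherian regular affine formal scheme (as in \S 2.2, using that $\rA$, hence $\rA^{(1)}$, is Noetherian regular and that the Rees construction along the prismatic ideal stays regular), and $f^{\mathrm{rel}}$ is proper, being obtained by transmutation from the proper morphism $f$. Under the Rees dictionary $\R f^{\mathrm{rel}}_*\O$ computes the Nygaard-filtered relative prismatic cohomology $\Fil^\bullet_{\Nyg}\Prism^{(1)}_{\X/\rA}$, which is a perfect $\rA^{(1)}$-complex carrying a finite, termwise-perfect filtration precisely because $\X\to\Y$ is smooth and proper; combined with properness, this forces $\R f^{\mathrm{rel}}_*$ to send perfect complexes to perfect complexes (for instance by passing to the $\G_m$-torsor atlas, where proper pushforward on a Noetherian base preserves coherence and bounded amplitude, and then using regularity of the target to identify $\D^b_{\Coh}$ with $\Perf$). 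This is the stacky packaging of the finiteness of relative syntomic/prismatic cohomology of smooth proper morphisms with perfect coefficients recorded at the end of \S 6.1 of \cite{Fgauge}.

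The non-formal step, and the main obstacle, is precisely this last input: that $\R f^{\mathrm{rel}}_*$ preserves perfection for all coefficients, not merely the structure sheaf. This is where smooth properness of $\X\to\Y$ is used in an essential way, via the perfectness and finite-filtration property of relative prismatic cohomology of a smooth proper morphism. The remaining ingredients — the equalizer reduction, Lemma \ref{lem:realization_commute}, the descent to a Breuil-Kisin Rees cover, and the flat base change — are formal given the results of Section 2.
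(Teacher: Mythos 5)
Your first reduction agrees with the paper: passing to $\X^{\Nyg}$ via the equalizer presentation and the \'{e}taleness of $\X^{\Nyg}\to\X^{\Syn}$, using Lemma \ref{lem:realization_commute} to commute $j^*_{\Nyg}$ with $\R f_*$, and then base changing along a Breuil--Kisin Rees cover to the relative Nygaard stack is exactly how the paper begins. The gap is in your final step. You treat $f^{\mathrm{rel}}\colon (\X/\rA)^{\Nyg}\to \mathrm{Rees}_{\Fil^\bullet_{\Nyg}}(\rA^{(1)})$ as a proper morphism to which the standard coherence theorem for proper pushforward over a Noetherian base applies after passing to the $\G_m$-atlas. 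But $(\X/\rA)^{\Nyg}$ is a transmuted formal stack; the map $f^{\mathrm{rel}}$ is not representable by proper schemes (its fibers over the various loci of the Rees stack are non-algebraic stacky objects), so there is no off-the-shelf coherence theorem to invoke -- the finiteness of $\R f^{\mathrm{rel}}_*$ is precisely the non-formal content that must be proved. Moreover, the inference ``$\R f^{\mathrm{rel}}_*\O$ is perfect with a finite termwise-perfect filtration, hence $\R f^{\mathrm{rel}}_*$ preserves perfection for all coefficients'' is not valid: there is no d\'{e}vissage from the structure sheaf to arbitrary perfect coefficients here, and the paper never argues this way.

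What the paper does instead is stratify the Nygaard stack by the loci $t\neq 0$ and $t=0$. On $t\neq 0$ one has $\X^{\Nyg}_{t\neq 0}\simeq\X^{\Prism}$ and the claim with coefficients is known (via the Hodge--Tate stack, Proposition 5.11 of \cite{Zpcrystalline}); on $t=0$ one base changes to $\Spf \R\langle u\rangle/\G_m$, uses $\pi$-adic completeness to pass to the $\pi=0$ fiber, and reduces (as in Appendix B of \cite{Hauck}, but over a general Noetherian regular special fiber) to the statement that smooth proper pushforwards of smooth $k$-schemes preserve perfect complexes -- with coefficients throughout. Crucially, perfectness on the two strata alone does not give perfectness on $\Y^{\Nyg}$: one must also prove derived $t$-completeness (Nygaard completeness) of $\R f_*\mathcal{E}$, which the paper obtains by reducing to Hodge-filtered de Rham cohomology in the smooth proper case as in \cite{APC} Proposition 5.8.2, and only then glue formally along $t$. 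Your proposal omits this completeness step entirely, and the ``properness'' shortcut that was meant to replace it is unjustified; as written, the argument does not establish the key finiteness input it correctly identifies as the heart of the matter.
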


\begin{proof}
It suffices to check this for $(-)^{\Nyg}$ using flat descent of $\Perf$, as $\Y^{\Nyg}$ is an \'{e}tale cover of $\Y^{\Syn}$ and similarly for $\X^{\Nyg}$ and $\X^{\Syn}$ (see the discussion around \cite[Definition 6.1.1]{Fgauge}). Then we may use that $j^*_{\Nyg}$ commutes with pushforwards as verified in the previous lemma to reduce to checking $\R f_*$ sends perfect complexes on $\X^{\Nyg}$ to perfect complexes on $\Y^{\Nyg}$ for smooth proper morphisms $f$. There is a stratification of $\X^{\Nyg}$ by $\X^{\Nyg}_{t=0}$ and $\X^{\Nyg}_{t\neq 0}$ (as well as $\Y^{\Nyg}$).

Now we observe that it suffices to check perfectness on the $t=0$ component of $\Y^{\Nyg}$ and derived $t$-completeness. One may also use formal gluing for this stratification to make the reduction, by checking perfectness is also preserved for the pushforward $\X^{\Nyg}_{t\neq 0} \simeq \X^{\Prism} \to \Y^{\Nyg}_{t\neq 0} \simeq \Y^{\Prism}$, which is well-known (see \cite[Corollary 5.16]{Zpcrystalline}). The pushforward $\R f_* \mathcal{E}$ will still be complete for the Nygaard filtration as one can reduce to the same assertion for Hodge-filtered de Rham cohomology in the smooth proper case by the same method as \cite[Proposition 5.8.2]{APC}. The claim for the $t=0$ component is shown in \cite{madapusi2025perfect}, via reduction to the case where $\Y$ is semiperfectoid and then reducing the claim to smooth proper pushforwards of schemes preserving perfect complexes.
\end{proof}

This then formally implies the following result.

\begin{corollary}
Suppose $\X$ and $\Y$ are smooth and quasicompact over $\Spf \O_K$ and let $\mathbb{L}\in \D^{(b)}_{\mathrm{lisse}}(\X_\eta,\Z_p)[1/p]$ be in the essential image of $\T_\et[1/p]$ (as partially characterized in Corollary \ref{cor:Perf_cris}). Then if $f: \X \to \Y$ is smooth proper, $\R^i f_{\eta,*} \mathbb{L}$ is crystalline for all $i$.
\label{cor:der_C_cris}
\end{corollary}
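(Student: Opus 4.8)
The plan is to lift $\mathbb{L}$ to a perfect $F$-gauge on $\X^{\Syn}$, push it forward along $f$ using Proposition~\ref{prop:sm_prop_pushforward}, and then read off crystallinity of the cohomology sheaves from the ``moreover'' clause of Theorem~\ref{thm:generalsmoothTetcrys}. Concretely, by the hypothesis and Corollary~\ref{cor:Perf_cris} there is $\mathcal{E}\in\Perf(\X^{\Syn})$ with $\T_\et(\mathcal{E})[1/p]\simeq\mathbb{L}$. Since $f$ is smooth proper, Proposition~\ref{prop:sm_prop_pushforward} gives $\R f_*\mathcal{E}\in\Perf(\Y^{\Syn})$, and Theorem~\ref{thm:generalsmoothTetcrys} then shows $\H^i(\T_\et(\R f_*\mathcal{E}))[1/p]\in\Loc_{\Q_p}^\cris(\Y_\eta)$ for every $i$. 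Thus the entire corollary reduces to identifying $\H^i(\T_\et(\R f_*\mathcal{E}))[1/p]$ with $\R^i f_{\eta,*}\mathbb{L}$.

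To make this identification I would produce an equivalence $\T_\et(\R f_*\mathcal{E})\simeq\R f_{\eta,*}\T_\et(\mathcal{E})$ in $\D^{(b)}_{\mathrm{lisse}}(\Y_\eta,\Z_p)$; applying $\H^i(-)[1/p]$ and using the $t$-exactness of $\T_\et$ (Lemma~\ref{lem:Tet_texact}) together with exactness of $(-)[1/p]$ then finishes the argument, since $\R^i f_{\eta,*}\mathbb{L}=\H^i(\R f_{\eta,*}\T_\et(\mathcal{E}))[1/p]=\H^i(\T_\et(\R f_*\mathcal{E}))[1/p]$. Recall $\T_\et$ factors as restriction to $\X^{\Prism}$ (Lemma~\ref{lem:restriction_Fcris}), followed by inverting $\I_{\Prism}$ and $p$-completing, followed by the Bhatt--Scholze equivalence $\D(\X_{\Prism},\O_{\Prism}[1/\I_{\Prism}]_p^{\wedge})^{\varphi=1}\simeq\D^{(b)}_{\mathrm{lisse}}(\X_\eta,\Z_p)$. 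The restriction $j^*_{\Prism}$ commutes with $\R f_*$ by Lemma~\ref{lem:realization_commute}; inverting $\I_{\Prism}$ and $p$-completing commute with $\R f_*$ on perfect complexes by flat base change and the projection formula; and the remaining step, that the prismatic-$F$-crystal-to-local-system equivalence commutes with smooth proper pushforward, is a form of the relative primitive comparison theorem. I would reduce this last point to a perfectoid ($v$-)cover of $\Y$, where it amounts to classical proper smooth base change in the pro-\'etale topology together with Scholze's comparison theorem, and then descend using that prismatization preserves flat covers (Proposition~\ref{prop:prism_preserve_flat_covers}); alternatively one may cite the relative \'etale comparison underlying \cite{Zpcrystalline}.

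The main obstacle is precisely this compatibility of $\T_\et$ with smooth proper pushforward; everything else is a direct appeal to results already in the excerpt. The delicate points are the identification of the prismatic-$F$-crystal-to-local-system functor with $\T_\et$ in families (so that one really computes $\R^i f_{\eta,*}$ of a genuine local system rather than something weaker), and that this identification be compatible with the relevant $t$-structures, so that the Grothendieck spectral sequence for $\R f_*$ of $F$-gauges matches the Leray spectral sequence for $\R f_{\eta,*}$ --- though the latter is automatic once the underlying objects of $\D^{(b)}_{\mathrm{lisse}}(\Y_\eta,\Z_p)$ have been identified, since $\H^i$ is computed there.
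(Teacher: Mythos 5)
Your proposal is correct and takes essentially the same route as the paper: lift to a perfect $F$-gauge, push forward via Proposition \ref{prop:sm_prop_pushforward}, read off crystallinity of the cohomology from Theorem \ref{thm:generalsmoothTetcrys}, and reduce everything to the commutation of $\T_\et$ with $\R f_*$, handling the restriction step with Lemma \ref{lem:realization_commute} (and Lemma \ref{lem:restriction_Fcris}). The only divergence is that for the compatibility of the \'{e}tale realization of perfect prismatic $F$-crystals with smooth proper pushforward the paper simply cites Theorem 4.8 of \cite{Fgaugelift}, rather than re-deriving it from perfectoid covers and the primitive comparison theorem as you sketch.
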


\begin{proof}
It suffices to show that $\T_\et$ commutes with smooth proper pushforwards in light of Corollary \ref{cor:Perf_cris}. In the diagram
\begin{display}
\Perf(\X^{\Syn}) \ar{d}{\R f_*} \ar{r}{(-)|_{\X^{\Prism}}} & \Perf^{\varphi}(\X_{\Prism},\O_{\Prism}) \ar{d}{\R f_*} \ar{r} & \D^{(b)}_{\mathrm{lisse}}(\X_\eta,\Z_p) \ar{d}{\R f_{\eta,*}} \\
\Perf(\Y^{\Syn}) \ar{r}{(-)|_{\Y^{\Prism}}} & \Perf^{\varphi}(\Y_{\Prism},\O_{\Prism}) \ar{r} & \D^{(b)}_{\mathrm{lisse}}(\Y_\eta,\Z_p)
\end{display}
we know the right square commutes by \cite[Theorem 5.8]{Fgaugelift}, so it suffices to verify the left square commutes and $\R f_*$ preserves perfect $F$-gauges. The latter holds by Proposition \ref{prop:sm_prop_pushforward}.

Now we check commutativity of the left square. For prismatic $F$-crystals, by an inspection of the canonical prismatic $F$-crystal structure given to $j^*_{\Prism}(\mathcal{E})$ in Lemma \ref{lem:restriction_Fcris} it suffices to know $j^*_{\Nyg}$ and $j^*_{\Prism}$ commute with pushforwards, which we have already shown in Lemma \ref{lem:realization_commute}.
\end{proof}

The result that $(\R^i f_{\eta,*} \mathbb{L})[1/p]$ is crystalline when $\mathbb{L}\in \Loc_{\Z_p}^\cris(\X_\eta)$ and $f$ is smooth proper is given by Theorem B of \cite{Zpcrystalline} (the isocrystal association portion of the statement is verified later in Proposition \ref{prop:Dcris_compat}). We can regard this as strengthening the result to allow more general $\mathbb{L}$ in the derived category. When $\Y=\Spf \O_K$, by Proposition \ref{prop:O_K_equiv} we actually learn about the complex beyond its cohomology, namely that it lies in the essential image of $\D^b(\Rep_{\Q_p}^\cris(\rG_K)) \to \D^{(b)}_{\mathrm{lisse}}(\Spa K, \Z_p)[1/p]$.

\subsection{Admissible filtered $F$-isocrystals in the proper case}

In \cite[Proposition 5.6.2]{Hauck}, Corollary \ref{cor:Perf_cris} is refined to an equivalence in the case of $\X=\Spf \Z_p$. In particular, it is shown that $\T_\et$ induces an equivalence
\[\Perf(\Z_p^{\Syn})[1/p] \simeq \D^b(\Rep_{\Q_p}^\cris(\rG_{\Q_p})).\]
Such an equivalence cannot hold in general, as the following example shows.

\begin{example}
Let $\X$ be the $p$-adic formal scheme $\P^1_{\Z_p}$. If we had an equivalence
\[\Perf(\X^{\Syn})[1/p] \simeq \D^b(\Loc_{\Q_p}^\cris((\P^1_{\Q_p})^{\mathrm{ad}}))\]
then considering $\R\Hom_{(\P^1_{\Z_p})^{\Syn}}(\O,\O\{1\})[1/p] \simeq \R\Gamma_{\Syn}(\P^1_{\Z_p},\O\{1\})[1/p]$ we would need to show that
\[\H^i_{\Syn}(\P^1_{\Z_p},\O\{1\})[1/p] \simeq \Ext^i_{\Loc_{\Q_p}^\cris((\P^1_{\Q_p})^{\mathrm{ad}})}(\Q_p,\Q_p(1)).\]
When $i=2$, the right hand side must vanish, since $\Ext^2$ vanishes in crystalline $\Q_p$-local systems. Indeed, since $(\P^1_{\Q_p})^{\mathrm{ad}}$ has an \'{e}tale fundamental group isomorphic to $\rG_{\Q_p}$, the subcategory of crystalline local systems is equivalent to the category of crystalline Galois representations of $\rG_{\Q_p}$. In this abelian category, $\Ext^2$ will vanish (by \cite{emerton1999extensions}; see also \cite{Hauck} Remark 5.2.5).

However the left hand side (syntomic cohomology) is nonzero when $i=2$, as we get a nonzero class coming from the syntomic Chern class (see Variant 7.5.4 in \cite{APC}).
\label{ex:derived_counter}
\end{example}

In this subsection we will first show that despite this failure in the higher dimensional case, Hauck's result still extends to the case where $K/\Q_p$ is an arbitrary finite extension:
\[\Perf(\O_K^{\Syn})[1/p] \simeq \D^b(\Rep_{\Q_p}^\cris(\rG_{K})).\]
We will then define a category $\Perf^{\mathrm{adm}}_{\mathrm{fIsoc}^{\varphi}}(\X)$ of admissible filtered $F$-isocrystals in perfect complexes and use the case of $\O_K$ to show that there is a fully faithful functor
\[\Perf(\X^{\Syn})[1/p] \to \Perf^{\mathrm{adm}}_{\mathrm{fIsoc}^{\varphi}}(\X)\]
when $\X/\Spf \O_K$ is smooth proper, and then use \S 4.1 to see the functor is essentially surjective.

The key input in the case of a point is an analogue of the cohomology computation done in \cite[Proposition 6.7.3]{Fgauge} for arbitrary finite extensions $K/\Q_p$.

\begin{proposition}
Let $K/\Q_p$ be a finite extension. Then for $\mathcal{E}\in \Coh(\O_K^{\Syn})$ we have
\[\H^i(\O_K^{\Syn},\mathcal{E})[1/p] \simeq \Ext^i_{\Rep_{\Q_p}^\cris(\rG_K)}(\Q_p,\T_\et(\mathcal{E})[1/p])\]
where the $0$th extension group on the right is $\H^0(\rG_K,(-)[1/p])$, the first is given by 
\[\H^1_f(\rG_K,-) = \Ext^1_{\Rep_{\Q_p}^\cris(\rG_K)}(\Q_p,(-)[1/p]),\]
and both sides vanish for $i\ge 2$.
\label{prop:O_K_cohom}
\end{proposition}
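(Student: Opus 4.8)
The plan is to reduce everything to the known syntomic cohomology computation over $\O_K$ and to Bloch--Kato's comparison of syntomic cohomology with the finite Selmer group, using the equivalence $\mathsf{Refl}(\O_K^{\Syn})\simeq \Rep_{\Z_p}^\cris(\rG_K)$ from Theorem \ref{thm:bhatt-lurie} together with the results of \S 3. First I would use Proposition \ref{prop:ext1_cohom} (which, applied to $\X=\Spf\O_K$, already gives the degree one statement $\H^1_{\Syn}(\O_K,\mathcal{E})[1/p]\simeq \H^1_f(\rG_K,\T_\et(\mathcal{E})[1/p])=\Ext^1_{\Rep_{\Q_p}^\cris(\rG_K)}(\Q_p,\T_\et(\mathcal{E})[1/p])$), so the substance is the identification in degree $0$ and the vanishing for $i\ge 2$. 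In degree $0$ we have $\H^0_{\Syn}(\O_K,\mathcal{E})[1/p]=\Hom_{\D_{\mathrm{qc}}(\O_K^{\Syn})}(\O,\mathcal{E})[1/p]=\Hom_{\Coh(\O_K^{\Syn})[1/p]}(\O,\mathcal{E}[1/p])$, and by Corollary \ref{cor:Perf_cris} this agrees with $\Hom_{\Loc_{\Q_p}^\cris(\Spa K)}(\Q_p,\T_\et(\mathcal{E})[1/p])=\H^0_\et(\rG_K,\T_\et(\mathcal{E})[1/p])$; one should note that $\H^0_{\Syn}$ of a perfect complex vanishes in negative degrees, so there is no contribution from higher cohomology sheaves.

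The real content is the vanishing $\H^i_{\Syn}(\O_K,\mathcal{E})[1/p]=0$ for $i\ge 2$. The approach I would take is to first reduce to the reflexive case: by Theorem \ref{thm:general_Tet_kernel} (and Proposition \ref{prop:p_tors_Tet}) the kernel of $\mathcal{E}\to\mathcal{E}^{\vee\vee}$ is $p$-power torsion, hence an isomorphism after inverting $p$, so $\R\Gamma_{\Syn}(\O_K,\mathcal{E})[1/p]\simeq\R\Gamma_{\Syn}(\O_K,\mathcal{E}^{\vee\vee})[1/p]$ and we may assume $\mathcal{E}\in\mathsf{Refl}(\O_K^{\Syn})$, i.e. $\mathcal{E}$ corresponds to a crystalline lattice $T$ with $\T_\et(\mathcal{E})[1/p]=V$. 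Now $\R\Gamma_{\Syn}(\O_K,\mathcal{E})$ is computed by the exact triangle from \S 2.1 relating it to $\R\Gamma$ of $\O_K^{\Nyg}$ and $\O_K^{\Prism}$, or equivalently (after inverting $p$) by the Bhatt--Lurie recipe as the fiber of $\varphi\{i\}-\mathrm{can}$ on Nygaard-filtered prismatic cohomology with coefficients in the $F$-crystal underlying $\mathcal{E}$. The key input the author flags — and the main obstacle — is an extension with coefficients of Proposition 6.7.3 of \cite{Fgauge}: one must show this complex has cohomology concentrated in degrees $0$ and $1$. The clean way is to invoke the $(\varphi,\Gamma)$-module / syntomic complex description: $\R\Gamma_{\Syn}(\O_K,\mathcal{E})[1/p]$ is the Bloch--Kato--Fontaine--Herr complex computing $\R\Gamma_f(\rG_K,V)$, which is well known to live in degrees $[0,1]$ (its $\H^0$ is $V^{\rG_K}$, its $\H^1$ is $\H^1_f(\rG_K,V)$, and it has no higher cohomology because $D_{\dR}(V)/\mathrm{Fil}^0$ and the two-term Frobenius/Galois complexes contributing are all concentrated in degrees $\le 1$). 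I would cite the identification of rationalized syntomic cohomology over $\O_K$ with the Bloch--Kato complex — available through \cite{BeilinsonFib} or directly from the fiber-sequence definition by computing $\R\Gamma$ of the relevant $\D$-module stack as in Corollary \ref{cor:Nyg_relqrsp_desc} — and then quote Bloch--Kato's computation that this complex is concentrated in $[0,1]$.

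An alternative, more self-contained route for the $i\ge 2$ vanishing: use Corollary \ref{cor:Nyg_relqrsp_desc} to present $\R\Gamma(\O_K^{\Syn},\mathcal{E})[1/p]$ via the cosimplicial object built from $\mathrm{Rees}_{\Fil^\bullet_{\Nyg}}(\rA^{(n)})$ where $\rA^{(n)}=\varphi^*\W(k)\llbracket u_0,\dots,u_{n-1}\rrbracket\{(u_j-u_0)/\E(u_0)\}^\wedge$ is Noetherian regular; on each such Rees stack the syntomic cohomology of a coherent sheaf is a two-term complex (the fiber of a map between global sections of graded pieces on a quotient of a regular affine by $\G_m$), and after inverting $p$ the resulting totalization is controlled by a spectral sequence that degenerates into degrees $[0,1]$ because the relevant filtered $\varphi$-module data over $K$ only sees de Rham and isocrystal cohomology, both of which are concentrated in degree $0$ over a point. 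Either way, once the complex is known to live in $[0,1]$, the identifications in degrees $0$ and $1$ established above finish the proof. The main obstacle is making the bridge from the stacky $\R\Gamma_{\Syn}$ to the Bloch--Kato complex precise with coefficients, so I would lean on Proposition \ref{prop:ext1_cohom} for degree $1$ and on Corollary \ref{cor:Perf_cris} for degree $0$, reserving a direct cohomological argument only for the vanishing above the range.
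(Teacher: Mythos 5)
Your treatment of degrees $0$ and $1$ matches the paper exactly (degree $0$ from the heart statement of Corollary \ref{cor:Perf_cris}, degree $1$ from Proposition \ref{prop:ext1_cohom}), and your reduction to the reflexive case via Theorem \ref{thm:general_Tet_kernel} is also how the paper proceeds. The problem is the vanishing in degrees $\ge 2$, which is the actual content of the proposition, and neither of your two routes supplies it. Your ``clean way'' — citing that $\R\Gamma_{\Syn}(\O_K,\mathcal{E})[1/p]$ \emph{is} the Bloch--Kato complex computing $\R\Gamma_f(\rG_K,V)$ — is not available in the literature in the generality needed (arbitrary ramified $K$ and coefficients in an arbitrary coherent $F$-gauge); the paper is explicit that the only shortcuts of this kind are either the unramified case (Poincar\'e duality for $\Spf\W(k)/\Spf\Z_p$) or the \emph{conjectural} Lagrangian refinement of Tate duality (Conjecture 6.5.25 of \cite{Fgauge}). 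Worse, within this paper the appeal to a Beilinson-fiber-square/Bloch--Kato identification over $\O_K$ with coefficients is circular: Corollary \ref{cor:Beilinson_fiber} is deduced from Proposition \ref{prop:O_K_equiv}, which itself rests on the proposition you are proving.

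Your alternative route gestures at the right objects (the cosimplicial presentation from Corollary \ref{cor:Nyg_relqrsp_desc}) but then asserts, without argument, that ``the resulting totalization is controlled by a spectral sequence that degenerates into degrees $[0,1]$.'' That concentration is precisely what must be proved, and the totalization has infinitely many two-term columns, so nothing formal forces it. The paper's proof has genuine content here: it first collapses the descent complex using the Gao--Min--Wang bound (Lemma \ref{lem:trunc_desc_complex}), then imports the $\Theta$-operator formalism of \cite{KtheoryZpn} to build a connection $\nabla_\M$ on the Breuil--Kisin module and rewrites $\R\Gamma_{\Syn}(\O_K,\mathcal{E})$ as the total fiber of an explicit $2\times 2$ square (Lemma \ref{lem:syntomic_fiber}), and finally proves, in Proposition \ref{prop:H2_vanish}, that the map $\bigl(1-\varphi^{\nabla}_\M,\ \nabla_\M\bigr)$ is rationally surjective — the effective case by topological nilpotence of $\varphi$, and the non-effective twists $\mathcal{E}\{r\}$ by a filtration induction using the Leibniz rule, where the constants $\mathrm{C}_i=i$ are only invertible after inverting $p$. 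None of this is replaced by your sketch, so as written the $i\ge 2$ vanishing is a gap rather than a proof.
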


Assuming the conjectured Lagrangian refinement of Tate duality for regular formal schemes, see \cite[Conjecture 6.5.25]{Fgauge}, one may deduce this using the same strategy as \cite[Proposition 6.7.3]{Fgauge}. If $K$ is unramified, since $\Spf \W(k)/\Spf \Z_p$ is finite \'{e}tale and in particular smooth proper we may use geometric Poincar\'{e} duality to deduce the claim (see the remark following \cite[Conjecture 6.5.25]{Fgauge}).

We will use a different approach to prove the claim by adapting the tools of \cite{KtheoryZpn} to work with coefficients. As a byproduct, we will produce a complex computing syntomic cohomology using Breuil-Kisin modules.

\textbf{Notation}. In what follows we will abbreviate absolute prismatic cohomology $\R\Gamma(\O_K^{\Prism},\mathcal{E})$ as $\Prism_{\O_K}(\mathcal{E})$ and the relative prismatic cohomology $\R\Gamma((\O_K/\rA)^{\Prism},\mathcal{E})$ as $\Prism_{\O_K/\rA}(\mathcal{E})$ for a prismatic crystal $\mathcal{E}$. 

Using \cite[Theorem 1.2(6)]{PrismaticDelta} with coefficients (which follows from the identification of stacks in Corollary \ref{cor:Nyg_relqrsp_desc} after restricting to the $t\neq 0$ locus), we may compute
\[\Prism_{\O_K}(\mathcal{E}) \simeq \Prism_{\O_K/\W(k)}(\mathcal{E})\]
as
\[\mathrm{Tot} \left(\begin{tikzcd} \Prism_{\O_K/\W(k)\llbracket u_0\rrbracket}(\mathcal{E}) \ar[shift left]{r} \ar[shift right]{r} & \ar{l} \Prism_{\O_K/\W(k)\llbracket u_0,u_1\rrbracket}(\mathcal{E}) \ar[shift right, shift right]{r} \ar{r} \ar[shift left, shift left]{r} & \ar[shift right]{l} \ar[shift left]{l} \cdots \end{tikzcd}\right)\]
giving a descent complex
\begin{display}
\Prism_{\O_K/\W(k)\llbracket u_0\rrbracket}(\mathcal{E}) \ar{r}{\mathrm{d}^1} & \Prism_{\O_K/\W(k)\llbracket u_0,u_1\rrbracket}(\mathcal{E}) \ar{r}{\mathrm{d}^2} & \ldots
\end{display}
The analogous descent complex also computes Nygaard filtered prismatic cohomology after endowing each $\Prism^{(1)}_{\O_K/\W(k)\llbracket u_0,\ldots, u_i\rrbracket}(\mathcal{E})$ with the na\"{i}ve Nygaard filtration for $\I=\E(u_0)$ (as $\O_K$ is relatively quasisyntomic in the sense of \cite{PrismaticDelta}). We can use Corollary \ref{cor:Nyg_relqrsp_desc} to formally justify this construction with coefficients. We can use this along with a result of Gao-Min-Wang to obtain a shorter description of prismatic cohomology.

\begin{lemma}
\label{lem:trunc_desc_complex}
Let $\mathcal{E}\in \Vect(\O_K^{\Prism})$. Then
\[\R\Gamma(\O_K^{\Prism},\mathcal{E}) \simeq \left[\Prism_{\O_K/\W(k)\llbracket u_0\rrbracket}(\mathcal{E}) \to K_2\right],\]
where $K_2 = \ker(\mathrm{d}^2: \Prism_{\O_K/\W(k)\llbracket u_0,u_1\rrbracket}(\mathcal{E}) \to \Prism_{\O_K/\W(k)\llbracket u_0,u_1,u_2\rrbracket}(\mathcal{E}))$ is the kernel of the second differential in the descent complex.
\end{lemma}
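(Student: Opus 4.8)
The plan is to realize the two-term complex as the ``stupid truncation at level one, with the last nonzero term replaced by the kernel'' of the infinite descent complex, so that the only substantive point becomes a cohomological vanishing imported from Gao-Min-Wang. First I would record that every term $\Prism_{\O_K/\W(k)\llbracket u_0,\ldots,u_n\rrbracket}(\mathcal{E})$ is discrete: by Corollary~\ref{cor:Nyg_relqrsp_desc} the relative prismatic cohomology in question is the evaluation of the crystal $\mathcal{E}$ on the (discrete) prismatic envelope $\rA^{(n+1)}$, hence a finite projective module. Consequently the totalization displayed in the excerpt is literally the cochain complex
\[
C^\bullet\colon\quad \Prism_{\O_K/\W(k)\llbracket u_0\rrbracket}(\mathcal{E}) \xrightarrow{\mathrm{d}^1} \Prism_{\O_K/\W(k)\llbracket u_0,u_1\rrbracket}(\mathcal{E}) \xrightarrow{\mathrm{d}^2} \Prism_{\O_K/\W(k)\llbracket u_0,u_1,u_2\rrbracket}(\mathcal{E}) \xrightarrow{\mathrm{d}^3} \cdots
\]
concentrated in cohomological degrees $\geq 0$, and it computes $\R\Gamma(\O_K^\Prism,\mathcal{E})$.

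Next, since $\mathrm{d}^2\mathrm{d}^1 = 0$ the differential $\mathrm{d}^1$ factors through $K = \ker(\mathrm{d}^2)$, so there is a map of complexes
\[
\iota\colon\quad \bigl[\,\Prism_{\O_K/\W(k)\llbracket u_0\rrbracket}(\mathcal{E}) \xrightarrow{\mathrm{d}^1} K\,\bigr] \longrightarrow C^\bullet
\]
which is the identity in degree $0$ and the inclusion $K \hookrightarrow \Prism_{\O_K/\W(k)\llbracket u_0,u_1\rrbracket}(\mathcal{E})$ in degree $1$. On $\H^0$ it is the identity on $\ker(\mathrm{d}^1)$ and on $\H^1$ it is the identity on $\ker(\mathrm{d}^2)/\im(\mathrm{d}^1)$; hence $\iota$ is a quasi-isomorphism if and only if $\H^i(C^\bullet) = \H^i(\O_K^\Prism,\mathcal{E}) = 0$ for every $i \geq 2$.

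The one genuine input is precisely this vanishing, which I would obtain from the result of Gao-Min-Wang: for a prismatic crystal on $(\O_K)_\Prism$ the absolute prismatic cohomology is computed by a two-term complex (via a prismatic Sen-type operator on the associated Breuil-Kisin module), so in particular $\R\Gamma(\O_K^\Prism,\mathcal{E})$ has cohomological amplitude contained in $[0,1]$ when $\mathcal{E}$ is a vector bundle; in whatever form the cited result is stated, all that is used here is $\H^{\geq 2}(\O_K^\Prism,\mathcal{E}) = 0$. Granting this, $\iota$ is a quasi-isomorphism, which is the assertion of the lemma.

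The main obstacle is importing the Gao-Min-Wang bound in the form and generality needed, namely coefficients in an arbitrary prismatic vector bundle and $K/\Q_p$ an arbitrary (possibly wildly ramified) finite extension; everything else is formal homological algebra together with the discreteness already available from Corollary~\ref{cor:Nyg_relqrsp_desc}. A minor point worth flagging is that the rings $\rA^{(n)}$ are prismatic envelopes and need not be Noetherian, so $K$ need not be finitely generated a priori, but this is irrelevant to the argument since $K$ enters only as the kernel of an honest map of discrete modules.
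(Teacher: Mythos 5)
Your proposal is correct and is essentially the paper's argument: the paper's proof consists precisely of citing Gao--Min--Wang for the vanishing $\H^{\geq 2}(\O_K^{\Prism},\mathcal{E})=0$, so that the two-term complex (which, as you note, is just the canonical truncation $\tau^{\leq 1}$ of the descent complex) receives a quasi-isomorphism from nothing more than formal homological algebra. Your added details on discreteness of the terms and on the truncation map are exactly the implicit steps the paper leaves to the reader.
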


\begin{proof}
This follows from the result of Gao-Min-Wang in \cite[Theorem 4.10]{GaoMinWang} that $\R\Gamma(\O_K^{\Prism},\mathcal{E})$ lies in cohomological degrees 0 and 1.
\end{proof}

In what follows, all filtrations $\mathrm{F}^\ast$ on rings have the property that $\mathrm{F}^0 \rA = \mathrm{F}^{-n} \rA$ for $n\ge 0$ and $\mathrm{F}^0 \rA = \rA$. A filtered $\delta$-pair consists of a filtered $\delta$-ring $\mathrm{F}^\ast \rA$ (one requires $\delta(\mathrm{F}^n \rA) \subseteq \mathrm{F}^{pn} \rA$) and a map of filtered rings $\mathrm{F}^\ast \rA \to \mathrm{F}^\ast \R$. In \cite{PrismaticDelta} \S 10 a filtered variant of prismatic cohomology $\mathrm{F}^\ast \Prism_{\R/\rA}$ is given relative to filtered $\delta$-pairs $\mathrm{F}^\ast \rA \to \mathrm{F}^\ast \R$. From now on we give the Breuil-Kisin prism $\W(k)\llbracket u_0 \rrbracket$ the $u_0$-adic filtration, and $\O_K$ the $\pi$-adic filtration for a uniformizer $\pi$. For a filtered $\delta$-pair $\mathrm{F}^\ast \rA \to \mathrm{F}^\ast \R$, the relative prismatic cohomology inherits a filtration $\mathrm{F}^\ast \Prism_{\R/\rA}$. With coefficients in a prismatic crystal $\mathcal{E}$, we let $\mathrm{F}^\ast \Prism_{\R/\rA}(\mathcal{E})$ be induced from the filtration $\mathrm{F}^\ast \Prism_{\R/\rA}$ (the flat filtration in the terminology of \cite{PrismaticDelta}). Write $\mathrm{F}^\ast \widehat{\Prism}_{\R/\rA}$ for the completion with respect to this filtration.

We will need a $\W(k)\llbracket u_0\rrbracket^{(1)}$-linear map
\[\Theta: \mathrm{F}^{*} \varphi^* \widehat{\Prism}_{\O_K/\W(k)\llbracket u_0,u_1\rrbracket}(\mathcal{E}) \to \mathrm{F}^{*-1} \varphi^* \widehat{\Prism}_{\O_K/\W(k)\llbracket u_0\rrbracket}(\mathcal{E}\{-1\})\]
as well as a Nygaard filtered version, which shifts the Nygaard filtration down by one. In \cite{KtheoryZpn} this is constructed for $\mathcal{E}=\O_{\Prism}$, which induces the general map. It is helpful to understand this construction via the perspective of comodules over a Hopf algebroid, which we now explain.

In \S 4.6 of \cite{KtheoryZpn} they view prismatic crystals as right comodules over a Hopf algebroid $(\Gamma_0,\Gamma_1)=(\mathrm{F}^\ast \widehat{\Prism}_{\O_K/\W(k)\llbracket u_0 \rrbracket}, \mathrm{F}^\ast \widehat{\Prism}_{\O_K/\W(k)\llbracket u_0, u_1 \rrbracket})$ corresponding to the truncated $\mathrm{F}$-completed descent complex. This is only used for Breuil-Kisin twists, but the same general construction applies verbatim in our setting for prismatic crystals in vector bundles. There is of course also a Frobenius twisted variant, instead using the Hopf algebroid 
\[(\mathrm{F}^\ast \varphi^* \widehat{\Prism}_{\O_K/\W(k)\llbracket u_0 \rrbracket}, \mathrm{F}^\ast \varphi^* \widehat{\Prism}_{\O_K/\W(k)\llbracket u_0, u_1 \rrbracket}).\] 
By using the $g_u$ basis of $\Gamma_1$ constructed in \cite[Proposition 3.34]{KtheoryZpn}, they give a map $\Theta: \Gamma_1 \to \Gamma_0$ in \cite[Definition 4.41]{KtheoryZpn} in the Frobenius twisted case (as well as an untwisted variant). This induces a connection $\nabla_\Theta$ on an arbitrary right comodule over this Hopf algebroid via the setup of \S 4.6 as the composite $\begin{tikzcd} \M \ar{r}{\rho} & \M\otimes_{\Gamma_0} \Gamma_1 \ar{r}{1 \otimes \Theta} & \M \end{tikzcd}$ where $\rho$ is the coaction and the second map identifies with the $\Theta$ we needed previously in the Frobenius twisted case (we trivialize the Breuil-Kisin twist for convenience by picking a generator $\E(u_0)$ for the prismatic ideal). We may also use the same argument as in \cite[Lemma 4.36]{KtheoryZpn} to identify the descent complex of a prismatic crystal in vector bundles with the cobar complex of the associated comodule.

There is also a Nygaard filtered version, as by virtue of Corollary \ref{cor:Nyg_relqrsp_desc} we may view sheaves on the stack $\O_K^{\Nyg}$ as Nygaard filtered right comodules for the Frobenius twisted variant $(\Gamma_0^{(1)}, \Gamma_1^{(1)})$; the map $\Theta$ is Nygaard filtered, so it upgrades to a map of Rees stacks once we apply $\mathrm{F}$-completion to each Rees stack. We will see later $\mathrm{F}$-completion is automatic for $\Gamma_0$ which is useful (in particular allowing $\nabla_\M$ to exist before completion), but we must still involve it in the construction as it is necessary for $\Theta$ to be defined. Thus to describe coefficients for Nygaard filtered $\mathrm{F}$-completed prismatic cohomology as right comodules we apply $\mathrm{F}$-completion to each Rees stack used in the descent complex to obtain an $\mathrm{F}$-completed Nygaard filtered variant. The same remark about the cobar complex and descent complex matching again holds, now with Nygaard filtrations.

By the previous discussion we already have a connection $\nabla_{\mathcal{E}}$ on $\varphi^* \mathrm{F}^{*} \widehat{\Prism}_{\O_K/\W(k)\llbracket u_0\rrbracket}(\mathcal{E})$. There is also an equivalent formulation using the descent complex, which will be convenient to have for proofs.

\begin{definition}[\cite{KtheoryZpn}]
Define 
\[\nabla_{\mathcal{E}}: \mathrm{F}^* \widehat{\Prism}_{\O_K/\W(k)\llbracket u_0\rrbracket}(\mathcal{E}) \to \mathrm{F}^{*-1} \widehat{\Prism}_{\O_K/\W(k)\llbracket u_0\rrbracket}(\mathcal{E}\{-1\})\]
to be the composition of $\Theta$ and the first differential of the descent complex, and similarly we may define Frobenius twisted and Nygaard filtered variants (recall we use Corollary \ref{cor:Nyg_relqrsp_desc} to make sense of coefficients). We will trivialize this Breuil-Kisin twist by picking a generator $\E(u_0)$ for the prismatic ideal in $\W(k)\llbracket u_0 \rrbracket$.
\label{def:nabla}
\end{definition}

Using \cite[Proposition 10.43(c)]{PrismaticDelta} we deduce the source and target of $\nabla_{\mathcal{E}}$ are already $F$-complete, allowing us to drop the completion. We can now check that a variant of \cite[Corollary 4.44]{KtheoryZpn} with coefficients holds. The map $\varphi^{\nabla}$ appearing in the statement will be clarified in the proof.

\begin{lemma}
Let $\mathcal{E}$ be a reflexive $F$-gauge on $\O_K^{\Syn}$, and make a choice of a Breuil-Kisin prism $(\W(k)\llbracket u_0\rrbracket,(\E(u_0)))$ for $\O_K$. Let $\M$ be the Breuil-Kisin module associated to $\mathcal{E}$ with the na\"{i}ve Nygaard filtration $\mathrm{Fil}^i_{\Nyg} \varphi^* \M = \{x\in \varphi^* \M: \varphi_\M(x) \in \E(u_0)^i \M\}$, and let $\mathrm{F}^* \M$ be the filtration induced by regarding $\W(k)\llbracket u_0\rrbracket$ as a filtered $\delta$-ring where $\deg(u_0)=1$.

Then $\R\Gamma_{\Syn}(\O_K,\mathcal{E})$ can be computed as the total fiber of the square
\begin{display}
\mathrm{F}^* \mathrm{Fil}_{\Nyg}^0 \varphi^* \M \ar{d}{\mathrm{can}-\varphi_\M} \ar{r}{\mathrm{Fil}^0_{\Nyg} \nabla_\M} & \mathrm{F}^{*-1} \mathrm{Fil}_{\Nyg}^{-1} \varphi^* \M \ar{d}{\mathrm{can}-\varphi_\M^{\nabla}} \\
\mathrm{F}^* \varphi^* \M \ar{r}{\nabla_\M} & \mathrm{F}^{*-1} \varphi^* \M
\end{display}
for a map $\varphi^{\nabla}$. In the square, $\varphi_{\M}$ is regarded as a Frobenius semilinear map.
\label{lem:syntomic_fiber}
\end{lemma}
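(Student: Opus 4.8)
The plan is to unwind $\R\Gamma_{\Syn}(\O_K,\mathcal{E})$ through the relative-over-$\W(k)$ description of $\O_K^{\Nyg}$ and then collapse the cosimplicial descent direction by means of the operator $\Theta$ of \cite{KtheoryZpn}. From the pushout presentation of $\X^{\Syn}$ we have
$\R\Gamma_{\Syn}(\O_K,\mathcal{E})=\mathrm{fib}\bigl(j_{\HT}^\ast-j_{\dR}^\ast\colon \R\Gamma(\O_K^{\Nyg},j^\ast_{\Nyg}\mathcal{E})\to \R\Gamma(\O_K^{\Prism},j^\ast_{\Prism}\mathcal{E})\bigr)$,
where $j^\ast_{\dR}$ is the ``canonical'' map and $j^\ast_{\HT}$ the divided Frobenius. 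Applying Corollary \ref{cor:Nyg_relqrsp_desc} with coefficients identifies $\R\Gamma(\O_K^{\Nyg},j^\ast_{\Nyg}\mathcal{E})$ with the totalization over $\Delta$ of the Nygaard-filtered relative prismatic cohomologies $\mathrm{Fil}^0_{\Nyg}\varphi^\ast\widehat{\Prism}_{\O_K/\W(k)\llbracket u_0,\dots,u_{n-1}\rrbracket}(\mathcal{E})$ (with the na\"{i}ve Nygaard filtration for $\E(u_0)$), and similarly $\R\Gamma(\O_K^{\Prism},j^\ast_{\Prism}\mathcal{E})$ with the totalization of $\varphi^\ast\widehat{\Prism}_{\O_K/\W(k)\llbracket u_0,\dots,u_{n-1}\rrbracket}(\mathcal{E})$; the residual $\mathrm{F}^\ast$-filtration is the one coming from the $\delta$-ring filtration on $\W(k)\llbracket u_0\rrbracket$ with $\deg(u_0)=1$, as in \cite{PrismaticDelta} \S10 and the statement. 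Thus $\R\Gamma_{\Syn}(\O_K,\mathcal{E})$ is the total fiber of the ($\mathbf{2}\times\Delta$)-shaped diagram whose vertical maps are $\mathrm{can}-\varphi$ and whose horizontal direction is the relative descent complex.

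Next I would reduce to the constant-coefficient case. Since $\mathcal{E}$ is a reflexive $F$-gauge, Theorem \ref{thm:prismFcris_reflexive} and Proposition \ref{prop:refl_VB} show that its evaluation $\M$ on the Breuil-Kisin prism restricts to a vector bundle on the punctured spectrum of $\W(k)\llbracket u_0\rrbracket$; as $\W(k)\llbracket u_0\rrbracket$ is a two-dimensional regular local ring, the reflexive module $\M$ is in fact finite free. Because $\mathcal{E}$ is a crystal and the prisms $\rA^{(n)}$ for $\O_K/\W(k)\llbracket u_0,\dots,u_{n-1}\rrbracket$ are flat over $\W(k)\llbracket u_0\rrbracket$ (Corollary \ref{cor:Nyg_relqrsp_desc} and Remark \ref{rem:flat}), each term of the descent complex with coefficients in $\mathcal{E}$ is $\M\otimes_{\W(k)\llbracket u_0\rrbracket}\widehat{\Prism}_{\O_K/\W(k)\llbracket u_0,\dots,u_{n-1}\rrbracket}$, compatibly with all filtrations and Frobenii; being finite free, tensoring with $\M$ is an exact functor that commutes with the relevant completions and preserves quasi-isomorphisms, so all constructions base change from the case $\mathcal{E}=\O_{\Prism}$.

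Now I would invoke the coefficient variant of Corollary 4.44 of \cite{KtheoryZpn}, built from the map $\Theta$ and the operator $\nabla_{\mathcal{E}}=\Theta\circ \mathrm{d}^1$ of Definition \ref{def:nabla}: the totalization of the relative descent cosimplicial object is computed by the two-term complex $[\varphi^\ast\widehat{\Prism}_{\O_K/\W(k)\llbracket u_0\rrbracket}(\mathcal{E})\xrightarrow{\ \nabla_{\mathcal{E}}\ }\varphi^\ast\widehat{\Prism}_{\O_K/\W(k)\llbracket u_0\rrbracket}(\mathcal{E}\{-1\})]$, and this identification is compatible with the Nygaard filtration (where it yields $[\mathrm{Fil}^0_{\Nyg}\varphi^\ast\M\xrightarrow{\mathrm{Fil}^0_{\Nyg}\nabla_\M}\mathrm{Fil}^{-1}_{\Nyg}\varphi^\ast\M\{-1\}]$, the map $\Theta$ shifting the Nygaard filtration down by one), with the $\mathrm{F}^\ast$-filtration (which $\Theta$ likewise shifts down by one), with the map $\mathrm{can}$, and with Frobenius up to the twist relation $\nabla_\M\circ\varphi_\M=\varphi_\M^{\nabla}\circ\nabla_\M$ defining $\varphi_\M^{\nabla}$. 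For $\mathcal{E}=\O_{\Prism}$ this is \cite{KtheoryZpn} Corollary 4.44, combined with the degree bound recorded in Lemma \ref{lem:trunc_desc_complex}; for general reflexive $\mathcal{E}$ it follows by tensoring with the finite free $\M$ as in the previous paragraph. Trivializing the Breuil-Kisin twist via the chosen generator $\E(u_0)$ and feeding these two-term complexes, for $j^\ast_{\Nyg}\mathcal{E}$ and for $j^\ast_{\Prism}\mathcal{E}$ linked by $\mathrm{can}-\varphi$, back into the total fiber of the first paragraph produces exactly the displayed square.

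I expect the main obstacle to be the coefficient upgrade of \cite{KtheoryZpn} Corollary 4.44 in the third step: checking that the explicit quasi-isomorphism from the completed descent complex to the two-term $\nabla$-complex is strictly compatible with the Nygaard and $\mathrm{F}^\ast$-filtrations simultaneously, and that it intertwines the divided Frobenius with $\varphi_\M^{\nabla}$. The reductions in the first two paragraphs are essentially bookkeeping on top of Corollary \ref{cor:Nyg_relqrsp_desc} together with the observation that a reflexive $\mathcal{E}$ gives a finite free $\M$; the genuine content is pinning down the behaviour of $\Theta$ with respect to all of these structures at once, after passing to coefficients.
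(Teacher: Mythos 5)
Your overall skeleton is the same as the paper's: write $\R\Gamma_{\Syn}(\O_K,\mathcal{E})$ as the fiber of $\mathrm{can}-\varphi$ between Nygaard and prismatic cohomology, compute both via the relative descent complex over $\W(k)\llbracket u_0,\ldots,u_{n-1}\rrbracket$ (Corollary \ref{cor:Nyg_relqrsp_desc}), and collapse the descent direction using $\Theta$ and $\nabla_{\mathcal{E}}$ from \cite{KtheoryZpn}. The genuine gap is in the step you yourself flag as the main obstacle: the coefficient upgrade of \cite{KtheoryZpn} Corollary 4.44. Your proposed mechanism --- reduce to $\mathcal{E}=\O_{\Prism}$ by observing that $\M$ is finite free and tensoring the constant-coefficient statement with $\M$ --- does not work, even before filtrations enter. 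The terms of the descent complex with coefficients are indeed $\M\,\widehat{\otimes}_{\rA}\,\rA^{(n)}$ by the crystal property, but the coface maps touching the zeroth vertex use the crystal (comodule) structure of $\mathcal{E}$, so the complex is \emph{not} obtained from the constant-coefficient descent complex by applying the exact functor $\M\otimes_{\rA}(-)$ termwise; this is precisely why a connection $\nabla_\M$ (Leibniz over $\nabla$ on $\rA$, not $\rA$-linear) appears rather than $\M\otimes\nabla$. Consequently the quasi-isomorphism induced by $\Theta$, and a fortiori its strict compatibility with the Nygaard filtration (which depends on $\varphi_\M$, not just on the underlying module), the $\mathrm{F}$-filtration, and the twisted Frobenius $\varphi_\M^{\nabla}$, cannot be deduced by base change; it has to be proved with coefficients. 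The paper does this by rerunning the cobar-resolution argument of \cite{KtheoryZpn} Lemma 4.43 for the comodule $\Prism_{\O_K/\W(k)\llbracket u_0\rrbracket}(\mathcal{E})$ over the Hopf algebroid given by the descent complex, where finite projectivity of the crystal is what makes the argument go through --- this is where your freeness observation belongs, but as an input to the cobar argument, not as a substitute for it.

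Two smaller points of bookkeeping that your write-up asserts rather than justifies, and which the paper addresses explicitly: (i) one must first pass to $\mathrm{F}$-completions of all terms before invoking $\Theta$ (Corollary \ref{cor:Nyg_relqrsp_desc} gives the uncompleted descent complex), which is legitimate because syntomic cohomology with reflexive coefficients is $\mathrm{F}$-complete --- the paper deduces this by generalizing Proposition 2.11 of \cite{KtheoryZpn} using Proposition 8.6 of \cite{PrismaticDelta} with coefficients; and (ii) at the end one needs $\widehat{\M}=\M$, which holds because the $\mathrm{F}$-filtration on $\M$ is induced from the already complete $u_0$-adic filtration (Proposition 10.41(c) of \cite{PrismaticDelta}). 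The paper also shortcuts the totalization by first truncating to a two-term complex $[\Prism_{\O_K/\W(k)\llbracket u_0\rrbracket}(\mathcal{E})\to K]$ via Lemma \ref{lem:trunc_desc_complex} and then showing $\Theta$ is an isomorphism on $K$; your route through the full totalization is fine in principle, but it only amplifies the need for the genuinely coefficient-level $\Theta$-argument described above.
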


\begin{proof}
This follows from the same method as \cite[Corollary 4.44]{KtheoryZpn} but with minor modifications for coefficients. Let $\M=\R\Gamma((\O_K/\W(k)\llbracket u_0\rrbracket)^{\Prism},\mathcal{E})$ be the Breuil-Kisin module attached to $\mathcal{E}$. Using Lemma \ref{lem:trunc_desc_complex} we see
\[\R\Gamma(\O_K^{\Prism},\mathcal{E}) \simeq \left[\Prism_{\O_K/\W(k)\llbracket u_0\rrbracket}(\mathcal{E}) \to K_2\right]\]
where $K_2$ is the kernel of the second differential in the descent complex. Similarly using the descent complex colimit presentation of $\O_K^{\Nyg}$ by Rees stacks, we have
\[\R\Gamma(\O_K^{\Nyg},\mathcal{E}) \simeq \left[\Fil^0_{\Nyg}\varphi^* \Prism_{\O_K/\W(k)\llbracket  u_0\rrbracket}(\mathcal{E}) \to \Fil^0_{\Nyg} \varphi^* K_2\right].\]
The cohomological amplitude input is given by the identification with a cobar complex, supplied by the Nygaard filtered version of \cite[Lemma 4.43]{KtheoryZpn}, which we explain later in the proof (it does not depend on this amplitude claim). Alternatively, this may also be supplied by using the presentation of $\O_K^{\Nyg}|_{t=0}$ in \cite{pham2024}.\footnote{Pham's operator $\D$ seems closely related to $\overline{\nabla}_\M$ used in Theorem \ref{thm:general_Tet_kernel}.}

Thus, the syntomic cohomology of the reflexive $F$-gauge $\mathcal{E}$ is given by the total fiber of
\begin{display}
\mathrm{Fil}_{\Nyg}^0 \varphi^* \M \ar{d}{\mathrm{can}-\varphi_\M} \ar{r} & \mathrm{Fil}_{\Nyg}^0 \varphi^* K_2 \ar{d}{\mathrm{can}-\varphi_{K_2}} \\
\varphi^* \M \ar{r} & \varphi^* K_2
\end{display}
and the horizontal maps come from the first differential. We can take $\mathrm{F}$-completions on every term, since syntomic cohomology with coefficients in a reflexive $F$-gauge is still $\mathrm{F}$-complete. Indeed, the proof of \cite[Proposition 2.11]{KtheoryZpn} readily generalizes as \cite[Proposition 8.6]{PrismaticDelta} works with coefficients (syntomic cohomology with coefficients lands in their category of quadruples $(H,N,c,\varphi)$).

Thus, we see syntomic cohomology is computed as the total fiber of the $\mathrm{F}$-completed square
\begin{display}
\mathrm{F}^* \mathrm{Fil}_{\Nyg}^0 \varphi^* \widehat{\M} \ar{d}{\mathrm{can}-\varphi_\M} \ar{r} & \mathrm{F}^* \mathrm{Fil}_{\Nyg}^0 \varphi^* \widehat{K}_2 \ar{d}{\mathrm{can}-\varphi_{K_2}} \\
\mathrm{F}^* \varphi^* \widehat{\M} \ar{r} & \mathrm{F}^* \varphi^* \widehat{K}_2 
\end{display}
As before, the symbol $\widehat{\cdot}$ denotes $\mathrm{F}$-completion.

The map $\Theta$ is an isomorphism when restricted to $K_2$, which can be shown by using the same proof as \cite[Lemma 4.43]{KtheoryZpn}. For completeness, we give a brief overview of the argument. We see $\widehat{\Prism}_{\O_K/\W(k)\llbracket u_0\rrbracket}(\mathcal{E})$ is a comodule over the Hopf algebroid 
\[\Gamma = (\widehat{\Prism}_{\O_K/\W(k)\llbracket u_0\rrbracket}, \widehat{\Prism}_{\O_K/\W(k)\llbracket u_0,u_1\rrbracket})\]
and the cobar complex of $\widehat{\Prism}_{\O_K/\W(k)\llbracket u_0\rrbracket}(\mathcal{E})$ identifies with its descent complex. The claim we want to show is that we get a quasi-isomorphism of complexes
\begin{display}
\mathrm{F}^* \widehat{\Prism}_{\O_K/\W(k)\llbracket u_0\rrbracket}(\mathcal{E}) \ar[equals]{d} \ar{r} & \mathrm{F}^* \widehat{\Prism}_{\O_K/\W(k)\llbracket u_0,u_1\rrbracket}(\mathcal{E}) \ar{r} \ar{d}{\Theta} & \ldots \\
\mathrm{F}^* \widehat{\Prism}_{\O_K/\W(k)\llbracket u_0\rrbracket}(\mathcal{E}) \ar{r}{\nabla} & \mathrm{F}^{*-1} \widehat{\Prism}_{\O_K/\W(k)\llbracket u_0\rrbracket}(\mathcal{E}) \ar{r} & 0
\end{display}
where the top complex is the $\mathrm{F}$-completed descent complex. The argument of Lemma 4.43 now follows identically by using the cobar resolution of $\Prism_{\O_K/\W(k)\llbracket u_0\rrbracket}(\mathcal{E})$ and that $\mathcal{E}$ is a prismatic crystal in vector bundles. The Frobenius twisted and Nygaard filtered versions follow similarly.

Translating the previous square along $\Theta$, one then obtains a square
\begin{display}
\mathrm{F}^* \mathrm{Fil}_{\Nyg}^0 \varphi^* \widehat{\M} \ar{d}{\mathrm{can}-\varphi_\M} \ar{r}{\nabla_\M} & \mathrm{F}^{*-1} \mathrm{Fil}_{\Nyg}^{-1} \varphi^* \widehat{\M} \ar{d}{\mathrm{can}-\varphi_\M^{\nabla}} \\
\mathrm{F}^* \varphi^* \widehat{\M} \ar{r}{\nabla_\M} & \mathrm{F}^{*-1} \varphi^* \widehat{\M}
\end{display}
whose total fiber computes syntomic cohomology as desired. As a final step, we observe that actually $\widehat{\M}=\M$. The $\mathrm{F}$-filtration on $\Prism_{\O_K/\W(k)\llbracket u_0 \rrbracket}$ is the $u_0$-adic filtration by \cite[Proposition 10.43(c)]{PrismaticDelta}, so this is already complete; since we work with prismatic crystals in vector bundles and use the induced flat filtration, the desired completeness follows.
\end{proof}

As seen in the proof, the map $\varphi^{\nabla}$ is induced by the map $\varphi_\M$ on the kernel $K_2$ of $\mathrm{d}^2$ in the descent complex for the prismatic $F$-crystal, which is identified with the desired map by applying $\Theta$ and trivializing the twist. By construction, it satisfies the relation $\nabla \circ \varphi = \varphi^{\nabla} \circ \nabla$.

We may now use the ramified variant of this result in Lemma \ref{lem:syntomic_fiber} to deduce the desired vanishing statement. Before proceeding, we note that the connection $\nabla_\M$ does not actually have a Leibniz rule, but it does have one on the associated $F$-graded with correction terms. In particular, all we will use is that
\[\nabla_\M(u_0 m) = \mathrm{C}_i m + n\]
where if $m\in \mathrm{F}^{i-1} \varphi^* \M \setminus \mathrm{F}^i \varphi^* \M$ we may pick $n\in \mathrm{F}^i \varphi^* \M$ and $\mathrm{C}_i$ is multiplication by $i$ up to units. In general by writing $u_0 m = u_0^i m_0$ where $m_0$ is in $\mathrm{F}^0 \varphi^* \M$ by definition
\[\nabla_\M(u_0^i m_0) = (\mathrm{id} \otimes \Theta)(\rho(u_0^i m_0))\]
where $\rho: \varphi^* \M \to \varphi^* \M \otimes_{\W(k)\llbracket u_0\rrbracket} \widehat{\Prism}^{(1)}_{\O_K/\W(k)\llbracket u_0,u_1 \rrbracket}$ is the coaction of the comodule $\varphi^* \M$. Here, since we work with the Frobenius twist we clarify that $u_0$ is $1 \otimes u_0$. We can write the coaction as $\rho(m_0) = m_0 \otimes 1 + \rho_{\ge 1}$, where $\rho_{\ge 1}$ is the contribution in higher $\mathrm{F}$-degrees; the first component is forced by the relation $(\id \otimes \varepsilon) \rho(m_0) = m_0$. That is, $\rho(m_0)-m_0 \otimes 1\in \ker(\id \otimes \varepsilon)$ and this kernel will lie in $\mathrm{F}^{\ge 1}$ (as $\varepsilon:\Gamma_1\to \Gamma_0$ sends $u_1,u_0$ to $u_0$ and the $\mathrm{F}$-filtration on $\W(k)\llbracket u_0, u_1 \rrbracket \subset \Gamma_1$ is determined by \cite[Proposition 10.43(c)]{PrismaticDelta}). Using this, up to higher terms in the $\mathrm{F}$-filtration (i.e. modulo $\mathrm{F}^i \varphi^* \M$) we can compute $(\mathrm{id} \otimes \Theta)(\rho(u_0^i m_0)) = (\mathrm{id} \otimes \Theta)(\eta_R(u_0^i) (m_0 \otimes 1))$. Using the same argument as \cite[Lemma 4.61(1)]{KtheoryZpn}, there is a total contribution of $\mathrm{C}_i m$ where $\mathrm{C}_i$ is $i$ up to units to the $\gr_{\mathrm{F}}^{i-1}$ component. That is, working in the twist $\varphi^* \M$ we may expand $(\id \otimes \Theta)(\eta_R(u_0^i) (m_0 \otimes 1))$ as $m_0 \otimes i u_0^{i-1} + m_0\otimes \sum_{j=2}^i \binom{i}{j} u_0^{i-j}\Theta(g_0^j)$ where $g_0 := u_1-u_0$ (in \cite{KtheoryZpn} the map $\Theta$ is $\W(k)\llbracket u_0 \rrbracket^{(1)}$-linear and is defined by what it does on monomials in the $g_u$ basis, starting with $g_0$ as the first nontrivial one). The same argument allows us to deduce the surviving $j \ge 2$ terms are multiples of $m_0 (p \cdot i u_0^{i-1})$, from which the claim follows as they can only change the scalar $\mathrm{C}_i$ by a unit. Note that we have already trivialized the Breuil-Kisin twist in this formulation, as we regard $\nabla_\M$ as a map $\varphi^* \M \to \varphi^* \M$.

\begin{proposition}
If $\mathcal{E}$ is in $\Coh(\O_K^{\Syn})$, then $\tau^{\ge 2} \R\Gamma(\O_K^{\Syn},\mathcal{E})[1/p]=0$. 
\label{prop:H2_vanish}
\end{proposition}

\begin{proof}
Without loss of generality, we may assume that $\mathcal{E}$ is reflexive: up to $p$-isogeny this is always possible by the proof of Corollary \ref{cor:Perf_cris}. Indeed, replacing $\mathcal{E}$ by its reflexive hull $\mathcal{E}^{\vee\vee}$ does not change the claim, since the kernel and cokernel of $\mathcal{E}\to \mathcal{E}^{\vee\vee}$ are $p$-power torsion, and hence their syntomic cohomology is $p$-power torsion as well.

By Lemma \ref{lem:syntomic_fiber} it suffices to check that the map
\[
\begin{tikzcd}
\mathrm{F}^{*-1} \mathrm{Fil}_{\Nyg}^{-1} \varphi^* \M \oplus \mathrm{F}^* \varphi^* \M
\ar{rr}{\begin{pmatrix} \mathrm{can}-\varphi_\M^{\nabla} \\ -\nabla_\M \end{pmatrix}^{\mathrm{T}} }
& &
\mathrm{F}^{*-1} \varphi^* \M
\end{tikzcd}
\]
is surjective after inverting $p$. Call this map $\alpha$.

Assume first that $\mathcal{E}$ has all Hodge--Tate weights $\ge 0$. Then $\varphi_\M^{\nabla}$ is defined integrally on $\varphi^*\M$, and $\M$ is $u_0$-adically complete. In the effective case, $\mathrm{Fil}^{-1}_{\Nyg}\varphi^*\M[1/p]\subseteq \varphi^*\M[1/p]$ is an equality, so we may regard $\varphi_{\M}^{\nabla}$ as an endomorphism of $\varphi^* \M$. One may check that $\varphi_\M^{\nabla}$ strictly raises the induced $u_0$-adic filtration (see \cite[Definition 4.54]{KtheoryZpn} for the case of $\mathcal{E}=\O$, noting the filtration is shifted by $-1$). In particular, $\varphi^{\nabla}$ is topologically nilpotent, and the convergent series $\sum_{j\ge 0}(\varphi_{\M}^{\nabla})^j$ gives a right inverse to $\mathrm{can}-\varphi_{\M}^{\nabla}$ proving integral surjectivity of $\alpha$.

In general, we may write the desired reflexive $F$-gauge as $\mathcal{E}\{r\}$ for some $r\ge 0$, where $\mathcal{E}$ has all Hodge--Tate weights $\ge 0$. The corresponding untwisted module $\M\{r\}$ is now equipped with a semilinear map
\[\varphi_{\M\{r\}}:\M\{r\}\longrightarrow \frac{1}{\E(u_0)^{r}}\M\{r\},\]
so due to the indexing shift this will imply $\varphi_{\M\{r\}}^{\nabla}$ is only defined integrally on $\mathrm{Fil}^{-1}_{\Nyg}\varphi^*\M\{r\}\subseteq \varphi^*\M\{r\}$, and iterates $(\varphi_{\M\{r\}}^{\nabla})^j$ need not be defined on all of $\varphi^*\M\{r\}$ when there is a nontrivial twist. Thus, we cannot run the same argument.

However, a similar argument can be run when working high enough in the $\mathrm{F}$-filtration. The cokernel $\coker(\alpha)$ is derived $p$-complete (as syntomic cohomology is derived $p$-complete and the cokernel is identified with $\H^2_{\Syn}$), and the filtered pieces $\mathrm{F}^i\coker(\alpha)$ are derived $p$-complete as well. Thus, to prove $\mathrm{F}^{i_0}\coker(\alpha)=0$ it suffices to prove $(\mathrm{F}^i\coker(\alpha))/p=0$ for all $i\ge i_0$, i.e. surjectivity of $\alpha$ after restricting to $\mathrm{F}^i$ and reducing modulo $p$.

Since $\E(u_0)\equiv u_0^e \pmod p$ (where $e$ is the ramification index), one checks that there is an $i_0$ so that for $i\ge i_0$ the map $\mathrm{can}-\varphi_{\M\{r\}}^{\nabla}$ induces a well-defined endomorphism of $(\mathrm{F}^i\varphi^*\M\{r\})/p$ using that the map $\varphi_{\M\{r\}}^{\nabla}$ scales the $-1$-shifted $\mathrm{F}$-filtration by a factor of $p$ (again see \cite[Definition 4.54]{KtheoryZpn}, noting the filtration is shifted by $-1$). Moreover, for such $i$ we have
\[(\mathrm{F}^i\varphi^*\M\{r\})/p \subseteq (\mathrm{Fil}^{-1}_{\Nyg}\varphi^*\M\{r\})/p,\]
so that passing to $\mathrm{Fil}^{-1}_{\Nyg}$ does not enlarge the source after reduction modulo $p$. Concretely, writing $i=ej+j'$ with $0\le j'<e$, we have $u_0^i\equiv u_0^{j'}\E(u_0)^j \pmod p$; for $j\ge r$ we may factor $\E(u_0)^j=\E(u_0)^{j-r}\E(u_0)^r$, and the bounded-denominator condition for the twist implies that multiplying by $\E(u_0)^r$ places us in $\mathrm{Fil}^{0}_{\Nyg}\subseteq \mathrm{Fil}^{-1}_{\Nyg}$. It follows that, for $i\ge i_0$, we may regard $\mathrm{can}-\varphi_{\M\{r\}}^{\nabla}$ as an endomorphism of $(\mathrm{F}^i\varphi^*\M\{r\})/p$.

Finally, modulo $p$ the operator $\varphi_{\M\{r\}}^{\nabla}$ strictly raises the $u_0$-adic filtration on $(\mathrm{F}^i\varphi^*\M\{r\})/p$ for $i\ge i_0$, hence is topologically nilpotent there. The same convergent power series argument therefore shows that $\mathrm{can}-\varphi_{\M\{r\}}^{\nabla}$ is surjective onto $(\mathrm{F}^i\varphi^*\M\{r\})/p$ for all $i\ge i_0$, and we conclude that $\mathrm{F}^{i_0}\coker(\alpha)=0$ integrally (note that this does not show integral surjectivity of $\mathrm{can}-\varphi_{\M\{r\}}^{\nabla}$, as the cokernel need not be derived $p$-complete).

It remains to check rationalized surjectivity on the finitely many remaining graded pieces of the $\mathrm{F}$-filtration for indices $i<i_0$. For $i\ge 1$ the constants $\mathrm{C}_i$ in the formula $\nabla(u_0m)=\mathrm{C}_i m+n$ are units after inverting $p$, so $\mathrm{gr}^i_{\mathrm{F}}\nabla_{\M\{r\}}$ is a rational isogeny on each such graded piece (it acts by $\mathrm{C}_i$ on $\mathrm{gr}^i_{\mathrm{F}}$, recalling the target filtration is shifted). Since there are only finitely many indices $i<i_0$, it follows that $\coker(\alpha)[1/p]=0$, as desired.
\end{proof}

\begin{proof}[Proof of Proposition \ref{prop:O_K_cohom}]
We have already shown this for $\H^0$ (by the heart claim in Corollary \ref{cor:Perf_cris}) and for $\H^1$ it follows from Proposition \ref{prop:ext1_cohom}. For $i\ge 2$, the $\Ext^i$ groups in $\Rep_{\Q_p}^\cris(\rG_K)$ are zero. Thus we only need to show vanishing of cohomology in degrees $\ge 2$, which was just shown in Proposition \ref{prop:H2_vanish}.
\end{proof}

\begin{remark}
Let $\X/\Spf \O_K$ be smooth and quasicompact, with structure map $f$. There is a hypercohomology Leray spectral sequence
\[\E_1^{i,j}=\H^{2i+j}_{\Syn}(\O_K,\R^{-i} f_* \mathcal{E}) \implies \H^{i+j}_{\Syn}(\X,\mathcal{E}).\]
This is the spectral sequence associated to the filtration on 
\[\R\Gamma_{\Syn}(\X,\mathcal{E}) \simeq \R\Gamma_{\Syn}(\Spf \O_K, \R f_* \mathcal{E})\]
by the complexes $\R\Hom(\O,\tau_{\le -i} \R f_* \mathcal{E})$ (see Stacks Project \href{https://stacks.math.columbia.edu/tag/015X}{015X} for more details about the indexing -- we choose $\tau_{\le -i}$ to make a decreasing filtration). We can check this converges using $t$-boundedness of $\R f_* \mathcal{E}$, which is satisfied because $\X$ is smooth and quasicompact. In the smooth proper case we will focus on, this is immediate from the stronger result that $\R f_* \mathcal{E}$ is perfect. 

This has a practical consequence for smooth proper $\X/\Spf \O_K$. Let $\mathcal{E}\in \Coh(\X^{\Syn})$. 

The differentials on $\E_1$ are
\[d_1^{p,q}: \H^{2p+q}_{\Syn}(\O_K,\R^{-p} f_* \mathcal{E}) \to \H^{2(p+1)+q}_{\Syn}(\O_K,\R^{-p-1} f_* \mathcal{E}).\]
This can never be nontrivial rationally, since $\H^2$ vanishes (as well as all higher $\H^i$). Indeed, $\R^i f_* \mathcal{E}$ is coherent as $\R f_* \mathcal{E}$ is a perfect complex (by Proposition \ref{prop:sm_prop_pushforward}). Thus after rationalization this spectral sequence degenerates on $\E_1$. Note that this also shows the syntomic period map $\H^i_{\Syn}(\X,\mathcal{E}) \to \H^i_\proet(\X_\eta,\T_\et(\mathcal{E}))$ is injective rationally in light of Proposition \ref{prop:O_K_cohom}.
\end{remark}

We are now ready to reduce showing the equivalence $\Perf(\O_K^{\Syn})[1/p] \simeq \D^b(\Rep_{\Q_p}^\cris(\rG_K)) \simeq \D^b(\mathrm{MF}_{K}^{\varphi,\mathrm{wa}})$ to a cohomology computation. Note that the definition of $\R\Hom_{\mathrm{MF}_{K}^{\varphi,\mathrm{wa}}}$ uses the $\Ind$ category of $\mathrm{MF}_{K}^{\varphi,\mathrm{wa}}$ and \cite{Ind}, as the category of weakly admissible filtered isocrystals does not have enough injectives. We can realize the target category $\D^b(\mathrm{MF}_{K}^{\varphi,\mathrm{wa}})$ as an $\infty$-category by applying the usual construction for $\mathcal{A}=\Ind(\mathrm{MF}_{K}^{\varphi,\mathrm{wa}})$ as the stabilization of the animation of the category of injectives and regarding $\D^b(\mathrm{MF}_{K}^{\varphi,\mathrm{wa}})$ as a full subcategory.

\begin{proposition}
There is a symmetric monoidal equivalence of categories
\[\Perf(\O_K^{\Syn})[1/p] \simeq \D^b(\Rep_{\Q_p}^\cris(\rG_K))\]
induced by $\T_\et$.
\label{prop:O_K_equiv}
\end{proposition}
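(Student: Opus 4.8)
The plan is to reinterpret the statement as saying that the realization functor
\[
\mathrm{real}\colon \D^b\!\big(\Coh(\O_K^{\Syn})[1/p]\big)\longrightarrow \Perf(\O_K^{\Syn})[1/p]
\]
attached to the canonical $t$-structure (Proposition~\ref{prop:t_struct_XSyn}) is an equivalence, and then to transport it along the symmetric monoidal equivalence on hearts $\Coh(\O_K^{\Syn})[1/p]\simeq\Rep_{\Q_p}^\cris(\rG_K)$ of Corollary~\ref{cor:Perf_cris} (which is induced by $\T_\et$, and is monoidal by Lemma~\ref{lem:dual}). The $t$-structure on $\Perf(\O_K^{\Syn})[1/p]$ is bounded since the Rees-stack cover is Noetherian regular, so $\mathrm{real}$ exists, is $t$-exact, and is essentially surjective: every object is built from its finitely many cohomology sheaves by shifts and extensions, using $\Perf(\O_K^{\Syn})=\D^b_{\Coh}(\O_K^{\Syn})$ as in Corollary~\ref{cor:Perf_cris}. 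Hence everything reduces to the Ext comparison
\[
\Ext^i_{\Coh(\O_K^{\Syn})[1/p]}(\mathcal{E},\mathcal{E}')\xrightarrow{\ \sim\ }\Hom_{\Perf(\O_K^{\Syn})[1/p]}(\mathcal{E},\mathcal{E}'[i])
\]
for $\mathcal{E},\mathcal{E}'$ in the heart and all $i$.

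For this, first I would reduce to $\mathcal{E},\mathcal{E}'$ reflexive: in the isogeny category the canonical map $\mathcal{E}\to\mathcal{E}^{\vee\vee}$ is an isomorphism (proof of Corollary~\ref{cor:Perf_cris}, via Theorem~\ref{thm:general_Tet_kernel}), so one may replace both by their reflexive hulls. For reflexive $\mathcal{E},\mathcal{E}'$ the complex $\R\underline{\Hom}(\mathcal{E},\mathcal{E}')$ is perfect (the cover is Noetherian regular), and I claim it is concentrated in degree $0$ after inverting $p$: the extension of $\T_\et$ to $\Perf$ is $t$-exact (Lemma~\ref{lem:Tet_texact}) and preserves internal Hom (Lemma~\ref{lem:dual}), so $\T_\et\big(\R\underline{\Hom}(\mathcal{E},\mathcal{E}')\big)=\R\underline{\Hom}(\T_\et\mathcal{E},\T_\et\mathcal{E}')$ is an internal Hom of crystalline local systems (Theorem~\ref{thm:generalsmoothTetcrys}) and so lives in degree $0$; therefore the higher cohomology sheaves of $\R\underline{\Hom}(\mathcal{E},\mathcal{E}')$ lie in $\ker\T_\et$, are killed by $(p,v_{1,\O_K})^n$ by Theorem~\ref{thm:general_Tet_kernel}, and vanish rationally. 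Consequently $\Hom_{\Perf(\O_K^{\Syn})[1/p]}(\mathcal{E},\mathcal{E}'[i])\simeq\H^i_{\Syn}\big(\O_K,\underline{\Hom}(\mathcal{E},\mathcal{E}')\big)[1/p]$ with $\underline{\Hom}(\mathcal{E},\mathcal{E}')$ a coherent $F$-gauge. Now apply Proposition~\ref{prop:O_K_cohom}: it identifies the right-hand side with $\Ext^i_{\Rep_{\Q_p}^\cris(\rG_K)}\big(\Q_p,\T_\et(\underline{\Hom}(\mathcal{E},\mathcal{E}'))[1/p]\big)=\Ext^i_{\Rep_{\Q_p}^\cris(\rG_K)}(\T_\et\mathcal{E}[1/p],\T_\et\mathcal{E}'[1/p])$ (Lemma~\ref{lem:dual} again), which under the heart equivalence of Corollary~\ref{cor:Perf_cris} is $\Ext^i_{\Coh(\O_K^{\Syn})[1/p]}(\mathcal{E},\mathcal{E}')$ — and both are $0$ for $i\ge2$. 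This gives full faithfulness, so $\mathrm{real}$ is an equivalence; combining with the heart equivalence yields $\Perf(\O_K^{\Syn})[1/p]\simeq\D^b(\Rep_{\Q_p}^\cris(\rG_K))$. Symmetric monoidality is inherited from the monoidal heart equivalence, and the commuting square of Corollary~\ref{cor:Perf_cris} shows the resulting equivalence is the one induced by $\T_\et$ (so in particular $\T_\et[1/p]$ factors through $\D^b(\Rep_{\Q_p}^\cris(\rG_K))\hookrightarrow\D^{(b)}_{\mathrm{lisse}}(\Spa K,\Z_p)[1/p]$).

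\textbf{Main obstacle.} The genuinely hard input — the rational vanishing $\tau^{\ge2}\R\Gamma(\O_K^{\Syn},-)[1/p]=0$, i.e. Propositions~\ref{prop:H2_vanish} and~\ref{prop:O_K_cohom}, proved via the Breuil--Kisin / $q$-connection description of Lemma~\ref{lem:syntomic_fiber} — is already established. With it in hand the remaining difficulty is purely homological-algebraic: one must honestly upgrade the matching of $\Ext$-groups to an equivalence of stable $\infty$-categories, for which the two delicate points are (i) the reduction to reflexive objects together with the proof that $\R\underline{\Hom}$ of reflexive $F$-gauges is rationally concentrated in degree $0$, so that Proposition~\ref{prop:O_K_cohom} applies verbatim rather than through a two-row hypercohomology spectral sequence that would reintroduce a spurious $\Ext^2$; and (ii) bookkeeping the fact that $\T_\et[1/p]$ is \emph{not} fully faithful into $\D^{(b)}_{\mathrm{lisse}}(\Spa K,\Z_p)[1/p]$ — its $\Ext^2$'s do not match — only into the full subcategory $\D^b(\Rep_{\Q_p}^\cris(\rG_K))$, as already flagged in Remark~\ref{rem:ess_im}.
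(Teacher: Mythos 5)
Your proposal is correct and follows essentially the same route as the paper: both construct the realization functor $\D^b(\Coh(\O_K^{\Syn})[1/p])\to\Perf(\O_K^{\Syn})[1/p]$, deduce essential surjectivity from full faithfulness via shifts and fibers (Hauck's Lemma 5.4.3), and reduce full faithfulness through duals/internal $\R\underline{\Hom}$ to the cohomology comparison of Proposition \ref{prop:O_K_cohom}. The only (harmless) difference is bookkeeping: you pass to reflexive hulls and show $\R\underline{\Hom}$ is rationally concentrated in degree $0$, whereas the paper reduces to coherent coefficients by compatibility of the functor with shifts and fibers.
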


\begin{proof}
Once the cohomology calculation has been done, the proof is essentially the same as in \cite{Hauck}. We spell out some details of this argument for the convenience of the reader. The natural equivalence induced by $\T_\et$ is that $\D^b(\Coh(\X^{\Syn})[1/p])\simeq \D^{b}(\Loc_{\Q_p}^\cris(\X_\eta))$ (which is then also equivalent to $\D^b(\mathrm{MF}_{K}^{\varphi,\mathrm{wa}})$), so our task is actually to show the natural functor
\[F: \D^b(\Coh(\X^{\Syn})[1/p]) \to \Perf(\X^{\Syn})[1/p]\]
is an equivalence for $\X=\Spf \O_K$. Here, we know the functor is well-defined by the same argument as in the proof of Corollary \ref{cor:Perf_cris}. This functor can be seen to be essentially surjective once we know full faithfulness, as we have an equivalence on the heart and may construct objects using shifts and fibers from the heart on the target $\Perf(\X^{\Syn})[1/p]$ via \cite[Lemma 5.4.3]{Hauck}. We may then construct an object $\mathcal{E}\in \D^b(\Coh(\X^{\Syn})[1/p])$ with the desired output $F(\mathcal{E})$ by lifting the attaching maps via full faithfulness and noting $F$ is compatible with shifts and fibers.

Full faithfulness can then be reduced to checking that $\R\Hom$ in both categories agree following the standard reduction to cohomology method in \cite[Proposition 5.6.2]{Hauck}. The functor $F$ is compatible with duals, tensor products, and the internal RHom, so via the tensor-Hom adjunction we reduce to checking cohomology with coefficients agrees (using the isomorphism $\R\Hom_{\Perf(\O_K^{\Syn})}(\O,\mathcal{E})\simeq \R\Gamma_{\Syn}(\O_K,\mathcal{E})$ to pass to cohomology at the end). Note that existence and compatibility with duals and the internal RHom is slightly tricky for $\D^b(\Coh(\X^{\Syn})[1/p])$ but can be justified as $\D^b(\Coh(\X^{\Syn})[1/p])$ is equivalent to the rigid symmetric monoidal category $\D^{b}(\Rep_{\Q_p}^\cris(\rG_K))$ via the symmetric monoidal functor $\T_\et$. Since $F$ is symmetric monoidal this allows us to deduce $F$ is compatible with duals and RHoms as its source is rigid symmetric monoidal. Finally, compatibility of $F$ with shifts and fibers allows us to further reduce to checking cohomologies with coherent coefficients agree.

Writing out cohomology with coefficients in the heart in each category, since there is a map of complexes we need only check that
\[\Ext^i_{\Rep_{\Q_p}^\cris(\rG_K)}(\Q_p,\T_\et(\mathcal{E})[1/p]) \simeq \H^i_{\Syn}(\Spf \O_K,\mathcal{E})[1/p]\]
which is shown in Proposition \ref{prop:O_K_cohom}.
\end{proof}

This result in the case of a point will be useful in proving an analogous result for smooth proper $\X/\Spf \O_K$. In light of Example \ref{ex:derived_counter}, it will be necessary to change how the theorem is formulated. It turns out to be more natural to interpret Proposition \ref{prop:O_K_equiv} as stating that
\[\Perf(\O_K^{\Syn})[1/p] \simeq  \D^b(\Rep^{\cris}_{\Q_p}(\rG_K))\simeq \D^b(\mathrm{MF}_K^{\varphi,\mathrm{wa}})\]
where $\mathrm{MF}_K^{\varphi,\mathrm{wa}}$ is the category of weakly admissible filtered $F$-isocrystals. This is induced by the abelian category level result of Colmez-Fontaine in \cite{ColmezFontaine} that the functor $\D_\cris$ induces an equivalence $\Rep^{\cris}_{\Q_p}(\rG_K) \simeq \mathrm{MF}_K^{\varphi,\mathrm{wa}}$.

In general, we will need to formulate the appropriate notion of a derived filtered $F$-isocrystal. This is given by the Beilinson fiber square; namely, we have a commutative diagram of stacks
\begin{display}
\X^{\Syn} & \ar{l} (\X_s)^{\Syn} \\
(\X/\O_K)^{\dR,+} \ar{u} & \ar{l} \ar{u} (\X/\O_K)^{\dR}
\end{display}
where $\X_s := \X \times_{\Spf \O_K} \Spec k$. The left vertical map is $(\X/\O_K)^{\dR,+}\to \X^{\dR,+}\to \X^{\Syn}$. There is a nontrivial arrow to define here, namely the map of stacks
\[(\X/\O_K)^{\dR} \to (\X_s)^{\Syn}.\]
We only need to construct such a map in the case of a point $\X=\Spf \O_K$ as this then induces a map for all $\X$. In this case, it is given explicitly by
\[\Spf \O_K \to k^{\Nyg} = (\Spf \W(k)\langle u,t \rangle/(ut-p))/\G_m \to k^{\Syn}\]
induced by sending $t \to 1$ and $u \to p$. Note that in the unramified case we end up with the inclusion of the $t \neq 0$ locus, namely $\W(k)^{\dR}\simeq k^{\Prism} \simeq (k^{\Nyg})_{t \neq 0} \to k^{\Syn}$ as in \cite{Hauck}. The map from $(\X/\O_K)^{\dR,+}$ to $\X^{\Syn}$ is induced by $(\X/\O_K)^{\dR,+} \to \X^{\dR,+} \to \X^{\Syn}$.

What we first show is that after taking cohomology and rationalizing, the square
\begin{display}
	\R\Gamma(\X^{\Syn},\mathcal{E})[1/p] \ar{d} \ar{r} & \R\Gamma(\X_{s}^{\Syn}, \mathcal{E})[1/p] \ar{d} \\
	\R\Gamma((\X/\O_K)^{\dR,+}, \mathcal{E})[1/p] \ar{r} & \R\Gamma((\X/\O_K)^{\dR}, \mathcal{E})[1/p]
\end{display}
becomes Cartesian for $\mathcal{E}$ perfect. This is shown in \cite{Hauck} in the unramified case and in \cite{devalapurkar} for $\Z_p[\zeta_p]$. Once definitions are unwound we will see this claim is essentially equivalent to Proposition \ref{prop:O_K_equiv}, similar to the argument in the unramified case in \cite{Hauck}.

\begin{corollary}[Beilinson fiber square for smooth proper $\X$]
Let $\X/\Spf \O_K$ be smooth proper, and denote the residue field of $K$ by $k$. Fix a choice of Breuil-Kisin prism $(\W(k)\llbracket u_0 \rrbracket, (\E(u_0)))$ in order to define the maps of stacks as above and let $\mathcal{E}\in \Perf(\X^{\Syn})$. There is a fiber square
\begin{display}
\R\Gamma(\X^{\Syn},\mathcal{E})[1/p] \ar{d} \ar{r} & \R\Gamma(\X_{s}^{\Syn}, \mathcal{E})[1/p] \ar{d} \\
\R\Gamma((\X/\O_K)^{\dR,+}, \mathcal{E})[1/p] \ar{r} & \R\Gamma((\X/\O_K)^{\dR}, \mathcal{E})[1/p]
\end{display}
where we write $\X_s := \X \times_{\Spf \O_K} \Spec k$ and abuse notation to again write $\mathcal{E}$ for the pullback to the stacks $\X_s^{\Syn},(\X/\O_K)^{\dR}$, and $(\X/\O_K)^{\dR,+}$.
\label{cor:Beilinson_fiber}
\end{corollary}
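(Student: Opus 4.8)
The plan is to reduce the statement, via the smooth proper pushforward $f\colon \X \to \Spf \O_K$, to the case of a point, where the Cartesianness is a repackaging of Proposition \ref{prop:O_K_equiv} (equivalently of the cohomology comparison Proposition \ref{prop:O_K_cohom}).

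First I would build a map of squares from the square for $\X$ to the square for $\Spf \O_K$, lying over the structure maps $\X^{\Syn}\to\O_K^{\Syn}$, $(\X_s)^{\Syn}\to k^{\Syn}$, and the structure maps of the two relative de Rham stacks to $\Spf \O_K$; this is how the map $(\X/\O_K)^{\dR}\to(\X_s)^{\Syn}$, and hence the whole square, was defined for general $\X$ out of the point case. Each corner of the $\X$-square is the fibre product of $\X^{\Syn}$ with the corresponding corner of the $\O_K$-square over $\O_K^{\Syn}$: for the two syntomic corners this is just the structure map, while for $(\X/\O_K)^{\dR,+}$ and $(\X/\O_K)^{\dR}$ it is the identification of the relative de Rham cohomology of $\X/\O_K$ with the fibre of $\X^{\Syn}$, resp. of $(\X_s)^{\Syn}$, over the relevant point. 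By Proposition \ref{prop:sm_prop_pushforward} we have $\R f_* \mathcal{E}\in \Perf(\O_K^{\Syn})$; taking $\R\Gamma$ and applying base change along each edge of the resulting cube then rewrites the four corners of the $\X$-square as $\R\Gamma$ over the corners of the $\O_K$-square with coefficients in $\R f_* \mathcal{E}$. Along the edges factoring through Nygaard and prismatic stacks the required base-change isomorphisms are exactly Lemma \ref{lem:realization_commute}; the remaining ones follow by the same quasisyntomic-descent reduction to the affine prismatic level used in its proof, where smooth proper prismatic pushforward commutes with base change. Thus it suffices to prove the corollary for $\X = \Spf \O_K$.

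For $\X = \Spf \O_K$ one unwinds the definitions exactly as in the unramified case of \cite{Hauck} (or the $\Z_p[\zeta_p]$ case of \cite{devalapurkar}): the four corners become $\R\Gamma(\O_K^{\Syn},\mathcal{E})[1/p]$, the realization of $\mathcal{E}$ on $k^{\Syn}$ (computing the isocrystal $\D_\cris$ of $\T_\et(\mathcal{E})[1/p]$ with its Frobenius), and the filtered and unfiltered de Rham realizations of $\mathcal{E}$ (computing $\mathrm{Fil}^0 \D_\dR$ and $\D_\dR$), and Cartesianness says precisely that $\R\Gamma(\O_K^{\Syn},\mathcal{E})[1/p]$ is the homotopy fibre product of the latter three. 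Under the equivalence $\Coh(\O_K^{\Syn})[1/p]\simeq \Rep_{\Q_p}^\cris(\rG_K)$ of Corollary \ref{cor:Perf_cris} this is Fontaine's fundamental exact sequence for crystalline representations, i.e.\ the content of Proposition \ref{prop:O_K_equiv}. To run this cleanly I would use the canonical $t$-structure and the compatibility of every functor in sight with shifts and fibres to reduce to $\mathcal{E}\in \Coh(\O_K^{\Syn})$, then --- since $\mathcal{E}\to\mathcal{E}^{\vee\vee}$ has $p$-power torsion kernel and cokernel (cf.\ the proof of Corollary \ref{cor:Perf_cris}) --- to $\mathcal{E}$ reflexive, and there Propositions \ref{prop:H2_vanish} and \ref{prop:O_K_cohom} compute all the cohomology groups and verify the fibre-product identity degree by degree.

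I expect the main obstacle to be the base-change step of the reduction: identifying the de Rham corners of the $\X$-square as base changes of the point square, and checking that $\R f_*$ commutes with the relevant (non-flat) restriction maps --- the Nygaard and prismatic edges are covered by Lemma \ref{lem:realization_commute}, but the de Rham edges require either properness of $\X^{\Syn}\to\O_K^{\Syn}$ in a suitable sense, or a descent argument reducing to classical smooth proper prismatic base change. Once everything is reduced to a point, the remaining work is bookkeeping together with the cohomology computations of \S 4.1, which are already available for an arbitrary finite extension $K/\Q_p$.
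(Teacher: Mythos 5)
Your proposal follows essentially the same route as the paper: reduce to the point $\X=\Spf\O_K$ via the smooth proper pushforward of Proposition \ref{prop:sm_prop_pushforward} (the paper leaves the compatibility of $\R f_*$ with restriction to the four corners implicit, in the spirit of Lemma \ref{lem:realization_commute}, which you spell out more carefully), and then identify the point-case square, after reducing to the heart by shifts and fibres, with the comparison between syntomic cohomology and crystalline/weakly admissible $\Ext$-groups. Your degree-by-degree verification via Propositions \ref{prop:H2_vanish} and \ref{prop:O_K_cohom} is just a repackaging of the paper's appeal to Proposition \ref{prop:O_K_equiv} and the $\Ext$ computations of \cite{emerton1999extensions}, so the argument is correct and matches the paper's proof.
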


\begin{proof}
By Proposition \ref{prop:sm_prop_pushforward}, it suffices to prove the claim for $\X=\Spf \O_K$ (note that the realizations involved commute with pushforwards, by similar arguments as in Lemma \ref{lem:realization_commute}; see also the appendix in \cite{Hauck} for the unramified case). As already noted, Proposition \ref{prop:O_K_equiv} says that the functor
\[\Perf(\O_K^{\Syn})[1/p] \to \D^b(\mathrm{MF}_{K}^{\varphi,\mathrm{wa}})\]
is an equivalence. For $\mathcal{E}\in \Perf(\O_K^{\Syn})$, write $\D\in \D^b(\mathrm{MF}_{K}^{\varphi,\mathrm{wa}})$ for the image under this equivalence.

This is the only nontrivial identification in the square. We can identify $\Perf(k^{\Syn})[1/p]$ with isocrystals $\Perf^{\varphi}(K_0)$ (which follows easily using \cite[Lemma 3.4.11]{Fgauge}) where $K_0=\W(k)[1/p]$, and as we take the de Rham stacks relative to $\O_K$ we have $\Perf((\O_K/\O_K)^{\dR})[1/p]\simeq \Perf(K)$ and for the filtered de Rham stack we get filtered perfect complexes of $K$-vector spaces (which we denote $\mathrm{MF}_K$ to align with usual notation). Now as before let $\D$ be the bounded complex of admissible filtered isocrystals associated to $\mathcal{E}\in \Perf(\O_K^{\Syn})[1/p]$, and let $K_0=\W(k)[1/p]$ where $k$ is the residue field of $K$. Unwinding definitions we can rewrite the fiber square for the case of $\O_K$ as
\begin{display}
\R\Hom_{\mathrm{MF}_{K}^{\varphi,\mathrm{wa}}}(K_0,\D) \ar{d} \ar{r} & \R\Hom_{\Perf^{\varphi}(K_0)}(K_0, \T_\cris(\D)) \ar{d} \\
\R\Hom_{\mathrm{MF}_K}(K,\T_{\dR,+}(\D)) \ar{r} & \R\Hom_{K}(K, \T_{\dR}(\D))
\end{display}
where we obtain the obvious maps. The only one which needs any inspection is the map $\Perf^{\varphi}(K_0) \to \Perf(K)$, which is indeed induced by pullback along the integral map $\Spf \O_K \to k^{\Syn}$. We may reduce to $\D$ concentrated in degree zero as well by applying \cite[Lemma 5.4.3]{Hauck}, at which point the claim follows from the $\Ext$ computations in \cite{emerton1999extensions} (just as in the calculation in \cite[Proposition 5.2.4]{Hauck}, we use \cite[Corollary 2.4.4]{emerton1999extensions}).
\end{proof}

\begin{remark}
An analogous statement is likely true for all smooth $\X/\O_K$, but we do not pursue this here.
\end{remark}

\begin{definition}
Let $\X/\Spf \O_K$ be a smooth and quasicompact $p$-adic formal scheme. Define $\Perf_{\mathrm{fIsoc}^{\varphi}}(\X)$ as the isogeny category $\mathcal{C}[1/p]$ of
\[\mathcal{C}=\Perf(\X_s^{\Syn}) \times_{\Perf((\X/\O_K)^{\dR})} \Perf((\X/\O_K)^{\dR,+}).\]
Individually, all three categories carry a natural $t$-structure induced by a Noetherian regular flat covering (which is well-defined by the same argument as $\X^{\Syn}$).
\label{def:admfil_Fisoc}
\end{definition}

The Beilinson fiber square then tells us (using the internal $\R\Hom$ and the tensor-Hom adjunction) in the smooth proper case that we have a fully faithful functor
\[\mathrm{Beil}: \Perf(\X^{\Syn})[1/p] \to \Perf_{\mathrm{fIsoc}^{\varphi}}(\X).\]
We will next characterize the heart of the $t$-structure on this category. In what follows the filtration on a filtered $F$-isocrystal is more general than what is usually allowed, but the essential image of $\mathrm{Beil}$ will later be shown to be the subcategory of admissible filtered $F$-isocrystals and thus lands in the usual subcategory with a stronger filtration condition (for example the convention of \cite{Zpcrystalline} for filtered $F$-isocrystals).

\begin{definition}
Let $\X/\Spf \O_K$ be smooth. The category $\mathrm{Isoc}^{\varphi}(\X_{s}/\W(k))$ is defined as the isogeny category of $F$-crystals in coherent sheaves on $(\X_{s}/\W(k))_\cris$.

We set
\[\mathrm{fIsoc}^{\varphi}(\X) := \mathrm{Isoc}^{\varphi}(\X_{s}/\W(k)) \times_{\Vect^{\nabla}(\X_\eta)} \Vect^{\nabla, +}(\X_\eta)\]
where $\Vect^{\nabla}(\X_\eta)$ is the category of vector bundles $(\E, \nabla)$ on $\X_\eta$ with flat connection, and $\Vect^{\nabla,+}(\X_\eta)$ is the same but with a complete decreasing (possibly non-genuine) filtration on $\E$ so $\nabla$ is a filtered map of degree $-1$ with coherent associated graded.

The functor $\mathrm{Isoc}^{\varphi}(\X_{s}/\W(k)) \to \Vect^{\nabla}(\X_\eta)$ is given by \cite[Theorem 2.15]{ogus1984f}.

A filtered $F$-isocrystal is \textit{admissible} if it is of the form $\D_\cris(\mathbb{L})$ for a crystalline local system $\mathbb{L}$. We define $\D_{\cris}$ as the composite functor $\Loc_{\Q_p}^\cris(\X_\eta) \simeq \Vect^{\varphi,\mathrm{an}}(\X_{\Prism},\O_{\Prism})[1/p] \to \mathrm{fIsoc}^{\varphi}(\X)$ where the second functor is $\tilde{\T}_\cris$ from \cite[Corollary 4.6]{Zpcrystalline} (denoted $\tilde{\D}_{\mathrm{crys}}$ there). Let $\Perf^{\mathrm{adm}}_{\mathrm{fIsoc}^{\varphi}}(\X)$ be the full subcategory consisting of filtered $F$-isocrystals $\mathcal{E}$ in perfect complexes where each $\H^i(\mathcal{E})$ is admissible.
\label{def:fIsoc}
\end{definition}

This variant of $\D_{\cris}$ is compatible with other definitions which are more classical, for example the one in the appendix of \cite{Zpcrystalline2}: using Remark 4.7 in \cite{Zpcrystalline} one need only check the filtered $F$-isocrystal they associate to a crystalline local system $\T_\et(\mathcal{E})$ is associated in the sense of Faltings (see \cite[Definition 2.31]{Zpcrystalline}). In particular, the subcategory of admissible objects we use agrees with any other reasonable definition.
		
The author is not aware of a linear-algebraic condition which characterizes admissibility inside of $\mathrm{fIsoc}^{\varphi}(\X)$, and it would be interesting to know of one (for example, pointwise weak admissibility at each classical point is likely not sufficient). 

The category $\Perf_{\mathrm{fIsoc}^{\varphi}}(\X)$ carries a $t$-structure inherited from the $t$-structure on each of the categories in the fiber product, constructed analogously from a regular Noetherian flat cover. In the next proposition we verify this is actually well-defined, i.e. that the relevant maps in the fiber product are $t$-exact on isogeny categories; the category 
\[\left(\Coh(\X_{s}^{\Syn}) \times_{\Coh((\X/\O_K)^{\dR})} \Coh((\X/\O_K)^{\dR,+})\right)[1/p]\] 
is the heart of this $t$-structure.

\begin{proposition}
Assume $\X/\Spf \O_K$ is smooth and quasicompact. There is an equivalence of abelian categories
\[\Coh(\X_{s}^{\Syn})[1/p] \to \mathrm{Isoc}^\varphi(\X_{s}/\W(k))\]
and moreover
\[\left(\Coh(\X_{s}^{\Syn}) \times_{\Coh((\X/\O_K)^{\dR})} \Coh((\X/\O_K)^{\dR,+})\right)[1/p] \simeq \mathrm{fIsoc}^{\varphi}(\X).\]
This describes the heart of $\Perf_{\mathrm{fIsoc}^{\varphi}}(\X)$ with its natural $t$-structure.
\label{prop:isoc_justification}
\end{proposition}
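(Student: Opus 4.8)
The plan is to compute the heart of $\Perf_{\mathrm{fIsoc}^{\varphi}}(\X)$ as the fiber product of the hearts of the three categories occurring in Definition \ref{def:admfil_Fisoc}, and then to identify each of those hearts with the corresponding factor of $\mathrm{fIsoc}^{\varphi}(\X)$. The two legs $\Perf(\X_s^{\Syn}) \to \Perf((\X/\O_K)^{\dR})$ (pullback along $(\X/\O_K)^{\dR}\to \X_s^{\Syn}$) and $\Perf((\X/\O_K)^{\dR,+}) \to \Perf((\X/\O_K)^{\dR})$ (forget the filtration) are pullbacks along maps of stacks that become $t$-exact after pulling back to a Noetherian regular flat-local surjection, so they are $t$-exact for the $t$-structures of Proposition \ref{prop:t_struct_XSyn} and its analogues (cf. Remark \ref{rem:t-exact}); hence the $t$-structure on the fiber product is defined componentwise and its heart is $\Coh(\X_s^{\Syn})[1/p] \times_{\Coh((\X/\O_K)^{\dR})[1/p]} \Coh((\X/\O_K)^{\dR,+})[1/p]$. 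It remains to produce the three claimed equivalences and to check the connecting functors match.

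\textbf{The characteristic $p$ equivalence.} For $\Coh(\X_s^{\Syn})[1/p] \simeq \mathrm{Isoc}^{\varphi}(\X_s/\W(k))$ I would replay the arguments of \S 3 over $\mathbf{F}_p$. Restriction to $\X_s^{\Prism}$ (the crystalline stack, for which a crystalline prism $(\tilde{\rA},(p))$ coming from a $\delta$-smooth lift is a flat-local surjection in the affine case) followed by inverting $p$ gives a crystalline realization $\Coh(\X_s^{\Syn}) \to \mathrm{Isoc}^{\varphi}(\X_s/\W(k))$. Its kernel consists of $p$-power torsion $F$-gauges: pulling back to $\mathrm{Rees}_{(p)^\bullet}\tilde{\rA} \simeq (\Spf \tilde{\rA}\langle u,t\rangle/(ut-p))/\G_m$ one runs the $t$-power-torsion then $u$-power-torsion argument from the proof of Theorem \ref{thm:general_Tet_kernel}, which is simpler here since there is no ramification and no $v_{1}$-term. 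Thus every object of $\Coh(\X_s^{\Syn})[1/p]$ is isogenous to its reflexive hull, and one reduces to the reflexive case. There the characteristic $p$ analogue of Theorem \ref{thm:prismFcris_reflexive} identifies reflexive $F$-gauges with analytic prismatic $F$-crystals on $\X_s$; since the prismatic ideal of a crystalline prism is $(p)$, the analytic locus is $\Spec \tilde{\rA}[1/p]$, so these are exactly vector-bundle $F$-crystals up to isogeny, which is $\mathrm{Isoc}^{\varphi}(\X_s/\W(k))$. The key input for the analogue of Theorem \ref{thm:prismFcris_reflexive} is the promised $q=1$ variant of Lemma \ref{lem:unram_Fcris_pinv_VB}: a coherent prismatic $F$-crystal on $\X_s$, evaluated on $\tilde{\rA}$, carries the flat connection coming from the (absolute) crystal structure, its first nonzero Fitting ideal is stable under this connection, and a nonzero connection-stable ideal of the smooth $\W(k)[1/p]$-algebra $\tilde{\rA}[1/p]$ is the unit ideal (the classical fact that coherent $D$-modules on smooth characteristic zero affines are vector bundles), so $\mathcal{E}(\tilde{\rA})[1/p]$ is a vector bundle. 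Essential surjectivity is the analogue of Theorem \ref{thm:Fgaugelift}: a vector-bundle $F$-crystal lifts to $\X_s^{\Syn}$ via $\mathrm{Fil}^\bullet = \varphi_{\mathcal{E}}^{-1}(I^\bullet\mathcal{E})$, and one checks (as in the proof of the lifting lemma above, or by citing the $\mathbf{F}_p$-case of \cite{Fgaugelift}) that the output is coherent and reflexive.

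\textbf{The de Rham corners and compatibility.} Here $\D_{\mathrm{qc}}((\X/\O_K)^{\dR})$ is crystals with flat $\O_K$-linear connection and $\D_{\mathrm{qc}}((\X/\O_K)^{\dR,+})$ is the Rees stack of the Hodge filtration, so $\Coh((\X/\O_K)^{\dR})[1/p]$ is the isogeny category of coherent $\O$-modules with flat connection and $\Coh((\X/\O_K)^{\dR,+})[1/p]$ the analogous filtered category. Restriction to the rigid generic fiber identifies coherent $\O_{\X}[1/p]$-modules with coherent $\O_{\X_\eta}$-modules, and the $q=1$ Fitting-ideal argument, now in characteristic zero on the nose, forces a coherent module with flat connection on the smooth rigid space $\X_\eta$ to be a vector bundle; conversely a vector bundle with connection spreads out to a coherent crystal over $\O_K[1/p]$-coefficients by Noetherian approximation. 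This gives $\Coh((\X/\O_K)^{\dR})[1/p]\simeq \Vect^{\nabla}(\X_\eta)$ and $\Coh((\X/\O_K)^{\dR,+})[1/p]\simeq \Vect^{\nabla,+}(\X_\eta)$, with the forgetful legs matching. Finally one checks that pullback along $(\X/\O_K)^{\dR}\to (\X_s)^{\Syn}$ corresponds to the functor $\mathrm{Isoc}^{\varphi}(\X_s/\W(k))\to \Vect^{\nabla}(\X_\eta)$ of \cite{ogus1984f} Theorem 2.15: one unwinds the explicit map $\Spf \O_K \to k^{\Nyg} \to k^{\Syn}$ given by $t\mapsto 1$, $u\mapsto p$, recognizes the resulting pullback of an $F$-gauge as its de Rham realization, and matches it with Ogus's convergent de Rham functor via the crystalline--de Rham comparison. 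Assembling the three equivalences and the compatibility gives the fiber-product identification of $\mathrm{fIsoc}^{\varphi}(\X)$, hence the description of the heart.

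\textbf{Main obstacle.} I expect the genuine difficulty to lie in the characteristic $p$ half: proving $\Coh(\X_s^{\Syn})[1/p]\simeq \mathrm{Isoc}^{\varphi}(\X_s/\W(k))$ requires both the $\mathbf{F}_p$-analogue of the reflexive-hull machinery of \S 3 and, for essential surjectivity, a lifting functor whose output one must show is coherent --- exactly the kind of check that was delicate in \cite{Fgaugelift}. The second, more bookkeeping-heavy obstacle is verifying that pullback along $(\X/\O_K)^{\dR}\to (\X_s)^{\Syn}$ really is Ogus's functor, which demands a careful comparison of the stack-theoretic construction with the classical crystalline--de Rham formalism.
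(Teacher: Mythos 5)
Your characteristic $p$ half is essentially the paper's own argument (restriction to $\X_s^{\Prism}$, the $q=1$ Fitting-ideal argument showing coherent crystals become vector bundles after inverting $p$, reduction to $p$-torsion-free objects, and a Guo--Li-style lifting to see the restriction functor has a fully faithful inverse up to isogeny); packaging it through reflexive hulls and the analytic locus $\Spec\tilde{\rA}\setminus\V(p,p)=\Spec\tilde{\rA}[1/p]$ is a cosmetic variation and is fine. The reduction of the heart of the fiber product to the fiber product of hearts via componentwise $t$-exactness is also consistent with how the paper sets things up.

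The genuine gap is in the de Rham corners. The claimed equivalences $\Coh((\X/\O_K)^{\dR})[1/p]\simeq \Vect^{\nabla}(\X_\eta)$ and $\Coh((\X/\O_K)^{\dR,+})[1/p]\simeq \Vect^{\nabla,+}(\X_\eta)$ are false: quasicoherent sheaves on $(\X/\O_K)^{\dR}$ are crystals, i.e.\ modules with flat connection whose $p$-curvature is nilpotent modulo $p$ (cf.\ Remark 2.5.8 of the $F$-gauge paper), so restriction to the generic fiber is only \emph{fully faithful}, with essential image the bundles $(\mathcal{E}_\eta,\nabla)$ admitting a formal model whose connection has nilpotent $p$-curvature mod $p$. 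Your ``spreading out by Noetherian approximation'' step is exactly where this fails: an arbitrary flat connection on $\X_\eta$ admits a formal model of the underlying sheaf (L\"utkebohmert), but nothing forces quasi-nilpotence. For instance $\nabla = d + \mathrm{d}\T$ on the trivial line bundle over $\Spf \O_K\langle \T\rangle$ has $p$-curvature equal to the identity modulo $p$, hence does not come from $(\X/\O_K)^{\dR}$. The fiber-product statement is nevertheless true, but the proof must run as in the paper: establish full faithfulness with the characterized essential image, and then check that every object actually occurring in the fiber product --- i.e.\ one matched with the restriction of an object of $\mathrm{Isoc}^{\varphi}(\X_s/\W(k))$ --- lies in that image, because by Ogus (Proposition 2.21) the isocrystal furnishes a local formal model which, being a crystal, automatically has quasi-nilpotent connection (and similarly in the filtered case). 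With your asserted equivalences this quasi-nilpotence condition never enters, so the argument as written proves a statement about the de Rham stacks that is not correct and silently elides the one nontrivial check; the compatibility of the leg $\Coh(\X_s^{\Syn})[1/p]\to\Coh((\X/\O_K)^{\dR})[1/p]$ with Ogus's functor, which you rightly flag, should be folded into that same step.
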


\begin{proof}
We start by arguing that $\Coh(\X_{s}^{\Syn})[1/p] \simeq \mathrm{Isoc}^\varphi(\X_{s}/\W(k))$. We have a natural functor
\[\Coh(\X_{s}^{\Syn})[1/p] \to \mathrm{Isoc}^\varphi(\X_{s}/\W(k))\]
given by restriction to $\X_{s}^{\Prism}$ (which produces an $F$-isocrystal for the same reasons as in mixed characteristic). To deduce this is an equivalence, we may use the same argument we used to deduce the analogous claim for $\Coh(\X^{\Syn})[1/p]$ and the isogeny category of coherent prismatic $F$-crystals in mixed characteristic. We may first reduce to when $\X$ is smooth affine by Zariski descent. The idea is then that coherent crystals on $(\X_s/\W(k))_\cris$ after evaluation on a covering (now taking the prisms $(\tilde{\R},p)$ where $\tilde{\R}$ is a $p$-completely smooth $\W(k)$-lift) are vector bundles after inverting $p$ by Corollary \ref{cor:pinv_VB}. Up to isogeny, we may then deduce objects on the source and target of $(-)_{\X_s^{\Prism}}$ come from $p$-torsionfree ones. As remarked in \cite{Fgaugelift}, the argument in \cite[Theorem 3.32]{Fgaugelift} shows these $p$-torsionfree $F$-crystals may be fully faithfully lifted via an analogous functor $\Pi_{\X_s}$. From this we can deduce that the restriction functor $(-)_{\X_s^{\Prism}}$ is an equivalence, by checking the unit of the adjunction $\eta: \mathcal{E}\to \Pi_{\X_s}(\mathcal{E}|_{\X_s^{\Prism}})$ for $\mathcal{E}\in \Coh(\X_{s}^{\Syn})$ is rationally an equivalence to deduce $\Pi_{\X_s}$ is essentially surjective. Indeed, working locally when $\X_s$ is affine we have a flat-local surjection $\Spf (\tilde{\R} \langle u,t \rangle/(ut-p))/\G_m \to (\X_s)^{\Syn}$. We know the kernel and cokernel of $\eta$ are $0$ restricted to $\X_s^{\Prism}$, hence when pulled back to the cover they are supported on $\V(t)\subset \V(p)$; it follows that the desired map is rationally an equivalence.

In Remark 2.5.8 of \cite{Fgauge}, explicit descriptions of $\Vect((\X/\O_K)^{\dR})$ and $\Vect((\X/\O_K)^{\dR,+})$ are given (and also a general method to describe sheaves, via the identification as a Rees stack). The first category consists of vector bundles with flat connection and nilpotent $p$-curvature modulo $p$, and the second category adds a Griffiths-transverse decreasing filtration by subbundles. We then have fully faithful functors
\[\Coh((\X/\O_K)^{\dR})[1/p] \to \Vect^{\nabla}(\X_\eta)\]
and similarly for the Hodge-filtered de Rham stack with $\Vect^{\nabla,+}$. We observe here that up to $p$-isogeny $\Coh((\X/\O_K)^{\dR})[1/p]\simeq \Vect((\X/\O_K)^{\dR})[1/p]$, so we do obtain vector bundles with flat connection after inverting $p$. For the Hodge-filtered de Rham stack, considering $\Coh((\X/\O_K)^{\dR,+})$ has the same effect on the underlying vector bundle with connection but allows the filtration to be a complete filtration with coherent associated graded; we just ask that $\nabla$ has degree $-1$. After these identifications we then obtain a fully faithful functor
\[\left(\Coh(\X_{s}^{\Syn}) \times_{\Coh((\X/\O_K)^{\dR})} \Coh((\X/\O_K)^{\dR,+})\right)[1/p] \to \mathrm{fIsoc}^{\varphi}(\X),\]
observing the left hand side is well-defined since the pullback from $\Coh(\X_{s}^{\Syn})[1/p]$ to $\Perf((\X/\O_K)^{\dR})[1/p]$ induced by the map of stacks in Corollary \ref{cor:Beilinson_fiber} indeed lands in the heart (the other map is clear, as it simply forgets the filtration). We may identify this pullback functor, after the first equivalence $\Coh(\X_{s}^{\Syn})[1/p] \to \mathrm{Isoc}^\varphi(\X_{s}/\W(k))$ and unwinding definitions, with passage to the underlying vector bundle with flat connection associated to the isocrystal.

It remains to check essential surjectivity. The essential image of the natural functor 
\[\Coh((\X/\O_K)^{\dR})[1/p] \to \Vect^{\nabla}(\X_\eta)\]
consists of vector bundles $\mathcal{E}_\eta$ with flat connection where the flat connection arises from a formal model $(\mathcal{E},\nabla)$ with a flat connection that has nilpotent $p$-curvature modulo $p$. By \cite[Lemma 2.2]{lutkebohmert} any coherent sheaf on $\X_\eta$ has a formal model, so $\Coh(\X_\eta)\simeq \Coh(\X)[1/p]$. In fact, if $(\mathcal{E}_\eta,\nabla)\in \Vect^{\nabla}(\X_\eta)$ is obtained from $\mathcal{E}'\in \mathrm{Isoc}^{\varphi}(\X_s/\W(k))$, then it has a model locally by \cite[Proposition 2.21]{ogus1984f} so we also have a formal model for the connection. The sheaf $\mathcal{E}_\eta$ must be locally free, as we may check on an affinoid that the algebraization gives a coherent sheaf with flat connection on a characteristic zero scheme which must be a vector bundle.

The desired claim now follows if we can additionally show there is some formal model for $(\mathcal{E}_\eta,\nabla)$ where $\nabla$ has nilpotent $p$-curvature modulo $p$ when $(\mathcal{E}_\eta,\nabla)$ comes from some $\mathcal{E}'\in \mathrm{Isoc}^{\varphi}(\X_s/\W(k))$. This is clear, since the formal model arises from a crystal and must then have a quasi-nilpotent connection, or equivalently nilpotent $p$-curvature modulo $p$. The essential image in the filtered case is derived similarly.
\end{proof}

Our next goal will be to describe the entire category $\Perf_{\mathrm{fIsoc}^{\varphi}}(\X)$ without reference to a stack.

\begin{definition}
Assume $\X=\Spf \R$ is affine over $\Spf \O_K$ and written as $\X_0 \widehat{\otimes}_{\W(k)} \O_K$, where $\X_0$ is a model with an \'{e}tale map
\[\X_0 \to \Spf \W(k) \langle \T_1^\pm, \ldots, \T_n^\pm \rangle\]
so that we have an induced basis of $\widehat{\Omega}^1_\R$. Let $\varphi$ be a global Frobenius lift on $\X_0$ (and thus inducing one on $(\X_0)_\eta$), available in our local situation via the torus chart.

A flat connection $\nabla=\sum_i \theta_i \mathrm{d}\T_i$ on a vector bundle $\mathcal{E}$ on $\X_\eta$ is called \textit{convergent} if for $e\in \Gamma(\X_\eta,\mathcal{E})$ and $0\le \rho < 1$ we have
\[\lim_{|\alpha| \to \infty} \left\|\frac{1}{\alpha!}\theta^\alpha(e)\right\| \rho^{|\alpha|}= 0.\]
Here $\alpha$ is a multi-index. The same definition works for $(\X_0)_\eta$.

Let $\pi$ be a uniformizer for $\O_K$. We set $\widehat{\D}_{\X_{\eta}} := (\widehat{\D}_\X)[1/\pi]$ where $\widehat{\D}_\X$ is the pullback of the sheaf of crystalline differential operators. Let $\Perf(\widehat{\D}_{\X_\eta})$ be the category of $\widehat{\D}_{\X_\eta}$-modules $\mathcal{E}$ whose underlying $\O_{\X_\eta}$-module is a perfect complex on $\X_\eta$. For the Hodge-filtered variant, let $\Perf^{+}(\widehat{\D}_{\X_\eta})$ denote the subcategory of perfect filtered $\widehat{\D}_{\X_\eta}$-modules where we endow $\widehat{\D}_{\X_{\eta}}$ with the order filtration. Here, perfectness means the underlying $\O_{\X_\eta}$-module is perfect, the associated graded is perfect, and the filtration is complete.

Finally, we let $\Perf^{\varphi}(\widehat{\D}_{(\X_0)_\eta})$ denote the category of $\widehat{\D}_{(\X_0)_\eta}$-modules $\mathcal{E}$ with their underlying $\O_{(\X_0)_\eta}$-module a perfect complex, along with a horizontal Frobenius $\varphi^* \mathcal{E}\simeq \mathcal{E}$ and the condition that each $\H^i(\mathcal{E})$ viewed as a vector bundle with connection has a convergent connection.
\label{def:conv_isoc}
\end{definition}

We can in fact describe the category $\Perf_{\mathrm{fIsoc}^{\varphi}}(\X)$ explicitly if we are allowed to work Zariski locally and pick a Frobenius lift. The same argument as \cite[Lemma 2.9]{Zpcrystalline} justifies that we can locally land in the situation of Definition \ref{def:conv_isoc}.

\begin{proposition}
Assume $\X_0$ is affine and \'{e}tale over $\Spf \W(k) \langle \T_i^\pm \rangle$, and choose a global Frobenius lift $\varphi$ on $\X_0$. Then
\[\Perf(\X_{s}^{\Syn})[1/p] \simeq \Perf^{\varphi}(\widehat{\D}_{(\X_0)_\eta}).\]
Let $\X = \X_0 \widehat{\otimes}_{\W(k)} \O_K$. Then there is also an equivalence
\[\Perf_{\mathrm{fIsoc}^{\varphi}}(\X) \simeq \Perf^{\varphi}(\widehat{\D}_{(\X_0)_\eta}) \times_{\Perf(\widehat{\D}_{\X_\eta})} \Perf^+(\widehat{\D}_{\X_\eta}).\]
\label{prop:explicit_isocrystal}
\end{proposition}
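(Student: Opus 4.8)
The plan is to deduce both displayed equivalences by comparing each of the three stacky categories assembling $\Perf_{\mathrm{fIsoc}^{\varphi}}(\X)$ with the corresponding $\widehat{\D}$-module category on the right, and then checking these comparisons intertwine the structure functors of the two fiber products. First I would prove the standalone equivalence $\Perf(\X_s^{\Syn})[1/p]\simeq\Perf^{\varphi}(\widehat{\D}_{(\X_0)_\eta})$. On hearts this is Proposition \ref{prop:isoc_justification}, which gives $\Coh(\X_s^{\Syn})[1/p]\simeq\mathrm{Isoc}^{\varphi}(\X_s/\W(k))$, combined with the classical fact that, after fixing the global Frobenius lift $\varphi$ on $\X_0$, a convergent $F$-isocrystal on $\X_s/\W(k)$ is the same datum as a vector bundle on $(\X_0)_\eta$ with convergent flat connection together with an isomorphism $\varphi^{*}\mathcal{E}\simeq\mathcal{E}$; this is the comparison between the convergent site of $\X_s/\W(k)$ and modules with connection on the rigid generic fibre (\cite{ogus1984f} Theorem 2.15 and Proposition 2.21, together with rigid GAGA on the affinoid $(\X_0)_\eta$), identifying $\mathrm{Isoc}^{\varphi}(\X_s/\W(k))$ with the heart of $\Perf^{\varphi}(\widehat{\D}_{(\X_0)_\eta})$. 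To promote this heart-level equivalence to all perfect complexes I would run the argument of Corollary \ref{cor:Perf_cris} and Proposition \ref{prop:O_K_equiv}: both sides carry $t$-structures for which the comparison functor (restriction to $\X_s^{\Prism}$, then the identification above) is $t$-exact and an equivalence on hearts, and each is the bounded derived category of its heart --- the left side because $\X_s^{\Syn}$ admits a Noetherian regular flat-local surjection as in \S 2.2, the right side because $\widehat{\D}_{(\X_0)_\eta}$ has finite global dimension --- so \cite{Hauck} Lemma 5.4.3 builds a quasi-inverse by lifting attaching maps from the heart.

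Next I would treat the two de Rham factors. Using the derived form of the description in \cite{Fgauge} Remark 2.5.8, obtained by the same local analysis as in the proof of Proposition \ref{prop:isoc_justification}, the category $\Coh((\X/\O_K)^{\dR})[1/p]$ embeds fully faithfully into coherent $\widehat{\D}_{\X_\eta}$-modules, with essential image those whose connection admits a formal model with nilpotent $p$-curvature modulo $p$, and $\Coh((\X/\O_K)^{\dR,+})[1/p]$ embeds fully faithfully into filtered $\widehat{\D}_{\X_\eta}$-modules with complete filtration and coherent associated graded, Griffiths transversality corresponding to the order filtration on $\widehat{\D}$. Passing to perfect complexes, and using that $\X_\eta$ is affinoid so that $\Perf$ compares termwise with its rigid generic fibre (via \cite{lutkebohmert}, as in Proposition \ref{prop:isoc_justification}), yields fully faithful functors $\Perf((\X/\O_K)^{\dR})[1/p]\hookrightarrow\Perf(\widehat{\D}_{\X_\eta})$ and $\Perf((\X/\O_K)^{\dR,+})[1/p]\hookrightarrow\Perf^{+}(\widehat{\D}_{\X_\eta})$.

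Finally I would assemble the fiber products. The structure functor $\Perf(\X_s^{\Syn})\to\Perf((\X/\O_K)^{\dR})$ is pullback along the map of stacks $(\X/\O_K)^{\dR}\to\X_s^{\Syn}$ constructed before Corollary \ref{cor:Beilinson_fiber}; unwinding that construction, under the identifications above it becomes the functor forgetting the Frobenius structure and base changing from $(\X_0)_\eta$ to $\X_\eta$, while the de Rham to filtered de Rham functor is the obvious forgetful functor on filtrations. Hence the comparison functor $\Perf_{\mathrm{fIsoc}^{\varphi}}(\X)\to\Perf^{\varphi}(\widehat{\D}_{(\X_0)_\eta})\times_{\Perf(\widehat{\D}_{\X_\eta})}\Perf^{+}(\widehat{\D}_{\X_\eta})$ is well defined and fully faithful (inverting $p$ is a filtered colimit and commutes with this finite limit of stable $\infty$-categories). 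Essential surjectivity is the same phenomenon exploited in Proposition \ref{prop:isoc_justification}: given a convergent $F$-isocrystal complex on $(\X_0)_\eta$, each cohomology sheaf of its underlying $\widehat{\D}_{\X_\eta}$-module comes from a crystal, hence from a formal model with nilpotent $p$-curvature modulo $p$ (\cite{ogus1984f} Proposition 2.21), so its image lands in $\Perf((\X/\O_K)^{\dR})[1/p]$, and likewise a filtered lift lands in $\Perf((\X/\O_K)^{\dR,+})[1/p]$; reducing this complex-level statement to the heart via the inherited $t$-structure of Definition \ref{def:admfil_Fisoc} and shifts and fibres finishes the argument.

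The hard part will be the first equivalence together with the essential-surjectivity step: one must track the convergence condition carefully through the passage to complexes (which is why the definition of $\Perf^{\varphi}(\widehat{\D}_{(\X_0)_\eta})$ imposes it only on each $\H^{i}$), pin down the map $(\X/\O_K)^{\dR}\to\X_s^{\Syn}$ concretely enough on $\widehat{\D}$-modules to recognize the structure functors, and make the reduction-to-the-heart argument for essential surjectivity rigorous. By contrast the de Rham factors, being the derived shadow of \cite{Fgauge} Remark 2.5.8, and the commutation of $[1/p]$ with the fiber product, are comparatively formal.
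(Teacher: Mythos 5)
There is a genuine gap in your treatment of the first equivalence. You propose to promote the heart-level statement to perfect complexes by asserting that \emph{both} sides are the bounded derived categories of their hearts (the left because $\X_s^{\Syn}$ has a Noetherian regular flat-local surjection, the right because $\widehat{\D}_{(\X_0)_\eta}$ has finite global dimension), and then invoking Lemma 5.4.3 of \cite{Hauck}. Neither justification works. A Noetherian regular cover only identifies $\Perf(\X_s^{\Syn})$ with locally bounded complexes with coherent cohomology; it says nothing about whether $\R\Hom$ in $\Perf(\X_s^{\Syn})[1/p]$ is computed by Yoneda $\Ext$ in the heart, and statements of exactly this shape are the delicate point of the whole section: the analogous mixed-characteristic claim fails already for $\P^1$ (Example \ref{ex:derived_counter}), and even for the point $\Spf \O_K$ it required the nontrivial cohomological input of Proposition \ref{prop:O_K_cohom}. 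On the right-hand side, finite global dimension of $\widehat{\D}_{(\X_0)_\eta}$ concerns $\D$-module Exts, not Exts in the abelian category of Frobenius-equivariant modules with the convergence condition (which is imposed only on the $\H^i$); the $\varphi$-equivariance introduces an equalizer in every $\R\Hom$, and it is not at all clear (nor needed in the paper) that Yoneda Exts of convergent $F$-isocrystals compute it. So full faithfulness, the heart of the matter, is left unproved in your plan.

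The paper's proof does not pass through $\D^b(\text{heart})$ at all. Full faithfulness of $\Perf(\X_s^{\Syn})[1/p]\to\Perf(\X_{s,\cris},\O_{\cris})[1/p]^{\varphi=1}$ is obtained by a direct cohomology comparison: using the lift $\Spf\R$ with its global Frobenius, the Nygaard filtration on $\R\Gamma(\X_s^{\Prism},\mathcal{E})$ has filtered pieces given by $p$-power rescalings of the completed de Rham complex, so after inverting $p$ the fiber sequence computing $\R\Gamma(\X_s^{\Syn},\mathcal{E})$ becomes the fiber of $\varphi_{\mathcal{E}}-1$ on rational crystalline cohomology, which in turn is rigid de Rham cohomology of $(\X_0)_\eta$ with coefficients; this matches $\R\Hom$ in $\Perf^{\varphi}(\widehat{\D}_{(\X_0)_\eta})$, and only then is essential surjectivity deduced from the heart equivalence plus shifts and fibers (your essential-surjectivity step, and your treatment of the two de Rham factors via the torsor description/Koszul duality behind Remark 2.5.8 of \cite{Fgauge} and the assembly of the fiber product, do agree with the paper). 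To repair your argument you would need to replace the ``$\D^b$ of the heart'' assertion by this identification of $\R\Gamma(\X_s^{\Syn},-)[1/p]$ with the $\varphi$-fixed-point fiber on crystalline (equivalently rigid de Rham) cohomology, or prove the unproved derived-category claims by some equivalent computation.
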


\begin{proof}
We consider the claim $\Perf(\X_{s}^{\Syn})[1/p] \simeq \Perf^{\varphi}(\widehat{\D}_{(\X_0)_\eta})$ first. The crystalline realization defines a functor
\[\Perf(\X_{s}^{\Syn})[1/p] \to \Perf(\X_{s,\cris},\O_{\cris})[1/p]^{\varphi=1}.\]
We will deduce this functor is fully faithful using the fiber sequence
\begin{display}
\R\Gamma(\X_{s}^{\Syn},\mathcal{E}) \ar{r} & \mathrm{Fil}^0_{\Nyg} \R\Gamma(\X_{s}^{\Prism},\mathcal{E}) \ar{r}{\varphi_{\mathcal{E}}-1} & \R\Gamma(\X_{s}^{\Prism},\mathcal{E}).
\end{display}
In our situation $\X_{s}$ is equipped with a lift $\X_0=\Spf \R_0$, which also has a global Frobenius lift due to the framing we are working in. This means that crystalline cohomology can be computed via the completed de Rham complex $\widehat{\Omega}_{\X_0}(\mathcal{E})$ of $\mathcal{E}$, and the Nygaard filtration on $\R\Gamma(\X_{s}^\cris,\mathcal{E}) \simeq \varphi^* \R\Gamma(\X_{s}^{\Prism},\mathcal{E})$ will have $k$th filtered piece given by
\[p^k \mathcal{E} \otimes \Omega^0_{\R_0} \to p^{k-1} \mathcal{E} \otimes \widehat{\Omega}^1_{\R_0} \to \cdots \to p \mathcal{E} \otimes \widehat{\Omega}^{k-1}_{\R_0} \to \mathcal{E} \otimes \widehat{\Omega}^k_{\R_0} \to \mathcal{E} \otimes \widehat{\Omega}^{k+1}_{\R_0} \to \cdots\]
where $\mathcal{E}$ is the corresponding object on $\X_0^{\dR} \simeq \X_{s}^\cris$. This description works when $\mathcal{E}$ is in the heart and $p$-torsionfree.

It follows that inverting $p$ we get the fiber sequence
\begin{display}
\R\Gamma(\X_{s}^{\Syn},\mathcal{E})[1/p] \ar{r} & \R\Gamma(\X_{s}^{\Prism},\mathcal{E})[1/p] \ar{r}{\varphi_{\mathcal{E}}-1} & \R\Gamma(\X_{s}^{\Prism},\mathcal{E})[1/p],
\end{display}
for $\mathcal{E}\in \Coh(\X_s^{\Syn})$. Here we use that crystalline cohomology is recovered as the Frobenius pullback of prismatic cohomology in positive characteristic so that
\[\R\Gamma_\cris(\X_{s},\mathcal{E})[1/p]^{\varphi_{\mathcal{E}}=1} \simeq \R\Gamma_{\Prism}(\X_{s},\mathcal{E})[1/p]^{\varphi_{\mathcal{E}}=1}.\]
Then we use the description of the Nygaard filtration; as mentioned in the proof of Proposition \ref{prop:isoc_justification} we may assume $\mathcal{E}$ is $p$-torsionfree as this is always true up to isogeny. The same argument as Proposition \ref{prop:O_K_equiv} allows us to deduce full faithfulness from this result, as well as essential surjectivity once we know full faithfulness as we have shown the claim on the heart in Proposition \ref{prop:isoc_justification}.

Now we can show
\[\Perf(\X_{s,\cris},\O_{\cris})[1/p]^{\varphi=1} \to \Perf^{\varphi}(\widehat{\D}_{(\X_0)_\eta})\]
is an equivalence; we can compute crystalline cohomology as the completed de Rham cohomology of $\X_0/\W(k)$. This then rationalizes to de Rham cohomology on $(\X_0)_\eta$, so we see that the functor is fully faithful by the same ``reduction to cohomology'' argument used in Proposition \ref{prop:O_K_equiv}. Essential surjectivity on the heart follows from the fact that $\mathrm{Isoc}^{\varphi}(\X_{s}/\W(k))$ is equivalent to the category of vector bundles $\mathcal{E}$ on $(\X_0)_\eta$ with a flat convergent connection $\nabla$ and an isomorphism $\varphi^* \mathcal{E} \simeq \mathcal{E}$ by \cite[Proposition 2.18]{ogus1984f}. Now that we know full faithfulness, since all $t$-structures we work with are bounded and nondegenerate we see essential surjectivity follows from compatibility of the functor with shifts and fibers, using \cite[Lemma 5.4.3]{Hauck} and the equivalence on the heart (similar to Proposition \ref{prop:O_K_equiv}). Finally, the functor $\Perf(\X_s^{\Syn})[1/p] \to \Perf(\X_{s,\cris},\O_{\cris})[1/p]^{\varphi=1}$ is an equivalence on the heart by the equivalence $\Coh(\X_s^{\Syn})[1/p]\simeq \mathrm{Isoc}^{\varphi}(\X_{s}/\W(k))$ of Proposition \ref{prop:isoc_justification}. This again extends to essential surjectivity by the same argument.

Now we turn to the second claim. To analyze $\Perf(\widehat{\D}_{\X_\eta})$ and $\Perf^+(\widehat{\D}_{\X_\eta})$ we will not actually need the local form of $\X$ (only that it is smooth). We have fully faithful functors $\Perf((\X/\O_K)^{\dR})[1/p]\to \Perf(\widehat{\D}_{\X_\eta})$ as well as the filtered variant
\[\Perf((\X/\O_K)^{\dR,+})[1/p]\to \Perf^+(\widehat{\D}_{\X_\eta})\]
with the unfiltered essential image consisting of objects $(\mathcal{E}_\eta,\nabla)$ admitting an integral model on $\X$ where each cohomology sheaf has a connection with nilpotent $p$-curvature modulo $p$, and the filtered essential image is similar, but with a filtration where $\nabla$ has filtered degree $-1$. To produce these functors, one way (among many) is to use a local presentation of the stack $\X$. As $\X$ is smooth, Zariski locally we may assume $\X$ is \'{e}tale over the formal scheme $\A^n_{\O_K} = \Spf \O_K \langle \T_1,\ldots,\T_n \rangle$, $(\X/\O_K)^{\dR,+}\simeq (\X \times \A^1/\G_m)/\V(\O(-1))^{\#, n}$ (the $\V(\O(-1))^{\#, n}$ torsor structure given by pulling back the case of $\A^n_{\O_K}$ along the \'{e}tale map). Using this description of the stack and filtered Koszul duality, quasicoherent sheaves on the stack can be interpreted as filtered $\D$-modules for the algebra of crystalline differential operators with the order filtration. We may use the proof of \cite[Lemma 6.7]{APC2} to translate $\Gamma^\ast (\E)$ coactions to continuous $\Sym^\ast(\E^\vee)$ actions. In particular we can apply this to $\QCoh([(\X \times \A^1/\G_m)/\V(\E)^{\#}])$ for $\V(\E)^{\#}$ acting on $\X \times \A^1/\G_m$ to see this category is equivalent to $\Mod_{\Sym^\ast(\E^\vee)}(\X \times \A^1/\G_m)$, the category of quasicoherent sheaves on $\X \times \A^1/\G_m$ equipped with a continuous action of $\widehat{\Sym^\ast}(\E^\vee)$ (completed with respect to the $\Sym^{\ge n}$ filtration) compatible with the action of $\widehat{\Sym^\ast(\E^\vee)}$ on $\O_{\X \times \A^1/\G_m}$. Rationalizing, we see sheaves on this stack are a full subcategory of filtered $\widehat{\D}_{\X_\eta}$-modules with the order filtration; we can check perfectness on the open point $\A^1/\G_m$ and the formal completion on the closed point, resulting in the definition of $\Perf^+(\widehat{\D}_{\X_\eta})$. Note that the filtration being complete with perfect associated graded encodes the second condition. Knowing the first equivalence, we deduce full faithfulness and may again test essential surjectivity in the second claim $\Perf_{\mathrm{fIsoc}^{\varphi}}(\X) \simeq \Perf^{\varphi}(\widehat{\D}_{(\X_0)_\eta}) \times_{\Perf(\widehat{\D}_{\X_\eta})} \Perf^+(\widehat{\D}_{\X_\eta})$ by reducing to the equivalence on the heart shown in Proposition \ref{prop:isoc_justification}.
\end{proof}

On the heart, the functor $\mathrm{Beil}$ then identifies with the functor $\D_\cris: \Loc^{\cris}_{\Q_p}(\X_\eta) \to \mathrm{fIsoc}^{\varphi}(\X)$. This will allow us to characterize the essential image as objects whose cohomology sheaves are admissible.

\begin{proposition}
Let $\X/\Spf \O_K$ be smooth and quasicompact. The diagram
\begin{display}
\Coh(\X^{\Syn})[1/p] \ar{d}{\T_\et} \ar{r}{\mathrm{Beil}} & \left(\Coh(\X_{s}^{\Syn}) \times_{\Coh((\X/\O_K)^{\dR})} \Coh((\X/\O_K)^{\dR,+})\right)[1/p]  \ar{d}{\sim} \\
\Loc_{\Q_p}^\cris(\X_\eta) \ar{r}{\D_\cris} & \mathrm{fIsoc}^{\varphi}(\X)
\end{display}
commutes.\footnote{Recall we define $\D_{\cris}$ using \cite[Corollary 4.6]{Zpcrystalline}, using the equivalence $\Loc_{\Q_p}^\cris(\X_\eta) \simeq \Vect^{\varphi,\mathrm{an}}(\X_{\Prism},\O_{\Prism})[1/p]$ and then applying their fully faithful functor to filtered $F$-isocrystals (which we call $\tilde{\T}_\cris$ here).} Recall that the right vertical arrow is an equivalence by Proposition \ref{prop:isoc_justification}.
\label{prop:Dcris_compat}
\end{proposition}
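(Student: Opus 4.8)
The plan is to identify both composite functors with the filtered $F$-isocrystal obtained by applying the crystalline, de Rham-with-connection, and Hodge-filtered realizations to the $F$-gauge, and to match these against the constituents of Guo--Reinecke's $\tilde{\D}_{\cris}$. Since $\mathrm{fIsoc}^{\varphi}(\X)=\mathrm{Isoc}^{\varphi}(\X_s/\W(k))\times_{\Vect^{\nabla}(\X_\eta)}\Vect^{\nabla,+}(\X_\eta)$ is a fiber product, producing the required natural isomorphism amounts to producing compatible natural isomorphisms between the two composites after passing to (i) the underlying $F$-isocrystal on $\X_s/\W(k)$, (ii) the underlying filtered vector bundle with connection on $\X_\eta$, and (iii) the gluing isomorphism in $\Vect^{\nabla}(\X_\eta)$ relating the images of (i) and (ii). First I would reduce to the case where $\mathcal{E}$ is reflexive: in $\Coh(\X^{\Syn})[1/p]$ every object is equivalent to its reflexive hull $\mathcal{E}^{\vee\vee}$ (as used in the proof of Corollary \ref{cor:Perf_cris}), so by Theorem \ref{thm:generalsmoothTetcrys} and the equivalence $\mathsf{Refl}(\X^{\Syn})\simeq \Vect^{\varphi,\mathrm{an}}(\X_{\Prism},\O_{\Prism})$ of \S3.2 we may assume $\mathcal{E}=\Pi_\X(\mathcal{M})$ for $\mathcal{M}=\mathcal{E}|_{\X^{\Prism}}$. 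Moreover the equivalence $\Loc_{\Q_p}^\cris(\X_\eta)\simeq \Vect^{\varphi,\mathrm{an}}(\X_{\Prism},\O_{\Prism})[1/p]$ used to define $\D_{\cris}$ sends $\T_\et(\mathcal{E})$ back to $\mathcal{M}$ (the factorization of $\T_\et$ through restriction to $\X^{\Prism}$ in the proof of Theorem \ref{thm:generalsmoothTetcrys}), so $\D_{\cris}(\T_\et(\mathcal{E}))=\tilde{\D}_{\cris}(\mathcal{M})$, and the proposition reduces to comparing $\Pi_\X$ with $\tilde{\D}_{\cris}$.

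For part (i), the $F$-isocrystal attached to $\mathrm{Beil}(\mathcal{E})$ is $(\mathcal{E}|_{\X_s^{\Syn}})|_{\X_s^{\Prism}}$, which by transitivity of restriction equals $\mathcal{M}|_{\X_s^{\Prism}}$, i.e.\ the mod-$p$ crystalline realization of $\mathcal{M}$; this is exactly the $F$-isocrystal underlying $\tilde{\D}_{\cris}(\mathcal{M})$ by the Bhatt--Scholze prismatic--crystalline comparison on which \cite{Zpcrystalline} Proposition 2.38 is built. For part (ii), under the identifications of Propositions \ref{prop:isoc_justification} and \ref{prop:explicit_isocrystal} the filtered bundle with connection attached to $\mathrm{Beil}(\mathcal{E})$ is $\mathcal{E}|_{(\X/\O_K)^{\dR,+}}$, whose underlying module with connection is the relative de Rham realization of $\mathcal{M}$ (prismatic--de Rham comparison), matching that of $\tilde{\D}_{\cris}(\mathcal{M})$; here the only nontrivial content is the agreement of the Hodge filtrations, which is the substance of (iii). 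In all cases the matching is essentially tautological once unwound, because Guo--Reinecke's functor is built from precisely these comparison isomorphisms (their filtered $F$-isocrystal is associated in the sense of Faltings).

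The delicate point is the Hodge filtration together with the gluing datum. By Theorem \ref{thm:Fgaugelift}, on any $p$-completely flat quasiregular semiperfectoid cover $S\to \X$ the Nygaard filtration of $\Pi_\X(\mathcal{M})$ is $\varphi_{\mathcal{M}}^{-1}(\I^\bullet\mathcal{M}(\Prism_S))$; transmuting this through $(\X/\O_K)^{\dR,+}$ produces the Hodge filtration on the de Rham realization, and this is exactly the recipe defining the Hodge filtration in $\tilde{\D}_{\cris}$. Likewise the gluing isomorphism of $\mathrm{Beil}(\mathcal{E})$ over $\Vect^{\nabla}(\X_\eta)$ is induced by the map of stacks $(\X/\O_K)^{\dR}\to (\X_s)^{\Syn}$, transmuted from the point, where it is $\Spf\O_K\to k^{\Nyg}\to k^{\Syn}$ with $t\mapsto 1$, $u\mapsto p$; unwound this is the Frobenius-twist-then-invert-$\I$ identification of de Rham cohomology with the isocrystal, which again underlies Guo--Reinecke's gluing. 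Both statements can be checked after passing to a quasiregular semiperfectoid cover, where $\Prism_S[1/\I]$ and $\varphi^{*}\Prism_S[1/\I]$ become the relevant de Rham and crystalline period rings and the two constructions collapse to the same identity. I expect this bookkeeping---tracking the interaction of the Nygaard filtration, the Frobenius twist, and the prismatic ideal, and in particular confirming the Hodge filtration matches on the nose rather than up to a twist---to be the main obstacle; everything else follows formally from the explicit descriptions of $\Pi_\X$ and of $\tilde{\D}_{\cris}$, the equivalences of \S3.2, and Propositions \ref{prop:isoc_justification} and \ref{prop:explicit_isocrystal}.
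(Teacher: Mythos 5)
Your proposal is correct and follows essentially the same route as the paper: both reduce the commutativity to comparing the filtration, the isocrystal component being formal from compatibility of restriction functors (Proposition \ref{prop:isoc_justification}), and the reduction through $(-)|_{\X^{\Prism}}$ matching the definition of $\D_{\cris}$ via Guo--Reinecke's $\tilde{\T}_{\cris}$. The only difference is at the crux, the identification of $\mathcal{E}|_{(\X/\O_K)^{\dR,+}}$ with $\D_{\dR}(\T_\et(\mathcal{E}))$ as filtered bundles with connection: the paper invokes the argument of Remark 7.2.9 in \cite{Hauck} (adapted relative to $\O_K$), whereas you sketch the same verification directly on quasiregular semiperfectoid covers using the explicit Nygaard filtration $\varphi_{\mathcal{M}}^{-1}(\I^\bullet)$ of $\Pi_\X$, which is the content of that cited remark.
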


\begin{proof}
It suffices to check the claim locally, so we can even assume that $\X=\X_0 \widehat{\otimes}_{\W(k)} \O_K$ where $\X_0$ is \'{e}tale over $\W(k) \langle \T_1^\pm, \ldots, \T_n^{\pm} \rangle$ (this reduction follows from \cite[Lemma 2.9]{Zpcrystalline}).

It suffices to show that the following diagram is commutative:
\begin{display}
\Coh(\X^{\Syn})[1/p] \ar{d}{(-)|_{\X^{\Prism}}} \ar{r}{\mathrm{Beil}} & \left(\Coh(\X_{s}^{\Syn}) \times_{\Coh((\X/\O_K)^{\dR})} \Coh((\X/\O_K)^{\dR,+})\right)[1/p]  \ar{d}{\sim} \\
\Coh^{\varphi}(\X_{\Prism},\O_{\Prism})[1/p] \ar{r}{\tilde{\T}_\cris} & \mathrm{fIsoc}^\varphi(\X)
\end{display}
Here, $\tilde{\T}_{\cris}$ is the fully faithful functor to filtered $F$-isocrystals constructed in \cite[Corollary 4.6]{Zpcrystalline}. The commutativity of the square is easy to see once we recall $\tilde{\T}_{\cris}$ is given by taking the restriction functor
\[\T_\cris: \Coh^{\varphi}(\X_{\Prism},\O_{\Prism})[1/p] \to \Coh^{\varphi}(\X_{s,\Prism},\O_{\Prism})[1/p] \simeq \mathrm{Isoc}^{\varphi}(\X_{s}/\W(k))\]
and using \cite[Proposition 2.38]{Zpcrystalline} to equip this with a filtration to obtain $\tilde{\T}_\cris$ (one can pick a reflexive representative). We have already shown in Proposition \ref{prop:isoc_justification} that $\T_\cris$ fits in the diagram
\begin{display}
\Coh(\X^{\Syn})[1/p] \ar{d}{(-)|_{\X^{\Prism}}} \ar{r}{(-)|_{\X_{s}^{\Syn}}} & \Coh(\X_{s}^{\Syn})[1/p] \ar{d}{(-)|_{\X_{s}^{\Prism}}} \\
\Coh^{\varphi}(\X_{\Prism},\O_{\Prism})[1/p] \ar{r}{\T_{\cris}} & \mathrm{Isoc}^{\varphi}(\X_{s}/\W(k))
\end{display}
so we need only verify that we obtain the same filtration from both constructions. By \cite[Proposition 2.38]{Zpcrystalline}, what we need to show is that given $\mathcal{E}\in \Coh(\X^{\Syn})$ we have a canonical identification
\[\D_{\dR}(\T_\et(\mathcal{E})[1/p]) \simeq \mathcal{E}|_{(\X/\O_K)^{\dR,+}}[1/p]\]
via the fully faithful functor $\Coh((\X/\O_K)^{\dR,+})[1/p] \to \Vect^{\nabla,+}(\X_\eta)$ allowing us to regard both sides as filtered vector bundles on $\X_\eta$ with a flat Griffiths-transverse connection. The argument in \cite{Hauck} Remark 7.2.9 shows this is the case (the same reasoning works relative to $\O_K$ when $\X/\Spf \O_K$ is smooth; note again we may choose a reflexive representative of the isogeny class). 
\end{proof}

\begin{theorem}
Assume $\X$ is smooth proper over $\Spf \O_K$. Then $\mathrm{Beil}$ induces an equivalence
\[\Perf(\X^{\Syn})[1/p]\simeq \Perf^{\mathrm{adm}}_{\mathrm{fIsoc}^{\varphi}}(\X).\]
This equivalence is $t$-exact and symmetric monoidal.
\label{thm:rational_equiv}
\end{theorem}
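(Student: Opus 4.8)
The plan is to prove that $\mathrm{Beil}$ is fully faithful, $t$-exact and symmetric monoidal, and then to identify its essential image with $\Perf^{\mathrm{adm}}_{\mathrm{fIsoc}^{\varphi}}(\X)$ by a d\'evissage from the heart. Symmetric monoidality is immediate: $\mathrm{Beil}$ is assembled from the pullback functors along $\X_s^{\Syn}\to\X^{\Syn}$, $(\X/\O_K)^{\dR,+}\to\X^{\Syn}$ and $(\X/\O_K)^{\dR}\to(\X/\O_K)^{\dR,+}$, each of which is symmetric monoidal, and the fiber product $\Perf_{\mathrm{fIsoc}^{\varphi}}(\X)$ inherits a symmetric monoidal structure. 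For $t$-exactness I would use that each of the four categories carries its canonical $t$-structure defined by pullback along a regular Noetherian flat-local surjection — a Breuil-Kisin Rees stack for $\X^{\Syn}$ and its evident mod-$\pi$, de Rham and filtered-de Rham analogues — and check that these covers can be chosen compatibly, so that the three restriction functors, and hence $\mathrm{Beil}$, are $t$-exact; this is precisely what the fiber-product $t$-structure of Definition \ref{def:admfil_Fisoc} is designed to encode.

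For full faithfulness I would invoke the Beilinson fiber square (Corollary \ref{cor:Beilinson_fiber}), which, since $\X/\Spf\O_K$ is smooth proper, tells us that for every $\mathcal{F}\in\Perf(\X^{\Syn})$ the square of complexes $\R\Gamma(-,\mathcal{F})[1/p]$ over the four stacks above is Cartesian. Perfect complexes on $\X^{\Syn}$ are dualizable (it has affine diagonal, Lemma \ref{lem:aff_diag}), so for $\mathcal{E},\mathcal{E}'\in\Perf(\X^{\Syn})[1/p]$ one has $\R\Hom(\mathcal{E},\mathcal{E}')\simeq\R\Gamma(\X^{\Syn},\mathcal{E}^{\vee}\otimes\mathcal{E}')[1/p]$; applying the Cartesian square to $\mathcal{F}=\mathcal{E}^{\vee}\otimes\mathcal{E}'$ and using that $\mathrm{Beil}$ is symmetric monoidal, so $\mathrm{Beil}(\mathcal{F})\simeq\mathrm{Beil}(\mathcal{E})^{\vee}\otimes\mathrm{Beil}(\mathcal{E}')$, identifies this complex with the total fiber computing $\R\Hom(\mathrm{Beil}(\mathcal{E}),\mathrm{Beil}(\mathcal{E}'))$ in the pullback category $\Perf_{\mathrm{fIsoc}^{\varphi}}(\X)$ (mapping objects in a pullback of stable categories being total fibers). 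Hence $\mathrm{Beil}$ is fully faithful.

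It then remains to compute the essential image, and I would reduce this to the heart. By Proposition \ref{prop:Dcris_compat}, $\mathrm{Beil}$ on hearts agrees with $\D_{\cris}\circ\T_\et$ under the equivalence $(\Coh(\X_s^{\Syn})\times_{\Coh((\X/\O_K)^{\dR})}\Coh((\X/\O_K)^{\dR,+}))[1/p]\simeq\mathrm{fIsoc}^{\varphi}(\X)$ of Proposition \ref{prop:isoc_justification}; since $\T_\et$ is an equivalence $\Coh(\X^{\Syn})[1/p]\xrightarrow{\sim}\Loc_{\Q_p}^\cris(\X_\eta)$ by Corollary \ref{cor:Perf_cris}, and admissible filtered $F$-isocrystals are by definition exactly those of the form $\D_{\cris}(\mathbb{L})$, the fully faithful functor $\mathrm{Beil}$ restricts to an equivalence of $\Coh(\X^{\Syn})[1/p]$ onto the full subcategory of admissible objects in $\mathrm{fIsoc}^{\varphi}(\X)$, which is the heart of $\Perf^{\mathrm{adm}}_{\mathrm{fIsoc}^{\varphi}}(\X)$. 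By $t$-exactness, $\H^i(\mathrm{Beil}(\mathcal{E}))=\mathrm{Beil}(\H^i(\mathcal{E}))$ is admissible for all $i$, so $\mathrm{Beil}$ indeed lands in $\Perf^{\mathrm{adm}}_{\mathrm{fIsoc}^{\varphi}}(\X)$. Conversely, any $\D\in\Perf^{\mathrm{adm}}_{\mathrm{fIsoc}^{\varphi}}(\X)$ has finitely many nonzero cohomology objects, each admissible and hence of the form $\mathrm{Beil}(\mathcal{E}_i)$ for a unique $\mathcal{E}_i\in\Coh(\X^{\Syn})[1/p]$; running the Postnikov tower of $\D$, lifting each attaching map through full faithfulness of $\mathrm{Beil}$, and taking iterated fibers in $\Perf(\X^{\Syn})[1/p]$ — exactly the argument of Proposition \ref{prop:O_K_equiv}, via \cite{Hauck} Lemma 5.4.3 and the fact that $\mathrm{Beil}$ preserves shifts and fibers — produces $\mathcal{E}\in\Perf(\X^{\Syn})[1/p]$ with $\mathrm{Beil}(\mathcal{E})\simeq\D$. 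Combined with full faithfulness this gives the claimed equivalence, which is $t$-exact and symmetric monoidal by the first step.

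The Beilinson fiber square (Corollary \ref{cor:Beilinson_fiber}) is the deep ingredient, and it is already available; within the present argument the d\'evissage is formal and full faithfulness follows from it purely categorically, so the main obstacle I anticipate is the $t$-exactness bookkeeping — showing that the canonical $t$-structures are compatible under the three restriction functors, and in particular that pullback to the special-fiber syntomic stack $\X_s^{\Syn}$ is $t$-exact. This requires choosing the covering Rees stacks of $\X^{\Syn}$, $\X_s^{\Syn}$, $(\X/\O_K)^{\dR}$ and $(\X/\O_K)^{\dR,+}$ compatibly and identifying the relevant restrictions with flat base changes of the covers, along the lines of the corresponding checks in \cite{Hauck}.
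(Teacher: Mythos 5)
Your proposal is correct and follows essentially the same route as the paper: full faithfulness from the Beilinson fiber square (Corollary \ref{cor:Beilinson_fiber}), identification on the heart via Propositions \ref{prop:Dcris_compat} and \ref{prop:isoc_justification} together with Corollary \ref{cor:Perf_cris}, and essential surjectivity by the d\'evissage of \cite{Hauck} Lemma 5.4.3, with $t$-exactness and symmetric monoidality coming from the componentwise pullback construction. The extra detail you supply for full faithfulness (dualizability and the total-fiber description of mapping complexes in the fiber product) is exactly what the paper's appeal to the fiber square implicitly uses.
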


\begin{proof}
To deduce the claimed $t$-exactness, recall we defined the $t$-structure on the target to come from the individual $t$-structures on $\Perf((\X/\O_K)^{\dR,+})$, $\Perf(\X_s^{\Syn})$ and $\Perf((\X/\O_K)^{\dR})$; the pullbacks to each of these are $t$-exact rationally. Of these the only non-obvious one is the pullback to $\Perf(\X_s^{\Syn})$, which can be seen by showing the derived pullback of objects in $\Coh(\X^{\Syn})[1/p]$ lands in $\Coh(\X_s^{\Syn})[1/p]$; this follows since we have already seen that up to isogeny we may choose a reflexive representative. By descent we may work Zariski locally to assume $\X=\Spf \R$, where we have a cover $\mathrm{Rees}_{(\E(u_0))^\bullet}\R_0\llbracket u_0 \rrbracket$ of $\X^{\Syn}$ induced by a corresponding Breuil-Kisin prism with a compatible cover $\mathrm{Rees}_{p^\bullet}\R_0 \to \X_s^{\Syn}$ given by the $u_0=0$ locus. By this, we mean there is a commutative square
\begin{display}
\mathrm{Rees}_{p^\bullet} \R_0 \ar{r} \ar{d} & \mathrm{Rees}_{(\E(u_0))^\bullet} \R_0\llbracket u_0 \rrbracket \ar{d} \\
\X_s^{\Syn} \ar{r} & \X^{\Syn}.
\end{display}
The existence of the reflexive (hence $u_0$-torsionfree) representative shows the derived pullback to the cover $\mathrm{Rees}_{p^\bullet} \R_0$ of $\X_{s}^{\Syn}$ is coherent, hence the pullback lands in $\Coh(\X_s^{\Syn})[1/p]$.

By Corollary \ref{cor:Beilinson_fiber}, we know $\mathrm{Beil}$ is fully faithful so we need only show that the essential image is $\Perf^{\mathrm{adm}}_{\mathrm{fIsoc}^{\varphi}}(\X)$. Each $\H^i(\mathrm{Beil}(\mathcal{E}))$ is an admissible filtered $F$-isocrystal by Proposition \ref{prop:Dcris_compat} and using $t$-exactness of $\mathrm{Beil}$, we see the essential image of $\mathrm{Beil}$ is a full subcategory of $\Perf^{\mathrm{adm}}_{\mathrm{fIsoc}^{\varphi}}(\X)$.

The functor $\mathrm{Beil}$ is symmetric monoidal essentially by construction, as the functor on each component of $\Perf_{\mathrm{fIsoc}^{\varphi}}(\X)$ is induced by a pullback. To show every object of $\Perf^{\mathrm{adm}}_{\mathrm{fIsoc}^{\varphi}}(\X)$ is in the essential image, since we already know $\mathrm{Beil}$ is fully faithful in the derived sense, $t$-exact, and an equivalence on the heart, \cite[Lemma 5.4.3]{Hauck} can be used to deduce essential surjectivity as the $t$-structures are bounded and nondegenerate.
\end{proof}

\nocite{Stacks}
\bibliographystyle{amsalpha}
\bibliography{citations.bib}

\end{document}